\setlist[enumerate]{leftmargin=*}
\setlist[itemize]{labelindent=\parindent, leftmargin=*}
\numberwithin{equation}{section}
\theoremstyle{plain}
\newtheorem{thm}{Theorem}[section]
\newtheorem{lem}[thm]{Lemma}
\newtheorem{prop}[thm]{Proposition}
\theoremstyle{definition}
\theoremstyle{remark}
\newtheorem{rem}[thm]{Remark}
\newcommand\Hom{\operatorname{Hom}}
\newcommand\Ind{\operatorname{Ind}}
\newcommand\JH{\operatorname{JH}}
\renewcommand\Re{\operatorname{Re}}
\newcommand\Span{\operatorname{Span}}
\newcommand\cusp{\mathrm{cusp}}
\newcommand\disc{\mathrm{disc}}
\newcommand\gen{\mathrm{gen}}
\newcommand\GL{\mathrm{GL}}
\newcommand\Mp{\mathrm{Mp}}
\renewcommand\O{\mathrm{O}}
\newcommand\res{\mathrm{res}}
\newcommand\SL{\mathrm{SL}}
\newcommand\SO{\mathrm{SO}}
\newcommand\Sp{\mathrm{Sp}}
\newcommand\st{\mathrm{st}}
\newcommand\St{\mathrm{St}}
\newcommand\Sym{\mathrm{Sym}}
\newcommand\U{\mathrm{U}}
\newcommand\A{\mathbb{A}}
\newcommand\C{\mathbb{C}}
\newcommand\F{\mathbb{F}}
\newcommand\R{\mathbb{R}}
\newcommand\Z{\mathbb{Z}}
\newcommand\g{\mathfrak{g}}
\newcommand\q{\mathfrak{q}}
\newcommand\VV{\mathcal{V}}
\newcommand\longhookrightarrow{\lhook\joinrel\longrightarrow}
\newcommand\longtwoheadrightarrow{\relbar\joinrel\twoheadrightarrow}
\DeclareMathOperator*{\Res}{Res}
\title{The automorphic discrete spectrum of $\Mp_4$}
\author{Wee Teck Gan}
\address{Department of Mathematics, National University of Singapore, 10 Lower Kent Ridge Road, Singapore 119076}
\email{matgwt@nus.edu.sg}
\author{Atsushi Ichino}
\address{Department of Mathematics, Kyoto University, Kitashirakawa Oiwake-cho, Sakyo-ku, Kyoto 606-8502, Japan}
\email{ichino@math.kyoto-u.ac.jp}
\begin{document}

\maketitle

\begin{abstract}
We prove the multiplicity formula for the automorphic discrete spectrum of the metaplectic group $\Mp_4$ of rank $2$.
\end{abstract}

\tableofcontents

\section{Introduction}

In our previous paper \cite{gi-mp}, we studied the automorphic discrete spectrum $L^2_\disc(\Mp_{2n})$ of the metaplectic group $\Mp_{2n}$, which is a nonlinear double cover of the symplectic group $\Sp_{2n}$ of rank $n$.
In particular, we proved a decomposition
\[
 L^2_{\disc}(\Mp_{2n}) = \bigoplus_\phi L^2_\phi(\Mp_{2n}), 
\]
where $\phi$ runs over elliptic $A$-parameters for $\Mp_{2n}$ and $L^2_\phi(\Mp_{2n})$ is the near equivalence class determined by $\phi$ (which depends on an auxiliary choice of an additive character).
In addition, when $\phi$ is tempered, we proved a further decomposition
\begin{equation}
\label{eq:main-intro}
 L^2_\phi(\Mp_{2n}) \cong \bigoplus_\pi m_\pi \pi, 
\end{equation}
where $\pi$ runs over representations in the global $A$-packet associated to $\phi$ and $m_\pi$ is the nonnegative integer predicted by the analog of Arthur's conjecture \cite[\S 5.6]{g-iccm}.
When $n=1$, such a result was first established by Waldspurger \cite{w1,w2} as a representation-theoretic reformulation of the Shimura correspondence.

The purpose of this paper is to prove \eqref{eq:main-intro} when $n=2$ and $\phi$ is nontempered (see Theorem \ref{t:main}), which completes the analysis of $L^2_\disc(\Mp_4)$.
We also tabulate the representations in the local $A$-packets for $\Mp_4$ explicitly (see Appendix \ref{a:A-packets}), which may be useful for arithmetic applications. 
The case $n=2$ is especially interesting because, similarly to the case $n=1$, it involves the two groups $\SO_5$ and $\Sp_4$ of the same type $B_2 = C_2$.
In particular, we hope that our representation-theoretic formulation will elucidate Ibukiyama's conjecture \cite{ibukiyama} on integral and half-integral weight Siegel modular forms of genus $2$.

\subsection*{Acknowledgments}

The first-named author is partially supported by an MOE Tier one grant R-146-000-228-114.
The second-named author is partially supported by JSPS KAKENHI Grant Number 19H01781.

\subsection*{Notation}

If $F$ is a local field of characteristic zero, we fix a nontrivial additive character $\psi$ of $F$.
For $a \in F^\times$, we define another nontrivial additive character $\psi_a$ of $F$ by $\psi_a(x) = \psi(ax)$.
We also define a quadratic character $\chi_a$ of $F^\times$ by $\chi_a(x) = (a,x)_F$, where $(\cdot,\cdot)_F$ is the quadratic Hilbert symbol of $F$.
Let $W_F$ be the Weil group of $F$ and put 
\[
 L_F =
 \begin{cases}
  W_F \times \SL_2(\C) & \text{if $F$ is nonarchimedean;} \\
  W_F & \text{if $F$ is archimedean.}
 \end{cases}
\]
For any linear algebraic group $G$ over $F$, we identify $G$ with its group of $F$-rational points.

If $F$ is a number field with ad\`ele ring $\A$, we fix a nontrivial additive character $\psi$ of $\A/F$.
For $a \in F^\times$, we define another nontrivial additive character $\psi_a$ of $\A/F$ by $\psi_a(x) = \psi(ax)$.
We also define a quadratic automorphic character $\chi_a$ of $\A^\times$ by $\chi_a(x) = \prod_v (a,x_v)_{F_v}$, where $v$ runs over places of $F$.
We denote by $\zeta^S(s)$ the partial zeta function of $F$, where $S$ is a finite set of places of $F$.
Put 
\[
 \mu_{2,\A} = \Big\{ (\epsilon_v) \in \prod_v \{ \pm 1 \} \, \Big| \, \text{$\epsilon_v = 1$ for almost all $v$} \Big\}.
\]

For any representation $\pi$, we denote by $\pi^\vee$ its contragredient representation.
For any abelian locally compact group $S$, we denote by $\widehat{S}$ the group of continuous characters of $S$.
For any positive integer $d$, we denote by $S_d$ the unique $d$-dimensional irreducible representation of $\SL_2(\C)$.
We write
\[
 1^d = \underbrace{1,1,\dots,1}_d. 
\]
For any $a \in \frac{1}{2} \Z$ with $a>0$, we denote by $\mathcal{D}_a$ the $2$-dimensional irreducible representation of $W_\R$ induced from the character $z \mapsto (z/\bar{z})^a$ of $W_\C = \C^\times$.

\section{The multiplicity formula}

In this section, we state the multiplicity formula for the automorphic discrete spectrum of $\Mp_4$, which is the main result of this paper.

\subsection{Elliptic $A$-parameters}

Let $F$ be a number field with ad\`ele ring $\A$ and fix a nontrivial additive character $\psi$ of $\A/F$.
Recall from \cite{gi-mp} that an elliptic $A$-parameter for $\Mp_4$ is a formal unordered finite direct sum
\[
 \phi = \bigoplus_i \phi_i \boxtimes S_{d_i}, 
\]
where
\begin{itemize}
\item $\phi_i$ is an irreducible self-dual cuspidal automorphic representation of $\GL_{n_i}(\A)$;
\item $S_{d_i}$ is the unique $d_i$-dimensional irreducible representation of $\SL_2(\C)$;
\item if $d_i$ is odd, then $\phi_i$ is symplectic, i.e.~the exterior square $L$-function $L(s, \phi_i, \wedge^2)$ has a pole at $s=1$;
\item if $d_i$ is even, then $\phi_i$ is orthogonal, i.e.~the symmetric square $L$-function $L(s, \phi_i, \Sym^2)$ has a pole at $s=1$;
\item if $(\phi_i, d_i) = (\phi_j, d_j)$, then $i=j$;
\item $\sum_i n_i d_i  = 4$.
\end{itemize}
Let $S_\phi$ be the global component group of $\phi$, which is defined formally as a free $\Z/2\Z$-module
\[
 S_\phi = \bigoplus_i (\Z/2\Z) a_i
\]
with a basis $\{a_i\}$, where $a_i$ corresponds to $\phi_i \boxtimes S_{d_i}$.
Recall also that Arthur \cite[(1.5.6)]{arthur} associated to $\phi$ (which can be regarded as an elliptic $A$-parameter for $\SO_5$) a character $\epsilon_\phi$ of $S_\phi$, which plays an important role in the multiplicity formula for the automorphic discrete spectrum of $\SO_5$.
We define another character $\tilde{\epsilon}_\phi$ of $S_\phi$, which plays a similar role for $\Mp_4$, by
\[
 \tilde{\epsilon}_\phi(a_i) = \epsilon_\phi(a_i) \times 
 \begin{cases}
  \epsilon(\frac{1}{2}, \phi_i) & \text{if $\phi_i$ is symplectic;} \\
  1 & \text{if $\phi_i$ is orthogonal,} \\
 \end{cases}
\]
where $\epsilon(\frac{1}{2}, \phi_i) = \pm 1$ is the root number of $\phi_i$.

We can enumerate the elliptic $A$-parameters $\phi$ for $\Mp_4$ with associated character $\tilde{\epsilon}_\phi$ as follows.
\begin{itemize}
\item We say that $\phi$ is \emph{tempered} if $d_i=1$ for all $i$.
In this case, we have $\tilde{\epsilon}_\phi(a_i) = \epsilon(\frac{1}{2}, \phi_i)$ for all $i$.
\item We say that $\phi$ is of \emph{Saito--Kurokawa type} if 
\[
 \phi = (\rho \boxtimes S_1) \oplus (\chi \boxtimes S_2)
\]
for some irreducible cuspidal automorphic representation $\rho$ of $\GL_2(\A)$ with trivial central character and some quadratic automorphic character $\chi$ of $\A^\times$.
In this case, we have $S_\phi \cong (\Z/2\Z)^2$ and 
\[
 \tilde{\epsilon}_\phi(a_1) = \epsilon(\tfrac{1}{2}, \rho) \cdot \epsilon(\tfrac{1}{2}, \rho \times \chi), \qquad
 \tilde{\epsilon}_\phi(a_2) = \epsilon(\tfrac{1}{2}, \rho \times \chi), 
\]
where $a_1$ and $a_2$ correspond to $\rho \boxtimes S_1$ and $\chi \boxtimes S_2$, respectively.
\item We say that $\phi$ is of \emph{Howe--Piatetski-Shapiro type} if 
\[
 \phi = (\chi_1 \boxtimes S_2) \oplus (\chi_2 \boxtimes S_2)
\]
for some distinct quadratic automorphic characters $\chi_1$ and $\chi_2$ of $\A^\times$.
In this case, we have $S_\phi \cong (\Z/2\Z)^2$ and $\tilde{\epsilon}_\phi = 1$.
\item We say that $\phi$ is of \emph{Soudry type} if 
\[
 \phi = \rho \boxtimes S_2
\]
for some irreducible dihedral cuspidal automorphic representation $\rho$ of $\GL_2(\A)$ with nontrivial quadratic central character.
In this case, we have $S_\phi \cong \Z/2\Z$ and $\tilde{\epsilon}_\phi = 1$.
\item We say that $\phi$ is \emph{principal} if 
\[
 \phi = \chi \boxtimes S_4
\]
for some quadratic automorphic character $\chi$ of $\A^\times$.
In this case, we have $S_\phi \cong \Z/2\Z$ and $\tilde{\epsilon}_\phi = 1$.
\end{itemize}

\subsection{Local $A$-packets}

Let $\phi = \bigoplus_i \phi_i \boxtimes S_{d_i}$ be an elliptic $A$-parameter for $\Mp_4$.
For each place $v$ of $F$, the localization $\phi_v = \bigoplus_i \phi_{i,v} \boxtimes S_{d_i}$ of $\phi$ at $v$ gives rise to a local $A$-parameter
\[
 \phi_v : L_{F_v} \times \SL_2(\C) \longrightarrow \Sp_4(\C) 
\]
via the local Langlands correspondence \cite{langlands,harris-taylor,henniart,scholze}.
We define the associated $L$-parameter $\varphi_{\phi_v} : L_{F_v} \rightarrow \Sp_4(\C)$ by 
\[
 \varphi_{\phi_v}(w) = \phi_v \! 
 \left( w,
 \begin{pmatrix}
  |w|_v^{\frac{1}{2}} & \\
  & |w|_v^{-\frac{1}{2}}
 \end{pmatrix}
 \right).
\]
We denote by $S_{\phi_v}$ and $S_{\varphi_{\phi_v}}$ the component groups of the centralizers of $\phi_v$ and $\varphi_{\phi_v}$ in $\Sp_4(\C)$, respectively. 
Then we have a canonical map $S_\phi \rightarrow S_{\phi_v}$ and a natural surjection $S_{\phi_v} \rightarrow S_{\varphi_{\phi_v}}$.

Let $\Mp_4(F_v)$ be the metaplectic double cover of $\Sp_4(F_v)$.
We will assign to $\phi_v$ a finite set (which depends on the nontrivial additive character $\psi_v$ of $F_v$)
\[
 \Pi_{\phi_v,\psi_v}(\Mp_4) = \{ \pi_{\eta_v} \, | \, \eta_v \in \widehat{S}_{\phi_v} \}
\]
of semisimple genuine representations of $\Mp_4(F_v)$ of finite length indexed by characters of $S_{\phi_v}$.
If $\phi$ is tempered, then $\Pi_{\phi_v,\psi_v}(\Mp_4)$ is defined as the $L$-packet associated to $\phi_v$ (relative to $\psi_v$) defined via the local Shimura correspondence \cite{ab1,ab2,gs}.
If $\phi$ is nontempered, then $\Pi_{\phi_v,\psi_v}(\Mp_4)$ will be defined in \S\S \ref{ss:local-princ}, \ref{ss:local-sk}, \ref{ss:local-howe-ps}, \ref{ss:local-soudry} below.
Moreover, we will show that the following properties hold:
\begin{itemize}
\item $\Pi_{\phi_v,\psi_v}(\Mp_4)$ is multiplicity-free;
\item $\Pi_{\phi_v,\psi_v}(\Mp_4)$ contains the $L$-packet $\Pi_{\varphi_{\phi_v}, \psi_v}(\Mp_4)$ associated to $\varphi_{\phi_v}$ (relative to $\psi_v$);
\item the diagram
\[
\begin{tikzcd}
 \Pi_{\phi_v, \psi_v}(\Mp_4) \arrow[r] & \widehat{S}_{\phi_v} \\
 \Pi_{\varphi_{\phi_v}, \psi_v}(\Mp_4) \arrow[r] \arrow[u,hook] & \widehat{S}_{\varphi_{\phi_v}} \arrow[u,hook]
\end{tikzcd}
\]
commutes.
\end{itemize}

\subsection{Statement of the main theorem}

Let $\Mp_4(\A)$ be the metaplectic double cover of $\Sp_4(\A)$.
We denote by 
\[
 L^2_{\disc}(\Mp_4)
\]
the genuine part of the discrete spectrum of the unitary representation $L^2(\Sp_4(F) \backslash \Mp_4(\A))$ of $\Mp_4(\A)$, where we regard $\Sp_4(F)$ as a subgroup of $\Mp_4(\A)$ via the canonical splitting.
In our previous paper \cite{gi-mp}, we proved a decomposition
\[
 L^2_{\disc}(\Mp_4) = \bigoplus_{\phi} L^2_{\phi,\psi}(\Mp_4),
\]
where $\phi$ runs over elliptic $A$-parameters for $\Mp_4$ and $L^2_{\phi,\psi}(\Mp_4)$ is the near equivalence class of irreducible representations $\pi$ in $L^2_{\disc}(\Mp_4)$ such that the $L$-parameter of $\pi_v$ (relative to $\psi_v$) is $\varphi_{\phi_v}$ for almost all places $v$ of $F$.
Thus, to understand the spectral decomposition of $L^2_{\disc}(\Mp_4)$, it remains to describe the multiplicity of any irreducible representation of $\Mp_4(\A)$ in $L^2_{\phi,\psi}(\Mp_4)$.

Consider the compact group $S_{\phi, \A} = \prod_v S_{\phi_v}$ and the group $\widehat{S}_{\phi,\A} = \bigoplus_v \widehat{S}_{\phi_v}$ of continuous characters of $S_{\phi, \A}$.
For any $\eta = \bigotimes_v \eta_v \in \widehat{S}_{\phi, \A}$, we may form a semisimple genuine representation
\[
 \pi_\eta = \bigotimes_v \pi_{\eta_v}
\]
of $\Mp_4(\A)$.
Let $\Delta^* \eta = \eta \circ \Delta \in \widehat{S}_\phi$ be the pullback of $\eta$ under the diagonal map $\Delta : S_\phi \rightarrow S_{\phi, \A}$.

We now state our main result:

\begin{thm}
\label{t:main}
For any elliptic $A$-parameter $\phi$ for $\Mp_4$, we have
\[
 L^2_{\phi,\psi}(\Mp_4) \cong \bigoplus_{\eta \in \widehat{S}_{\phi, \A}} m_\eta \pi_\eta,
\]
where
\[
 m_\eta = 
 \begin{cases}
  1 & \text{if $\Delta^* \eta = \tilde{\epsilon}_\phi$;} \\
  0 & \text{otherwise.}
 \end{cases}
\]
In particular, $L^2_{\disc}(\Mp_4)$ is multiplicity-free.
\end{thm}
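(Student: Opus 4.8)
The plan is to treat the five families of elliptic $A$-parameters for $\Mp_4$ separately, since the tempered case is already done in \cite{gi-mp} and the nontempered cases split naturally into the Saito--Kurokawa, Howe--Piatetski-Shapiro, Soudry, and principal types. In each nontempered case the key input is the comparison of $L^2_{\phi,\psi}(\Mp_4)$ with an automorphic spectrum on a related classical group, via theta correspondence: for the principal and Saito--Kurokawa types one compares with $\widetilde{\SL}_2$ (and through Waldspurger's work with $\mathrm{PGL}_2$/$\mathrm{PD}^\times$), while for the Howe--Piatetski-Shapiro and Soudry types one compares with the residual spectrum of $\Sp_4$ or $\SO$ of a smaller quadratic space, whose multiplicities are governed by Arthur's multiplicity formula for $\SO_5$. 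The global theta lift from $\widetilde{\SL}_2(\A)$ (resp.\ from a smaller group) to $\Mp_4(\A)$ will produce a nonzero square-integrable automorphic representation precisely when a central $L$-value or a local obstruction condition is satisfied, and the local obstruction at each place is exactly what cuts out the character $\eta_v$ of $S_{\phi_v}$; multiplying these local conditions together and applying the Rallis inner product formula converts the global nonvanishing into the condition $\Delta^*\eta = \tilde\epsilon_\phi$ on the diagonal. The appearance of the root number $\epsilon(\tfrac12,\phi_i)$ in the definition of $\tilde\epsilon_\phi$ (as opposed to $\epsilon_\phi$) is precisely the discrepancy introduced by the $\psi$-twisted theta correspondence and the functional equation of the relevant Rankin--Selberg or standard $L$-function.

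Concretely, first I would handle the principal parameter $\phi = \chi \boxtimes S_4$: here $L^2_{\phi,\psi}(\Mp_4)$ should consist of a single residual representation obtained as the theta lift of a character (or a one-dimensional residual representation) from a smaller metaplectic or orthogonal group; one shows this lift is nonzero and square-integrable, identifies its local components with the members of the local $A$-packets defined in \S\S\ref{ss:local-princ}, and checks that the resulting global character is $\tilde\epsilon_\phi = 1$, matching $m_\eta$. Next, for the Saito--Kurokawa type $\phi = (\rho\boxtimes S_1)\oplus(\chi\boxtimes S_2)$, the representations in $L^2_{\phi,\psi}(\Mp_4)$ are theta lifts from $\widetilde{\SL}_2(\A)$ of representations in the Waldspurger packet attached to $\rho$ (twisted by $\chi$ and $\psi$); the multiplicity one statement on $\widetilde{\SL}_2$ side (Waldspurger \cite{w1,w2}) together with the nonvanishing criterion for the theta lift — which by Rallis inner product and Waldspurger's formula reduces to nonvanishing of $L(\tfrac12,\rho\times\chi)$ and the matching of local epsilon factors — yields exactly the condition $\Delta^*\eta = \tilde\epsilon_\phi$ with the two coordinates $\tilde\epsilon_\phi(a_1), \tilde\epsilon_\phi(a_2)$ as stated. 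For the Howe--Piatetski-Shapiro and Soudry types one argues similarly but the comparison is with the residual spectrum of $\SO_5 \cong \mathrm{PGSp}_4$, where Arthur's multiplicity formula is already known, transported to $\Mp_4$ via the theta correspondence between $\Mp_4$ and $\SO_5$; here the key point is that the theta correspondence is an isometry on the relevant near-equivalence classes, so multiplicities are literally preserved, and the only bookkeeping is the translation between $\epsilon_\phi$ and $\tilde\epsilon_\phi$ (which is trivial, $=1$, in both these families).

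The main obstacle, in my estimation, is the precise local analysis needed to show that the local theta lifts realize all the representations $\pi_{\eta_v}$ in the local $A$-packets with the correct labelling by $\widehat{S}_{\phi_v}$, and that the local nonvanishing/obstruction condition for the theta lift is governed by exactly the character-theoretic data that appears in $\Delta^*\eta = \tilde\epsilon_\phi$ — in particular, pinning down the local root number contributions at the archimedean and ramified places, and verifying compatibility with the commuting diagram relating $\widehat S_{\phi_v}$ and $\widehat S_{\varphi_{\phi_v}}$. This requires the explicit description of the local $A$-packets (deferred to \S\S\ref{ss:local-princ}--\ref{ss:local-soudry} and tabulated in Appendix \ref{a:A-packets}) together with known results on the local theta correspondence in the relevant low-rank dual pairs. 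Once the local statements are in hand, the global assembly is a formal consequence of the Rallis inner product formula, the Siegel--Weil formula, and the already-established decomposition $L^2_{\disc}(\Mp_4) = \bigoplus_\phi L^2_{\phi,\psi}(\Mp_4)$; the multiplicity-one conclusion then follows because each $m_\eta \in \{0,1\}$ and distinct $\eta$ give inequivalent $\pi_\eta$ (as the local packets are multiplicity-free).
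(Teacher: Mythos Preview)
Your overall case-by-case strategy and the idea of transporting multiplicities via theta correspondence are correct, but the specific dual pairs you propose are off in several places, and in the Soudry case this is a genuine obstruction rather than a cosmetic difference.

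For the Saito--Kurokawa and Howe--Piatetski-Shapiro types, the paper does \emph{not} lift from $\widetilde{\SL}_2$ or compare with $\SO_5$; in both cases it uses the theta lift from $\O(V_1^\epsilon)$ (three-variable orthogonal groups, i.e.\ inner forms of $\mathrm{PGL}_2$) directly to $\Mp_4$. The local $A$-packets are defined as theta lifts of irreducible representations of $\O(V_1^\epsilon)$ indexed by the pair $(\epsilon,\epsilon')$ (Hasse invariant and sign of $-1$), and the global argument realizes each $\pi^{\epsilon_1,\epsilon_2}$ as a global theta lift $\Theta_{W_2,V_1^\epsilon,\psi_a}(\sigma^{\epsilon,\epsilon'})$; nonvanishing comes from the Rallis inner product formula, and the exhaustion of $L^2_{\phi,\psi}(\Mp_4)$ comes from showing that any cuspidal $\pi$ in the orthogonal complement would have a pole of $L^S_{\psi_a}(s,\pi)$ at $s=\tfrac32$, forcing a nonzero theta lift back down to $\O(V_1^\epsilon)$. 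Your $\widetilde{\SL}_2$ picture is morally related (via Waldspurger's $\O_3\leftrightarrow\Mp_2$), but there is no theta correspondence $\widetilde{\SL}_2\to\Mp_4$, so the mechanism you describe does not literally exist.

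The serious gap is the Soudry case. You propose to compare with $\SO_5$ via the equal-rank theta correspondence, asserting that ``the theta correspondence is an isometry on the relevant near-equivalence classes, so multiplicities are literally preserved.'' This fails: the paper explicitly notes that the Soudry packet ``cannot be constructed by using theta lifts from smaller orthogonal groups,'' and the equal-rank lift $\Mp_4\leftrightarrow\SO_5$ does not come with a general multiplicity-preservation statement. Instead the paper goes to the \emph{stable range}, lifting $\Mp_4\to\SO_{2r+5}$ with $r>3$ (in practice $r=4$, so $\SO_{13}$), where J.-S.~Li's inequalities $m_{\disc}(\pi)\le m_{\disc}(\theta_\psi(\pi))\le m(\theta_\psi(\pi))\le m(\pi)$ apply. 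One then proves $m_{\disc}(\pi)=m(\pi)$ for the relevant $\pi$ by a cuspidal-support argument, so the chain collapses and multiplicities are preserved. Arthur's multiplicity formula is invoked for the parameter $\theta(\phi)=\phi\oplus(1\boxtimes S_{2r})$ on $\SO_{2r+5}$, and the local identification $\theta_\psi(\pi_{\eta_v})=\sigma_{\xi_v}$ requires nontrivial work (M{\oe}glin's explicit $A$-packets in the nonarchimedean case; for $F=\R$ with $\rho$ irreducible, an Atlas computation with $r=4$). Without the stable-range mechanism you have no tool to equate $m_{\disc}(\pi)$ on the two sides, and your proposed argument does not go through.
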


This theorem was already proved in our previous paper \cite{gi-mp} when $\phi$ is tempered and follows from Propositions \ref{p:princ}, \ref{p:sk}, \ref{p:howe-ps}, \ref{p:soudry} below when $\phi$ is nontempered.

\section{Metaplectic and orthogonal groups}

In this section, we introduce notation for the groups and representations which appear in this paper.
Let $F$ be either a local field of characteristic zero or a number field with ad\`ele ring $\A$.

\subsection{Metaplectic groups}
\label{ss:mp}

Let $W_n$ be a $2n$-dimensional vector space over $F$ equipped with a nondegenerate antisymmetric bilinear form $\langle \cdot, \cdot \rangle_{W_n}$.
We call such $W_n$ a symplectic space over $F$ and denote by $\Sp(W_n)$ the associated symplectic group.
Choose a basis $w_1,\dots,w_n,w_1^*,\dots,w_n^*$ of $W_n$ such that
\[
 \langle w_i, w_j \rangle_{W_n} = \langle w_i^*, w_j^* \rangle_{W_n} = 0, \qquad \langle w_i, w_j^* \rangle_{W_n} = \delta_{ij}.
\]
Given a sequence $\mathbf{k} = (k_1, \dots, k_m)$ of positive integers such that $k_1 + \dots + k_m \le n$, we write 
\[
 W_n = X_1 \oplus \cdots \oplus X_m \oplus X_1^* \oplus \cdots \oplus X_m^* \oplus W_{n_0}
\]
with 
\begin{align*}
 X_i & = \Span(w_{k'_{i-1} + 1}, \dots, w_{k'_i}), \\ 
 X_i^* & = \Span(w_{k'_{i-1} + 1}^*, \dots, w_{k'_i}^*), \\
 W_{n_0} & = \Span(w_{k'_m+1}, \dots, w_n, w_{k'_m+1}^*, \dots, w_n^*),
\end{align*}
where $k'_i = k_1 + \dots + k_i$ and $n_0 = n - k_1 - \dots - k_m$.
Let $P_{\mathbf{k}} = M_{\mathbf{k}} N_{\mathbf{k}}$ be the parabolic subgroup of $\Sp(W_n)$ stabilizing the flag
\[
 X_1 \subset X_1 \oplus X_2 \subset \cdots \subset X_1 \oplus \cdots \oplus X_m, 
\]
where 
\[
 M_{\mathbf{k}} \cong \GL(X_1) \times \dots \times \GL(X_m) \times \Sp(W_{n_0}) 
\]
is the Levi component of $P_{\mathbf{k}}$ stabilizing the flag
\[
 X_1^* \subset X_1^* \oplus X_2^* \subset \cdots \subset X_1^* \oplus \cdots \oplus X_m^* 
\]
and $N_{\mathbf{k}}$ is the unipotent radical of $P_{\mathbf{k}}$.
We also write $P_k = P_{(k)}$ for $1 \le k \le n$ and $B = P_{(1^n)}$, which are a maximal parabolic subgroup and a Borel subgroup of $\Sp(W_n)$, respectively.

Suppose that $F$ is local.
We denote by $\Mp(W_n)$ the metaplectic double cover of $\Sp(W_n)$:
\[
 1 \longrightarrow \{ \pm 1 \} \longrightarrow \Mp(W_n) \longrightarrow \Sp(W_n) \longrightarrow 1.
\]
This cover splits over $N_{\mathbf{k}}$ uniquely.
Let $\widetilde{P}_{\mathbf{k}}$ and $\widetilde{M}_{\mathbf{k}}$ be the preimages of $P_{\mathbf{k}}$ and $M_{\mathbf{k}}$ in $\Mp(W_n)$, respectively, so that $\widetilde{P}_{\mathbf{k}} = \widetilde{M}_{\mathbf{k}} N_{\mathbf{k}}$ and 
\[
 \widetilde{M}_{\mathbf{k}} \cong \widetilde{\GL}(X_1) \times_{\{ \pm 1 \} } \dots \times_{\{ \pm 1 \} } \widetilde{\GL}(X_m) \times_{\{ \pm 1 \} }  \Mp(W_{n_0}).
\]
Here $\widetilde{\GL}(X_i)$ is the double cover of $\GL(X_i)$ given in \cite[\S 2.5]{gi1}.
Suppose that $F$ is global.
We denote by $\Mp(W_n)(\A)$ the metaplectic double cover of $\Sp(W_n)(\A)$:
\[
 1 \longrightarrow \{ \pm 1 \} \longrightarrow \Mp(W_n)(\A) \longrightarrow \Sp(W_n)(\A) \longrightarrow 1.
\]
This cover splits over $\Sp(W_n)(F)$ uniquely.

\subsection{Orthogonal groups}
\label{ss:so}

Let $V_n$ be a $(2n+1)$-dimensional vector space over $F$ equipped with a nondegenerate symmetric bilinear form $\langle \cdot, \cdot \rangle_{V_n}$.
We call such $V_n$ a quadratic space over $F$ and denote by $\O(V_n)$ the associated orthogonal group.
Let $\SO(V_n)$ be the identity component of $\O(V_n)$, so that 
\[
 \O(V_n) = \SO(V_n) \times \{ \pm 1\}.
\]
Let $V_{\mathrm{an}}$ be an anisotropic kernel of $V_n$ and $r$ the Witt index of $V_n$.
Choose a basis $v_1,\dots,v_r,v_1^*,\dots,v_r^*$ of the orthogonal complement of $V_{\mathrm{an}}$ in $V_n$ such that
\[
 \langle v_i, v_j \rangle_{V_n} = \langle v_i^*, v_j^* \rangle_{V_n} = 0, \qquad \langle v_i, v_j^* \rangle_{V_n} = \delta_{ij}.
\]
Given a sequence $\mathbf{k} = (k_1, \dots, k_m)$ of positive integers such that $k_1 + \dots + k_m \le r$, we write 
\[
 V_n = Y_1 \oplus \cdots \oplus Y_m \oplus Y_1^* \oplus \cdots \oplus Y_m^* \oplus V_{n_0}
\]
with 
\begin{align*}
 Y_i & = \Span(v_{k'_{i-1} + 1}, \dots, v_{k'_i}), \\ 
 Y_i^* & = \Span(v_{k'_{i-1} + 1}^*, \dots, v_{k'_i}^*), \\
 V_{n_0} & = \Span(v_{k'_m+1}, \dots, v_r, v_{k'_m+1}^*, \dots, v_r^*) \oplus V_{\mathrm{an}},
\end{align*}
where $k'_i = k_1 + \dots + k_i$ and $n_0 = n - k_1 - \dots - k_m$.
Let $Q_{\mathbf{k}} = L_{\mathbf{k}} U_{\mathbf{k}}$ be the parabolic subgroup of $\SO(V_n)$ stabilizing the flag
\[
 Y_1 \subset Y_1 \oplus Y_2 \subset \cdots \subset Y_1 \oplus \cdots \oplus Y_m, 
\]
where 
\[
 L_{\mathbf{k}} \cong \GL(Y_1) \times \dots \times \GL(Y_m) \times \SO(V_{n_0}) 
\]
is the Levi component of $Q_{\mathbf{k}}$ stabilizing the flag
\[
 Y_1^* \subset Y_1^* \oplus Y_2^* \subset \cdots \subset Y_1^* \oplus \cdots \oplus Y_m^* 
\]
and $U_{\mathbf{k}}$ is the unipotent radical of $Q_{\mathbf{k}}$.
We also write $Q_k = Q_{(k)}$ for $1 \le k \le r$, which is a maximal parabolic subgroup of $\SO(V_n)$.

We denote by $V_n^+$ the unique (up to isometry) $(2n+1)$-dimensional quadratic space over $F$ with Witt index $n$ and trivial discriminant.
Note that $\SO(V_n^+)$ is split over $F$.
If $F$ is local, then the isometry classes of $(2n+1)$-dimensional quadratic spaces over $F$ with trivial discriminant are classified as follows.
\begin{itemize}
\item
When $n=0$ or $F = \C$, there is a unique such quadratic space $V_n^+$.
\item 
When $n>0$ and $F$ is nonarchimedean, or $n=1$ and $F=\R$, there are precisely two such quadratic spaces $V_n^+$ and $V_n^-$.
Here $V_n^-$ is the $(2n+1)$-dimensional quadratic space over $F$ with Witt index $n-1$ and trivial discriminant.
\item 
When $F = \R$, there are precisely $n+1$ such quadratic spaces $V_{p,q}$, where $p$ and $q$ are nonnegative integers such that $p+q=2n+1$ and $q \equiv n \bmod 2$.
Here $V_{p,q}$ is the quadratic space over $\R$ of signature $(p,q)$.
Note that $V_n^+ = V_{n+1,n}$ and $V_1^- = V_{0,3}$.
\end{itemize}

\subsection{Parabolic induction}

Suppose that $F$ is local and fix a nontrivial additive character $\psi$ of $F$.
Let $\widetilde{P}_{\mathbf{k}} = \widetilde{M}_{\mathbf{k}} N_{\mathbf{k}}$ be the parabolic subgroup of $\Mp(W_n)$ as in \S \ref{ss:mp}.
Then any irreducible genuine representation of $\widetilde{M}_{\mathbf{k}}$ is of the form 
\[
 (\tau_1 \otimes \chi_\psi) \boxtimes \dots \boxtimes (\tau_m \otimes \chi_\psi) \boxtimes \pi,
\]
where $\tau_i$ is an irreducible representation of $\GL(X_i)$, $\chi_\psi$ is the genuine character of $\widetilde{\GL}(X_i)$ defined in terms of the Weil index associated to $\psi$ (see \cite[\S 2.6]{gi1}), and $\pi$ is an irreducible genuine representation of $\Mp(W_{n_0})$.
We write 
\[
 I_{P_{\mathbf{k}}, \psi}(\tau_1, \dots, \tau_m, \pi) = 
 \Ind^{\Mp(W_n)}_{\widetilde{P}_{\mathbf{k}}}((\tau_1 \otimes \chi_\psi) \boxtimes \dots \boxtimes (\tau_m \otimes \chi_\psi) \boxtimes \pi)
\]
for the associated parabolically induced representation.
Note that $I_{P_{\mathbf{k}}, \psi}(\tau_1, \dots, \tau_m, \pi)^\vee = I_{P_{\mathbf{k}}, \psi^{-1}}(\tau_1^\vee, \dots, \tau_m^\vee, \pi^\vee)$ and
\[
 I_{P_{\mathbf{k}}, \psi_a}(\tau_1, \dots, \tau_m, \pi) = I_{P_{\mathbf{k}}, \psi}(\tau_1 \otimes (\chi_a \circ \det), \dots, \tau_m \otimes (\chi_a \circ \det), \pi)
\]
for $a \in F^\times$.
If $I_{P_{\mathbf{k}}, \psi}(\tau_1, \dots, \tau_m, \pi)$ is a standard module, then we denote by $J_{P_{\mathbf{k}}, \psi}(\tau_1, \dots, \tau_m, \pi)$ its unique irreducible quotient.

Similarly, for the parabolic subgroup $Q_{\mathbf{k}}$ of $\SO(V_n)$ as in \S \ref{ss:so}, we write
\[
 I_{Q_{\mathbf{k}}}(\tau_1, \dots, \tau_m, \sigma) = 
 \Ind^{\SO(V_n)}_{Q_{\mathbf{k}}}(\tau_1 \boxtimes \dots \boxtimes \tau_m \boxtimes \sigma)
\]
for the parabolically induced representation associated to irreducible representations $\tau_i$ and $\sigma$ of $\GL(Y_i)$ and $\SO(V_{n_0})$, respectively.
If $I_{Q_{\mathbf{k}}}(\tau_1, \dots, \tau_m, \sigma)$ is a standard module, then we denote by $J_{Q_{\mathbf{k}}}(\tau_1, \dots, \tau_m, \sigma)$ its unique irreducible quotient.

\section{Local theta lifts}

In this section, we introduce local theta lifts and recall some of their basic properties. 

\subsection{Notation}

Let $F$ be a local field of characteristic zero and fix a nontrivial additive character $\psi$ of $F$.
Let $W_n$ be a $2n$-dimensional symplectic space over $F$ and $V_m$ a $(2m+1)$-dimensional quadratic space over $F$ with trivial discriminant.
We denote by 
\[
 \omega_{W_n,V_m,\psi}
\]
the Weil representation of $\Mp(W_n) \times \O(V_m)$ with respect to $\psi$.
Note that $\omega_{W_n,V_m,\psi}$ depends only on the $(F^\times)^2$-orbit of $\psi$.

For any irreducible genuine representation $\pi$ of $\Mp(W_n)$, the maximal $\pi$-isotypic quotient of $\omega_{W_n,V_m,\psi}$ is of the form
\[
 \pi \boxtimes \Theta_{W_n,V_m,\psi}(\pi)
\]
for some smooth representation $\Theta_{W_n,V_m,\psi}(\pi)$ of $\O(V_m)$ of finite length.
We denote by 
\[
 \theta_{W_n,V_m,\psi}(\pi)
\]
the maximal semisimple quotient of $\Theta_{W_n,V_m,\psi}(\pi)$.
Then, by the Howe duality \cite{howe,w3,gt2}, $\theta_{W_n,V_m,\psi}(\pi)$ is either zero or irreducible. 
We also regard $\Theta_{W_n,V_m,\psi}(\pi)$ and $\theta_{W_n,V_m,\psi}(\pi)$ as representations of $\SO(V_m)$ via restriction.
Note that if $\theta_{W_n,V_m,\psi}(\pi)$ is nonzero, then it remains irreducible as a representation of $\SO(V_m)$.

Similarly, for any irreducible representation $\sigma$ of $\O(V_m)$, we may define a smooth representation $\Theta_{W_n,V_m,\psi}(\sigma)$ of $\Mp(W_n)$ of finite length and its maximal semisimple quotient $\theta_{W_n,V_m,\psi}(\sigma)$, which is either zero or irreducible. 
Let $\sigma_0$ be an irreducible representation of $\SO(V_m)$.
For $\epsilon = \pm 1$, let $\sigma^\epsilon$ be the extension of $\sigma_0$ to $\O(V_m)$ such that $-1 \in \O(V_m)$ acts as the scalar $\epsilon$, which we call the $\epsilon$-extension of $\sigma_0$.
If $m \ge n$, then by the conservation relation \cite{sz}, there is at most one $\epsilon$ such that $\theta_{W_n,V_m,\psi}(\sigma^\epsilon)$ is nonzero.
We write $\theta_{W_n,V_m,\psi}(\sigma_0) = \theta_{W_n,V_m,\psi}(\sigma^\epsilon)$ if such $\epsilon$ exists, and interpret $\theta_{W_n,V_m,\psi}(\sigma_0)$ as zero otherwise.

\subsection{Elementary Weil representations}
\label{ss:e-weil}

Consider the Weil representation $\omega_{W_n,V_0^+,\psi}$ of $\Mp(W_n) \times \O(V_0^+)$ with respect to $\psi$.
We may decompose it as
\[
 \omega_{W_n, V_0^+, \psi} = (\omega_{W_n, \psi}^+ \boxtimes 1) \oplus (\omega_{W_n, \psi}^- \boxtimes \det), 
\]
where $\omega_{W_n, \psi}^+ = \Theta_{W_n,V_0^+,\psi}(1)$ and $\omega_{W_n, \psi}^- = \Theta_{W_n,V_0^+,\psi}(\det)$ are the big theta lifts of the trivial and nontrivial characters of $\O(V_0^+) = \{ \pm 1\}$, respectively.
Then both $\omega_{W_n, \psi}^+$ and $\omega_{W_n, \psi}^-$ are irreducible, and we call them the even and odd elementary Weil representations of $\Mp(W_n)$ with respect to $\psi$, respectively.

\begin{lem}
\label{l:elem-weil}
We have
\begin{align*}
 \omega_{W_n, \psi}^+ & = J_{B, \psi}(|\cdot|^{n-\frac{1}{2}}, |\cdot|^{n-\frac{3}{2}}, \dots, |\cdot|^{\frac{1}{2}}), \\ 
 \omega_{W_n, \psi}^- & = J_{P, \psi}(|\cdot|^{n-\frac{1}{2}}, |\cdot|^{n-\frac{3}{2}}, \dots, |\cdot|^{\frac{3}{2}}, \omega_{W_1, \psi}^-),
\end{align*}
where $P = P_{(1^{n-1})}$.
\end{lem}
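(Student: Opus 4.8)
The plan is to argue by induction on $n$, treating $\omega_{W_n,\psi}^+$ and $\omega_{W_n,\psi}^-$ simultaneously. For $n=1$ the second assertion is vacuous (the list of $\GL_1$-data is empty), while the first — that the even elementary Weil representation of $\Mp_2$ is the Langlands quotient $J_{B,\psi}(|\cdot|^{1/2})$, i.e.\ the unique irreducible quotient of $\Ind_{\widetilde{B}}^{\Mp(W_1)}(|\cdot|^{1/2}\chi_\psi)$ — is the classical description of $\omega_{W_1,\psi}^+$ (see \cite{gs}). Since all the exponents appearing in the statement are positive and strictly decreasing, and $\omega_{W_1,\psi}^-$ is tempered (indeed square-integrable), the induced representations $I_{B,\psi}(|\cdot|^{n-\frac12},\dots,|\cdot|^{\frac12})$ and $I_{P,\psi}(|\cdot|^{n-\frac12},\dots,|\cdot|^{\frac32},\omega_{W_1,\psi}^-)$ are genuine standard modules, so the right-hand sides are well defined.

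For the inductive step I would compute the normalized Jacquet module of the Weil representation along the maximal parabolic $\widetilde{P}_1$ of $\Mp(W_n)$. By Kudla's filtration (cf.\ \cite{gi-mp} and the references therein), restricting $\omega_{W_n,V_0^+,\psi}$ to $\widetilde{P}_1 \times \O(V_0^+)$ and forming the normalized $N_1$-coinvariants yields a filtration whose graded pieces are built from $\omega_{W_{n-1},V',\psi}$ with $\dim V' \in \{\dim V_0^+,\ \dim V_0^+-2\}$; since $V_0^+$ is anisotropic only the top piece survives, and keeping track of $\delta_{P_1}$ together with the action of $\widetilde{\GL}(X_1)$ in the Schr\"odinger model one obtains
\[
 R_{\widetilde{P}_1}\bigl(\omega_{W_n,V_0^+,\psi}\bigr) \cong \bigl(|\cdot|^{\frac12-n}\chi_\psi\bigr) \boxtimes \omega_{W_{n-1},V_0^+,\psi}
\]
as $\widetilde{\GL}_1 \times \Mp(W_{n-1}) \times \O(V_0^+)$-modules, the exponent being pinned down by the case $n=1$. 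Passing to the $1$- and $\det$-isotypic parts of $\O(V_0^+)=\{\pm1\}$, and then using Casselman's identity $R_{\widetilde{P}_1^-}(\pi) \cong \bigl(R_{\widetilde{P}_1}(\pi^\vee)\bigr)^\vee$ for the opposite parabolic $P_1^-$ together with $\bigl(\omega_{W_m,\psi}^{\pm}\bigr)^\vee \cong \omega_{W_m,\psi^{-1}}^{\pm}$ and $\chi_{\psi^{-1}}=\chi_\psi^{-1}$, I would deduce
\[
 R_{\widetilde{P}_1^-}\bigl(\omega_{W_n,\psi}^{\pm}\bigr) \cong \bigl(|\cdot|^{n-\frac12}\chi_\psi\bigr) \boxtimes \omega_{W_{n-1},\psi}^{\pm}.
\]

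With this the rest is formal. By the inductive hypothesis $\omega_{W_{n-1},\psi}^+$ is a quotient of the standard module $I_{B,\psi}(|\cdot|^{n-\frac32},\dots,|\cdot|^{\frac12})$ on $\Mp(W_{n-1})$, so by exactness of parabolic induction $I_{B,\psi}(|\cdot|^{n-\frac12},\dots,|\cdot|^{\frac12})$ surjects onto $I_{P_1,\psi}(|\cdot|^{n-\frac12},\omega_{W_{n-1},\psi}^+)$; hence any irreducible quotient of the latter is the Langlands quotient $J_{B,\psi}(|\cdot|^{n-\frac12},\dots,|\cdot|^{\frac12})$. As $\omega_{W_n,\psi}^+$ is irreducible it now suffices to produce a nonzero map $I_{P_1,\psi}(|\cdot|^{n-\frac12},\omega_{W_{n-1},\psi}^+) \to \omega_{W_n,\psi}^+$, and by Bernstein's second adjointness such a map is the same as a nonzero $\widetilde{M}_1$-homomorphism $(|\cdot|^{n-\frac12}\chi_\psi)\boxtimes\omega_{W_{n-1},\psi}^+ \to R_{\widetilde{P}_1^-}(\omega_{W_n,\psi}^+)$, the target of which the displayed identity identifies with the (irreducible) source — so the identity map does the job. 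The argument for $\omega_{W_n,\psi}^-$ is identical, with $P_{(1^{n-2})}\subset\Sp(W_{n-1})$ in place of the Borel and the inductive hypothesis $\omega_{W_{n-1},\psi}^- = J_{P_{(1^{n-2})},\psi}(|\cdot|^{n-\frac32},\dots,|\cdot|^{\frac32},\omega_{W_1,\psi}^-)$, the temperedness of $\omega_{W_1,\psi}^-$ ensuring that all the induced representations in play remain standard modules. I expect the only genuinely computational point to be the Jacquet module in the second paragraph — isolating the surviving term of Kudla's filtration in the anisotropic case and, more delicately, getting the genuine character $\chi_\psi$ and the exponent right, since the interplay of the metaplectic twist with the contragredient and with passage to the opposite parabolic is exactly what produces the change from $\psi$ to $\psi^{-1}$; everything downstream is bookkeeping with the Langlands classification. (Alternatively, one can sidestep the induction by quoting the identification of $\omega_{W_n,\psi}^+$ with the unique irreducible quotient of the degenerate principal series $\Ind_{\widetilde{P}_n,\psi}(\chi_\psi|\det|^{n/2})$, which reduces to the analogous $\GL_n$-side computation.)
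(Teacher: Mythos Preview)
Your inductive argument via the $P_1$-Jacquet module and Bernstein's second adjointness is correct in the nonarchimedean case, but it is not the paper's route, and as written it is a $p$-adic argument: second adjointness and Kudla's filtration are invoked in the smooth category, whereas the lemma is stated uniformly for all local fields of characteristic zero. The archimedean case would need either the $(\g,K)$-module analogs or a separate treatment, which you do not supply.

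The paper instead gives a direct, non-inductive construction that works over any local field. In the Schr\"odinger model on $\mathcal{S}(F^n)$ the even and odd subspaces realize $\omega_{W_n,\psi}^\pm$, and one writes down explicit $\Mp(W_n)$-equivariant maps
\[
 \lambda^+(f)(g) = [\omega^+_{W_n,\psi}(g)f](0), \qquad
 \lambda^-(f)(g) = [\omega^-_{W_n,\psi}(g)f](0,\dots,0,\,\cdot\,)
\]
into $I_{P_n,\psi}(|\det|^{-n/2})$ and $I_{P_{n-1},\psi}(|\det|^{-(n+1)/2},\omega_{W_1,\psi}^-)$ respectively. Irreducibility of $\omega_{W_n,\psi}^\pm$ forces the image to be the unique irreducible subrepresentation of the target (which embeds further into $I_{B,\psi}$ or $I_{P,\psi}$ with the negative exponents), and one finishes by taking contragredients using $(\omega_{W_n,\psi}^\pm)^\vee = \omega_{W_n,\psi^{-1}}^\pm$. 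This is essentially the parenthetical alternative you mention at the end (through the degenerate Siegel-type principal series), and it bypasses both the induction and the adjointness machinery entirely; the only input is the standard Schr\"odinger-model formulas. Your approach is more structural and would generalize differently, but for this lemma the explicit construction is shorter and uniform in $F$.
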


\begin{proof}
We use the Schr\"odinger model of the Weil representation $\omega_{W_n, V_0^+, \psi}$ on the space of Schwartz functions on $F^n$.
Then $\omega_{W_n, \psi}^+$ and $\omega_{W_n, \psi}^-$ are realized on the subspaces of even and odd functions, respectively.
We may define nonzero $\Mp(W_n)$-equivariant maps
\[
 \lambda^+ : \omega_{W_n, \psi}^+ \longrightarrow I_{P_n, \psi}(|\det|^{-\frac{n}{2}}), \qquad
 \lambda^- : \omega_{W_n, \psi}^- \longrightarrow I_{P_{n-1}, \psi}(|\det|^{-\frac{n+1}{2}}, \omega_{W_1, \psi}^-)
\]
by 
\[
 \lambda^+(f)(g) = [\omega^+_{W_n,\psi}(g)f](0), \qquad
 \lambda^-(f)(g) = [\omega^-_{W_n,\psi}(g)f](\underbrace{0,0,\dots,0}_{n-1}, \, \cdot \,).
\]
Since $\omega_{W_n, \psi}^\pm$ is irreducible and $\omega_{W_1, \psi}^-$ is tempered, the images of $\lambda^+$ and $\lambda^-$ must be the unique irreducible subrepresentations of 
\[
 I_{P_n, \psi}(|\det|^{-\frac{n}{2}}) \subset I_{B, \psi}(|\cdot|^{-n+\frac{1}{2}}, |\cdot|^{-n+\frac{3}{2}}, \dots, |\cdot|^{-\frac{1}{2}})
\]
and
\[
 I_{P_{n-1}, \psi}(|\det|^{-\frac{n+1}{2}}, \omega_{W_1, \psi}^-) \subset I_{P, \psi}(|\cdot|^{-n+\frac{1}{2}}, |\cdot|^{-n+\frac{3}{2}}, \dots, |\cdot|^{-\frac{3}{2}}, \omega_{W_1, \psi}^-), 
\]
respectively.
The lemma now follows from this and the fact that $(\omega_{W_n, \psi}^\pm)^\vee = \omega_{W_n, \psi^{-1}}^\pm$.
\end{proof}

\section{Global theta lifts}

In this section, we introduce global theta lifts and recall some of their basic properties. 

\subsection{Notation}

Let $F$ be a number field and fix a nontrivial additive character $\psi$ of $\A/F$.
Let $W_n$ be a $2n$-dimensional symplectic space over $F$ and $V_m$ a $(2m+1)$-dimensional quadratic space over $F$ with trivial discriminant.
We denote by
\[
 \omega_{W_n,V_m,\psi} = \bigotimes_v \omega_{W_{n,v},V_{m,v},\psi_v}
\]
the Weil representation of $\Mp(W_n)(\A) \times \O(V_m)(\A)$ with respect to $\psi$, which is equipped with a natural equivariant map $\phi \mapsto \theta(\phi)$ from $\omega_{W_n,V_m,\psi}$ to the space of left $\Sp(W_n)(F) \times \O(V_m)(F)$-invariant smooth functions on $\Mp(W_n)(\A) \times \O(V_m)(\A)$ of moderate growth.

For any irreducible genuine cuspidal automorphic representation $\pi$ of $\Mp(W_n)(\A)$, we define an automorphic representation $\Theta_{W_n,V_m,\psi}(\pi)$ of $\O(V_m)(\A)$ as the space spanned by all automorphic forms of the form
\[
 \theta(f,\varphi)(h) = \int_{\Sp(W_n)(F) \backslash \Mp(W_n)(\A)} \theta(f)(g,h) \overline{\varphi(g)} \, dg
\]
for $f \in \omega_{W_n,V_m,\psi}$ and $\varphi \in \pi$.
If $\Theta_{W_n,V_m,\psi}(\pi)$ is nonzero and is contained in the space of square-integrable automorphic forms on $\O(V_m)(\A)$, then by \cite[Corollary 7.1.3]{kr}, $\Theta_{W_n,V_m,\psi}(\pi)$ is irreducible and is isomorphic to $\bigotimes_v \theta_{W_{n,v},V_{m,v},\psi_v}(\pi_v)$.

Similarly, for any irreducible cuspidal automorphic representation $\sigma$ of $\O(V_m)(\A)$, we may define an automorphic representation $\Theta_{W_n,V_m,\psi}(\sigma)$ of $\Mp(W_n)(\A)$.

\subsection{Tower property}

Given an irreducible cuspidal automorphic representation $\sigma$ of $\O(V_m)(\A)$, we consider a family of the global theta lifts $\Theta_{W_n,V_m,\psi}(\sigma)$ to $\Mp(W_n)(\A)$ when $W_n$ varies.
Let $n_0$ be the smallest nonnegative integer such that $\Theta_{W_{n_0},V_m,\psi}(\sigma)$ is nonzero.
Then the tower property \cite{rallis} says that
\begin{itemize}
\item such $n_0$ exists;
\item $\Theta_{W_{n_0},V_m,\psi}(\sigma)$ is contained in $\mathcal{A}_{\cusp}(\Mp(W_n))$;
\item $\Theta_{W_n,V_m,\psi}(\sigma)$ is nonzero but is not contained in $\mathcal{A}_{\cusp}(\Mp(W_n))$ for all $n > n_0$,
\end{itemize}
where $\mathcal{A}_{\cusp}(\Mp(W_n))$ denotes the space of genuine cusp forms on $\Mp(W_n)(\A)$.
Moreover, we have the following refinement by M{\oe}glin \cite{moeglin-theta} and Jiang--Soudry \cite[Theorem 3.6]{js} (note that there is a typo in \cite[Theorem 3.6]{js}: $n+a+1$ should be $2n+a+1$).

\begin{prop}
\label{p:tower}
Assume that $n>n_0$ and $n+n_0>2m$.
Then $\Theta_{W_n,V_m,\psi}(\sigma)$ is spanned by residues of Eisenstein series
\[
 \Res_{s=s_0} E(s,\Phi),
\]
where $s_0 = \frac{1}{2}(n+n_0-2m)$ and $\Phi$ runs over sections of 
\[
 \Ind^{\Mp(W_n)(\A)}_{\widetilde{P}_{n-n_0}(\A)}(\chi_\psi |\det|^s \boxtimes \Theta_{W_{n_0},V_m,\psi}(\sigma)).
\]
In particular, $\Theta_{W_n,V_m,\psi}(\sigma)$ is orthogonal to $\mathcal{A}_{\cusp}(\Mp(W_n))$.
\end{prop}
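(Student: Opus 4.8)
The plan is to realize the global theta lift $\Theta_{W_n,V_m,\psi}(\sigma)$ for $n > n_0$ as an iterate of the lowest occurrence $\Theta_{W_{n_0},V_m,\psi}(\sigma)$ via the tower of dual pairs, and then identify the resulting automorphic forms as residues of the Eisenstein series attached to the Siegel-type parabolic $\widetilde P_{n-n_0}$. First I would invoke the tower property (already recalled above, following Rallis) to know that $\Theta_{W_{n_0},V_m,\psi}(\sigma)$ is cuspidal and that for $n > n_0$ the lift $\Theta_{W_n,V_m,\psi}(\sigma)$ is nonzero but noncuspidal. The starting point is then the classical ``seesaw'' or doubling-type identity expressing the theta integral against $\sigma$ at level $W_n$ in terms of the theta integral at level $W_{n_0}$: writing $W_n \cong W_{n_0} \oplus (\text{hyperbolic plane})^{\,n-n_0}$ and using the corresponding decomposition of the Weil representation, one computes the constant term of $\theta(f,\varphi)$ along $\widetilde P_{n-n_0}$ and finds it is, up to normalization, the value of a degenerate Eisenstein section built from $\chi_\psi|\det|^s \boxtimes \Theta_{W_{n_0},V_m,\psi}(\sigma)$, evaluated at the point $s = s_0 = \tfrac12(n+n_0-2m)$.

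The key steps, in order, are: (i) decompose $\omega_{W_n,V_m,\psi}$ using the mixed model adapted to the polarization $W_n = W_{n_0} \oplus Y \oplus Y^*$ with $Y$ an $(n-n_0)$-dimensional isotropic subspace, so that the Schwartz space factors as $\mathcal S(Y \otimes V_m) \otimes \omega_{W_{n_0},V_m,\psi}$ and the action of $\widetilde M_{n-n_0} = \widetilde\GL(Y) \times \Mp(W_{n_0})$ becomes explicit; (ii) integrate the resulting kernel against $\bar\varphi$ for $\varphi \in \sigma$, interchanging integration to produce an inner theta integral on $\Mp(W_{n_0})(\A) \times \O(V_m)(\A)$ — which by definition lands in $\Theta_{W_{n_0},V_m,\psi}(\sigma)$ — and an outer integral over $\GL(Y)$ which assembles into a degenerate Eisenstein series on $\Mp(W_n)(\A)$ induced from $\widetilde P_{n-n_0}$; (iii) track the $|\det|^s$-twist and the Weil-index character $\chi_\psi$ to pin down that the relevant Eisenstein series is $E(s,\Phi)$ for $\Phi$ a section of $\Ind^{\Mp(W_n)(\A)}_{\widetilde P_{n-n_0}(\A)}(\chi_\psi|\det|^s \boxtimes \Theta_{W_{n_0},V_m,\psi}(\sigma))$; (iv) locate the pole: under the hypothesis $n + n_0 > 2m$ the point $s_0$ is strictly positive and lies in the range where this Eisenstein series has a simple pole (this is where the ``first term range'' analysis of Kudla--Rallis, and the refinement of M{\oe}glin and Jiang--Soudry, enters), and the residue $\Res_{s=s_0} E(s,\Phi)$ spans $\Theta_{W_n,V_m,\psi}(\sigma)$; (v) finally, since a residue of an Eisenstein series induced from a proper parabolic is visibly not cuspidal and in fact orthogonal to the cuspidal spectrum, conclude orthogonality to $\mathcal A_\cusp(\Mp(W_n))$.

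The main obstacle, and the step I would spend the most care on, is step (iv): proving that $E(s,\Phi)$ genuinely has a pole at $s=s_0$ and not merely that the theta integral is a residue of \emph{some} Eisenstein value. This requires knowing the precise location and order of poles of the degenerate Eisenstein series attached to the Siegel parabolic of $\Mp(W_n)$ with inducing data $\chi_\psi|\det|^s \boxtimes \Theta_{W_{n_0},V_m,\psi}(\sigma)$, which in turn rests on the doubling-method computation of the constant term and the analytic properties of the associated intertwining operators and $L$-functions of $\sigma$. Here I would cite the Kudla--Rallis regularized Siegel--Weil machinery together with the M{\oe}glin and Jiang--Soudry refinements quoted in the statement, rather than redo the pole analysis; the inequality $n + n_0 > 2m$ is exactly the condition that places $s_0$ in the ``first term range'' where the residue is controlled. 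A secondary technical point is the bookkeeping of the metaplectic cover and the character $\chi_\psi$ throughout the unfolding — making sure the genuine automorphic forms and the splitting of the cover over $\Sp(W_n)(F)$ and over the unipotent radicals are handled consistently — but this is routine given the setup of \S\ref{ss:mp}.
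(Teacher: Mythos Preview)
Your sketch is a reasonable outline of the standard argument, but note that the paper does not supply its own proof of this proposition: it is stated as a direct citation of M{\oe}glin \cite{moeglin-theta} and Jiang--Soudry \cite[Theorem 3.6]{js}, with only a typographical correction to the latter. The mixed-model unfolding you describe (steps (i)--(iii)) and the pole analysis (step (iv)) are essentially what those references carry out, so your approach is aligned with the literature the paper invokes rather than constituting an alternative route.

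One small caution on your step (iv): the Eisenstein series here is not the degenerate (Siegel--Weil type) Eisenstein series on $\Mp(W_n)$, but the one induced from the \emph{cuspidal} datum $\chi_\psi|\det|^s \boxtimes \Theta_{W_{n_0},V_m,\psi}(\sigma)$ on the Levi $\widetilde\GL_{n-n_0} \times \Mp(W_{n_0})$. Its poles are governed by the standard $L$-function of $\sigma$ (or equivalently of its first-occurrence theta lift) via the constant-term computation, not by the Kudla--Rallis Siegel--Weil analysis per se; the relevant input is the Rankin--Selberg/doubling computation showing that the intertwining operator has a pole at $s_0$ precisely because $n_0$ is the first occurrence. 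This is the content of the cited works, and your sketch would need to be adjusted to reflect that the inducing data is cuspidal on the smaller metaplectic factor rather than trivial.
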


Similarly, given an irreducible genuine cuspidal automorphic representation $\pi$ of $\Mp(W_n)(\A)$, an analogous result holds for a family of the global theta lifts $\Theta_{W_n,V_m,\psi}(\pi)$ to $\O(V_m)(\A)$ when $V_m$ varies in a fixed Witt tower.

\subsection{Nonvanishing}

We now discuss the nonvanishing of global theta lifts.
For this, it suffices to compute the Petersson inner products of the global theta lifts.
This is achieved by the Rallis inner product formula \cite{kr,gqt,yamana}, which expresses the inner products in terms of special values of automorphic $L$-functions and which implies the following.

\begin{prop}
\label{p:nonvanish1}
Let $\sigma$ be an irreducible cuspidal automorphic representation of $\O(V_m)(\A)$ and $L(s, \sigma)$ its standard $L$-function.
Assume that $m \le n \le 2m$ and that $\Theta_{W_{n-1}, V_m, \psi}(\sigma)$ is zero.
Then $\Theta_{W_n, V_m, \psi}(\sigma)$ is nonzero if and only if
\begin{itemize}
 \item $\Theta_{W_{n,v}, V_{m,v}, \psi_v}(\sigma_v)$ is nonzero for all $v$; and
 \item $L(s,\sigma)$ is holomorphic and nonzero at $s=n-m+\frac{1}{2}$.
\end{itemize}   
\end{prop}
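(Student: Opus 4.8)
The plan is to derive the proposition from the Rallis inner product formula. First note that, since $\Theta_{W_{n-1},V_m,\psi}(\sigma)=0$, the first occurrence index $n_0$ of $\sigma$ in the Witt tower of symplectic spaces satisfies $n_0\ge n$; hence by the tower property $\Theta_{W_n,V_m,\psi}(\sigma)$ is either zero (if $n_0>n$) or cuspidal (if $n_0=n$). In either case it consists of square-integrable automorphic forms, so $\Theta_{W_n,V_m,\psi}(\sigma)\ne 0$ if and only if the Petersson inner product $\langle\theta(f_1,\varphi_1),\theta(f_2,\varphi_2)\rangle$ is nonzero for some $f_1,f_2\in\omega_{W_n,V_m,\psi}$ and $\varphi_1,\varphi_2\in\sigma$.

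To compute this inner product I would use the doubling method, via the seesaw attached to $\O(V_m)\times\O(V_m)\subset\O(\V)$, with $\V=V_m\oplus(-V_m)$ split of dimension $4m+2$, paired with the diagonal $\Mp(W_n)\subset\Mp(W_n)\times\Mp(W_n)$. Unfolding the two theta lifts and applying the regularized Siegel--Weil formula on $\O(\V)$ rewrites $\langle\theta(f_1,\varphi_1),\theta(f_2,\varphi_2)\rangle$ as a doubling zeta integral of $\varphi_1\otimes\overline{\varphi_2}$ against the value (or leading term) of a Siegel Eisenstein series at the point corresponding to $s=n-m+\tfrac12$ for the standard $L$-function of $\sigma$. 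The assumption $\Theta_{W_{n-1},V_m,\psi}(\sigma)=0$ forces the complementary \emph{second term} of the regularized Siegel--Weil formula --- built from theta lifts of $\sigma$ to smaller metaplectic groups --- to vanish, which is exactly why, in the range $m\le n\le 2m$, the doubling identity takes the clean form
\[
 \langle\theta(f_1,\varphi_1),\theta(f_2,\varphi_2)\rangle = c\cdot L\bigl(n-m+\tfrac12,\sigma\bigr)\cdot\prod_v Z_v^*,
\]
where $c\ne 0$, the $Z_v^*$ are the normalized local doubling zeta integrals with inducing section at each $v$ taken in the Weil representation constituent of the degenerate principal series of $\O(\V_v)$ at the relevant point, and $Z_v^*=1$ for almost all $v$ on unramified data. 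This is the Rallis inner product formula of \cite{kr,gqt,yamana} in the range under consideration. Since the only possible pole of $L(s,\sigma)$ is at $s=1$ and $n-m+\tfrac12\notin\Z$, the $L$-value appearing is finite and the holomorphy assertion in the proposition is automatic.

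It remains to read off the nonvanishing condition. Because $c\ne 0$ and almost all $Z_v^*$ equal $1$, the right-hand side above can be made nonzero if and only if $L(n-m+\tfrac12,\sigma)\ne 0$ and, for every place $v$, $Z_v^*$ is nonzero for some local data. The last condition is in turn equivalent to $\theta_{W_{n,v},V_{m,v},\psi_v}(\sigma_v)\ne 0$: when the inducing section is constrained to the Weil representation constituent, the local zeta integral realizes, up to a nonzero scalar, an element of $\Hom_{\O(V_{m,v})\times\O(V_{m,v})}\bigl(\omega_{W_{n,v},V_{m,v},\psi_v}\boxtimes\omega_{W_{n,v},V_{m,v},\psi_v}^\vee,\,\sigma_v^\vee\boxtimes\sigma_v\bigr)$, and this $\Hom$-space is nonzero precisely when the local theta lift of $\sigma_v$ is nonzero. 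This local doubling/Siegel--Weil compatibility is likewise contained in \cite{gqt,yamana}. Combining these statements with the reduction of the first paragraph gives exactly the equivalence asserted in the proposition.

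The main obstacle is the content of the second paragraph: one must confirm that the regularized Siegel--Weil and Rallis inner product formulas hold in the stated clean form for the metaplectic--odd orthogonal dual pair throughout the range $m\le n\le 2m$, identify the Eisenstein series point with $s=n-m+\tfrac12$, and invoke that the normalizations of \cite{yamana} are arranged so that $L(n-m+\tfrac12,\sigma)\ne 0$ is the precise global condition --- together with the local matching of the zeta integral at the Siegel--Weil point with the local theta $\Hom$-space. These inputs are exactly what \cite{kr,gqt,yamana} supply, so the remaining work consists in checking conventions and verifying that the hypotheses of those results are met.
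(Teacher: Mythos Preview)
Your approach is exactly what the paper does: the paper gives no argument of its own for this proposition but simply records it as a consequence of the Rallis inner product formula, citing \cite{kr,gqt,yamana}, and your sketch is a faithful unpacking of how that formula yields the statement via the doubling seesaw with $\V=V_m\oplus(-V_m)$. One small correction: the claim that ``the only possible pole of $L(s,\sigma)$ is at $s=1$'' is not accurate for cuspidal $\sigma$ on $\O_{2m+1}$ in general---poles of the standard $L$-function are governed by the Arthur parameter of $\sigma$ and can occur at several half-integers---so holomorphy at $s=n-m+\tfrac12$ is not automatic for the parity reason you give, but is rather part of the analysis carried out in \cite{gqt,yamana}, where the first-occurrence hypothesis $\Theta_{W_{n-1},V_m,\psi}(\sigma)=0$ is what controls both the second-term contribution and the potential pole.
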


An analogous result also holds for global theta lifts in the other direction, but in this paper, we only need the following weaker result, which is a consequence of the regularized Siegel--Weil formula \cite{kr,ichino,wu} (see also the argument in the proof of \cite[Theorem 7.2.5]{kr}).

\begin{prop}
\label{p:nonvanish2} 
Let $\pi$ be an irreducible genuine cuspidal automorphic representation of $\Mp(W_n)(\A)$ and $L_\psi^S(s, \pi)$ its partial standard $L$-function relative to $\psi$, where $S$ is a sufficiently large finite set of places of $F$.
\begin{enumerate}
\item
Assume that $m < n$ and that $L_\psi^S(s, \pi)$ has a pole at $s=n-m+\frac{1}{2}$.
Then there exits a $(2m+1)$-dimensional quadratic space $V_m$ over $F$ with trivial discriminant such that $\Theta_{W_n,V_m,\psi}(\pi)$ is nonzero.
\item
Assume that $m=n$ and that $L_\psi^S(s, \pi)$ is holomorphic and nonzero at $s=\frac{1}{2}$.
Then there exits a $(2n+1)$-dimensional quadratic space $V_n$ over $F$ with trivial discriminant such that $\Theta_{W_n,V_n,\psi}(\pi)$ is nonzero.
\end{enumerate}
\end{prop}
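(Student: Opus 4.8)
The plan is to deduce this from the doubling method for $\Mp(W_n)$ together with the regularized Siegel--Weil formula, in the spirit of \cite[\S 7]{kr}. Write $[\Mp(W_n)] = \Sp(W_n)(F) \backslash \Mp(W_n)(\A)$. Let $W^{\square} = W_n \oplus W_n^-$ be the $4n$-dimensional symplectic space obtained by doubling $W_n$ (so $W_n^-$ is $W_n$ with its bilinear form negated), let $\Mp(W^{\square})$ be its metaplectic cover with Siegel parabolic $\widetilde{P}^{\square}$ of Levi $\widetilde{\GL}_{2n}$, and let $E(s, g, \Phi_s)$ be the Siegel Eisenstein series on $\Mp(W^{\square})(\A)$ attached to a genuine standard section $\Phi_s$ of $\Ind_{\widetilde{P}^{\square}}^{\Mp(W^{\square})}(\chi_\psi |\det|^s)$. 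The diagonal embedding gives $\Mp(W_n) \times \Mp(W_n) \hookrightarrow \Mp(W^{\square})$.

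First I would record the doubling (Rallis inner product) identity: for $\varphi, \varphi' \in \pi$, the zeta integral
\[
 Z(s, \varphi, \varphi', \Phi_s) = \int_{[\Mp(W_n)] \times [\Mp(W_n)]} E(s, (g_1, g_2), \Phi_s) \, \varphi(g_1) \, \overline{\varphi'(g_2)} \, dg_1 \, dg_2
\]
converges for $\Re s \gg 0$, continues meromorphically, and equals a product over a finite set $S$ of places of local zeta integrals, times $L_\psi^S(s + \tfrac{1}{2}, \pi)$, divided by a product $d^S(s)$ of partial abelian $L$-functions entering the normalization of $E$; this is the doubling method for metaplectic groups (\cite{gqt,yamana}).

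Next I would invoke the regularized Siegel--Weil formula for the dual pair $(\Mp(W^{\square}), \O(V))$, where $V$ ranges over $(2m+1)$-dimensional quadratic spaces over $F$ of trivial discriminant, which relates $E(s, g, \Phi_s)$ near the point $s = s_m$ attached to $\dim V = 2m+1$ to the (regularized) theta integral $I(g, \phi_V) = \int_{\O(V)(F) \backslash \O(V)(\A)} \theta(g, h; \phi_V) \, dh$ for $\phi_V \in \omega_{W^{\square}, V, \psi}$. Concretely, if $m < n$ then $s = s_m$ lies in the first term range and, by \cite{kr,wu}, for a suitable section $\Phi_s$ (attached to some such $V$ and admissible Schwartz data) the residue of $E(s, g, \Phi_s)$ at $s = s_m$ is a nonzero multiple of $I(g, \phi_V)$; pairing with $\varphi \otimes \overline{\varphi'}$, the left-hand side is a residue of $Z(s, \varphi, \varphi', \Phi_s)$, which by the doubling identity and the hypothesis that $L_\psi^S(s, \pi)$ has a pole at $s = n - m + \tfrac{1}{2}$ can be arranged to be nonzero, while the right-hand side unfolds to a nonzero multiple of $\langle \theta_{W_n, V, \psi}(f, \varphi), \theta_{W_n, V, \psi}(f, \varphi') \rangle_{\O(V)}$, so that $\Theta_{W_n, V, \psi}(\pi) \ne 0$. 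If $m = n$, then $s = s_n$ sits at the boundary of the second term range; here the second term identity \cite{ichino,wu} expresses the relevant value of $E(s, g, \Phi_s)$ at $s = s_n$ as a nonzero multiple of $I(g, \phi_V)$ modulo contributions of strictly smaller Witt rank, and the same argument --- now using that $L_\psi^S(s, \pi)$ is holomorphic and nonzero at $s = \tfrac{1}{2}$ --- again produces a $V$ with $\Theta_{W_n, V, \psi}(\pi) \ne 0$. (For $\pi$ cuspidal and $m < n$, the theta lift to $\O(V)$ is rapidly decreasing, so the Petersson pairing converges; for $m = n$ one uses instead the regularized inner product of \cite{gqt}, which is what the doubling identity computes in that range.)

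The hard part is the extraction carried out in this last step. One must choose the global section $\Phi_s = \bigotimes_v \Phi_{s,v}$ --- equivalently, the global quadratic space $V$ together with admissible Schwartz data at each place --- so that the local zeta integrals are all nonzero at $s = s_m$ (a local condition on each $\pi_v$ and $V_v$, satisfiable by the local theta and doubling theory) and so that the local data at the various places are consistent with a single global $V$, making $E(s, g, \Phi_s)$ actually acquire the expected pole (respectively nonzero value) at $s = s_m$. One must also check that this pole (respectively nonvanishing) of $L_\psi^S$ is not cancelled by a zero of the normalizing factor $d^S(s)$ at $s = s_m$, and, in the case $m = n$, that the lower-order terms in the second term identity do not contribute to the pairing against the cuspidal datum $\varphi \otimes \overline{\varphi'}$. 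All of this proceeds as in \cite[\S 7]{kr}, the metaplectic form of the regularized Siegel--Weil formula being supplied by \cite{wu}.
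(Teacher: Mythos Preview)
Your proposal is correct and follows precisely the approach the paper itself indicates: the paper does not give a proof but simply records that this proposition is a consequence of the regularized Siegel--Weil formula \cite{kr,ichino,wu} together with the argument of \cite[Theorem 7.2.5]{kr}, which is exactly the doubling/Rallis inner product computation you outline. Your identification of the first-term range for $m<n$ and the boundary (second-term) case for $m=n$, with the metaplectic input supplied by \cite{gqt,yamana,wu}, matches the cited literature.
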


\section{The residual spectrum of $\Mp_4$}
\label{s:residual}

Recall the decomposition
\[
 L^2_\disc(\Mp_4) = L^2_{\cusp}(\Mp_4) \oplus L^2_{\res}(\Mp_4)
\]
of the discrete spectrum into the cuspidal and residual spectra, and the further decomposition 
\[
 L^2_{\res}(\Mp_4) = L^2_{P_1}(\Mp_4) \oplus L^2_{P_2}(\Mp_4) \oplus L^2_B(\Mp_4)
\]
according to cuspidal supports.
In this section, we review the result of Gao \cite{gao} which describes the structure of $L^2_{\res}(\Mp_4)$.

\subsection{Notation}

Let $F$ be a number field with ad\`ele ring $\A$ and fix a nontrivial additive character $\psi$ of $\A/F$.
As in \S \ref{ss:mp}, let $P_{\mathbf{k}}$ be the parabolic subgroup of $\Sp_{2n}$ associated to a sequence $\mathbf{k} = (k_1, \dots, k_m)$ of positive integers such that $k_1 + \dots + k_m \le n$, so that its Levi component is isomorphic to $\GL_{k_1} \times \dots \times \GL_{k_m} \times \Sp_{2n_0}$ with $n_0 = n - k_1 - \dots - k_m$.
For any irreducible representation $\tau_i$ of $\GL_{k_i}(\A)$ and any irreducible genuine representation $\pi$ of $\Mp_{2n_0}(\A)$ such that $I_{P_{\mathbf{k}}, \psi_v}(\tau_{1,v}, \dots, \tau_{m,v}, \pi_v)$ is a standard module for all $v$, we set
\[
 J_{P_{\mathbf{k}}, \psi}(\tau_1, \dots, \tau_m, \pi) = 
 \bigotimes_v J_{P_{\mathbf{k}}, \psi_v}(\tau_{1,v}, \dots, \tau_{m,v}, \pi_v).
\]

\subsection{Structure of $L^2_{P_1}(\Mp_4)$}

Recall that $P_1 = P_{(1)}$ is the maximal parabolic subgroup of $\Sp_4$ with Levi component $\GL_1 \times \Sp_2$.
Recall also that, if $\pi$ is an irreducible genuine cuspidal automorphic representation of $\Mp_2(\A)$, then the associated global $A$-parameter $\phi$ is of the form
\begin{itemize}
\item $\phi = \rho \boxtimes S_1$ for some irreducible cuspidal automorphic representation $\rho$ of $\GL_2(\A)$ with trivial central character; or 
\item $\phi = \chi \boxtimes S_2$ for some quadratic automorphic character $\chi = \chi_a$ of $\A^\times$ with $a \in F^\times$, in which case there is a nonempty finite set $S(\pi)$ of places of $F$ of even cardinality such that
\[
 \pi_v \cong
 \begin{cases}
  J_{B,\psi_v}(\chi_v|\cdot|_v^{\frac{1}{2}}) & \text{if $v \notin S(\pi)$;} \\
  \omega_{W_{1,v}, \psi_{a,v}}^- & \text{if $v \in S(\pi)$.}
 \end{cases}
\]
\end{itemize}
By \cite[Theorem 4.5]{gao}, we have
\[
 L^2_{P_1}(\Mp_4) = L^2_{P_1}(\Mp_4)_{\mathrm{pr}} \oplus L^2_{P_1}(\Mp_4)_{\mathrm{SK}} \oplus L^2_{P_1}(\Mp_4)_{\mathrm{HPS}}
\]
with 
\begin{align}
 L^2_{P_1}(\Mp_4)_{\mathrm{pr}} & \cong  \bigoplus_{\chi} \bigoplus_{\pi} J_{P_1,\psi}(\chi|\cdot|^{\frac{3}{2}}, \pi), \label{P1-pr} \\
 L^2_{P_1}(\Mp_4)_{\mathrm{SK}} & \cong \bigoplus_{\chi, \rho} \bigoplus_{\pi} J_{P_1,\psi}(\chi|\cdot|^{\frac{1}{2}}, \pi), \label{P1-SK} \\
 L^2_{P_1}(\Mp_4)_{\mathrm{HPS}} & \cong \bigoplus_{\chi_1, \chi_2} \bigoplus_{\pi} J_{P_1,\psi}(\chi_1|\cdot|^{\frac{1}{2}}, \pi), \label{P1-HPS}
\end{align}
where 
\begin{itemize}
\item in \eqref{P1-pr}, $\chi$ runs over quadratic automorphic characters of $\A^\times$, and $\pi$ runs over irreducible genuine cuspidal automorphic representations of $\Mp_2(\A)$ with $A$-parameter $\chi \boxtimes S_2$;
\item in \eqref{P1-SK}, $\chi, \rho$ run over pairs of a quadratic automorphic character of $\A^\times$ and an irreducible cuspidal automorphic representation of $\GL_2(\A)$ with trivial central character such that $L(\frac{1}{2}, \rho \times \chi) \ne 0$, and $\pi$ runs over irreducible genuine cuspidal automorphic representations of $\Mp_2(\A)$ with $A$-parameter $\rho \boxtimes S_1$;
\item in \eqref{P1-HPS}, $\chi_1, \chi_2$ run over ordered pairs of distinct quadratic automorphic characters of $\A^\times$, and $\pi$ runs over irreducible genuine cuspidal automorphic representations of $\Mp_2(\A)$ with $A$-parameter $\chi_2 \boxtimes S_2$ such that $\chi_{1,v} \ne \chi_{2,v}$ for all $v \in S(\pi)$.
\end{itemize}
Note that 
\begin{itemize}
\item $J_{P_1,\psi}(\chi|\cdot|^{\frac{3}{2}}, \pi)$ in \eqref{P1-pr} belongs to the near equivalence class determined by the principal $A$-parameter $\chi \boxtimes S_4$;
\item $J_{P_1,\psi}(\chi|\cdot|^{\frac{1}{2}}, \pi)$ in \eqref{P1-SK} belongs to the near equivalence class determined by the $A$-parameter $(\rho \boxtimes S_1) \oplus (\chi \boxtimes S_2)$ of Saito--Kurokawa type;
\item $J_{P_1,\psi}(\chi_1|\cdot|^{\frac{1}{2}}, \pi)$ in \eqref{P1-HPS} belongs to the near equivalence class determined by the $A$-parameter $(\chi_1 \boxtimes S_2) \oplus (\chi_2 \boxtimes S_2)$ of Howe--Piatetski-Shapiro type.
\end{itemize}

\subsection{Structure of $L^2_{P_2}(\Mp_4)$}

Recall that $P_2 = P_{(2)}$ is the maximal parabolic subgroup of $\Sp_4$ with Levi component $\GL_2$.
By \cite[Theorem 3.4]{gao}, we have
\[
 L^2_{P_2}(\Mp_4) \cong \bigoplus_\rho J_{P_2, \psi}(\rho \otimes |\det|^{\frac{1}{2}}), 
\]
where $\rho$ runs over irreducible dihedral cuspidal automorphic representations of $\GL_2(\A)$ with nontrivial quadratic central characters.
Note that $J_{P_2, \psi}(\rho \otimes |\det|^{\frac{1}{2}})$ belongs to the near equivalence class determined by the $A$-parameter $\rho \boxtimes S_2$ of Soudry type.

\subsection{Structure of $L^2_B(\Mp_4)$}

Recall that $B = P_{(1,1)}$ is the Borel subgroup of $\Sp_4$ with Levi component $\GL_1 \times \GL_1$.
By \cite[Theorem 5.10]{gao}, we have
\[
 L^2_B(\Mp_4) = L^2_B(\Mp_4)_{\mathrm{pr}} \oplus L^2_B(\Mp_4)_{\mathrm{HPS}}
\]
with 
\begin{align}
 L^2_B(\Mp_4)_{\mathrm{pr}} & \cong \bigoplus_{\chi} J_{B,\psi}(\chi|\cdot|^{\frac{3}{2}}, \chi|\cdot|^{\frac{1}{2}}), \label{B-pr} \\
 L^2_B(\Mp_4)_{\mathrm{HPS}} & \cong \bigoplus_{\chi_1,\chi_2} J_{B,\psi}(\chi_1|\cdot|^{\frac{1}{2}}, \chi_2|\cdot|^{\frac{1}{2}}), \label{B-HPS}
\end{align}
where
\begin{itemize}
\item in \eqref{B-pr}, $\chi$ runs over quadratic automorphic characters of $\A^\times$;
\item in \eqref{B-HPS}, $\chi_1, \chi_2$ run over unordered pairs of distinct quadratic automorphic characters of $\A^\times$.
\end{itemize}
Note that
\begin{itemize}
\item $J_{B,\psi}(\chi|\cdot|^{\frac{3}{2}}, \chi|\cdot|^{\frac{1}{2}})$ in \eqref{B-pr} belongs to the near equivalence class determined by the principal $A$-parameter $\chi \boxtimes S_4$;
\item $J_{B,\psi}(\chi_1|\cdot|^{\frac{1}{2}}, \chi_2|\cdot|^{\frac{1}{2}})$ in \eqref{B-HPS} belongs to the near equivalence class determined by the $A$-parameter $(\chi_1 \boxtimes S_2) \oplus (\chi_2 \boxtimes S_2)$ of Howe--Piatetski-Shapiro type.
\end{itemize}

\section{Principal $A$-packets}

In this section, we construct the $A$-packet associated to a principal $A$-parameter
\[
 \phi = \chi_a \boxtimes S_4
\]
with $a \in F^\times$.
This is the most degenerate nontempered $A$-packet and is given by elementary Weil representations.

\subsection{Local $A$-packets}
\label{ss:local-princ}

Suppose that $F$ is local and $\phi = \chi_a \boxtimes S_4$ is a local $A$-parameter with $a \in F^\times$.
Then we have $S_{\phi} \cong \Z/2\Z$, so that we may identify $\widehat{S}_\phi$ with $\mu_2$.
We define the $A$-packet $\Pi_{\phi,\psi}(\Mp_4)$ by 
\[
 \Pi_{\phi,\psi}(\Mp_4) = \{ \pi^\epsilon \, | \, \epsilon \in \mu_2 \}, \qquad
 \pi^\epsilon = \omega_{W_2, \psi_a}^\epsilon.
\]
More explicitly, we have
\[
 \pi^+ = J_{B, \psi}(\chi_a |\cdot|^{\frac{3}{2}}, \chi_a|\cdot|^{\frac{1}{2}}), \qquad
 \pi^- = J_{P_1, \psi}(\chi_a |\cdot|^{\frac{3}{2}}, \omega_{W_1,\psi_a}^-)
\]
by Lemma \ref{l:elem-weil}.
Hence $\Pi_{\phi,\psi}(\Mp_4)$ is multiplicity-free and we have $\Pi_{\varphi_\phi,\psi}(\Mp_4) = \{ \pi^+ \}$ as required, where $\varphi_\phi$ is the $L$-parameter associated to $\phi$ and $\Pi_{\varphi_\phi,\psi}(\Mp_4)$ is its $L$-packet.

\subsection{Structure of $L^2_{\phi,\psi}(\Mp_4)$}

Suppose that $F$ is global.
For $\epsilon \in \mu_{2,\A}$, put
\[
 \pi^\epsilon = \bigotimes_v \pi_v^{\epsilon_v},
\]
where $\pi_v^{\epsilon_v}$ is the representation in the local $A$-packet $\Pi_{\phi_v,\psi_v}(\Mp_4)$ defined above.

\begin{prop}
\label{p:princ}
We have 
\[
 L^2_{\phi,\psi}(\Mp_4) \cong \bigoplus_{\epsilon} \pi^\epsilon, 
\]
where $\epsilon$ runs over elements in $\mu_{2,\A}$ such that $\prod_v \epsilon_v = 1$.
\end{prop}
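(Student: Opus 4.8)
The plan is to identify $L^2_{\phi,\psi}(\Mp_4)$ with the part of the residual spectrum $L^2_{\res}(\Mp_4)$ coming from the principal $A$-parameter $\phi = \chi_a \boxtimes S_4$, and then to match Gao's description (from \S\ref{s:residual}) against the local $A$-packet computed in \S\ref{ss:local-princ}. First I would note that since $\phi$ is principal, $L^2_{\phi,\psi}(\Mp_4)$ is concentrated entirely in the residual spectrum: the Siegel--Weil and theta-lift considerations (or directly the structure of $L^2_\cusp$, which cannot contain this near equivalence class) show there is no cuspidal contribution. Then, reading off from the decompositions in \S\ref{s:residual}, the representations in $L^2_{\res}(\Mp_4)$ belonging to the near equivalence class of $\chi_a \boxtimes S_4$ are exactly the constituents
\[
 J_{P_1,\psi}(\chi_a|\cdot|^{\frac{3}{2}}, \pi) \quad\text{from \eqref{P1-pr}}, \qquad J_{B,\psi}(\chi_a|\cdot|^{\frac{3}{2}}, \chi_a|\cdot|^{\frac{1}{2}}) \quad\text{from \eqref{B-pr}},
\]
where in the first case $\pi$ runs over the irreducible genuine cuspidal automorphic representations of $\Mp_2(\A)$ with $A$-parameter $\chi_a \boxtimes S_2$.

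The second step is to translate this into the language of the proposition. Recall from Waldspurger's theory (the $n=1$ case, as recalled in \S\ref{s:residual}) that the cuspidal $\pi$ with $A$-parameter $\chi_a \boxtimes S_2$ are precisely the global theta lifts attached to characters: they are indexed by a nonempty finite set $S(\pi)$ of places of even cardinality, with $\pi_v = \omega^-_{W_{1,v},\psi_{a,v}}$ for $v \in S(\pi)$ and $\pi_v = J_{B,\psi_v}(\chi_{a,v}|\cdot|_v^{1/2})$ otherwise. Feeding such a $\pi$ into $J_{P_1,\psi}(\chi_a|\cdot|^{3/2},\pi)$ and applying Lemma \ref{l:elem-weil} place by place, one sees that the local component at $v$ is $J_{P_1,\psi_v}(\chi_{a,v}|\cdot|_v^{3/2}, \omega^-_{W_{1,v},\psi_{a,v}}) = \omega^-_{W_{2,v},\psi_{a,v}} = \pi_v^-$ if $v \in S(\pi)$ and $J_{B,\psi_v}(\chi_{a,v}|\cdot|_v^{3/2},\chi_{a,v}|\cdot|_v^{1/2}) = \omega^+_{W_{2,v},\psi_{a,v}} = \pi_v^+$ otherwise. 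Likewise the single representation $J_{B,\psi}(\chi_a|\cdot|^{3/2},\chi_a|\cdot|^{1/2})$ from \eqref{B-pr} has every local component equal to $\pi_v^+$, i.e.\ it is $\pi^\epsilon$ with $\epsilon = (1,1,\dots)$, the empty set $S = \varnothing$. Thus, as $\pi$ ranges over the cuspidal $\Mp_2$-representations with parameter $\chi_a\boxtimes S_2$ together with the extra Borel-residual piece, the resulting $\Mp_4$-representations are exactly the $\pi^\epsilon = \bigotimes_v \pi_v^{\epsilon_v}$ with $\epsilon \in \mu_{2,\A}$ and $\prod_v \epsilon_v = 1$ (the product-one condition being the even-cardinality constraint on $S(\pi)$, with $S = \varnothing$ accounting for the $\eqref{B-pr}$ term), each occurring with multiplicity one. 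Finally I would check that $\tilde\epsilon_\phi = 1$ for a principal $\phi$ (already recorded in \S2), that $\Delta^*$ sends $\eta = \bigotimes \eta_v$ to $\prod_v \eta_v$ under the identification $\widehat{S}_{\phi_v} \cong \mu_2$, and hence that the condition $\prod_v \epsilon_v = 1$ is precisely $\Delta^*\eta = \tilde\epsilon_\phi$, so that Proposition \ref{p:princ} is a special case of, and indeed verifies, Theorem \ref{t:main}.

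The main obstacle is the bookkeeping in the second step: one must be careful that the correspondence between cuspidal $\Mp_2$-representations $\pi$ with $A$-parameter $\chi_a\boxtimes S_2$ and finite sets of places is exactly as stated, that distinct $\pi$ give distinct (non-isomorphic, non-overlapping) $\Mp_4$-representations inside $L^2_{\res}$, and that the Borel piece \eqref{B-pr} is not already counted among the $P_1$-residues --- i.e.\ that the union in Gao's theorem is genuinely a direct sum with no hidden identifications. Once these disjointness and exhaustion facts are in place, the identification $L^2_{\phi,\psi}(\Mp_4) = \bigoplus_{\epsilon} \pi^\epsilon$ is immediate, and the multiplicity-one statement follows from the multiplicity-freeness of each local packet $\Pi_{\phi_v,\psi_v}(\Mp_4)$ together with the fact that each near equivalence class contributes at most one copy of each $\pi^\epsilon$ to the residual spectrum.
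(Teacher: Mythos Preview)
Your overall shape---identify the residual part of $L^2_{\phi,\psi}(\Mp_4)$ via Gao's description and then rule out any cuspidal contribution---is the same as the paper's. Your translation step (reading off \eqref{P1-pr} and \eqref{B-pr}, unfolding Waldspurger's parametrization of the cuspidal $\Mp_2$-representations with $A$-parameter $\chi_a\boxtimes S_2$, and applying Lemma~\ref{l:elem-weil} place by place) is correct and does produce exactly the $\pi^\epsilon$ with $\prod_v\epsilon_v = 1$. The paper instead realizes each $\pi^\epsilon$ directly as a space $\VV^\epsilon$ of theta functions (the global theta lift of a character of $\O(V_0^+)$) and then invokes Gao to conclude $L^2_{\phi,\psi}(\Mp_4)\cap L^2_{\res}(\Mp_4) = \bigoplus_\epsilon\VV^\epsilon$; your route arrives at the same identification from the other side.

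The genuine gap is your first step. You assert that ``the structure of $L^2_\cusp$ \dots cannot contain this near equivalence class,'' but offer no mechanism; there is no a~priori reason a cuspidal $\pi$ could not have $L$-parameter $\varphi_{\phi_v}$ at almost all $v$. This is in fact the substantive part of the argument. The paper proceeds as follows: if $\pi$ were a cuspidal constituent of $L^2_{\phi,\psi}(\Mp_4)$ orthogonal to all the $\VV^\epsilon$, then its partial standard $L$-function (relative to $\psi_a$) is
\[
 L^S_{\psi_a}(s,\pi) = \zeta^S(s+\tfrac{3}{2})\,\zeta^S(s+\tfrac{1}{2})\,\zeta^S(s-\tfrac{1}{2})\,\zeta^S(s-\tfrac{3}{2}),
\]
which has a pole at $s=\tfrac{5}{2}$. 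By Proposition~\ref{p:nonvanish2}(i) (the regularized Siegel--Weil input), the global theta lift $\Theta_{W_2,V_0^+,\psi_a}(\pi)$ to $\O(V_0^+)(\A)=\{\pm1\}$ is then nonzero. By adjunction this forces $\pi$ to pair nontrivially with the theta lift back from some character of $\O(V_0^+)$, i.e.\ with one of the $\VV^\epsilon$, contradicting orthogonality. Your phrase ``Siegel--Weil and theta-lift considerations'' is pointed in the right direction, but the specific chain \emph{pole of $L^S$ $\Rightarrow$ nonvanishing theta lift to $\O(V_0^+)$ $\Rightarrow$ adjunction} is what you must actually supply; the parenthetical suggestion that one could simply inspect $L^2_\cusp$ directly is not viable.
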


\subsection{Proof of Proposition \ref{p:princ}}

For $\epsilon \in \mu_{2,\A}$ such that $\prod_v \epsilon_v = 1$, we may realize the elementary Weil representation $\pi^\epsilon$ on the space $\VV^\epsilon$ of the associated theta functions on $\Mp_4(\A)$.
Indeed, by the tower property and Proposition \ref{p:tower}, $\VV^\epsilon$ is nonzero and is contained in $L^2_{\res}(\Mp_4)$.
Moreover, by the description of $L^2_{\res}(\Mp_4)$ in \S \ref{s:residual} (see in particular \eqref{P1-pr} and \eqref{B-pr}), we have
\begin{equation}
\label{eq:res-pr}
 L^2_{\phi,\psi}(\Mp_4) \cap L^2_{\res}(\Mp_4) = \bigoplus_\epsilon \VV^\epsilon, 
\end{equation}
where $\epsilon$ runs over elements in $\mu_{2,\A}$ such that $\prod_v \epsilon_v = 1$.

It remains to show that the orthogonal complement of $\bigoplus_\epsilon \VV^\epsilon$ in $L^2_{\phi,\psi}(\Mp_4)$ is zero.
Suppose on the contrary that there exists an irreducible genuine automorphic representation $\pi$ of $\Mp_4(\A)$ occurring in this orthogonal complement.
Then $\pi$ is cuspidal by \eqref{eq:res-pr} and we have
\[
 L^S_{\psi_a}(s, \pi) = \zeta^S(s+\tfrac{3}{2}) \cdot \zeta^S(s+\tfrac{1}{2}) \cdot \zeta^S(s-\tfrac{1}{2}) \cdot \zeta^S(s-\tfrac{3}{2}), 
\]
where $S$ is a sufficiently large finite set of places of $F$.
Since $L^S_{\psi_a}(s, \pi)$ has a pole at $s=\frac{5}{2}$, it follows from Proposition \ref{p:nonvanish2} that the global theta lift $\Theta_{W_2,V_0^+,\psi_a}(\pi)$ to $\O(V_0^+)(\A)$ is nonzero.
Hence, by the adjunction formula, $\pi$ is not orthogonal to $\VV^\epsilon$ for some $\epsilon \in \mu_{2,\A}$ such that $\prod_v \epsilon_v = 1$, which is a contradiction.
This completes the proof of Proposition \ref{p:princ}.

\section{$A$-packets of Saito--Kurokawa type}
\label{s:sk}

In this section, we construct the $A$-packet associated to an $A$-parameter of Saito--Kurokawa type
\[
 \phi = (\rho \boxtimes S_1) \oplus (\chi_a \boxtimes S_2)
\]
with an irreducible cuspidal automorphic representation $\rho$ of $\GL_2(\A)$ with trivial central character and $a \in F^\times$.
Since the associated character $\tilde{\epsilon}_\phi$ is nontrivial, this is the most interesting nontempered $A$-packet from the viewpoint of the multiplicity formula.

\subsection{Local $A$-packets}
\label{ss:local-sk}

Suppose that $F$ is local and $\phi = (\rho \boxtimes S_1) \oplus (\chi_a \boxtimes S_2)$ is a local $A$-parameter with a $2$-dimensional symplectic almost tempered representation $\rho$ of $L_F$ and $a \in F^\times$.
Here we say that a representation $\rho$ of $L_F$ is \emph{almost tempered} if, for any irreducible summand $\rho_0$ of $\rho$, the image of $W_F$ under $\rho_0 |\cdot|^{-s_0}$ is bounded for some $s_0 \in \R$ with $|s_0| < \frac{1}{2}$.
Then we have
\[
 S_\phi \cong
 \begin{cases}
  (\Z/2\Z)^2 & \text{if $\rho$ is irreducible;} \\
  (\Z/2\Z)^2 / (\Z/2\Z \oplus \{ 0 \}) & \text{if $\rho$ is reducible,}
 \end{cases}
\]
so that we may identify $\widehat{S}_\phi$ with
\[
\begin{cases}
 \mu_2 \times \mu_2 & \text{if $\rho$ is irreducible;} \\
 \{ 1 \} \times \mu_2 & \text{if $\rho$ is reducible.}
\end{cases}
\]

We construct the associated $A$-packet by using theta lifts from $\O(V_1)$, where $V_1$ is a $3$-dimensional quadratic space over $F$ with trivial discriminant.
As explained in \S \ref{ss:so}, up to isometry, there are precisely two such quadratic spaces $V_1^+$ and $V_1^-$ when $F$ is nonarchimedean or $F = \R$, and there is a unique such quadratic space $V_1^+$ when $F = \C$.
For $\epsilon \in \mu_2$, we may identify $\SO(V_1^\epsilon)$ with
\[
 (B^\epsilon)^\times / F^\times
\]
for some quaternion algebra $B^\epsilon$ over $F$.
Let $\sigma_0^\epsilon$ be the irreducible representation of $(B^\epsilon)^\times$ with $L$-parameter $\rho \otimes \chi_a$.
Here $\sigma_0^-$ does not exist if $\rho$ is reducible, in which case we interpret $\sigma_0^-$ as zero. 
Since $\rho \otimes \chi_a$ is symplectic, we may regard $\sigma_0^\epsilon$ as a representation of $\SO(V_1^\epsilon)$.
For $\epsilon, \epsilon' \in \mu_2$, let $\sigma^{\epsilon,\epsilon'}$ be the $\epsilon'$-extension of $\sigma_0^\epsilon$ to $\O(V_1^\epsilon)$.
Let $\epsilon_1, \epsilon_2 \in \mu_2$ be such that 
\begin{equation}
\label{eq:e1e2-SK}
 \epsilon = \epsilon_1 \cdot \epsilon(\tfrac{1}{2},\rho) \cdot \epsilon(\tfrac{1}{2},\rho \times \chi_a) \cdot \chi_a(-1), \qquad
 \epsilon' = \epsilon_1 \cdot \epsilon_2 \cdot \epsilon(\tfrac{1}{2},\rho) \cdot \chi_a(-1).
\end{equation}
Note that $\epsilon = \epsilon_1$ if $\rho$ is reducible.

\begin{lem}
\begin{enumerate}
\item
\label{item:SK-i}
The theta lift $\theta_{W_1,V_1^{\epsilon}, \psi_a}(\sigma^{\epsilon, \epsilon'})$ to $\Mp_2(F)$ is nonzero if and only if one of the following holds:
\begin{itemize}
\item $\rho$ is irreducible and $\epsilon_2 = 1$;
\item $\rho$ is reducible and $\epsilon_1 = \epsilon_2 = 1$.
\end{itemize}
\item
\label{item:SK-ii}
The theta lift $\theta_{W_2,V_1^{\epsilon}, \psi_a}(\sigma^{\epsilon, \epsilon'})$ to $\Mp_4(F)$ is nonzero if and only if one of the following holds:
\begin{itemize}
\item $\rho$ is irreducible and
\begin{itemize}
\item if $F$ is nonarchimedean and $\rho = \chi_a \boxtimes S_2$, or $F = \R$ and $\rho = \mathcal{D}_{\frac{1}{2}}$, then
\[
 (\epsilon_1,\epsilon_2) \ne
 \begin{cases}
  (1,-1) & \text{if $\chi_a \ne 1$;} \\
  (-1,-1) & \text{if $\chi_a = 1$;}
 \end{cases}
\]
\item otherwise, $\epsilon_1, \epsilon_2$ are arbitrary;
\end{itemize}
\item $\rho$ is reducible and $\epsilon_1 = 1$.
\end{itemize}
\end{enumerate}
\end{lem}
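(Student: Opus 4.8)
The plan is to prove both parts by combining the explicit local theta correspondence for the dual pair $\O(V_1^\epsilon)\times\Mp(W_1)$ with the conservation relation \cite{sz} and the local tower property \cite{rallis}. For (i), note that this theta correspondence is a form of the local Shimura--Waldspurger correspondence: $\SO(V_1^\epsilon)=(B^\epsilon)^\times/F^\times$, and $\sigma_0^\epsilon$ is the representation with $L$-parameter $\rho\otimes\chi_a$. Since $\dim V_1=3=\dim W_1+1$, the conservation relation shows that for each fixed $\epsilon$ at most one of $\sigma^{\epsilon,+1},\sigma^{\epsilon,-1}$ lifts nonzero to $\Mp_2$, and Waldspurger's explicit non-vanishing theorem \cite{w1,w2} (the local counterpart of the Rallis inner product formula, in its Tunnell--Saito-type $\epsilon$-dichotomy form; see also \cite{gs}) identifies the surviving extension: in the form relevant here, $\theta_{W_1,V_1^\epsilon,\psi_a}(\sigma^{\epsilon,\epsilon'})\ne 0$ exactly when $\epsilon'=\epsilon\cdot\epsilon(\tfrac12,\rho\times\chi_a)$, with the understanding that when $\rho$ is reducible $\sigma_0^-$ does not exist, so only $\epsilon=+1$ occurs. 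Feeding \eqref{eq:e1e2-SK} into $\epsilon'=\epsilon\cdot\epsilon(\tfrac12,\rho\times\chi_a)$ and cancelling the common invertible factor $\epsilon_1\,\epsilon(\tfrac12,\rho)\,\chi_a(-1)$ (using $\epsilon(\tfrac12,\rho\times\chi_a)^2=1$) collapses it to $\epsilon_2=1$, and the constraint $\epsilon=+1$ in the reducible case becomes $\epsilon_1=1$; this is precisely the asserted dichotomy.

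For (ii), the strategy is to use the tower property to turn the non-vanishing of $\theta_{W_2,V_1^\epsilon,\psi_a}(\sigma^{\epsilon,\epsilon'})$ into a question about first occurrence indices, which the conservation relation then settles. Let $n_0(\sigma^{\epsilon,\epsilon'})$ be the first occurrence index of $\sigma^{\epsilon,\epsilon'}$ in the Witt tower $\{W_n\}_{n\ge 0}$ of symplectic spaces over $F$. By the tower property $\theta_{W_2,V_1^\epsilon,\psi_a}(\sigma^{\epsilon,\epsilon'})\ne 0$ if and only if $n_0(\sigma^{\epsilon,\epsilon'})\le 2$, and the conservation relation for $\O(V_1^\epsilon)\times\Mp(W_\bullet)$ gives
\[
 n_0(\sigma^{\epsilon,+1})+n_0(\sigma^{\epsilon,-1})=\dim V_1^\epsilon=3 .
\]
Also, $n_0(\sigma^{\epsilon,\epsilon'})=0$ occurs exactly when $\sigma^{\epsilon,\epsilon'}$ equals the trivial representation $1_{\O(V_1^\epsilon)}$ of $\O(V_1^\epsilon)$ (since $\omega_{W_0,V_1^\epsilon,\psi_a}$, as a representation of $\O(V_1^\epsilon)$, is trivial), i.e.\ when $\sigma_0^\epsilon=1_{\SO(V_1^\epsilon)}$ and $\epsilon'=+1$; and because $\rho$ is almost tempered this can only happen with $\epsilon=-1$ and $\sigma_0^-=1_{(B^-)^\times/F^\times}$, which by the local Langlands correspondence means precisely that $F$ is nonarchimedean and $\rho=\chi_a\boxtimes S_2$, or $F=\R$ and $\rho=\mathcal{D}_{1/2}$.

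Outside this degenerate situation both extensions $\sigma^{\epsilon,\pm 1}$ have $n_0\ge 1$, so the displayed relation forces $\{n_0(\sigma^{\epsilon,+1}),n_0(\sigma^{\epsilon,-1})\}=\{1,2\}$; hence $n_0\le 2$ for both, and every theta lift to $\Mp_4$ is nonzero. This yields the ``$\epsilon_1,\epsilon_2$ arbitrary'' alternative when $\rho$ is irreducible but not of the exceptional shape, and --- since $V_1^-$ and $\sigma_0^-$ are absent for reducible $\rho$ --- the ``$\epsilon_1=1$'' alternative in that case. In the degenerate case (so $\rho$ is irreducible of the exceptional shape) we instead have $n_0(\sigma^{-,+1})=0$ on $V_1^-$, whence $n_0(\sigma^{-,-1})=3$ by the relation; thus $\theta_{W_2,V_1^-,\psi_a}(\sigma^{-,-1})=0$, while the lift of $\sigma^{-,+1}$ (as $0\le 2$) and, on $V_1^+$, the lifts of $\sigma^{+,\pm 1}$ (as before, since $\sigma_0^+\ne 1_{\SO(V_1^+)}$) are all nonzero. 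Finally, pushing the single vanishing datum $(\epsilon,\epsilon')=(-1,-1)$ through \eqref{eq:e1e2-SK} gives $\epsilon_2=\epsilon(\tfrac12,\rho\times\chi_a)$ and $\epsilon_1=-\epsilon(\tfrac12,\rho)\,\epsilon(\tfrac12,\rho\times\chi_a)\,\chi_a(-1)$; inserting the root numbers that hold in the exceptional case, namely $\epsilon(\tfrac12,\rho\times\chi_a)=-1$ and $\epsilon(\tfrac12,\rho)=-1$ if $\chi_a=1$ while $\epsilon(\tfrac12,\rho)=\chi_a(-1)$ if $\chi_a\ne 1$, shows the excluded pair is $(\epsilon_1,\epsilon_2)=(-1,-1)$ when $\chi_a=1$ and $(1,-1)$ when $\chi_a\ne 1$, exactly as claimed.

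The main obstacle, in both parts, is the bookkeeping of the local root numbers and of the dependence of the Shimura--Waldspurger correspondence on $\psi_a$ (as opposed to $\psi$), which is the source of all the twists by $\chi_a$; verifying the precise shape of the conservation relation in the metaplectic--orthogonal setting and the identity $n_0(1_{\O(V_1^-)})=0$ is comparatively routine.
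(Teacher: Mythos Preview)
Your proof is correct and follows essentially the same route as the paper: part~(i) via Waldspurger's $\epsilon$-dichotomy (recorded in the paper as \eqref{eq:dichotomy}) and part~(ii) via the Sun--Zhu conservation relation (packaged in the paper as Lemma~\ref{l:o3mp4-nonzero}, which says $\theta_{W_2,V_1^\epsilon,\psi}(\sigma)\ne 0\Longleftrightarrow\sigma\ne\det$). You have simply made explicit the root-number bookkeeping that the paper leaves implicit when translating these criteria through \eqref{eq:e1e2-SK}.
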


\begin{proof}
The assertions \eqref{item:SK-i} and \eqref{item:SK-ii} follow from \eqref{eq:dichotomy} and Lemma \ref{l:o3mp4-nonzero} below, respectively. 
\end{proof} 

We now define the $A$-packet $\Pi_{\phi,\psi}(\Mp_4)$ by
\[
 \Pi_{\phi,\psi}(\Mp_4) = \{ \pi^{\epsilon_1,\epsilon_2} \, | \, \epsilon_1,\epsilon_2 \in \mu_2 \}, \qquad 
 \pi^{\epsilon_1,\epsilon_2} = \theta_{W_2,V_1^{\epsilon}, \psi_a}(\sigma^{\epsilon, \epsilon'}),
\]
where $\epsilon, \epsilon' \in \mu_2$ are as in \eqref{eq:e1e2-SK}.
Then, by Lemma \ref{l:o3mp4-mult-free} below, $\Pi_{\phi,\psi}(\Mp_4)$ is multiplicity-free.
Moreover, since 
\[
 \pi^{\epsilon_1,+} = J_{P_1, \psi}(\chi_a |\cdot|^{\frac{1}{2}}, \theta_{W_1,V_1^\epsilon, \psi_a}(\sigma^{\epsilon,\epsilon'}))
\]
by Lemma \ref{l:ind-princ-1} below, we have $\Pi_{\varphi_\phi,\psi}(\Mp_4) = \{ \pi^{+,+}, \pi^{-,+} \}$ if $\rho$ is irreducible and $\Pi_{\varphi_\phi,\psi}(\Mp_4) = \{ \pi^{+,+} \}$ if $\rho$ is reducible as required, where $\varphi_\phi$ is the $L$-parameter associated to $\phi$ and $\Pi_{\varphi_\phi,\psi}(\Mp_4)$ is its $L$-packet.
In Appendix \ref{a:A-packets} below, we will describe $\pi^{\epsilon_1,\epsilon_2}$ explicitly.

\subsection{Structure of $L^2_{\phi,\psi}(\Mp_4)$}

Suppose that $F$ is global.
For $\epsilon_1, \epsilon_2 \in \mu_{2,\A}$, put
\[
 \pi^{\epsilon_1,\epsilon_2} = \bigotimes_v \pi_v^{\epsilon_{1,v},\epsilon_{2,v}},
\]
where $\pi_v^{\epsilon_{1,v},\epsilon_{2,v}}$ is the representation in the local $A$-packet $\Pi_{\phi_v,\psi_v}(\Mp_4)$ defined above.

\begin{prop}
\label{p:sk}
We have 
\[
 L^2_{\phi,\psi}(\Mp_4) \cong \bigoplus_{\epsilon_1,\epsilon_2} \pi^{\epsilon_1,\epsilon_2},
\]
where $\epsilon_1,\epsilon_2$ run over elements in $\mu_{2,\A}$ such that 
\begin{equation}
\label{eq:epsilon-sk}
 \prod_v \epsilon_{1,v} = \epsilon(\tfrac{1}{2}, \rho) \cdot \epsilon(\tfrac{1}{2}, \rho \times \chi_a), \qquad
 \prod_v \epsilon_{2,v} = \epsilon(\tfrac{1}{2}, \rho \times \chi_a).
\end{equation}
\end{prop}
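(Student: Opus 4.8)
The plan is to argue along the same lines as the proof of Proposition~\ref{p:princ}, combining the description of the residual spectrum in \S\ref{s:residual} with a seesaw/theta argument that accounts for the cuspidal part of $L^2_{\phi,\psi}(\Mp_4)$. First I would identify the contribution of the residual spectrum: by \eqref{P1-SK} (and the fact that an $A$-parameter of Saito--Kurokawa type contributes nothing to $L^2_{P_2}(\Mp_4)$ or $L^2_B(\Mp_4)$), the intersection $L^2_{\phi,\psi}(\Mp_4)\cap L^2_{\res}(\Mp_4)$ is the sum of the spaces $J_{P_1,\psi}(\chi_a|\cdot|^{\frac12},\pi)$ where $\pi$ ranges over genuine cuspidal representations of $\Mp_2(\A)$ with $A$-parameter $\rho\boxtimes S_1$ and $L(\tfrac12,\rho\times\chi_a)\ne 0$. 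On the other side, the theta-lift construction of $\pi^{\epsilon_1,\epsilon_2}$ from $\O(V_1)$, together with Lemma (parts \eqref{item:SK-i} and \eqref{item:SK-ii}) and the formula $\pi^{\epsilon_1,+}=J_{P_1,\psi}(\chi_a|\cdot|^{\frac12},\theta_{W_1,V_1^\epsilon,\psi_a}(\sigma^{\epsilon,\epsilon'}))$, shows that the representations with $\epsilon_2=1$ are exactly the residual ones. So I would first check that $\bigoplus_{\epsilon_1,\epsilon_2:\,\epsilon_2=1}\pi^{\epsilon_1,\epsilon_2}$, with $\epsilon_1$ constrained by \eqref{eq:epsilon-sk}, matches $L^2_{\phi,\psi}(\Mp_4)\cap L^2_{\res}(\Mp_4)$; this reduces to the Waldspurger-type $n=1$ multiplicity statement for $\Mp_2$ (controlling which local components occur and the global sign condition $\prod_v\epsilon_{1,v}$) plus the nonvanishing criterion for theta lifts from $\SO(V_1)$ to $\Mp_2$, which is governed by $L(\tfrac12,\rho\times\chi_a)$.

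Next I would treat the cuspidal contribution, i.e.~the representations $\pi^{\epsilon_1,\epsilon_2}$ with $\epsilon_2=-1$. For these I would realize $\pi^{\epsilon_1,-1}$ globally as the theta lift $\Theta_{W_2,V_1^\epsilon,\psi_a}(\sigma^{\epsilon,\epsilon'})$ of a suitable cuspidal automorphic representation $\sigma$ of $\O(V_1)(\A)$ attached to $\rho\otimes\chi_a$ (a representation of an inner form $(B^\epsilon)^\times$). The point is: (a) by the tower property and Proposition~\ref{p:tower}, since $V_1$ is $3$-dimensional and $W_2$ is the first symplectic space in the tower on which the lift can already be cuspidal, $\Theta_{W_2,V_1,\psi_a}(\sigma)$ lands in $L^2_{\cusp}(\Mp_4)$ (one must check $n_0$ is $2$, not $1$, precisely when $\epsilon_2=-1$, which is the content of Lemma \eqref{item:SK-i}); (b) by the Rallis inner product formula / Proposition~\ref{p:nonvanish1}, the nonvanishing of this lift is equivalent to the local nonvanishing conditions in Lemma \eqref{item:SK-ii} together with the nonvanishing of $L(\tfrac12,\sigma)=L(\tfrac12,\rho\times\chi_a)$ --- wait, in the $\epsilon_2=-1$ regime the relevant global $L$-value is actually $L(\tfrac12,\rho)$ via the seesaw with $\Mp_2$, so I would track carefully which $L$-value appears and confirm it is consistent with \eqref{eq:epsilon-sk}. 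Assembling the local packets $\pi_v^{\epsilon_{1,v},\epsilon_{2,v}}$ into $\pi^{\epsilon_1,\epsilon_2}$, the global multiplicity-one statement for the $\SO_3$ side (Waldspurger's theorem for $\widetilde{\SL}_2$ / $\PGL_2$ and its inner forms, which governs exactly when $\sigma$ is automorphic on $(B^\epsilon)^\times(\A)$) yields the sign constraint $\prod_v\epsilon_{1,v}=\epsilon(\tfrac12,\rho)\cdot\epsilon(\tfrac12,\rho\times\chi_a)$ and $\prod_v\epsilon_{2,v}=\epsilon(\tfrac12,\rho\times\chi_a)$, and the Howe-duality / Kudla--Rallis irreducibility (\cite[Corollary 7.1.3]{kr}) gives that each nonzero $\Theta_{W_2,V_1,\psi_a}(\sigma)$ is irreducible and isomorphic to $\pi^{\epsilon_1,\epsilon_2}$.

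Finally I would show there is nothing left over: if an irreducible genuine automorphic $\pi\subset L^2_{\phi,\psi}(\Mp_4)$ is orthogonal to all the $\pi^{\epsilon_1,\epsilon_2}$ constructed above, then by the residual computation $\pi$ is cuspidal, its partial standard $L$-function $L^S_{\psi_a}(s,\pi)$ is forced (by the near-equivalence class) to factor through $L^S(s+\tfrac12,\rho)\cdot\zeta^S(s+\tfrac12)\cdot\zeta^S(s-\tfrac12)$ (up to normalization), which has a pole at $s=\tfrac32$; hence by Proposition~\ref{p:nonvanish2}(i) the theta lift $\Theta_{W_2,V_1,\psi_a}(\pi)$ to some $\O(V_1)(\A)$ is nonzero, and by the adjunction/seesaw formula $\pi$ fails to be orthogonal to some $\pi^{\epsilon_1,\epsilon_2}$ --- a contradiction. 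The main obstacle I anticipate is the bookkeeping of signs and $L$-values: keeping the local recipe \eqref{eq:e1e2-SK} (the shifts by $\epsilon(\tfrac12,\rho)$, $\epsilon(\tfrac12,\rho\times\chi_a)$, $\chi_a(-1)$) compatible with the global constraints \eqref{eq:epsilon-sk}, and matching the two different seesaw pairs ($\O(V_1)$--$\Mp_2$ versus $\O(V_1)$--$\Mp_4$) so that exactly one of the two local $L$-values $L(\tfrac12,\rho)$, $L(\tfrac12,\rho\times\chi_a)$ controls each regime. This is where the delicate comparison with Waldspurger's $n=1$ results and the Prasad-type local dichotomy \eqref{eq:dichotomy} will have to be invoked carefully; the rest is an application of the machinery already set up.
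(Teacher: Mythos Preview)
Your overall strategy matches the paper's proof: realize each $\pi^{\epsilon_1,\epsilon_2}$ as a global theta lift $\Theta_{W_2,V_1^\epsilon,\psi_a}(\sigma^{\epsilon,\epsilon'})$ from $\O(V_1^\epsilon)$, use the tower property and the Rallis inner product formula to decide where it lands, compare with Gao's description of $L^2_{\res}(\Mp_4)$, and kill the orthogonal complement via a pole of the standard $L$-function and adjunction. However, several steps as you have written them would not go through.

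First, your dichotomy ``$\epsilon_2=1$ exactly residual, $\epsilon_2\ne 1$ cuspidal'' is wrong. The paper's criterion is: $\VV^{\epsilon_1,\epsilon_2}\subset L^2_{\res}(\Mp_4)$ if and only if $\epsilon_{2,v}=1$ for all $v$ \emph{and} $L(\tfrac12,\rho\times\chi_a)\ne 0$; otherwise it is cuspidal. When the root number $\epsilon(\tfrac12,\rho\times\chi_a)=+1$ but the $L$-value vanishes, the lift with $\epsilon_2=1$ is cuspidal, and the residual description \eqref{P1-SK} is empty. So you should not try to match the residual part to the $\epsilon_2=1$ part; the paper only needs the \emph{inclusion} $L^2_{\phi,\psi}(\Mp_4)\cap L^2_{\res}(\Mp_4)\subset\bigoplus\VV^{\epsilon_1,\epsilon_2}$, which is all that is required to force any hypothetical leftover $\pi$ to be cuspidal. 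Second, your $L$-function factorization is off: relative to $\psi_a$ one has
\[
L^S_{\psi_a}(s,\pi)=L^S(s,\rho\times\chi_a)\cdot\zeta^S(s+\tfrac12)\cdot\zeta^S(s-\tfrac12),
\]
not $L^S(s+\tfrac12,\rho)\cdot\zeta^S(s+\tfrac12)\cdot\zeta^S(s-\tfrac12)$; the twist by $\chi_a$ and the shift both matter (though the pole at $s=\tfrac32$ still comes from the $\zeta^S(s-\tfrac12)$ factor). Third, in the final ``nothing left over'' step you jump from ``$\Theta_{W_2,V_1^\epsilon,\psi_a}(\pi)\ne 0$'' directly to adjunction; the paper inserts the crucial identification $\Theta_{W_2,V_1^\epsilon,\psi_a}(\pi)=\sigma^{\epsilon,\epsilon'}$ for some $\epsilon'$ with $\prod_v\epsilon'_v=1$, obtained by strong multiplicity one on the $\O(V_1^\epsilon)$ side. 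Without this you do not know that the back-lift pairs against one of your constructed $\VV^{\epsilon_1,\epsilon_2}$. Finally, there is no seesaw in the argument; only adjunction is used.
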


\subsection{Proof of Proposition \ref{p:sk}}

For $\epsilon_1, \epsilon_2 \in \mu_{2,\A}$ satisfying \eqref{eq:epsilon-sk}, we define $\epsilon, \epsilon' \in \mu_{2,\A}$ by
\[
 \epsilon_v = \epsilon_{1,v} \cdot \epsilon(\tfrac{1}{2},\rho_v) \cdot \epsilon(\tfrac{1}{2},\rho_v \times \chi_{a,v}) \cdot \chi_{a,v}(-1), \qquad
 \epsilon'_v = \epsilon_{1,v} \cdot \epsilon_{2,v} \cdot \epsilon(\tfrac{1}{2},\rho_v) \cdot \chi_{a,v}(-1), 
\]
so that $\prod_v \epsilon_v = \prod_v \epsilon_v' = 1$.
Put 
\[
 \sigma^{\epsilon, \epsilon'} = \bigotimes_v \sigma_v^{\epsilon_v, \epsilon'_v}, 
\]
where $\sigma_v^{\epsilon_v, \epsilon'_v}$ is the representation of $\O(V_{1,v}^{\epsilon_v})$ defined above with $L$-parameter $\rho_v \otimes \chi_{a,v}$.
If $\sigma^{\epsilon, \epsilon'}$ is nonzero, then $\sigma^{\epsilon, \epsilon'}$ is an irreducible cuspidal automorphic representation of $\O(V_1^{\epsilon})(\A)$, where $V_1^{\epsilon}$ is the $3$-dimensional quadratic space over $F$ with trivial discriminant such that $V_1^\epsilon \otimes_F F_v = V_{1,v}^{\epsilon_v}$ for all $v$.

We may realize $\pi^{\epsilon_1, \epsilon_2}$ on the global theta lift
\[
 \VV^{\epsilon_1,\epsilon_2} = \Theta_{W_2,V_1^\epsilon,\psi_a}(\sigma^{\epsilon,\epsilon'}).
\]
Indeed, by the tower property and Propositions \ref{p:tower}, \ref{p:nonvanish1}, we have
\begin{itemize}
\item if the abstract representation $\pi^{\epsilon_1,\epsilon_2}$ is nonzero, then the global theta lift $\VV^{\epsilon_1,\epsilon_2}$ is nonzero;
\item if $\epsilon_{2,v} = 1$ for all $v$ and $L(\frac{1}{2}, \rho \times \chi_a)$ is nonzero, then $\VV^{\epsilon_1,\epsilon_2}$ is contained in $L^2_{\res}(\Mp_4)$; 
\item otherwise, $\VV^{\epsilon_1,\epsilon_2}$ is contained in $L^2_{\cusp}(\Mp_4)$.
\end{itemize}
Moreover, by the description of $L^2_{\res}(\Mp_4)$ in \S \ref{s:residual} (see in particular \eqref{P1-SK}), we have
\begin{equation}
\label{eq:res-SK}
 L^2_{\phi,\psi}(\Mp_4) \cap L^2_{\res}(\Mp_4) \subset \bigoplus_{\epsilon_1,\epsilon_2} \VV^{\epsilon_1,\epsilon_2},
\end{equation}
where $\epsilon_1,\epsilon_2$ run over elements in $\mu_{2,\A}$ satisfying \eqref{eq:epsilon-sk}.

It remains to show that the orthogonal complement of $\bigoplus_{\epsilon_1,\epsilon_2} \VV^{\epsilon_1,\epsilon_2}$ in $L^2_{\phi,\psi}(\Mp_4)$ is zero.
Suppose on the contrary that there exists an irreducible genuine automorphic representation $\pi$ of $\Mp_4(\A)$ occurring in this orthogonal complement.
Then $\pi$ is cuspidal by \eqref{eq:res-SK} and we have
\[
 L^S_{\psi_a}(s,\pi) = L^S(s,\rho \times \chi_a) \cdot \zeta^S(s+\tfrac{1}{2}) \cdot \zeta^S(s-\tfrac{1}{2}),
\]
where $S$ is a sufficiently large finite set of places of $F$.
Since $L^S_{\psi_a}(s,\pi)$ has a pole at $s = \frac{3}{2}$, it follows from Proposition \ref{p:nonvanish2} that the global theta lift $\Theta_{W_2,V_1^\epsilon, \psi_a}(\pi)$ to $\O(V_1^\epsilon)(\A)$ is nonzero for some $\epsilon \in \mu_{2,\A}$ such that $\prod_v \epsilon_v = 1$.
Moreover, since $\theta_{W_{2,v},V_{1,v}^{\epsilon_v}, \psi_{a,v}}(\pi_v) \cong \sigma_v^{+,+}$ for almost all $v$, we have $\Theta_{W_2,V_1^\epsilon, \psi_a}(\pi) = \sigma^{\epsilon,\epsilon'}$ for some $\epsilon' \in \mu_{2,\A}$ such that $\prod_v \epsilon_v' = 1$ by the strong multiplicity one theorem.
Hence, by the adjunction formula, $\pi$ is not orthogonal to $\VV^{\epsilon_1,\epsilon_2}$ for some $\epsilon_1,\epsilon_2 \in \mu_{2,\A}$ satisfying \eqref{eq:epsilon-sk}, which is a contradiction.
This completes the proof of Proposition \ref{p:sk}.

\section{$A$-packets of Howe--Piatetski-Shapiro type}
\label{s:howe-ps}

In this section, we construct the $A$-packet associated to an $A$-parameter of Howe--Piatetski-Shapiro type
\[
 \phi = (\chi_a \boxtimes S_2) \oplus (\chi_b \boxtimes S_2)
\]
with $a, b \in F^\times$ such that $\chi_a \ne \chi_b$.

\subsection{Local $A$-packets}
\label{ss:local-howe-ps}

Suppose that $F$ is local and $\phi = (\chi_a \boxtimes S_2) \oplus (\chi_b \boxtimes S_2)$ is a local $A$-parameter with $a,b \in F^\times$.
Then we have
\[
 S_\phi \cong
 \begin{cases}
  (\Z/2\Z)^2 & \text{if $\chi_a \ne \chi_b$;} \\
  (\Z/2\Z)^2 / \Delta \Z/2\Z & \text{if $\chi_a = \chi_b$,}
 \end{cases}
\]
so that we may identify $\widehat{S}_\phi$ with
\[
\begin{cases}
 \mu_2 \times \mu_2 & \text{if $\chi_a \ne \chi_b$;} \\
 \Delta \mu_2 & \text{if $\chi_a = \chi_b$.}
\end{cases}
\]

We construct the associated $A$-packet by using theta lifts from $\O(V_1)$ as in \S \ref{ss:local-sk}, but with the $L$-parameter $\rho \boxtimes S_1$ replaced by the $A$-parameter $\chi_b \boxtimes S_2$.
For $\epsilon \in \mu_2$, let $\sigma_0^\epsilon$ be the irreducible representation of $(B^\epsilon)^\times$ with $A$-parameter $\chi_{ab} \boxtimes S_2$, i.e.
\[
 \sigma_0^\epsilon = \chi_{ab} \circ \mathrm{N}_{B^\epsilon}, 
\]
where $\mathrm{N}_{B^\epsilon}$ is the reduced norm on $B^\epsilon$.
Here $\sigma_0^-$ does not exist if $F = \C$, in which case we interpret $\sigma_0^-$ as zero.
We may regard $\sigma_0^\epsilon$ as a representation of $\SO(V_1^\epsilon)$.
For $\epsilon, \epsilon' \in \mu_2$, let $\sigma^{\epsilon,\epsilon'}$ be the $\epsilon'$-extension of $\sigma_0^\epsilon$ to $\O(V_1^\epsilon)$.
Let $\epsilon_1, \epsilon_2 \in \mu_2$ be such that 
\begin{equation}
\label{eq:e1e2-HPS}
 \epsilon = \epsilon_2, \qquad
 \epsilon' = \epsilon_1 \cdot \epsilon_2 \cdot \chi_{ab}(-1).
\end{equation}

\begin{lem}

\begin{enumerate}
\item
\label{item:HPS-i}
The theta lift $\theta_{W_1,V_1^{\epsilon}, \psi_a}(\sigma^{\epsilon, \epsilon'})$ to $\Mp_2(F)$ is nonzero if and only if one of the following holds:
\begin{itemize}
\item $F \ne \C$, $\chi_a \ne \chi_b$, and $\epsilon_1 = 1$;
\item $F \ne \C$, $\chi_a = \chi_b$, and $\epsilon_1 = \epsilon_2$;
\item $F = \C$ (so that $\chi_a = \chi_b$) and $\epsilon_1 = \epsilon_2 = 1$.
\end{itemize}
\item
\label{item:HPS-ii}
The theta lift $\theta_{W_2,V_1^{\epsilon}, \psi_a}(\sigma^{\epsilon, \epsilon'})$ to $\Mp_4(F)$ is nonzero if and only if one of the following holds:
\begin{itemize}
\item $F$ is nonarchimedean, $\chi_a \ne \chi_b$, and $\epsilon_1, \epsilon_2$ are arbitrary;
\item $F = \R$, $\chi_a \ne \chi_b$, and $(\epsilon_1, \epsilon_2) \ne (-1,-1)$;
\item $F \ne \C$, $\chi_a = \chi_b$, and $\epsilon_1 = \epsilon_2$;
\item $F = \C$ (so that $\chi_a = \chi_b$) and $\epsilon_1 = \epsilon_2 = 1$.
\end{itemize}
\end{enumerate}
\end{lem}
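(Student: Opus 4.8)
The plan is to follow the pattern of the previous lemma. For \eqref{item:HPS-i} I would feed the representation $\sigma_0^\epsilon = \chi_{ab} \circ \mathrm{N}_{B^\epsilon}$ of $\SO(V_1^\epsilon) = (B^\epsilon)^\times / F^\times$ --- a one-dimensional representation with $A$-parameter $\chi_{ab} \boxtimes S_2$ --- into the explicit description \eqref{eq:dichotomy} of the theta correspondence for the pair $\O(V_1^\epsilon) \times \Mp_2$. For every $\epsilon \in \mu_2$ for which $\sigma_0^\epsilon$ is defined (all $\epsilon$ unless $F = \C$ and $\epsilon = -1$), exactly one of its two extensions $\sigma^{\epsilon,+}, \sigma^{\epsilon,-}$ to $\O(V_1^\epsilon)$ has nonzero theta lift to $\Mp_2$: it is $\mathbf{1}_{\O(V_1^\epsilon)}$ when $\sigma_0^\epsilon$ is the trivial character of $\SO(V_1^\epsilon)$, and is otherwise singled out by the local root number condition of \eqref{eq:dichotomy}. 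Unwinding the normalization \eqref{eq:e1e2-HPS} shows that this is always the extension with $\epsilon_1 = 1$; recording that $\sigma_0^-$ does not exist when $F = \C$, and that $\widehat{S}_\phi = \Delta\mu_2$ forces $\epsilon_1 = \epsilon_2$ when $\chi_a = \chi_b$, one obtains the three cases in \eqref{item:HPS-i}.

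For \eqref{item:HPS-ii}, the efficient route is to determine the first occurrence index $n_0(\sigma^{\epsilon,\epsilon'})$ of each $\sigma^{\epsilon,\epsilon'}$ in the metaplectic Witt tower; by the tower property (Proposition \ref{p:tower}), the lift $\theta_{W_2, V_1^\epsilon, \psi_a}(\sigma^{\epsilon,\epsilon'})$ is nonzero if and only if $n_0(\sigma^{\epsilon,\epsilon'}) \le 2$. Two inputs suffice. First, $n_0(\sigma^{\epsilon,\epsilon'}) = 0$ if and only if $\sigma^{\epsilon,\epsilon'} = \mathbf{1}_{\O(V_1^\epsilon)}$ --- the trivial representation being the unique representation of $\O(V_1^\epsilon)$ occurring in $\omega_{W_0, V_1^\epsilon, \psi_a} = \C$ --- that is, if and only if $\sigma_0^\epsilon$ is trivial and $\epsilon' = 1$. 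Second, the conservation relation of Sun--Zhu \cite{sz} gives $n_0(\sigma^{\epsilon,+}) + n_0(\sigma^{\epsilon,-}) = \dim V_1^\epsilon = 3$, the normalization being pinned down by the case $\dim V = 1$ of \S\ref{ss:e-weil}, where $\mathbf{1}$ and $\det$ on $\O(V_0^+)$ have first occurrences $0$ and $1$. Combining the two: if $\sigma_0^\epsilon$ is nontrivial then neither extension is $\mathbf{1}_{\O(V_1^\epsilon)}$, so $\{ n_0(\sigma^{\epsilon,+}), n_0(\sigma^{\epsilon,-}) \} = \{ 1, 2 \}$ and both extensions have nonzero lift to $\Mp_4$; if $\sigma_0^\epsilon$ is trivial then the extension with $\epsilon' = 1$ has $n_0 = 0$ and lifts to $\Mp_4$, while the one with $\epsilon' = -1$ has $n_0 = 3$ and its lifts to both $\Mp_2$ and $\Mp_4$ vanish. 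Note that this last argument does not even use \eqref{item:HPS-i}.

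It then remains to translate the condition ``$\sigma_0^\epsilon$ trivial and $\epsilon' = -1$'' into the exclusions listed in \eqref{item:HPS-ii}. Since the reduced norm of a split quaternion algebra, and of any quaternion algebra over a nonarchimedean field, is surjective onto $F^\times$, whereas $\mathrm{N}(\mathbb{H}^\times) = \R_{>0}$, the character $\sigma_0^\epsilon = \chi_{ab} \circ \mathrm{N}_{B^\epsilon}$ is trivial unless $\chi_a \ne \chi_b$ and either $F$ is nonarchimedean or $F = \R$ with $\epsilon = 1$; together with $\epsilon' = \epsilon_1 \epsilon_2 \chi_{ab}(-1)$, $\epsilon = \epsilon_2$, and the fact that $\chi_{ab}(-1) = -1$ precisely when $F = \R$ and $\chi_a \ne \chi_b$, this reproduces \eqref{item:HPS-ii} case by case (for instance, over $\R$ with $\chi_a \ne \chi_b$ the only vanishing occurs for $\sigma_0^- = \mathbf{1}_{\SO(3)}$ with $\epsilon' = -1$, i.e.\ for $(\epsilon_1, \epsilon_2) = (-1, -1)$). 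Finally one checks that the pairs $(\epsilon_1, \epsilon_2)$ that are illegal --- those with $\epsilon_1 \ne \epsilon_2$ when $\chi_a = \chi_b$, or those with $\epsilon_2 = -1$ when $F = \C$ --- also give a vanishing lift (then $\sigma^{\epsilon,\epsilon'}$ is the $\det$-twist of the trivial character, or is $0$), so the stated equivalence holds verbatim.

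I expect the difficulty to be organizational rather than conceptual: the substantive inputs (\eqref{eq:dichotomy}, the tower property, and the conservation relation) are all available, but the four-way case split is delicate, above all because the anisotropic quaternion algebra over $\R$ has positive reduced norm, which makes $\sigma_0^-$ trivial over $\R$ regardless of $\chi_a, \chi_b$ and is exactly what produces the lone exceptional pair $(\epsilon_1, \epsilon_2) = (-1, -1)$; the points to be most careful about are the normalization of the conservation relation (checked against $\dim V = 1$) and the unwinding of $\epsilon'$ via $\chi_{ab}(-1)$. Alternatively, \eqref{item:HPS-ii} can be absorbed into a direct computation of the $\O_3$-to-$\Mp_4$ theta lift through Kudla's filtration of the Jacquet modules of the Weil representation, at the cost of rather more explicit representation theory.
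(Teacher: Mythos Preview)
Your approach is correct and matches the paper's: \eqref{eq:dichotomy} for part \eqref{item:HPS-i}, and for part \eqref{item:HPS-ii} the paper simply invokes Lemma~\ref{l:o3mp4-nonzero} (the criterion $\sigma \ne \det$), whose proof is exactly the Sun--Zhu conservation-relation argument you have unfolded inline. One minor correction: the tower property you need is the local one (persistence of nonvanishing along the Witt tower), not the global Proposition~\ref{p:tower}, which concerns residues of Eisenstein series.
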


\begin{proof}
The assertions \eqref{item:HPS-i} and \eqref{item:HPS-ii} follow from \eqref{eq:dichotomy} and Lemma \ref{l:o3mp4-nonzero} below, respectively.
\end{proof}

We now define the $A$-packet $\Pi_{\phi,\psi}(\Mp_4)$ by
\[
 \Pi_{\phi,\psi}(\Mp_4) = \{ \pi^{\epsilon_1,\epsilon_2} \, | \, \epsilon_1,\epsilon_2 \in \mu_2 \}, \qquad 
 \pi^{\epsilon_1,\epsilon_2} = \theta_{W_2,V_1^{\epsilon}, \psi_a}(\sigma^{\epsilon, \epsilon'}),
\]
where $\epsilon, \epsilon' \in \mu_2$ are as in \eqref{eq:e1e2-HPS}.
Then, by Lemma \ref{l:o3mp4-mult-free} below, $\Pi_{\phi,\psi}(\Mp_4)$ is multiplicity-free.
Moreover, since 
\[
 \pi^{+,+} = J_{B, \psi}(\chi_a |\cdot|^{\frac{1}{2}}, \chi_b |\cdot|^{\frac{1}{2}}) 
\]
by Lemma \ref{l:howe-ps-local+} below, we have $\Pi_{\varphi_\phi, \psi}(\Mp_4) = \{ \pi^{+,+} \}$ as required, where $\varphi_\phi$ is the $L$-parameter associated to $\phi$ and $\Pi_{\varphi_\phi, \psi}(\Mp_4)$ is its $L$-packet.
In Appendix \ref{a:A-packets} below, we will describe $\pi^{\epsilon_1,\epsilon_2}$ explicitly.
We only remark that if $\chi_a \ne \chi_b$, then it follows from Lemmas \ref{l:howe-ps-local+} and \ref{l:howe-ps-local-} below that
\[
 \pi^{+,-} = J_{P_1, \psi}(\chi_a |\cdot|^{\frac{1}{2}}, \omega_{W_1,\psi_b}^-), \qquad
 \pi^{-,+} = J_{P_1, \psi}(\chi_b |\cdot|^{\frac{1}{2}}, \omega_{W_1,\psi_a}^-),
\]
which we will use later.

\subsection{Structure of $L^2_{\phi,\psi}(\Mp_4)$}

Suppose that $F$ is global.
For $\epsilon_1, \epsilon_2 \in \mu_{2,\A}$, put
\[
 \pi^{\epsilon_1,\epsilon_2} = \bigotimes_v \pi_v^{\epsilon_{1,v},\epsilon_{2,v}},
\]
where $\pi_v^{\epsilon_{1,v},\epsilon_{2,v}}$ is the representation in the local $A$-packet $\Pi_{\phi_v, \psi_v}(\Mp_4)$ defined above.

\begin{prop}
\label{p:howe-ps}
We have 
\[
 L^2_{\phi,\psi}(\Mp_4) \cong \bigoplus_{\epsilon_1,\epsilon_2} \pi^{\epsilon_1,\epsilon_2},
\]
where $\epsilon_1,\epsilon_2$ run over elements in $\mu_{2,\A}$ such that $\prod_v \epsilon_{1,v} = \prod_v \epsilon_{2,v} = 1$.
\end{prop}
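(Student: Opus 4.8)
The plan is to argue exactly as in the proof of Proposition~\ref{p:sk}, replacing the tempered parameter $\rho\otimes\chi_a$ on $\O(V_1)$ used there by the $A$-parameter $\chi_{ab}\boxtimes S_2$. Concretely, for $\epsilon_1,\epsilon_2\in\mu_{2,\A}$ with $\prod_v\epsilon_{1,v}=\prod_v\epsilon_{2,v}=1$, I would define $\epsilon,\epsilon'\in\mu_{2,\A}$ by the local recipe \eqref{eq:e1e2-HPS}, i.e.\ $\epsilon_v=\epsilon_{2,v}$ and $\epsilon'_v=\epsilon_{1,v}\epsilon_{2,v}\chi_{ab,v}(-1)$. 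Then $\prod_v\epsilon_v=1$ and, by the product formula for the Hilbert symbol, $\prod_v\epsilon'_v=\prod_v\epsilon_{1,v}\cdot\prod_v\epsilon_{2,v}\cdot\prod_v(ab,-1)_{F_v}=1$, so that $\sigma^{\epsilon,\epsilon'}=\bigotimes_v\sigma_v^{\epsilon_v,\epsilon'_v}$, when nonzero, is an irreducible automorphic character of $\O(V_1^\epsilon)(\A)$ with $A$-parameter $\chi_{ab}\boxtimes S_2$, where $V_1^\epsilon$ is the $3$-dimensional quadratic space over $F$ with trivial discriminant and localizations $V_{1,v}^{\epsilon_v}$. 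I would then realize $\pi^{\epsilon_1,\epsilon_2}$ on the global theta lift $\VV^{\epsilon_1,\epsilon_2}=\Theta_{W_2,V_1^\epsilon,\psi_a}(\sigma^{\epsilon,\epsilon'})$.

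Next, using the tower property together with Propositions~\ref{p:tower} and~\ref{p:nonvanish1}—and using that $\chi_{ab}$ is a nontrivial quadratic character, so that the abelian $L$-values governing nonvanishing at the $\O(V_1)\to\Mp_2$ stage are harmless—I would check that whenever the abstract representation $\pi^{\epsilon_1,\epsilon_2}$ is nonzero the global theta lift $\VV^{\epsilon_1,\epsilon_2}$ is nonzero, and that $\VV^{\epsilon_1,\epsilon_2}$ is residual when the first occurrence of $\sigma^{\epsilon,\epsilon'}$ in the metaplectic tower lies below $\Mp_4$ (lying then in $L^2_{P_1}(\Mp_4)_{\mathrm{HPS}}$ or, in the most degenerate case, in $L^2_B(\Mp_4)_{\mathrm{HPS}}$) and cuspidal otherwise. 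Comparing with the description of $L^2_\res(\Mp_4)$ in \S\ref{s:residual}, in particular with \eqref{P1-HPS} and \eqref{B-HPS}, this would give
\[
 L^2_{\phi,\psi}(\Mp_4)\cap L^2_\res(\Mp_4)\subset\bigoplus_{\epsilon_1,\epsilon_2}\VV^{\epsilon_1,\epsilon_2},
\]
the sum running over $\epsilon_1,\epsilon_2\in\mu_{2,\A}$ with $\prod_v\epsilon_{1,v}=\prod_v\epsilon_{2,v}=1$.

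It then remains to show that the orthogonal complement of $\bigoplus_{\epsilon_1,\epsilon_2}\VV^{\epsilon_1,\epsilon_2}$ in $L^2_{\phi,\psi}(\Mp_4)$ is zero. If an irreducible genuine automorphic representation $\pi$ occurred in it, then it would be cuspidal by the previous step and the residual spectrum description, and its partial standard $L$-function relative to $\psi_a$ would be
\[
 L^S_{\psi_a}(s,\pi)=\zeta^S(s+\tfrac12)\,\zeta^S(s-\tfrac12)\,L^S(s+\tfrac12,\chi_{ab})\,L^S(s-\tfrac12,\chi_{ab}),
\]
which has a simple pole at $s=\tfrac32$ coming from $\zeta^S(s-\tfrac12)$, the two $L$-factors of $\chi_{ab}$ being holomorphic and nonzero there. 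By Proposition~\ref{p:nonvanish2}(1) with $n=2$ and $m=1$ there would then be a $3$-dimensional quadratic space $V_1$ over $F$ with trivial discriminant such that $\Theta_{W_2,V_1,\psi_a}(\pi)\neq0$; since $\pi_v\cong\pi_v^{+,+}$ and $\theta_{W_{2,v},V_{1,v}^+,\psi_{a,v}}(\pi_v^{+,+})\cong\sigma_v^{+,+}$ for almost all $v$, strong multiplicity one for $\O(V_1^\epsilon)$ would identify $\Theta_{W_2,V_1^\epsilon,\psi_a}(\pi)$ with $\sigma^{\epsilon,\epsilon'}$ for some $\epsilon,\epsilon'\in\mu_{2,\A}$ with $\prod_v\epsilon_v=\prod_v\epsilon'_v=1$, and the adjunction formula would then show that $\pi$ is not orthogonal to $\VV^{\epsilon_1,\epsilon_2}$ for the corresponding $\epsilon_1,\epsilon_2$, a contradiction. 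Together with the mutual orthogonality of the $\VV^{\epsilon_1,\epsilon_2}$ and Lemma~\ref{l:o3mp4-mult-free} (so that each $\VV^{\epsilon_1,\epsilon_2}$ realizes $\pi^{\epsilon_1,\epsilon_2}$ with multiplicity one), this would yield the asserted decomposition.

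The step I expect to be the main obstacle is the passage carried out in the second paragraph above: compared with the Saito--Kurokawa case, the residual part of $L^2_{\phi,\psi}(\Mp_4)$ now consists of \emph{two} pieces, $L^2_{P_1}(\Mp_4)_{\mathrm{HPS}}$ and $L^2_B(\Mp_4)_{\mathrm{HPS}}$, so one must carefully match the family of theta lifts $\VV^{\epsilon_1,\epsilon_2}$—keeping precise track of which pairs $(\epsilon_1,\epsilon_2)$ give residual representations (and of which of the two types), which give cuspidal ones, and which give zero—against this two-part residual spectrum. This bookkeeping rests on the local theta dichotomy for $\O(V_1)$ (Lemma~\ref{l:o3mp4-nonzero} and its consequences in \S\ref{ss:local-howe-ps}) and on the structure of the metaplectic tower attached to the quadratic character $\sigma^{\epsilon,\epsilon'}$, and it is complicated by the archimedean complex places (and any finite place with $\chi_{a,v}=\chi_{b,v}$), where the local component group degenerates.
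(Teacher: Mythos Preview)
Your overall strategy matches the paper's, but there is a genuine gap in the case where $\epsilon_{2,v}=1$ for all $v$, equivalently $\epsilon_v=1$ for all $v$ so that $V_1^\epsilon=V_1^+$ is split. In that case $\SO(V_1^+)\cong\mathrm{PGL}_2$ and $\sigma^{\epsilon,\epsilon'}$ is a one-dimensional automorphic character, hence \emph{not cuspidal}. The global theta integral $\Theta_{W_2,V_1^+,\psi_a}(\sigma^{\epsilon,\epsilon'})$ is therefore not defined as written, and you cannot realize $\pi^{\epsilon_1,\epsilon_2}$ as a theta lift in this case. The paper handles this by treating this case separately: when all $\epsilon_{2,v}=1$ one reads $\pi^{\epsilon_1,\epsilon_2}$ directly off the residual spectrum description \eqref{P1-HPS} and \eqref{B-HPS}, defining $\VV^{\epsilon_1,\epsilon_2}$ as the corresponding subspace of $L^2_{\res}(\Mp_4)$ without invoking a theta lift at all.

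The same issue recurs in your orthogonal-complement argument. After producing a nonzero $\Theta_{W_2,V_1^\epsilon,\psi_a}(\pi)$ and identifying it with some $\sigma^{\epsilon,\epsilon'}$, you invoke adjunction to conclude $\pi$ is not orthogonal to $\VV^{\epsilon_1,\epsilon_2}$. But adjunction applies only when $\sigma^{\epsilon,\epsilon'}$ is cuspidal, i.e.\ when some $\epsilon_v\ne 1$. If all $\epsilon_v=1$, the target $\sigma^{\epsilon,\epsilon'}$ is non-cuspidal and the adjunction step fails. The paper closes this gap with an extra move: if all $\epsilon_v=1$ then, since $\sigma^{\epsilon,\epsilon'}$ is not cuspidal, the tower property forces $\Theta_{W_2,V_0^+,\psi_a}(\pi)\ne 0$, whence $\pi_v\cong J_{B,\psi_v}(\chi_{a,v}|\cdot|_v^{3/2},\chi_{a,v}|\cdot|_v^{1/2})$ for almost all $v$, contradicting $\pi\in L^2_{\phi,\psi}(\Mp_4)$. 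You should insert this case distinction in both places.
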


\subsection{Proof of Proposition \ref{p:howe-ps}}

The proof is similar to that of Proposition \ref{p:sk}.
For $\epsilon_1, \epsilon_2 \in \mu_{2,\A}$ such that $\prod_v \epsilon_{1,v} = \prod_v \epsilon_{2,v} = 1$, we define $\epsilon, \epsilon' \in \mu_{2,\A}$ by 
\[
 \epsilon_v = \epsilon_{2,v}, \qquad
 \epsilon'_v = \epsilon_{1,v} \cdot \epsilon_{2,v} \cdot \chi_{ab,v}(-1), 
\]
so that $\prod_v \epsilon_v = \prod_v \epsilon_v' = 1$.
Put 
\[
 \sigma^{\epsilon, \epsilon'} = \bigotimes_v \sigma_v^{\epsilon_v, \epsilon'_v}, 
\]
where $\sigma_v^{\epsilon_v, \epsilon'_v}$ is the representation of $\O(V_{1,v}^{\epsilon_v})$ defined above with $A$-parameter $\chi_{ab,v} \boxtimes S_2$.
If $\sigma^{\epsilon, \epsilon'}$ is nonzero, then $\sigma^{\epsilon, \epsilon'}$ is a $1$-dimensional automorphic representation of $\O(V_1^{\epsilon})(\A)$, where $V_1^{\epsilon}$ is the $3$-dimensional quadratic space over $F$ with trivial discriminant such that $V_1^\epsilon \otimes_F F_v = V_{1,v}^{\epsilon_v}$ for all $v$.

Suppose that $\epsilon_{2,v} \ne 1$ for some $v$, so that $V_1^\epsilon$ is anisotropic and $\sigma^{\epsilon, \epsilon'}$ is cuspidal.
Then we may realize $\pi^{\epsilon_1, \epsilon_2}$ on the global theta lift
\[
 \VV^{\epsilon_1,\epsilon_2} = \Theta_{W_2,V_1^\epsilon,\psi_a}(\sigma^{\epsilon,\epsilon'}).
\]
Indeed, by the tower property and Propositions \ref{p:tower}, \ref{p:nonvanish1}, we have:
\begin{itemize}
\item if the abstract representation $\pi^{\epsilon_1,\epsilon_2}$ is nonzero, then the global theta lift $\VV^{\epsilon_1,\epsilon_2}$ is nonzero;
\item if 
\[
\epsilon_{1,v} =
\begin{cases}
 1 & \text{if $\chi_{a,v} \ne \chi_{b,v}$;} \\
 \epsilon_{2,v} & \text{if $\chi_{a,v} = \chi_{b,v}$}
\end{cases} 
\]
for all $v$, then $\VV^{\epsilon_1,\epsilon_2}$ is contained in $L^2_{\res}(\Mp_4)$;
\item otherwise, $\VV^{\epsilon_1,\epsilon_2}$ is contained in $L^2_{\cusp}(\Mp_4)$.
\end{itemize}
Moreover, by the description of $L^2_{\res}(\Mp_4)$ in \S \ref{s:residual} (see in particular \eqref{P1-HPS} and \eqref{B-HPS}), we may realize $\pi^{\epsilon_1,\epsilon_2}$ on a subspace $\VV^{\epsilon_1,\epsilon_2}$ of $L^2_{\res}(\Mp_4)$ even if $\epsilon_{2,v} = 1$ for all $v$, and we have
\begin{equation}
\label{eq:res-HPS}
 L^2_{\phi,\psi}(\Mp_4) \cap L^2_{\res}(\Mp_4) \subset \bigoplus_{\epsilon_1,\epsilon_2} \VV^{\epsilon_1,\epsilon_2}, 
\end{equation}
where $\epsilon_1,\epsilon_2$ run over elements in $\mu_{2,\A}$ such that $\prod_v \epsilon_{1,v} = \prod_v \epsilon_{2,v} = 1$.

It remains to show that the orthogonal complement of $\bigoplus_{\epsilon_1,\epsilon_2} \VV^{\epsilon_1,\epsilon_2}$ in $L^2_{\phi,\psi}(\Mp_4)$ is zero.
Suppose on the contrary that there exists an irreducible genuine automorphic representation $\pi$ of $\Mp_4(\A)$ occurring in this orthogonal complement.
Then $\pi$ is cuspidal by \eqref{eq:res-HPS} and we have
\[
 L^S_{\psi_a}(s,\pi) = \zeta^S(s+\tfrac{1}{2}) \cdot \zeta^S(s-\tfrac{1}{2}) \cdot L^S(s+\tfrac{1}{2},\chi_{ab}) \cdot L^S(s-\tfrac{1}{2},\chi_{ab}), 
\]
where $S$ is a sufficiently large finite set of places of $F$.
Since $L^S_{\psi_a}(s,\pi)$ has a pole at $s = \frac{3}{2}$, it follows from Proposition \ref{p:nonvanish2} that the global theta lift $\Theta_{W_2,V_1^\epsilon, \psi_a}(\pi)$ to $\O(V_1^\epsilon)(\A)$ is nonzero for some $\epsilon \in \mu_{2,\A}$ such that $\prod_v \epsilon_v = 1$.
Moreover, since $\theta_{W_{2,v},V_{1,v}^{\epsilon_v}, \psi_{a,v}}(\pi_v) \cong \sigma_v^{+,+}$ for almost all $v$, we have $\Theta_{W_2,V_1^\epsilon, \psi_a}(\pi) = \sigma^{\epsilon,\epsilon'}$ for some $\epsilon' \in \mu_{2,\A}$ such that $\prod_v \epsilon_v' = 1$ by the strong multiplicity one theorem.
Hence, if $\epsilon_v \ne 1$ for some $v$, then by the adjunction formula, $\pi$ is not orthogonal to $\VV^{\epsilon_1,\epsilon_2}$ for some $\epsilon_1,\epsilon_2 \in \mu_{2,\A}$ such that $\prod_v \epsilon_{1,v} = \prod_v \epsilon_{2,v} = 1$ and $\epsilon_{2,v} \ne 1$ for some $v$, which is a contradiction.
This forces $\epsilon_v = 1$ for all $v$, i.e.~$V_1^\epsilon = V_1^+$.
Since $\sigma^{\epsilon,\epsilon'}$ is not cuspidal, the global theta lift $\Theta_{W_2,V_0^+, \psi_a}(\pi)$ to $\O(V_0^+)(\A)$ is nonzero by the tower property, so that 
\[
 \pi_v \cong J_{B, \psi_v}(\chi_{a,v} |\cdot|_v^{\frac{3}{2}}, \chi_{a,v} |\cdot|_v^{\frac{1}{2}})
\]
for almost all $v$.
This contradicts the assumption that $\pi$ occurs in $L^2_{\phi,\psi}(\Mp_4)$ and completes the proof of Proposition \ref{p:howe-ps}.

\section{$A$-packets of Soudry type}
\label{s:soudry}

In this section, we construct the $A$-packet associated to an $A$-parameter of Soudry type
\[
 \phi = \rho \boxtimes S_2
\]
with an irreducible dihedral cuspidal automorphic representation $\rho$ of $\GL_2(\A)$ with nontrivial quadratic central character.
This is the most troublesome nontempered $A$-packet in the sense that unlike the other nontempered $A$-packets, it cannot be constructed by using theta lifts from smaller orthogonal groups.
We will deal with it in the same way as \cite{gi-mp}, where we dealt with the tempered $A$-packets by using theta lifts to much larger orthogonal groups.

\subsection{Local $A$-packets}
\label{ss:local-soudry}

Suppose that $F$ is local and $\phi = \rho \boxtimes S_2$ is a local $A$-parameter with a $2$-dimensional orthogonal tempered representation $\rho$ of $L_F$.
Note that such $\rho$ can be regarded as a representation of $W_F$. 
We construct the associated $A$-packet
\[
 \Pi_{\phi, \psi}(\Mp_4) = \{ \pi_\eta \, | \, \eta \in \widehat{S}_\phi \}
\]
in a nonuniform way depending on whether $\rho$ is irreducible or not.

\subsubsection{The reducible case}

Suppose that $\rho$ is reducible.
Then $\rho$ is of the form either
\[
 \rho = \chi \oplus \chi^{-1}
\]
for some unitary character $\chi$ of $F^\times$ such that $\chi^2 \ne 1$, or
\[
 \rho = \chi_a \oplus \chi_b
\]
for some $a, b \in F^\times$.
In the former case, $S_\phi$ is trivial and we define the unique representation in $\Pi_{\phi, \psi}(\Mp_4)$ as the unique irreducible genuine representation of $\Mp_4(F)$ with $L$-parameter $\varphi_\phi$ associated to $\phi$.
In the latter case, $\phi$ is of Howe--Piatetski-Shapiro type and we have already constructed $\Pi_{\phi, \psi}(\Mp_4)$ in \S \ref{ss:local-howe-ps}.

\subsubsection{The irreducible case}

Suppose that $\rho$ is irreducible.
Then either $F$ is nonarchimedean or $F=\R$.
Also, we have $S_\phi \cong \Z/2\Z$, so that we may identify $\widehat{S}_\phi$ with $\mu_2$.
We define the $A$-packet $\Pi_{\phi, \psi}(\Mp_4)$ by 
\[
 \Pi_{\phi, \psi}(\Mp_4) = \{ \pi^\epsilon \, | \, \epsilon \in \mu_2 \} 
\]
with the representation $\pi^\epsilon$ given as follows.
Put 
\[
 \pi^+ = J_{P_2,\psi}(\tau \otimes |\det|^{\frac{1}{2}}),
\]
where $\tau$ is the irreducible square-integrable representation of $\GL_2(F)$ with $L$-parameter $\rho$.
When $F$ is nonarchimedean, we consider a symplectic representation $\varphi = \rho \boxtimes S_2$ of $L_F = W_F \times \SL_2(\C)$ and put 
\[
 \pi^- = \pi^-_\varphi, 
\]
where $\pi^-_\varphi$ is the irreducible genuine square-integrable representation of $\Mp_4(F)$ with $L$-parameter $\varphi$ (relative to $\psi$) associated to the nontrivial character of $S_\varphi \cong \Z/ 2\Z$.
When $F = \R$, we write $\rho = \mathcal{D}_\kappa$ with some positive integer $\kappa$ and put
\[
 \pi^- = \pi_\Lambda \oplus \pi_\Lambda^\vee,
\]
where $\pi_\Lambda$ is the genuine discrete series representation of $\Mp_4(\R)$ with lowest $\widetilde{\U}(2)$-type $\Lambda = (\kappa+\frac{3}{2}, \kappa+\frac{3}{2})$.
This is the only instance of a reducible representation of $\Mp_4(F)$ in an $A$-packet.

\subsection{Structure of $L^2_{\phi,\psi}(\Mp_4)$}

Suppose that $F$ is global.
For $\eta \in \widehat{S}_{\phi,\A}$, put
\[
 \pi_\eta = \bigotimes_v \pi_{\eta_v},
\]
where $\pi_{\eta_v}$ is the representation in the local $A$-packet $\Pi_{\phi_v, \psi_v}(\Mp_4)$ defined above.

\begin{prop}
\label{p:soudry}
We have 
\[
 L^2_{\phi,\psi}(\Mp_4) \cong \bigoplus_{\eta} \pi_\eta,
\]
where $\eta$ runs over elements in $\widehat{S}_{\phi,\A}$ such that $\Delta^* \eta = 1$.
\end{prop}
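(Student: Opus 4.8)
\textbf{Proof proposal for Proposition~\ref{p:soudry}.}
The plan is to follow the strategy of \cite{gi-mp} for tempered $A$-parameters, realizing the relevant representations of $\Mp_4(\A)$ via global theta lifts from an orthogonal group that is \emph{larger} rather than smaller, since an $A$-parameter of Soudry type cannot be obtained by lifting from $\O(V_1)$. Concretely, since $\phi = \rho \boxtimes S_2$ with $\rho$ a $2$-dimensional orthogonal (dihedral) cuspidal representation of $\GL_2(\A)$, the natural source is the split group $\SO(V_2^+)$ (or the inner forms $\O(V_2^\epsilon)$), whose cuspidal automorphic representations with the matching $A$-parameter $\rho \boxtimes S_2$ are themselves constructed (and understood) via the Shimura--Waldspurger correspondence, or more precisely via the Arthur/Arthur--Moeglin classification for $\SO_5$ in the Saito--Kurokawa-type range; one then theta-lifts these down to $\Mp_4$ with respect to $\psi_a$. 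First I would fix the orthogonal side: for each collection $\epsilon = (\epsilon_v) \in \mu_{2,\A}$ describe the automorphic representation $\sigma^{\epsilon,\epsilon'}$ of $\O(V_2^\epsilon)(\A)$ with local $A$-parameter $\rho_v \boxtimes S_2$ (incorporating the appropriate sign normalizations and $\epsilon$-extensions), determine when it is cuspidal and when it occurs in the discrete spectrum of $\SO_5$, and record its occurrence multiplicity via the $\SO_5$ multiplicity formula of Arthur. Then I would compute $\VV_\eta := \Theta_{W_2, V_2^\epsilon, \psi_a}(\sigma^{\epsilon,\epsilon'})$ and match, via the local theta correspondence computed in \S\ref{ss:local-soudry} (including the archimedean discrete-series and nonarchimedean square-integrable cases), $\VV_\eta$ with the abstract $\pi_\eta$ in the local $A$-packets.

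The core of the argument is then parallel to the proofs of Propositions \ref{p:princ}, \ref{p:sk}, \ref{p:howe-ps}: establish the two inclusions separately. For the ``$\supseteq$'' direction, I would use the tower property together with Propositions \ref{p:tower}, \ref{p:nonvanish1} to show that whenever the abstract $\pi_\eta$ is nonzero and $\eta$ satisfies the constraint $\Delta^*\eta = 1$, the global theta lift $\VV_\eta$ is nonzero, lands in $L^2_{\disc}(\Mp_4)$, and realizes $\pi_\eta$; the Rallis inner product formula (Proposition \ref{p:nonvanish1}) controls nonvanishing in terms of $L(\tfrac{1}{2}, \sigma)$, which unwinds to $L(\tfrac12, \rho)$ and $\zeta$-values, and the sign $\epsilon(\tfrac12,\rho)$ governs which orthogonal inner form carries the cuspidal form --- this is exactly what produces the condition $\Delta^*\eta = 1$ (note $\tilde\epsilon_\phi = 1$ here, so $\epsilon(\tfrac12,\rho\boxtimes S_2)$-type signs must cancel). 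Since $\phi$ is of Soudry type, $L^2_{\phi,\psi}(\Mp_4)$ also has a residual part, described in \S\ref{s:residual} (the $L^2_{P_2}(\Mp_4)$ piece), and I would check separately, as in the earlier proofs, that the residual contributions are exactly the $\VV_\eta$ that are non-cuspidal, consistent with $J_{P_2,\psi}(\rho\otimes|\det|^{1/2})$ being $\pi^+$ globally.

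For the ``$\subseteq$'' direction --- that there is nothing else --- I would argue by contradiction: suppose an irreducible genuine $\pi$ occurs in $L^2_{\phi,\psi}(\Mp_4)$ orthogonal to all the $\VV_\eta$. By the residual analysis such a $\pi$ is cuspidal, and its partial standard $L$-function (relative to $\psi_a$) is forced by near-equivalence to be $L^S_{\psi_a}(s,\pi) = L^S(s+\tfrac12,\rho)\cdot L^S(s-\tfrac12,\rho)$, which is holomorphic and nonvanishing at $s = \tfrac12$ because $\rho$ is cuspidal on $\GL_2$ (so $L^S(s,\rho)$ is entire) and nonvanishing on $\Re s = 1$; hence by Proposition \ref{p:nonvanish2}(ii) the theta lift $\Theta_{W_2, V_2^\epsilon, \psi_a}(\pi)$ to some $\O(V_2^\epsilon)(\A)$ is nonzero. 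Identifying that lift with $\sigma^{\epsilon,\epsilon'}$ via strong multiplicity one (using that the local theta lifts match $\sigma_v^{+,+}$ at almost all $v$), and then lifting back via the adjunction formula, forces $\pi$ to be one of the $\pi_\eta$ with $\Delta^*\eta = 1$, a contradiction. The main obstacle I anticipate is not the global bootstrap, which is by now routine, but the \emph{local} input: verifying that the theta lifts $\theta_{W_2, V_2^\epsilon, \psi}(\sigma^{\epsilon,\epsilon'})$ really produce the prescribed members of $\Pi_{\phi,\psi}(\Mp_4)$ --- in particular the reducible archimedean member $\pi_\Lambda \oplus \pi_\Lambda^\vee$ with lowest $\widetilde{\U}(2)$-type $(\kappa+\tfrac32,\kappa+\tfrac32)$, and the nonarchimedean square-integrable $\pi^-_\varphi$ attached to the nontrivial character of $S_\varphi$ --- together with the precise bookkeeping of Weil-index signs, $\epsilon$-extensions, and the passage between $\psi$ and $\psi_a$ that pins down the constraint on $\eta$; these local computations, and the compatibility of the theta-lift labelling with the component-group labelling of the $A$-packet, are where the real work lies.
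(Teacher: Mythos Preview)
Your proposal takes a genuinely different route from the paper's. You aim to run the Saito--Kurokawa/Howe--Piatetski-Shapiro template through the \emph{equal-rank} dual pair $(\Mp_4, \O(V_2^\epsilon))$, lifting Soudry-type automorphic representations of five-dimensional orthogonal groups across to $\Mp_4$. The paper instead lifts $\Mp_4$ \emph{up} into the stable range, to $\SO_{2r+5}$ with $r=4$, and invokes J.-S.~Li's theorem that in the stable range the assignment $\pi \mapsto \theta_\psi(\pi)$ preserves discrete-spectrum multiplicities (Proposition~\ref{p:mult-preservation} and Lemma~\ref{l:soudry-mult}). This converts the problem into Arthur's multiplicity formula for the \emph{split} group $\SO_{2r+5}$ applied to the parameter $\theta(\phi) = \phi \oplus (1 \boxtimes S_{2r})$, after which the only remaining work is the purely local identification of stable-range theta lifts with Arthur-packet members (Proposition~\ref{p:soudry-local}). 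No tower/Rallis nonvanishing arguments, no inner forms, and no adjunction-by-contradiction are needed.

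Your equal-rank approach, as stated, has genuine gaps. First, the Soudry-type near-equivalence class on $\SO_5$ is not purely cuspidal: it contains the residual representation $J_{Q_2}(\rho \otimes |\det|^{1/2})$, so your construction of the $\VV_\eta$ via Propositions~\ref{p:tower} and~\ref{p:nonvanish1} (both of which require a cuspidal input $\sigma$) breaks down for exactly the $\eta$ that should produce $\pi^+$. Second, to reach all $\eta$ with $\Delta^*\eta = 1$ you would need Arthur's classification for the \emph{inner forms} $\SO(V_2^\epsilon)$, not just the split form treated in \cite{arthur}, and you would further need to prove that the equal-rank local theta lift carries Arthur's Soudry packet on $\SO(V_2^\epsilon)$ to the packet $\Pi_{\phi,\psi}(\Mp_4)$ of \S\ref{ss:local-soudry}; since that packet was \emph{defined} without reference to $\O_5$, this compatibility is a theorem to be proved, not an available input. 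Third, in your ``$\subseteq$'' step you must know that $\Theta_{W_2,V_2^\epsilon,\psi}(\pi)$ is cuspidal before invoking strong multiplicity one, which requires first showing the lifts of $\pi$ to $\O(V_0^+)$ and $\O(V_1^\epsilon)$ vanish; you do not address this. Finally, your repeated references to ``$\psi_a$'' are misplaced: for $\phi = \rho \boxtimes S_2$ there is no distinguished $a \in F^\times$, and the relevant $L$-function and theta lifts are taken relative to $\psi$ itself.
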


\subsection{The multiplicity preservation}

Since we have constructed the local $A$-packet $\Pi_{\phi_v, \psi_v}(\Mp_4)$ in a nonuniform way, it is somewhat tricky to construct the near equivalence class $L^2_{\phi,\psi}(\Mp_4)$.
We follow \cite{gi-mp} and appeal to some result of J.-S.~Li \cite{li97} on theta lifts in the stable range.
Namely, we fix an integer $r>3$ (in fact, we will take $r=4$ later) and consider (abstract) theta lifts from $\Mp_4(\A)$ to $\SO_{2r+5}(\A)$, where $\SO_{2r+5} = \SO(V_{r+2}^+)$ denotes the split odd special orthogonal group of rank $r+2$.
For any irreducible genuine unitary representation $\pi$ of $\Mp_4(\A)$, we may define an irreducible unitary representation $\theta_\psi(\pi)$ of $\SO_{2r+5}(\A)$ by 
\[
 \theta_\psi(\pi) = \bigotimes_v \theta_{W_{2,v}, V_{r+2,v}^+, \psi_v}(\pi_v).
\]
Let $L^2_{\theta(\phi)}(\SO_{2r+5})$ be the near equivalence class in the automorphic discrete spectrum of $\SO_{2r+5}$ determined by the elliptic $A$-parameter
\[
 \theta(\phi) = \phi \oplus (1 \boxtimes S_{2r}).
\]

\begin{prop}
\label{p:mult-preservation}
Writing
\[
 L^2_{\phi,\psi}(\Mp_4) \cong \bigoplus_\pi m_{\phi,\psi}(\pi) \pi,
\]
where $\pi$ runs over irreducible genuine unitary representations of $\Mp_4(\A)$ and $m_{\phi,\psi}(\pi)$ is the multiplicity of $\pi$ in $L^2_{\phi,\psi}(\Mp_4)$, we have
\[
 L^2_{\theta(\phi)}(\SO_{2r+5}) \cong \bigoplus_\pi m_{\phi,\psi}(\pi) \theta_\psi(\pi).
\]
\end{prop}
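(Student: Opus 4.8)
The plan is to realize $\theta_\psi$ as a multiplicity-preserving bijection between the irreducible constituents of $L^2_{\phi,\psi}(\Mp_4)$ and those of $L^2_{\theta(\phi)}(\SO_{2r+5})$, following the strategy of \cite{gi-mp}. There are two directions to establish: that $\theta_\psi$ carries $L^2_{\phi,\psi}(\Mp_4)$ injectively into $L^2_{\theta(\phi)}(\SO_{2r+5})$ (``going up''), and that every constituent of the latter is of the form $\theta_\psi(\pi)$ for some $\pi$ occurring in the former (``going down''), with matching multiplicities.

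For the ``going up'' direction I would invoke J.-S.~Li's theory of theta lifting in the stable range \cite{li97}. Since $r>3$, the Witt index $r+2$ of $V_{r+2}^+$ exceeds $2\cdot 2$, so the dual pair $\big(\Mp(W_2),\O(V_{r+2}^+)\big)$ lies in the stable range; hence for every irreducible genuine unitary $\pi$ occurring in $L^2_{\disc}(\Mp_4)$ the global theta lift $\Theta_{W_2,V_{r+2}^+,\psi}(\pi)$ is nonzero, coincides with the irreducible representation $\theta_\psi(\pi)=\bigotimes_v\theta_{W_{2,v},V_{r+2,v}^+,\psi_v}(\pi_v)$, is contained in $L^2_{\disc}(\SO_{2r+5})$, and induces an injection on automorphic realizations; moreover $\pi\mapsto\theta_\psi(\pi)$ is injective on isomorphism classes since $\pi_v$ is recovered from the local theta lift of $\theta_\psi(\pi)_v$ back down to $\Mp_4(F_v)$. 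The unramified theta correspondence adjoins the block $1\boxtimes S_{2r}$ to the $A$-parameter, so $\theta_\psi$ sends the near equivalence class $L^2_{\phi,\psi}(\Mp_4)$ into $L^2_{\theta(\phi)}(\SO_{2r+5})$. Writing $L^2_{\phi,\psi}(\Mp_4)\cong\bigoplus_\pi m_{\phi,\psi}(\pi)\,\pi$, this exhibits $\bigoplus_\pi m_{\phi,\psi}(\pi)\,\theta_\psi(\pi)$ as a subrepresentation of $L^2_{\theta(\phi)}(\SO_{2r+5})$.

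For the ``going down'' direction, I would take an irreducible constituent $\sigma$ of $L^2_{\theta(\phi)}(\SO_{2r+5})$ and run its tower of theta lifts $\Theta_{W_k,V_{r+2}^+,\psi}(\sigma)$ to the metaplectic groups, $k=0,1,2,\dots$. Its standard $L$-function is $L^S(s,\sigma)=L^S(s,\phi)\cdot\prod_{j=0}^{2r-1}\zeta^S\!\big(s+\tfrac{2r-1}{2}-j\big)$, and since $\phi$ has total dimension $4$ the factor $L^S(s,\phi)$ has no pole for $s>\tfrac52$; hence the largest pole of $L^S(s,\sigma)$ is simple and sits at $s=\tfrac{2r+1}{2}>\tfrac52$. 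By the Rallis inner product formula \cite{gqt,yamana} and the regularized Siegel--Weil formula \cite{kr,ichino,wu} (the ``going down'' counterparts of Propositions \ref{p:nonvanish1} and \ref{p:nonvanish2}), the lift $\Theta_{W_k,V_{r+2}^+,\psi}(\sigma)$ vanishes for $k\le 1$, since $L^S(s,\sigma)$ has no pole at $s=r+\tfrac52$ or $s=r+\tfrac32$, and is nonzero for $k=2$, the pole at $s=\tfrac{2r+1}{2}=r+\tfrac52-2$ forcing nonvanishing. Thus the first occurrence of $\sigma$ is at $k=2$, so $\Theta_{W_2,V_{r+2}^+,\psi}(\sigma)$ is a nonzero square-integrable (indeed cuspidal, being a first occurrence) genuine representation of $\Mp_4(\A)$ whose unramified parameters are those of $\varphi_{\phi_v}$; that is, it lies in $L^2_{\phi,\psi}(\Mp_4)$. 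Choosing an irreducible constituent $\pi$ of it, the adjunction formula together with the stable-range irreducibility of $\theta_\psi(\pi)$ forces $\sigma\cong\theta_\psi(\pi)$, which shows the image produced in the first direction is all of $L^2_{\theta(\phi)}(\SO_{2r+5})$. Equality of multiplicities then follows because the forward theta lift and the backward one (well-defined via the Rallis inner product formula at the first occurrence) are each injective on automorphic forms, giving the inequality in both directions between the multiplicity of $\sigma$ in $L^2_{\theta(\phi)}(\SO_{2r+5})$ and $m_{\phi,\psi}(\pi)$.

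The main obstacle is this ``going down'' step. Li's theorem handles lifts from the small group $\Mp_4$ up to the large group $\SO_{2r+5}$ cleanly but is silent about lifts in the reverse direction, where $\Mp_4$ is far from the stable range, so the first occurrence of an arbitrary member of $L^2_{\theta(\phi)}(\SO_{2r+5})$ in the metaplectic tower must be located by analysing the pole structure of its standard $L$-function; it is precisely the largeness of $r$ that makes the dominant pole at $s=\tfrac{2r+1}{2}$ lie strictly above everything contributed by $\phi$, thereby pinning that first occurrence to $\Mp_4$.
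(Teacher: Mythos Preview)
Your ``going up'' step is essentially the paper's: both invoke Li's stable-range result \cite{li97} to obtain an embedding of $\bigoplus_\pi m_{\phi,\psi}(\pi)\,\theta_\psi(\pi)$ into $L^2_{\theta(\phi)}(\SO_{2r+5})$.

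The gap is in your ``going down'' step. You propose to take an irreducible constituent $\sigma$ of $L^2_{\theta(\phi)}(\SO_{2r+5})$ and run its theta tower $\Theta_{W_k,V_{r+2}^+,\psi}(\sigma)$ down to the metaplectic groups, locating the first occurrence via the Rallis inner product formula. But the theta integral defining $\Theta_{W_k,V_{r+2}^+,\psi}(\sigma)$, the tower property, and the Rallis inner product formula all require $\sigma$ to be \emph{cuspidal}, and there is no reason to expect this. The $A$-parameter $\theta(\phi)=(\rho\boxtimes S_2)\oplus(1\boxtimes S_{2r})$ is highly non-tempered; its packet members---for instance the one attached to the trivial character, which is the Langlands quotient $J_Q(|\cdot|^{r-\frac12},\dots,|\cdot|^{\frac12},\tau\otimes|\det|^{\frac12})$---typically lie in the residual spectrum. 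So the very formation of $\Theta_{W_k,V_{r+2}^+,\psi}(\sigma)$ and the $L$-function pole analysis that follows are not justified.

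The paper avoids any backward global theta lift. It uses instead the full chain of inequalities supplied by Li,
\[
 m_{\disc}(\pi)\ \le\ m_{\disc}(\theta_\psi(\pi))\ \le\ m(\theta_\psi(\pi))\ \le\ m(\pi),
\]
where $m(\cdot)$ counts \emph{all} automorphic realizations, not just square-integrable ones. The crucial extra input is Lemma~\ref{l:soudry-mult}, which shows $m_{\disc}(\pi)=m(\pi)$ by a cuspidal-support argument: the weak lift of $\pi$ to $\GL_4(\A)$ is $(\rho\otimes|\det|^{\frac12})\boxplus(\rho\otimes|\det|^{-\frac12})$, which forces any non-cuspidal automorphic realization of $\pi$ to have cuspidal support $\rho\otimes|\det|^{-\frac12}$ on $P_2$, hence to be square-integrable by the Langlands criterion. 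This collapses the entire chain to equalities, giving both $m_{\disc}(\pi)=m_{\disc}(\theta_\psi(\pi))$ and (via the argument of \cite[Corollary~4.2]{gi-mp}) the surjectivity onto $L^2_{\theta(\phi)}(\SO_{2r+5})$. The missing idea in your proposal is precisely this equality $m_{\disc}(\pi)=m(\pi)$ on the metaplectic side, which substitutes for the backward theta lift altogether.
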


\begin{proof}
The proof is similar to that of \cite[Corollary 4.2]{gi-mp}.
Put
\begin{align*}
 m(\pi) & = \dim_\C \Hom_{\Mp_4(\A)}(\pi, \mathcal{A}(\Mp_4)), \\
 m_\disc(\pi) & = \dim_\C \Hom_{\Mp_4(\A)}(\pi, \mathcal{A}^2(\Mp_4)),
\end{align*}
where $\mathcal{A}(\Mp_4)$ is the space of genuine automorphic forms on $\Mp_4(\A)$ and $\mathcal{A}^2(\Mp_4)$ is the subspace of square-integrable automorphic forms.
Similarly, for any irreducible unitary representation $\sigma$ of $\SO_{2r+5}(\A)$, we may define its multiplicities $m(\sigma)$ and $m_\disc(\sigma)$.
Then the result of J.-S.~Li \cite{li97} says that 
\begin{equation}
\label{eq:JSLi}
 m_\disc(\pi) \le m_\disc(\theta_\psi(\pi)) \le m(\theta_\psi(\pi)) \le m(\pi). 
\end{equation}
This and the Howe duality imply that there is an embedding
\[
 \bigoplus_\pi m_{\phi,\psi}(\pi) \theta_\psi(\pi) \longhookrightarrow L^2_{\theta(\phi)}(\SO_{2r+5}).
\]
To show that this embedding is an isomorphism, it suffices to prove the following:
\begin{itemize}
\item $m_\disc(\pi) = m_\disc(\theta_\psi(\pi))$ for any irreducible genuine unitary representation $\pi$ of $\Mp_4(\A)$ such that the $L$-parameter of $\pi_v$ (relative to $\psi_v$) is $\varphi_{\phi_v}$ for almost all $v$;
\item any irreducible summand of $L^2_{\theta(\phi)}(\SO_{2r+5})$ is isomorphic to $\theta_\psi(\pi)$ for some irreducible summand $\pi$ of $L^2_{\phi,\psi}(\Mp_4)$.
\end{itemize}
The first assertion follows from \eqref{eq:JSLi} and Lemma \ref{l:soudry-mult} below.
The second assertion follows from the first one as in the proof of \cite[Corollary 4.2]{gi-mp}.
\end{proof}

\begin{lem}
\label{l:soudry-mult}
Let $\pi$ be an irreducible genuine unitary representation of $\Mp_4(\A)$ such that the $L$-parameter of $\pi_v$ (relative to $\psi_v$) is $\varphi_{\phi_v}$ for almost all $v$.
Then we have
\[
 m_\disc(\pi) = m(\pi).
\]
\end{lem}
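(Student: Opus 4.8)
The plan is to prove $m_\disc(\pi) = m(\pi)$ by showing that every automorphic realization of $\pi$ is square-integrable; since the inequality $m_\disc(\pi) \le m(\pi)$ is automatic, this suffices. Equivalently, I want to show that $\pi$ does not occur in the orthogonal complement of $\mathcal{A}^2(\Mp_4)$ inside $\mathcal{A}(\Mp_4)$. The first step is to invoke the Langlands theory of Eisenstein series: if $\pi$ occurs in $\mathcal{A}(\Mp_4)$, then it is a constituent of the space $\mathcal{A}_{[\widetilde{M},\tau]}(\Mp_4)$ attached to a cuspidal datum $(\widetilde{M},\tau)$, where $\widetilde{M}$ is the Levi component of a parabolic subgroup of $\Mp_4$ and $\tau$ is an irreducible genuine cuspidal automorphic representation of $\widetilde{M}(\A)$.

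Next I would carry out a cuspidal support analysis. The near equivalence class of $\phi = \rho \boxtimes S_2$ corresponds to the isobaric automorphic representation $\rho|\cdot|^{\frac{1}{2}} \boxplus \rho|\cdot|^{-\frac{1}{2}}$ of $\GL_4(\A)$ (indeed $\varphi_{\phi_v} = \rho_v|\cdot|_v^{\frac{1}{2}} \oplus \rho_v|\cdot|_v^{-\frac{1}{2}}$), and if $\pi$ is a constituent of $\mathcal{A}_{[\widetilde{M},\tau]}(\Mp_4)$ then this isobaric representation is read off from $(\widetilde{M},\tau)$. Using the uniqueness of the isobaric decomposition (Jacquet--Shalika) together with the fact that $\rho$ is cuspidal on $\GL_2(\A)$ and $\rho^\vee \cong \rho$ (as $\rho$ is orthogonal), I expect the only possibilities to be: (a) $\widetilde{M} = \Mp_4$, so that $\pi$ is cuspidal, hence square-integrable; and (b) $\widetilde{M}$ is the Levi $\widetilde{\GL}_2$ of $\widetilde{P}_2$ with $\tau = \rho \otimes \chi_\psi$. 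A cuspidal datum on a Levi $\widetilde{\GL}_1 \times \Mp_2$ or $\widetilde{\GL}_1 \times \widetilde{\GL}_1$ is excluded, since the associated isobaric representation would then have a $\GL_1$-constituent whereas $\rho|\cdot|^{\frac{1}{2}} \boxplus \rho|\cdot|^{-\frac{1}{2}}$ has only $\GL_2$-constituents.

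For case (b) I would argue as follows. Among the constituents of $\mathcal{A}_{[\widetilde{\GL}_2,\,\rho \otimes \chi_\psi]}(\Mp_4)$, only those arising at the point $s = \tfrac{1}{2}$ of the family of Eisenstein series attached to $I_{P_2,\psi}(\rho \otimes |\det|^s)$ can lie in the near equivalence class of $\phi$, because for any other $s$ the corresponding $L$-parameter is $\rho_v|\cdot|_v^{s} \oplus \rho_v|\cdot|_v^{-s} \neq \varphi_{\phi_v}$. By \cite[Theorem 3.4]{gao} (equivalently, by computing the constant term of this Eisenstein series in terms of automorphic $L$-functions), the Eisenstein series has a pole at $s = \tfrac{1}{2}$ whose residues span the square-integrable residual representation $J_{P_2,\psi}(\rho \otimes |\det|^{\frac{1}{2}})$; hence the contribution of the point $s = \tfrac{1}{2}$ to $\mathcal{A}(\Mp_4)$ lies entirely in $\mathcal{A}^2(\Mp_4)$. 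Combining (a) and (b), every automorphic realization of $\pi$ is square-integrable, so $m(\pi) = m_\disc(\pi)$.

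The step I expect to be the main obstacle is the last one: showing that at the pole $s = \tfrac{1}{2}$ the space $\mathcal{A}_{[\widetilde{\GL}_2,\,\rho \otimes \chi_\psi]}(\Mp_4)$ contributes nothing to $\mathcal{A}(\Mp_4)$ beyond the square-integrable residues --- in particular, ruling out non-square-integrable automorphic forms assembled from the higher Laurent coefficients of the Eisenstein series at $s = \tfrac{1}{2}$. This is provided by the M{\oe}glin--Waldspurger description of residual Eisenstein series (or by the explicit structure of $\mathcal{A}_{[\widetilde{\GL}_2,\,\rho \otimes \chi_\psi]}(\Mp_4)$ underlying Gao's computation). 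Once that input is in place, the rest of the argument is a routine comparison of cuspidal supports, in the same spirit as the proof of \cite[Corollary 4.2]{gi-mp}.
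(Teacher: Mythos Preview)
Your outline is structurally the same as the paper's up through the cuspidal support analysis: both reduce to the case of cuspidal support on $\widetilde{P}_2$ with datum $\rho\otimes\chi_\psi$ at exponent $\pm\tfrac{1}{2}$. The divergence is exactly at the step you flag as the main obstacle. You propose to analyze the Eisenstein series at $s=\tfrac{1}{2}$ and argue (via M{\oe}glin--Waldspurger/Gao) that only the square-integrable residue occurs there. The paper avoids this entirely by a local argument.

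Concretely, the paper observes that any automorphic realization $\VV$ with this cuspidal support yields (via the constant term) an embedding of $\pi$ into $I_{P_2,\psi}(\rho\otimes|\det|^{\pm\frac{1}{2}})$. But for almost all $v$ one has $\pi_v\cong J_{P_2,\psi_v}(\rho_v\otimes|\det|_v^{\frac{1}{2}})$, and the Langlands quotient is \emph{not} a subrepresentation of the standard module $I_{P_2,\psi_v}(\rho_v\otimes|\det|_v^{\frac{1}{2}})$ at such places. This forces the exponent to be $-\tfrac{1}{2}$, and then Langlands' square-integrability criterion immediately gives $\VV\subset\mathcal{A}^2(\Mp_4)$.

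So your approach can be made to work, but the step you worried about is genuinely delicate (higher Laurent coefficients of Eisenstein series do produce non-$L^2$ automorphic forms), whereas the paper's sub-versus-quotient trick bypasses the global analysis of $\mathcal{A}_{[\widetilde{\GL}_2,\rho\otimes\chi_\psi]}(\Mp_4)$ in favor of an elementary local fact about standard modules. This is the same device used in the proof of \cite[Proposition 4.1]{gi-mp}, which the paper cites.
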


\begin{proof}
We may assume that $m(\pi)>0$.
For any realization of $\pi$ on a subspace $\VV$ of $\mathcal{A}(\Mp_4)$, we need to show that $\VV$ is contained in $\mathcal{A}^2(\Mp_4)$.
We may further assume that $\VV$ is not contained in $\mathcal{A}_{\cusp}(\Mp_4)$.
Since the weak lift of $\pi$ to $\GL_4(\A)$ is 
\[
 (\rho \otimes |\det|^{\frac{1}{2}}) \boxplus (\rho \otimes |\det|^{-\frac{1}{2}}),
\]
where $\boxplus$ denotes the isobaric sum, it follows from the argument in the proof of \cite[Proposition 4.1]{gi-mp} that the cuspidal support of $\VV$ is on $P_2$ and is of the form
\[
 \rho \otimes |\det|^{\pm \frac{1}{2}}.
\]
In particular, $\pi$ is a subrepresentation of $I_{P_2,\psi}(\rho \otimes |\det|^{\pm \frac{1}{2}})$.
On the other hand, we have $\pi_v \cong J_{P_2,\psi_v}(\rho_v \otimes |\det|_v^{\frac{1}{2}})$ for almost all $v$, which does not occur as a subrepresentation of $I_{P_2,\psi_v}(\rho_v \otimes |\det|_v^{\frac{1}{2}})$.
Hence the cuspidal support of $\VV$ must be $\rho \otimes |\det|^{-\frac{1}{2}}$, so that $\VV$ is contained in $\mathcal{A}^2(\Mp_4)$ by the square-integrability criterion.
This completes the proof.
\end{proof}

\begin{rem}
\label{r:howe-ps}
In fact, Proposition \ref{p:mult-preservation} holds for other elliptic $A$-parameters for $\Mp_4$.
We give some details here in the case of $A$-parameters of Howe--Piatetski-Shapiro type, which we will use later.
Suppose that $\phi = (\chi_a \boxtimes S_2) \oplus (\chi_b \boxtimes S_2)$ with $a, b \in F^\times$ such that $\chi_a \ne \chi_b$.
Let $\pi$ be an irreducible genuine unitary representation of $\Mp_4(\A)$ such that the $L$-parameter of $\pi_v$ (relative to $\psi_v$) is $\varphi_{\phi_v}$ for almost all $v$, so that its weak lift to $\GL_4(\A)$ is
\[
 \chi_a |\cdot|^{\frac{1}{2}} \boxplus \chi_b |\cdot|^{\frac{1}{2}} \boxplus \chi_a |\cdot|^{-\frac{1}{2}} \boxplus \chi_b |\cdot|^{-\frac{1}{2}}.
\]
Then, for any realization of $\pi$ on a subspace $\VV$ of $\mathcal{A}(\Mp_4)$ which is not contained in $\mathcal{A}_{\cusp}(\Mp_4)$, it follows from the same argument that the cuspidal support of $\VV$ is on $B$ and is of the form $\chi_a |\cdot|^{-\frac{1}{2}} \otimes \chi_b |\cdot|^{-\frac{1}{2}}$.
This implies that $m_\disc(\pi) = m(\pi)$, from which we can deduce Proposition \ref{p:mult-preservation} for $\phi$.
\end{rem}

\subsection{Proof of Proposition \ref{p:soudry}}

We first apply Arthur's multiplicity formula to the near equivalence class $L^2_{\theta(\phi)}(\SO_{2r+5})$.
For the elliptic $A$-parameter $\theta(\phi) = \phi \oplus (1 \boxtimes S_{2r})$, we have
\[
 S_{\theta(\phi)} \cong S_\phi \oplus \Z/2\Z,
\]
where the extra factor $\Z/2\Z$ corresponds to $1 \boxtimes S_{2r}$.
Similarly, for each place $v$ of $F$ and the local $A$-parameter $\theta(\phi_v) = \phi_v \oplus (1 \boxtimes S_{2r})$, we have
\[
 S_{\theta(\phi_v)} \cong S_{\phi_v} \oplus \Z/2\Z.
\]
Put 
\[
 \overline{S}_{\theta(\phi_v)} = S_{\theta(\phi_v)} / \langle z_{\theta(\phi_v)} \rangle,
\]
where $z_{\theta(\phi_v)}$ is the image of $-1 \in \Sp_{2r+4}(\C)$ in $S_{\theta(\phi_v)}$.
Then we have a natural isomorphism
\[
 \iota_v : S_{\phi_v} \longhookrightarrow S_{\theta(\phi_v)} \longtwoheadrightarrow \overline{S}_{\theta(\phi_v)}.
\]
Let
\[
 \Pi_{\theta(\phi_v)}(\SO_{2r+5}) = \{ \sigma_{\xi_v} \, | \, \xi_v \in \widehat{\overline{S}}_{\theta(\phi_v)} \}
\]
be the local $A$-packet defined by Arthur \cite{arthur} consisting of some semisimple representations of $\SO_{2r+5}(F_v)$ of finite length.
For $\xi \in \widehat{\overline{S}}_{\theta(\phi), \A}$, put 
\[
 \sigma_\xi = \bigotimes_v \sigma_{\xi_v}.
\]
Then, noting that the character $\epsilon_{\theta(\phi)}$ associated to $\theta(\phi)$ is trivial, we deduce from Arthur's multiplicity formula \cite[Theorem 1.5.2]{arthur} that
\begin{equation}
\label{eq:mult-so}
 L^2_{\theta(\phi)}(\SO_{2r+5}) \cong \bigoplus_\xi \sigma_\xi,  
\end{equation}
where $\xi$ runs over elements in $\widehat{\overline{S}}_{\theta(\phi), \A}$ such that $\Delta^* \xi = 1$.

We now use Proposition \ref{p:mult-preservation} to transfer the structure of $L^2_{\theta(\phi)}(\SO_{2r+5})$ to $L^2_{\phi,\psi}(\Mp_4)$.
For $\eta_v \in \widehat{S}_{\phi_v}$ and $\xi_v = \eta_v \circ \iota_v^{-1} \in \widehat{\overline{S}}_{\theta(\phi_v)}$, write 
\[
 \sigma_{\xi_v} = \bigoplus_i m_{\xi_v,i} \sigma_{\xi_v,i}
\]
with some positive integers $m_{\xi_v,i}$ and some pairwise distinct irreducible representations $\sigma_{\xi_v,i}$ of $\SO_{2r+5}(F_v)$, and put 
\begin{equation}
\label{eq:tilde_pi_eta}
 \tilde{\pi}_{\eta_v} = \bigoplus_i m_{\xi_v,i} \theta_{W_{2,v}, V_{r+2,v}^+,\psi_v}(\sigma_{\xi_v,i}).
\end{equation}
For $\eta \in \widehat{S}_{\phi,\A}$, put
\[
 \tilde{\pi}_\eta = \bigotimes_v \tilde{\pi}_{\eta_v}.
\]
Then, by Proposition \ref{p:mult-preservation} and \eqref{eq:mult-so}, we have
\[
 L^2_{\phi,\psi}(\Mp_4) \cong \bigoplus_{\eta} \tilde{\pi}_\eta,
\]
where $\eta$ runs over elements in $\widehat{S}_{\phi,\A}$ such that $\Delta^* \eta = 1$.
Thus, to finish the proof of Proposition \ref{p:soudry}, we may take $r=4$ and need to prove the following.

\begin{prop}
\label{p:soudry-local}
Assume that $r=4$.
Then we have
\[
 \tilde{\pi}_{\eta_v} = \pi_{\eta_v}
\]
for $\eta_v \in \widehat{S}_{\phi_v}$.
Here $\tilde{\pi}_{\eta_v}$ is defined by \eqref{eq:tilde_pi_eta}, whereas $\pi_{\eta_v}$ is defined in \S \ref{ss:local-soudry}.
\end{prop}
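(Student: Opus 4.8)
The plan is to make both $\tilde{\pi}_{\eta_v}$ and $\pi_{\eta_v}$ explicit and to check that they coincide, treating separately the cases in which $\rho = \rho_v$ is reducible or irreducible. When $\rho = \chi \oplus \chi^{-1}$ with $\chi^2 \ne 1$, we have $S_{\phi_v} = 1$, so Arthur's packet $\Pi_{\theta(\phi_v)}(\SO_{2r+5})$ is the singleton consisting of the Langlands quotient $\sigma_1$ attached to $\varphi_{\theta(\phi_v)}$; the theta lift $\theta_{W_{2,v},V_{r+2,v}^+,\psi_v}(\sigma_1)$ is then computed from the standard compatibility of the theta correspondence with parabolic induction and the Langlands classification (via the filtration of the Jacquet modules of the Weil representation together with the Rallis tower), which shows that passing down the tower strips off the part $|\cdot|^{\frac{2r-1}{2}},\dots,|\cdot|^{\frac12}$ coming from $1\boxtimes S_{2r}$ and leaves the Langlands quotient attached to $\varphi_{\phi_v}$, i.e.\ $\pi_{\eta_v}$. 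When $\rho = \chi_a \oplus \chi_b$, the parameter $\phi_v$ is of Howe--Piatetski-Shapiro type and $\pi_{\eta_v}$ was built in \S\ref{ss:local-howe-ps} from theta lifts from $\O(V_1)$; here I would deduce $\tilde{\pi}_{\eta_v} = \pi_{\eta_v}$ from Proposition \ref{p:howe-ps} and Remark \ref{r:howe-ps} by a local--global argument: these identify $L^2_{\theta(\phi)}(\SO_{2r+5})$ in two ways, which (using Arthur's multiplicity formula and the strong multiplicity one theorem for $\SO_{2r+5}$) forces the local identity $\theta_\psi(\pi^{\epsilon_1,\epsilon_2}_v) = \sigma_{\xi_v}$, after which the injectivity of the stable-range theta lift \cite{li97} and Howe duality give $\tilde{\pi}_{\eta_v} = \theta_{W_{2,v},V_{r+2,v}^+,\psi_v}(\sigma_{\xi_v}) = \pi^{\epsilon_1,\epsilon_2}_v = \pi_{\eta_v}$.

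The main case is $\rho$ irreducible, so that $F_v$ is nonarchimedean or $\R$ and $\widehat{S}_{\phi_v} \cong \mu_2$; there are two members $\pi^+,\pi^-$ to recover. First I would make Arthur's packet $\Pi_{\theta(\phi_v)}(\SO_{2r+5})$ explicit for $\theta(\phi_v) = (\rho\boxtimes S_2)\oplus(1\boxtimes S_{2r})$: it is $\{\sigma_1,\sigma_{-1}\}$, where $\sigma_1$ is the Langlands quotient obtained by placing the Speh-type datum of $1\boxtimes S_{2r}$ at the bottom and $\tau\otimes|\det|^{\frac12}$ on top ($\tau$ the square-integrable representation of $\GL_2(F_v)$ with $L$-parameter $\rho$), while $\sigma_{-1}$ is the member attached to the nontrivial character: over a $p$-adic field it is a Langlands quotient whose ``bottom'' is square-integrable with parameter $\rho\boxtimes S_2$, and over $\R$ it is a sum of two cohomologically induced representations. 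For $\sigma_1$ the same computation as above gives $\theta_{W_{2,v},V_{r+2,v}^+,\psi_v}(\sigma_1) = J_{P_2,\psi_v}(\tau\otimes|\det|^{\frac12}) = \pi^+$. For $\sigma_{-1}$, the compatibility of theta with $A$-parameters in the relevant first-occurrence range (as in \cite{gi-mp}) shows that the lift has $L$-parameter $\rho\boxtimes S_2$, and the local Shimura correspondence \cite{gs} identifies the character of the associated component group it carries, relative to $\psi_v$, as the nontrivial one; thus the lift is $\pi^-_\varphi$ when $F_v$ is $p$-adic and $\pi_\Lambda \oplus \pi_\Lambda^\vee$ with $\Lambda = (\kappa+\tfrac32,\kappa+\tfrac32)$ when $F_v = \R$, i.e.\ $\pi^-$. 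Since $\iota_v : S_{\phi_v} \to \overline{S}_{\theta(\phi_v)}$ is an isomorphism carrying the generator of $S_{\phi_v}$ (corresponding to $\rho\boxtimes S_2$) to the nontrivial class in $\overline{S}_{\theta(\phi_v)}$ --- the two generators of $S_{\theta(\phi_v)}$ being identified there because $z_{\theta(\phi_v)}$ is their product --- the character $\xi_v = \eta_v \circ \iota_v^{-1}$ is trivial exactly when $\eta_v$ is, and we conclude $\tilde{\pi}_{\eta_v} = \pi_{\eta_v}$ for both $\eta_v$.

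I expect the analysis of $\sigma_{-1}$ and its theta lift to be the hard part. Over a $p$-adic field one must pin down the character of $S_\varphi \cong \Z/2\Z$ carried by the lift, relative to $\psi_v$ --- a sign computation that cannot be avoided, since getting it wrong would interchange $\pi^+$ and $\pi^-$. Over $\R$ the difficulty is to show that the lift is precisely the \emph{reducible} representation $\pi_\Lambda \oplus \pi_\Lambda^\vee$ with the stated lowest $\widetilde{\U}(2)$-type: since Howe duality forces the theta lift of an irreducible representation to be irreducible, the reducibility must already be present in the member $\sigma_{-1}$ of the $\SO_{2r+5}(\R)$-packet, so one has to identify its two irreducible constituents and compute their archimedean theta lifts to $\Mp_4(\R)$ --- in particular the lowest $\widetilde{\U}(2)$-type --- explicitly. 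This is the only point at which a reducible member of an $A$-packet for $\Mp_4$ intervenes, and matching the reducibility on the two sides is the crux.
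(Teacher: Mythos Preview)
Your overall plan matches the paper's: split by whether $\rho_v$ is reducible, identify Arthur's packet $\Pi_{\theta(\phi_v)}(\SO_{2r+5})$ explicitly, and compute the theta lifts. The reducible case with $\chi^2\ne 1$ and the $\sigma^+$ halves are handled exactly as you say, via compatibility with parabolic induction (Lemma~\ref{l:ind-princ-2}).

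There is, however, a genuine gap in your treatment of the Howe--Piatetski-Shapiro case $\rho_v=\chi_a\oplus\chi_b$. The global inputs you cite (Proposition~\ref{p:howe-ps}, Remark~\ref{r:howe-ps}, and Arthur's formula \eqref{eq:mult-so2}) give
\[
\bigoplus_{\prod\epsilon_{i,v}=1}\theta_\psi(\pi^{\epsilon_1,\epsilon_2})\;\cong\;\bigoplus_{\prod\epsilon_{i,v}=1}\sigma^{\epsilon_1,\epsilon_2}
\]
as $\SO_{2r+5}(\A)$-modules, but this alone does \emph{not} force $\theta_\psi(\pi^{\epsilon_1,\epsilon_2}_v)=\sigma^{\epsilon_1,\epsilon_2}_v$. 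For instance, the possibility $\theta_\psi(\pi^{\epsilon_1,\epsilon_2}_v)=\sigma^{\epsilon_2,\epsilon_1}_v$ at every $v$ is fully compatible with both global decompositions and with strong multiplicity one. The paper breaks this symmetry by a direct local computation: at a nonarchimedean place where $\chi_a,\chi_b,1$ are pairwise distinct, one shows \eqref{eq:howe-ps-nonarch} by an explicit Jacquet-module calculation (\S\ref{sss:howe-ps-nonarch}), obtaining $\theta_\psi(\pi^{+,-})=\sigma^{+,-}$ and $\theta_\psi(\pi^{-,+})=\sigma^{-,+}$. Only then does the local--global argument (Lemmas~\ref{l:mult-free}--\ref{l:irred} plus the propagation step at the end of \S11.2) pin down the labelling at an arbitrary place. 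Your proposal omits this anchoring computation, and without it the argument is incomplete.

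Two smaller points. In the nonarchimedean irreducible case, the result you invoke for $\sigma^{-}$ is \cite[Theorem~1.4]{atobe-gan}, not \cite{gs} (which treats the equal-rank pair $(\Mp_{2n},\SO_{2n+1})$, not the stable-range pair here); also the square-integrable ``bottom'' of $\sigma^-$ sits on $\SO_7$ with parameter $(\rho\boxtimes S_2)\oplus(1\boxtimes S_2)$, not just $\rho\boxtimes S_2$. In the real irreducible case you correctly locate the crux---$\sigma^-$ is reducible---but the paper needs concrete ingredients you do not name: M{\oe}glin--Renard \cite{mr1,mr2} to write $\sigma^-=A_{\q_0}(\lambda_0)\oplus A_{\q_2}(\lambda_2)$, \cite{lmt} plus induction in stages to compute $\theta_\psi(\pi_\Lambda)$, and an Atlas check (Lemma~\ref{l:atlas}) for irreducibility in the weakly fair range $\kappa\le r=4$.
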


This proposition will be proved in the next section.
In fact, it holds for any $r>3$ when either $F$ is nonarchimedean or $F=\C$.

\begin{rem}
We recall the analog of \eqref{eq:mult-so} here in the case of $A$-parameters of Howe--Piatetski-Shapiro type, which we will use later. 
Suppose that $\phi = (\chi_a \boxtimes S_2) \oplus (\chi_b \boxtimes S_2)$ with $a, b \in F^\times$ such that $\chi_a \ne \chi_b$ and put $\theta(\phi) = \phi \oplus (1 \boxtimes S_{2r})$.
For each place $v$ of $F$, let
\[
 \Pi_{\theta(\phi_v)}(\SO_{2r+5}) = \{ \sigma_v^{\epsilon_{1,v},\epsilon_{2,v}} \, | \, \epsilon_{1,v}, \epsilon_{2,v} \in \mu_2 \}
\]
be the local $A$-packet defined by Arthur \cite{arthur} consisting of some semisimple representations of $\SO_{2r+5}(F_v)$ of finite length, where we identify $\widehat{\overline{S}}_{\theta(\phi_v)}$ with 
\[
\begin{cases}
 \mu_2 \times \mu_2 & \text{if $\chi_{a,v} \ne \chi_{b,v}$;} \\
 \Delta \mu_2 & \text{if $\chi_{a,v} = \chi_{b,v}$,}
\end{cases}
\]
and interpret $\sigma_v^{\epsilon_{1,v},\epsilon_{2,v}}$ as zero if $\chi_{a,v} = \chi_{b,v}$ and $\epsilon_{1,v} \ne \epsilon_{2,v}$.
For $\epsilon_1, \epsilon_2 \in \mu_{2,\A}$, put
\[
 \sigma^{\epsilon_1,\epsilon_2} = \bigotimes_v \sigma_v^{\epsilon_{1,v},\epsilon_{2,v}}.
\]
Then, noting that the character $\epsilon_{\theta(\phi)}$ associated to $\theta(\phi)$ is trivial, we deduce from Arthur's multiplicity formula \cite[Theorem 1.5.2]{arthur} that
\begin{equation}
\label{eq:mult-so2}
 L^2_{\theta(\phi)}(\SO_{2r+5}) \cong \bigoplus_{\epsilon_1,\epsilon_2} \sigma^{\epsilon_1,\epsilon_2}, 
\end{equation}
where $\epsilon_1,\epsilon_2$ run over elements in $\mu_{2,\A}$ such that $\prod_v \epsilon_{1,v} = \prod_v \epsilon_{2,v} = 1$.
\end{rem}

\section{Proof of Proposition \ref{p:soudry-local}}
\label{s:pf-soudry}

Suppose that $F$ is local and consider local $A$-parameters 
\[
 \phi = \rho \boxtimes S_2, \qquad 
 \theta(\phi) = \phi \oplus (1 \boxtimes S_{2r})
\]
with a $2$-dimensional orthogonal tempered representation $\rho$ of $L_F$ and an integer $r>3$.
Let 
\[
 \Pi_{\phi,\psi}(\Mp_4) = \{ \pi_\eta \, | \, \eta \in \widehat{S}_\phi \}, \qquad
 \Pi_{\theta(\phi)}(\SO_{2r+5}) = \{ \sigma_\xi \, | \, \xi \in \widehat{\overline{S}}_{\theta(\phi)} \}
\]
be the associated $A$-packets.
Note that $\pi_\eta$ is irreducible unless $F = \R$, $\rho$ is irreducible, and $\eta$ is nontrivial.
We denote by
\[
 \theta_\psi = \theta_{W_2,V_{r+2}^+,\psi}
\]
the theta lift from $\Mp_4 = \Mp(W_2)$ to $\SO_{2r+5} = \SO(V_{r+2}^+)$.
For any semisimple genuine representation $\pi$ of $\Mp_4(F)$ of finite length, write $\pi = \bigoplus_i m_i \pi_i$ with some positive integers $m_i$ and some pairwise distinct irreducible genuine representations $\pi_i$ of $\Mp_4(F)$, and put 
\[
 \theta_\psi(\pi) = \bigoplus_i m_i \theta_\psi(\pi_i).
\]
Then we need to show that
\[
 \theta_\psi(\pi_\eta) = \sigma_{\xi}
\]
for $\eta \in \widehat{S}_\phi$ and $\xi = \eta \circ \iota^{-1} \in \widehat{\overline{S}}_{\theta(\phi)}$ (under the assumption that $r=4$ when $F=\R$), where $\iota : S_{\phi} \hookrightarrow S_{\theta(\phi)} \twoheadrightarrow \overline{S}_{\theta(\phi)}$ is the natural isomorphism.

We will consider the various cases in turn.

\subsection{The reducible case I}

Suppose that $\rho = \chi \oplus \chi^{-1}$ for some unitary character $\chi$ of $F^\times$ such that $\chi^2 \ne 1$.
We denote by $\pi_\phi$ and $\sigma_{\theta(\phi)}$ the unique representations in $\Pi_{\phi,\psi}(\Mp_4)$ and $\Pi_{\theta(\phi)}(\SO_{2r+5})$, respectively.
By definition, we have
\[
 \pi_\phi = J_{B, \psi}(\chi |\cdot|^{\frac{1}{2}}, \chi^{-1} |\cdot|^{\frac{1}{2}}).
\]
Also, by the inductive property of $A$-packets \cite[\S 4.2]{moeglin11}, \cite[\S 6]{mr-c}, \cite[\S 5]{mr3}, we have
\[
 \sigma_{\theta(\phi)} = I_{Q_2}(\chi \circ \det, 1), 
\]
where the right-hand side is irreducible.
In fact, the right-hand side is a quotient of
\[
 I_Q(\chi |\cdot|^{\frac{1}{2}}, \chi |\cdot|^{-\frac{1}{2}}, |\cdot|^{r-\frac{1}{2}}, |\cdot|^{r-\frac{3}{2}}, \dots, |\cdot|^{\frac{1}{2}})
 \cong
 I_Q(|\cdot|^{r-\frac{1}{2}}, |\cdot|^{r-\frac{3}{2}}, \dots, |\cdot|^{\frac{1}{2}}, \chi |\cdot|^{\frac{1}{2}}, \chi^{-1} |\cdot|^{\frac{1}{2}}), 
\]
where $Q = Q_{(1^{r+2})}$, so that $\sigma_{\theta(\phi)}$ is the unique irreducible representation of $\SO_{2r+5}(F)$ with $L$-parameter $\varphi_{\theta(\phi)}$.
Hence, by Lemma \ref{l:ind-princ-2} below, we have
\[
 \theta_\psi(\pi_\phi) = \sigma_{\theta(\phi)}.
\]

\subsection{The reducible case II}

Suppose that $\rho = \chi_a \oplus \chi_b$ for some $a, b \in F^\times$.
For $\epsilon_1, \epsilon_2 \in \mu_2$, we denote by $\pi^{\epsilon_1,\epsilon_2}$ and $\sigma^{\epsilon_1,\epsilon_2}$ the representations in $\Pi_{\phi,\psi}(\Mp_4)$ and $\Pi_{\theta(\phi)}(\SO_{2r+5})$ associated to $(\epsilon_1,\epsilon_2)$, respectively, where we identify $\widehat{S}_\phi \cong \widehat{\overline{S}}_{\theta(\phi)}$ with a subgroup of $\mu_2 \times \mu_2$.
By definition, we have
\[
 \pi^{+,+} = \pi_\phi, 
\]
where $\pi_\phi$ is the unique irreducible genuine representation of $\Mp_4(F)$ with $L$-parameter $\varphi_\phi$ (relative to $\psi$).
Also, by \cite[Proposition 7.4.1]{arthur}, we have
\begin{equation}
\label{eq:sigma++}
 \sigma^{+,+} \supset \sigma_{\theta(\phi)}, 
\end{equation}
where $\sigma_{\theta(\phi)}$ is the unique irreducible representation of $\SO_{2r+5}(F)$ with $L$-parameter $\varphi_{\theta(\phi)}$.
Moreover, if $F$ is nonarchimedean and $\chi_a \ne \chi_b$, then the $A$-parameter $\theta(\phi)$ is elementary in the sense of M{\oe}glin \cite{moeglin06} and we can describe $\sigma^{\epsilon_1,\epsilon_2}$ explicitly.
For example, if further $\chi_a, \chi_b, 1$ are pairwise distinct, then we have 
\begin{align*}
 \sigma^{+,+} & = J_Q(|\cdot|^{r-\frac{1}{2}}, |\cdot|^{r-\frac{3}{2}}, \dots, |\cdot|^{\frac{1}{2}}, \chi_a |\cdot|^{\frac{1}{2}}, \chi_b |\cdot|^{\frac{1}{2}}), \\
 \sigma^{+,-} & = J_{Q'}(|\cdot|^{r-\frac{1}{2}}, |\cdot|^{r-\frac{3}{2}}, \dots, |\cdot|^{\frac{3}{2}}, \chi_a |\cdot|^{\frac{1}{2}}, \sigma^-_{\varphi_b}), \\
 \sigma^{-,+} & = J_{Q'}(|\cdot|^{r-\frac{1}{2}}, |\cdot|^{r-\frac{3}{2}}, \dots, |\cdot|^{\frac{3}{2}}, \chi_b |\cdot|^{\frac{1}{2}}, \sigma^-_{\varphi_a})
\end{align*}
by her explicit construction of $A$-packets (see also \cite[\S 6]{xu}).
Here $Q = Q_{(1^{r+2})}$, $Q' = Q_{(1^r)}$, and for any $c \in F^\times$ such that $\chi_c \ne 1$, $\sigma^-_{\varphi_c}$ is the irreducible square-integrable representation of $\SO_5(F)$ with $L$-parameter $\varphi_c = (\chi_c \boxtimes S_2) \oplus (1 \boxtimes S_2)$ associated to the nontrivial character of $\overline{S}_{\varphi_c} \cong \Z/2\Z$.

\begin{lem}
\label{l:mult-free}
The $A$-packet $\Pi_{\theta(\phi)}(\SO_{2r+5})$ is multiplicity-free.
\end{lem}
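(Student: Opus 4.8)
The plan is to run through the trichotomy for $\rho$ recalled just above. If $\rho = \chi \oplus \chi^{-1}$ with $\chi^2 \ne 1$, or if $F = \C$, then $\overline{S}_{\theta(\phi)}$ is trivial, so $\Pi_{\theta(\phi)}(\SO_{2r+5})$ is the singleton $\{\sigma_{\theta(\phi)}\}$ with $\sigma_{\theta(\phi)}$ irreducible (equal to $I_{Q_2}(\chi \circ \det, 1)$ in the first case), and multiplicity-freeness is immediate. So I may assume that $F$ is nonarchimedean or $F = \R$ and that $\rho$ is irreducible or $\rho = \chi_a \oplus \chi_b$. The observation driving the rest of the argument is that the formal Aubert dual of $\theta(\phi)$ --- obtained by transferring the action of the $\SL_2$-factors $S_2$ and $S_{2r}$ to the Weil--Deligne $\SL_2$ --- is the tempered $L$-parameter $\varphi' = (\rho \boxtimes S_2) \oplus (1 \boxtimes S_{2r})$ of $\SO_{2r+5}$.

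For $F$ nonarchimedean I would invoke M\oe glin's theorem (\cite{moeglin06}, together with \cite{xu} for the compatibility with Arthur's normalization) that, for a $p$-adic quasi-split classical group, the $A$-packet attached to the Aubert dual of an $A$-parameter is the image of the $A$-packet of that parameter under the generalized Aubert--Schneider--Stuhler involution. Applied to $\theta(\phi)$, whose Aubert dual $\varphi'$ is tempered, this says that $\Pi_{\theta(\phi)}(\SO_{2r+5})$ is the image of the tempered $L$-packet $\Pi_{\varphi'}(\SO_{2r+5})$ under the Aubert involution. Since that involution is a ring automorphism of the Grothendieck group carrying each irreducible representation to $\pm$ an irreducible representation, and a tempered $L$-packet of a quasi-split $p$-adic classical group consists of pairwise distinct irreducible representations, so does $\Pi_{\theta(\phi)}(\SO_{2r+5})$; in particular it is multiplicity-free. (For $\chi_a \ne \chi_b$ this just reproves what the explicit Langlands quotients $\sigma^{\pm,\pm} = J_Q(\dots)$ displayed above already make plain, their Langlands data being pairwise distinct.)

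For $F = \R$ the parameter $\theta(\phi)$ is cohomological (of Adams--Johnson type), and I would instead appeal to M\oe glin--Renard's construction and explicit description of the corresponding Arthur packet of $\SO_{2r+5}(\R)$ \cite{mr-c,mr3}. From that description one extracts the packet members as finitely many explicit unitary representations (each an irreducible Langlands quotient, except for at most one member, which is then a sum of two of them), and multiplicity-freeness becomes the assertion that the resulting finitely many irreducible constituents are pairwise distinct. I would check this using the available invariants --- the common infinitesimal character together with the lowest $K$-types --- which is precisely the data compiled in Appendix \ref{a:A-packets}. I expect this archimedean case to be the main obstacle: unlike over a $p$-adic field there is no Aubert involution to make distinctness automatic, so one must pin down M\oe glin--Renard's parametrization against Arthur's component-group labels and, most delicately, rule out any coincidence between the constituents of the reducible member and the irreducible members of the packet.
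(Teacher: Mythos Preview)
Your approach is different from the paper's. For the nonarchimedean and complex cases it essentially amounts to citing the literature, and the paper itself acknowledges this suffices (it opens by noting that M{\oe}glin \cite{moeglin11} handles the $p$-adic case and M{\oe}glin--Renard \cite{mr-c} the complex case). The paper then gives an entirely different argument, uniform over all local fields, based on globalization and theta lifts: one globalizes $\phi$ to a Howe--Piatetski-Shapiro parameter $\varPhi$ over a number field $\F$ with two places $v_0,v_1$ both isomorphic to $F$; Arthur's multiplicity formula \eqref{eq:mult-so2} then embeds
\[
\Big(\bigoplus_{\epsilon_1,\epsilon_2}\sigma^{\epsilon_1,\epsilon_2}\otimes\sigma^{\epsilon_1,\epsilon_2}\Big)\otimes\Big(\bigotimes_{v\ne v_0,v_1}\sigma_{\theta(\varPhi_v)}\Big)\longhookrightarrow L^2_{\theta(\varPhi)}(\SO_{2r+5}),
\]
giving $\sum_{\epsilon_1,\epsilon_2} m^{\epsilon_1,\epsilon_2}(\sigma)^2\le m_\disc(\varSigma)$ for any irreducible $\sigma$; finally, the already-established multiplicity formula for $\Mp_4$ (Proposition~\ref{p:howe-ps}) together with the multiplicity preservation of Remark~\ref{r:howe-ps} forces $m_\disc(\varSigma)\le 1$. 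The payoff is that the real case falls out with no explicit packet computation at all.

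Your real case, by contrast, is only a plan: you propose to read off the packet members from M{\oe}glin--Renard and check pairwise distinctness by lowest $K$-types, but you do not carry this out. The references are also off---\cite{mr-c} is for complex groups and \cite{mr3} for unitary groups; the relevant papers here are \cite{mr1,mr2}---and Appendix~\ref{a:A-packets} tabulates packets for $\Mp_4$, not for $\SO_{2r+5}$, so it does not supply the $K$-type data you would need. (As a minor point, the lemma sits inside the subsection where $\rho=\chi_a\oplus\chi_b$, so your trichotomy over all $\rho$ is partly extraneous; and your Aubert-involution argument in the $p$-adic case, while plausible, is more than is needed once one simply cites \cite{moeglin11}.)
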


\begin{proof}
In a more general context, the assertion was completely proved by M{\oe}glin \cite{moeglin11} when $F$ is nonarchimedean and by M{\oe}glin--Renard \cite{mr-c} when $F = \C$, and was largely proved by M{\oe}glin--Renard \cite{mr1,mr2} when $F = \R$.
Here we give a proof based on theta lifts.

Fix the following data:
\begin{itemize}
\item a number field $\F$ with ad\`ele ring $\A$ and distinct places $v_0,v_1$ such that $\F_{v_0} = \F_{v_1} = F$;
\item a nontrivial additive character $\varPsi$ of $\A/\F$ such that $\varPsi_{v_0}, \varPsi_{v_1}$ belong to the $(F^\times)^2$-orbit of $\psi$;
\item $\alpha, \beta \in \F^\times$ such that $\chi_{\alpha,v_0} = \chi_{\alpha,v_1} = \chi_a$, $\chi_{\beta,v_0} = \chi_{\beta,v_1} = \chi_b$, and $\chi_\alpha \ne \chi_\beta$.
\end{itemize}
Put 
\[
 \varPhi = (\chi_\alpha \boxtimes S_2) \oplus (\chi_\beta \boxtimes S_2), 
\]
so that $\varPhi_{v_0} = \varPhi_{v_1} = \phi$.
Then, by \eqref{eq:mult-so2}, there is an embedding
\[
 \Big( \bigoplus_{\epsilon_1, \epsilon_2 \in \mu_2} ( \sigma^{\epsilon_1,\epsilon_2} \otimes \sigma^{\epsilon_1,\epsilon_2} ) \Big)
 \otimes \Big( \bigotimes_{v \ne v_0, v_1} \sigma_{\theta(\varPhi_v)} \Big)
 \longhookrightarrow L^2_{\theta(\varPhi)}(\SO_{2r+5}),
\]
where $\theta(\varPhi) = \varPhi \oplus (1 \boxtimes S_{2r})$ and $\sigma_{\theta(\varPhi_v)}$ is the unique irreducible representation of $\SO_{2r+5}(\F_v)$ with $L$-parameter $\varphi_{\theta(\varPhi_v)}$.
Hence, for any irreducible representation $\sigma$ of $\SO_{2r+5}(F)$, we have
\[
 \sum_{\epsilon_1, \epsilon_2 \in \mu_2} m^{\epsilon_1,\epsilon_2}(\sigma)^2 \le m_\disc(\varSigma),
\]
where
\[
 m^{\epsilon_1,\epsilon_2}(\sigma) = \dim_\C \Hom_{\SO_{2r+5}(F)}(\sigma,\sigma^{\epsilon_1,\epsilon_2})
\]
and $\varSigma$ is the irreducible representation of $\SO_{2r+5}(\A)$ such that
\begin{itemize}
\item $\varSigma_{v_0} = \varSigma_{v_1} = \sigma$;
\item $\varSigma_v = \sigma_{\theta(\varPhi_v)}$ for all $v \ne v_0,v_1$.
\end{itemize}
On the other hand, by Proposition \ref{p:howe-ps} and the analog of Proposition \ref{p:mult-preservation} for $A$-parameters of Howe--Piatetski-Shapiro type (see Remark \ref{r:howe-ps}), we have $m_\disc(\varSigma) \le 1$, so that
\[
 \sum_{\epsilon_1, \epsilon_2 \in \mu_2} m^{\epsilon_1,\epsilon_2}(\sigma)^2 \le 1.
\]
This implies the assertion. 
\end{proof}

We denote by $\JH(\Pi_{\theta(\phi)}(\SO_{2r+5}))$ the set of irreducible representations of $\SO_{2r+5}(F)$ occurring in $\sigma^{\epsilon_1,\epsilon_2}$ for some $\epsilon_1,\epsilon_2 \in \mu_2$.

\begin{lem}
\label{l:bij}
The theta lift induces a bijection
\[
 \theta_\psi : \Pi_{\phi,\psi}(\Mp_4) \longrightarrow \JH(\Pi_{\theta(\phi)}(\SO_{2r+5})).
\]
\end{lem}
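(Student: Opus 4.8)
The plan is to deduce the bijection from two inputs: the stable-range properties of the theta lift $\theta_\psi = \theta_{W_2,V_{r+2}^+,\psi}$, which give injectivity and irreducibility of the image, and a global comparison — at an auxiliary place — of the two descriptions of $L^2_{\theta(\phi)}(\SO_{2r+5})$ provided by Arthur's multiplicity formula and by the multiplicity preservation of the theta correspondence, which pins down the image.

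First I would establish injectivity. Since $r>3$, the Witt index $r+2$ of $V_{r+2}^+$ exceeds $\dim W_2=4$, so $\theta_\psi$ is in the stable range; hence by Li \cite{li97}, for each (unitary) member $\pi\in\Pi_{\phi,\psi}(\Mp_4)$ the lift $\theta_\psi(\pi)$ is nonzero and remains irreducible as a representation of $\SO_{2r+5}(F)$. If two distinct members of $\Pi_{\phi,\psi}(\Mp_4)$ had a common image $\tau$, then by Howe duality the two extensions of $\tau$ to $\O(V_{r+2}^+)$ would both have nonzero theta lift back to $\Mp_4(F)$, contradicting the conservation relation (valid since $r+2>2$). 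So $\theta_\psi$ is injective on $\Pi_{\phi,\psi}(\Mp_4)$ with irreducible image; since $\JH(\Pi_{\theta(\phi)}(\SO_{2r+5}))$ also consists of irreducibles, it remains to show that these two sets of irreducibles coincide.

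For this I would globalize as in the proof of Lemma \ref{l:mult-free}: choose a number field $\F$ with a place $v_0$ with $\F_{v_0}=F$ and a further nonarchimedean place $v_1$, a nontrivial additive character $\varPsi$ of $\A/\F$ with $\varPsi_{v_0}$ in the $(F^\times)^2$-orbit of $\psi$, and an $A$-parameter $\varPhi=(\chi_\alpha\boxtimes S_2)\oplus(\chi_\beta\boxtimes S_2)$ with $\chi_\alpha\ne\chi_\beta$, $\varPhi_{v_0}=\phi$, and $\chi_{\alpha,v_1}\ne\chi_{\beta,v_1}$; put $\theta(\varPhi)=\varPhi\oplus(1\boxtimes S_{2r})$. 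By Arthur's multiplicity formula \eqref{eq:mult-so2}, $L^2_{\theta(\varPhi)}(\SO_{2r+5})$ is the direct sum of the $\bigotimes_v\sigma_v^{\epsilon_{1,v},\epsilon_{2,v}}$ over $\epsilon_1,\epsilon_2\in\mu_{2,\A}$ with $\prod_v\epsilon_{i,v}=1$; by Proposition \ref{p:howe-ps} together with the analog of Proposition \ref{p:mult-preservation} for $\varPhi$ (Remark \ref{r:howe-ps}), it is also the direct sum of the $\bigotimes_v\theta_{\varPsi_v}(\pi_v^{\epsilon_{1,v},\epsilon_{2,v}})$ over the same index set, each such summand being irreducible since the local lifts are in the stable range. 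Hence the two families of restricted tensor products have the same irreducible constituents. Projecting to the $v_0$-factor — and using that the $v_0$-component of a constituent of a restricted tensor product is a constituent of the $v_0$-factor — I would argue that the set of $v_0$-components arising from the first family equals $\JH(\Pi_{\theta(\phi)}(\SO_{2r+5}))$: one inclusion is the definition (as $\varPhi_{v_0}=\phi$), and the other holds because any $\tau\in\JH(\Pi_{\theta(\phi)}(\SO_{2r+5}))$, being a constituent of some $\sigma_{v_0}^{\epsilon^0}$, occurs in the first-family member $\sigma_{v_0}^{\epsilon^0}\otimes\sigma_{v_1}^{\epsilon^0}\otimes\bigotimes_{v\ne v_0,v_1}\sigma_v^{+,+}$ (the product conditions hold since $\epsilon^0\cdot\epsilon^0=1$ in $\mu_2\times\mu_2$, and $\sigma_{v_1}^{\epsilon^0}$ exists since $\chi_{\alpha,v_1}\ne\chi_{\beta,v_1}$). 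The same reasoning applied to the second family — using that all members of $\Pi_{\phi_{v_1},\psi_{v_1}}(\Mp_4)$ are nonzero at the nonarchimedean place $v_1$, so that the relevant global summand exists — shows that its set of $v_0$-components is exactly $\theta_\psi(\Pi_{\phi,\psi}(\Mp_4))$. Equality of the two families forces these sets to coincide, which together with injectivity proves the lemma.

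The step I expect to be the main obstacle is the globalization bookkeeping: one must choose $\F$, $\varPsi$, and $\varPhi$ so that $\varPhi_{v_0}=\phi$ while the localizations at $v_1$ and the remaining places are generic enough for \eqref{eq:mult-so2}, Proposition \ref{p:howe-ps}, Remark \ref{r:howe-ps}, and the pertinent local nonvanishing statements to apply, and one must carefully track the constraints $\prod_v\epsilon_{i,v}=1$ when passing between the two decompositions. The remaining ingredients — injectivity from the stable range and the conservation relation, irreducibility of stable-range lifts, and the behavior of constituents under restricted tensor products — are immediate from the results cited above.
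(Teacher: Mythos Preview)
Your overall strategy is the paper's: globalize to an $A$-parameter $\varPhi$ of Howe--Piatetski-Shapiro type, and compare Arthur's description \eqref{eq:mult-so2} of $L^2_{\theta(\varPhi)}(\SO_{2r+5})$ with the theta description coming from Proposition~\ref{p:howe-ps} and Remark~\ref{r:howe-ps}. The injectivity step is the same as the paper's (Howe duality plus the conservation relation to pass from $\O$ to $\SO$). However, your globalization deviates from the setup you cite, and this creates a genuine gap.

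You say ``globalize as in the proof of Lemma~\ref{l:mult-free}'' but then take $v_1$ to be an auxiliary \emph{nonarchimedean} place, whereas that lemma takes $\F_{v_0}=\F_{v_1}=F$. Your choice forces you to prove that $\sigma_{v_1}^{\epsilon^0}\ne 0$ for every $\epsilon^0$; without this, the global summand $\sigma_{v_0}^{\epsilon^0}\otimes\sigma_{v_1}^{\epsilon^0}\otimes\bigotimes_{v\ne v_0,v_1}\sigma_v^{+,+}$ may be zero and you cannot conclude that an arbitrary $\tau\in\JH(\Pi_{\theta(\phi)}(\SO_{2r+5}))$ actually appears as a $v_0$-component. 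Saying ``$\sigma_{v_1}^{\epsilon^0}$ exists since $\chi_{\alpha,v_1}\ne\chi_{\beta,v_1}$'' only means the label is admissible, not that the representation is nonzero; at this point in the argument you have not yet established irreducibility or nonvanishing of Arthur's local packet members (that is Lemma~\ref{l:irred}, proved afterwards). The paper sidesteps this entirely by taking $\F_{v_1}=F$: then $\sigma_{v_1}^{\epsilon^0}=\sigma^{\epsilon^0}$ contains $\tau$ and is automatically nonzero, and likewise $\pi_{v_1}^{\epsilon^0}=\pi^{\epsilon^0}$ is nonzero by hypothesis, so no separate nonarchimedean nonvanishing input is needed. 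With that single correction your argument goes through and is essentially the paper's, just phrased as ``compare two decompositions and project to $v_0$'' rather than ``globalize each $\pi$ (resp.\ $\sigma$) individually and track it through the lift.''
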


\begin{proof}
Let $\F, v_0, v_1, \varPhi, \varPsi$ be as in the proof of Lemma \ref{l:mult-free}.
For a given $\pi \in \Pi_{\phi,\psi}(\Mp_4)$, let $\varPi$ be the irreducible genuine representation of $\Mp_4(\A)$ such that 
\begin{itemize}
\item $\varPi_{v_0} = \varPi_{v_1} = \pi$;
\item $\varPi_v = \pi_{\varPhi_v}$ for all $v \ne v_0,v_1$,
\end{itemize}
where $\pi_{\varPhi_v}$ is the unique irreducible genuine representation of $\Mp_4(\F_v)$ with $L$-parameter $\varphi_{\varPhi_v}$ (relative to $\varPsi_v$).
Then $\varPi$ occurs in $L^2_{\varPhi,\varPsi}(\Mp_4)$ by Proposition \ref{p:howe-ps}, so that $\theta_\varPsi(\varPi)$ occurs in $L^2_{\theta(\varPhi)}(\SO_{2r+5})$ by Remark \ref{r:howe-ps}.
In particular, we have
\[
 \theta_\psi(\pi) = \theta_\varPsi(\varPi)_{v_0} \in \JH(\Pi_{\theta(\phi)}(\SO_{2r+5})).
\]
This defines a map $\theta_\psi:\Pi_{\phi,\psi}(\Mp_4) \rightarrow \JH(\Pi_{\theta(\phi)}(\SO_{2r+5}))$, which is injective by the Howe duality.

Conversely, for a given $\sigma \in \JH(\Pi_{\theta(\phi)}(\SO_{2r+5}))$, let $\varSigma$ be the irreducible representation of $\SO_{2r+5}(\A)$ such that
\begin{itemize}
\item $\varSigma_{v_0} = \varSigma_{v_1} = \sigma$;
\item $\varSigma_v = \sigma_{\theta(\varPhi_v)}$ for all $v \ne v_0,v_1$.
\end{itemize}
Then $\varSigma$ occurs in $L^2_{\theta(\varPhi)}(\SO_{2r+5})$ by \eqref{eq:mult-so2}, so that $\theta_\varPsi(\varSigma)$ occurs in $L^2_{\varPhi,\varPsi}(\Mp_4)$ by Remark \ref{r:howe-ps}.
In particular, we have
\[
 \theta_\psi(\sigma) = \theta_\varPsi(\varSigma)_{v_0} \in \Pi_{\phi,\psi}(\Mp_4).
\]
This implies that the map $\theta_\psi: \Pi_{\phi,\psi}(\Mp_4) \rightarrow \JH(\Pi_{\theta(\phi)}(\SO_{2r+5}))$ is surjective.
\end{proof}

\begin{lem}
\label{l:irred}
For any $\epsilon_1, \epsilon_2 \in \mu_2$, $\sigma^{\epsilon_1,\epsilon_2}$ is either zero or irreducible.
\end{lem}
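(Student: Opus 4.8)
The plan is to deduce the lemma from the bijection of Lemma~\ref{l:bij} together with an independent count of the non-zero members of $\Pi_{\theta(\phi)}(\SO_{2r+5})$. First I would record the sharper form of Lemma~\ref{l:mult-free} that its proof actually gives: for distinct $(\epsilon_1,\epsilon_2)$ the representations $\sigma^{\epsilon_1,\epsilon_2}$ have no Jordan--H\"older constituent in common, and each constituent occurs with multiplicity one, so that
\[
 \big| \JH(\Pi_{\theta(\phi)}(\SO_{2r+5})) \big| \;=\; \sum_{\epsilon_1, \epsilon_2 \in \mu_2} \big| \JH(\sigma^{\epsilon_1,\epsilon_2}) \big|.
\]
By Lemma~\ref{l:bij} the left-hand side equals $|\Pi_{\phi,\psi}(\Mp_4)|$; since $\phi$ is (locally) of Howe--Piatetski-Shapiro type, the members of $\Pi_{\phi,\psi}(\Mp_4)$ are theta lifts, hence irreducible, and pairwise distinct, so this is exactly the number $N$ of non-zero $\pi^{\epsilon_1,\epsilon_2}$, which is read off from the local non-vanishing lemma of \S\ref{ss:local-howe-ps}. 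As every non-zero $\sigma^{\epsilon_1,\epsilon_2}$ contributes at least $1$ to the displayed sum, it then suffices to show that the number of non-zero $\sigma^{\epsilon_1,\epsilon_2}$ is at least $N$: combined with the display this forces $|\JH(\sigma^{\epsilon_1,\epsilon_2})| \le 1$ for every $(\epsilon_1,\epsilon_2)$, which is the assertion.

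To obtain that lower bound I would argue separately at non-archimedean and archimedean places. When $F$ is non-archimedean, the $A$-parameter $\theta(\phi) = (\chi_a \boxtimes S_2) \oplus (\chi_b \boxtimes S_2) \oplus (1 \boxtimes S_{2r})$ is elementary in the sense of M{\oe}glin \cite{moeglin06}, so all of its packet members are non-zero and are given by her explicit construction as Langlands quotients (three of them exhibited before Lemma~\ref{l:mult-free}); this already shows directly that each $\sigma^{\epsilon_1,\epsilon_2}$ is zero or irreducible, and the count matches $N$. When $F = \R$, where the relevant real $A$-packets are only partially pinned down in the literature, I would instead globalize exactly as in the proofs of Lemmas~\ref{l:mult-free} and \ref{l:bij}: choose a real quadratic field $\F$ with its two real places $v_0, v_1$, a Howe--Piatetski-Shapiro parameter $\varPhi$ over $\F$ localizing to $\phi$ at $v_0$ and $v_1$ and chosen to be unramified outside $v_0, v_1$ and a controlled finite (hence non-archimedean) set of auxiliary places, and a suitable additive character $\varPsi$. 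At every finite place $\theta(\varPhi_v)$ is again elementary, so there the $\SO_{2r+5}(\F_v)$-packet members are non-zero irreducible and the analogue of Proposition~\ref{p:soudry-local} holds by M{\oe}glin's explicit computation of theta lifts for elementary parameters. Comparing the two decompositions of $L^2_{\theta(\varPhi)}(\SO_{2r+5})$ given by \eqref{eq:mult-so2} and by Proposition~\ref{p:howe-ps} with Remark~\ref{r:howe-ps} — the latter being multiplicity-free with irreducible summands $\theta_\varPsi(\pi^{\epsilon_1,\epsilon_2})$, by the Howe duality and J.-S.~Li's stable-range irreducibility — and isolating the components at the two real places (those away from $v_0, v_1$ being already understood), one gets the required lower bound on the number of non-zero $\sigma^{\epsilon_1,\epsilon_2}$ over $F = \R$. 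The remaining cases, in which $\chi_a = \chi_b$ (and correspondingly $F = \C$), are handled in the same way, the packets then having two (resp.\ one) members.

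The main obstacle is the archimedean step. One must set up the globalization so that every place other than $v_0$ and $v_1$ contributes only already-established irreducible $A$-packet members together with the known index bijection, and then argue carefully that isolating the two real places in the comparison of the two multiplicity formulas genuinely yields a lower bound on the number of non-zero $\sigma^{\epsilon_1,\epsilon_2}$, rather than merely reproducing the isomorphism one started from; this is exactly the kind of bookkeeping performed in the proofs of Lemmas~\ref{l:mult-free} and \ref{l:bij}, and I expect it to be the delicate point here as well.
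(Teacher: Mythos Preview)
Your approach is genuinely different from the paper's, and the archimedean step has a gap.

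The paper argues by direct contradiction, using the same globalization as in Lemmas~\ref{l:mult-free} and~\ref{l:bij} but \emph{mixing} constituents at the two marked places. If $\sigma^{\epsilon_1,\epsilon_2}$ had two distinct irreducible summands $\sigma',\sigma''$, one puts $\sigma'$ at $v_0$, $\sigma''$ at $v_1$, and $\sigma_{\theta(\varPhi_v)}$ elsewhere; this global $\varSigma$ lies in $L^2_{\theta(\varPhi)}(\SO_{2r+5})$ by~\eqref{eq:mult-so2} (both constituents sit in the \emph{same} $\sigma^{\epsilon_1,\epsilon_2}$), hence $\varPi=\theta_\varPsi(\varSigma)$ lies in $L^2_{\varPhi,\varPsi}(\Mp_4)$ by Remark~\ref{r:howe-ps}. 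By Lemma~\ref{l:bij} the components $\varPi_{v_0}=\pi^{\epsilon_1',\epsilon_2'}$ and $\varPi_{v_1}=\pi^{\epsilon_1'',\epsilon_2''}$ carry \emph{different} labels, while $\varPi_v=\pi_{\varPhi_v}$ at all other $v$ by Lemma~\ref{l:ind-princ-2}; this violates the product condition in Proposition~\ref{p:howe-ps}. Note that this uses nothing about the non-archimedean index-matching $\theta_\psi(\pi_v^{\epsilon_1,\epsilon_2})=\sigma_v^{\epsilon_1,\epsilon_2}$ beyond the trivial case $(\epsilon_1,\epsilon_2)=(+,+)$.

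Your counting route needs, at the real place, the lower bound ``number of non-zero $\sigma^{\epsilon_1,\epsilon_2}\ge N$'', and the globalization you sketch over a real quadratic field does not deliver that in the form you describe. If you actually carry out the comparison of the two decompositions with two identical real places, what drops out is not a lower bound but the identity $\sum_\epsilon |\JH(\sigma^\epsilon)|^2=N$ (obtained by counting irreducibles in $L^2$ two ways), which together with your $\sum_\epsilon |\JH(\sigma^\epsilon)|=N$ does force $|\JH(\sigma^\epsilon)|\le 1$; but this is a different computation from the one you wrote, and when unwound it is precisely the paper's mixing contradiction. Moreover, extracting that identity cleanly requires the index-matching $\theta_{\varPsi_v}(\pi_v^{\epsilon_1,\epsilon_2})=\sigma_v^{\epsilon_1,\epsilon_2}$ at every finite place, which you justify by citing ``M{\oe}glin's explicit computation of theta lifts for elementary parameters''; that is not a result one can quote off the shelf---the paper establishes the needed instances by hand in \S\ref{sss:howe-ps-nonarch}. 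The paper's argument sidesteps this dependency entirely.
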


\begin{proof}
Suppose on the contrary that $\sigma^{\epsilon_1,\epsilon_2}$ is nonzero and reducible.
Then, by Lemma \ref{l:mult-free}, we have $\sigma^{\epsilon_1,\epsilon_2} \supset \sigma' \oplus \sigma''$ for some inequivalent irreducible representations $\sigma',\sigma''$ of $\SO_{2r+5}(F)$.
Let $\F, v_0, v_1, \varPhi, \varPsi$ be as in the proof of Lemma \ref{l:mult-free}.
Let $\varSigma$ be the irreducible representation of $\SO_{2r+5}(\A)$ such that 
\begin{itemize}
\item $\varSigma_{v_0} = \sigma'$;
\item $\varSigma_{v_1} = \sigma''$;
\item $\varSigma_v = \sigma_{\theta(\varPhi_v)}$ for all $v \ne v_0,v_1$.
\end{itemize}
Then $\varSigma$ occurs in $L^2_{\theta(\varPhi)}(\SO_{2r+5})$ by \eqref{eq:mult-so2}, so that $\varPi = \theta_\varPsi(\varSigma)$ occurs in $L^2_{\varPhi,\varPsi}(\Mp_4)$ by Remark \ref{r:howe-ps}.
On the other hand, by Lemma \ref{l:bij}, we have
\[
 \varPi_{v_0} = \theta_\psi(\sigma') = \pi^{\epsilon_1',\epsilon_2'}, \qquad
 \varPi_{v_1} = \theta_\psi(\sigma'') = \pi^{\epsilon_1'',\epsilon_2''}
\]
for some $\epsilon_1', \epsilon_2', \epsilon_1'', \epsilon_2'' \in \mu_2$ such that $(\epsilon_1',\epsilon_2') \ne (\epsilon_1'',\epsilon_2'')$.
Also, by Lemma \ref{l:ind-princ-2} below, we have
\[
 \varPi_v = \theta_{\varPsi_v}(\sigma_{\theta(\varPhi_v)}) = \pi_{\varPhi_v}
\]
for all $v \ne v_0,v_1$.
Hence it follows from Proposition \ref{p:howe-ps} that $\varPi$ does not occur in $L^2_{\varPhi,\varPsi}(\Mp_4)$.
This is a contradiction and completes the proof.
\end{proof}

By Lemmas \ref{l:bij} and \ref{l:irred}, the theta lift induces a bijection
\[
 \theta_\psi : \Pi_{\phi,\psi}(\Mp_4) \longrightarrow \Pi_{\theta(\phi)}(\SO_{2r+5}).
\]
We need to show that 
\[
 \theta_\psi(\pi^{\epsilon_1,\epsilon_2}) = \sigma^{\epsilon_1,\epsilon_2} 
\]
for $\epsilon_1,\epsilon_2 \in \mu_2$.

By Lemma \ref{l:ind-princ-2} below, we have
\[
 \theta_\psi(\pi^{+,+}) = \sigma^{+,+},
\]
noting that $\pi^{+,+} = \pi_\phi$ and $\sigma^{+,+} = \sigma_{\theta(\phi)}$ (where the latter follows from \eqref{eq:sigma++} and Lemma \ref{l:irred}).
This implies that $\theta_\psi(\pi^{-,-}) = \sigma^{-,-}$ when $\chi_a = \chi_b$.
Also, we will show in \S \ref{sss:howe-ps-nonarch} below that 
\begin{equation}
\label{eq:howe-ps-nonarch}
 \theta_\psi(\pi^{+,-}) = \sigma^{+,-}, \qquad 
 \theta_\psi(\pi^{-,+}) = \sigma^{-,+}
\end{equation}
when $F$ is nonarchimedean and $\chi_a,\chi_b,1$ are pairwise distinct.
This implies that $\theta_\psi(\pi^{-,-}) = \sigma^{-,-}$ in this case.

Finally, suppose that $F$ is arbitrary and that $\chi_a \ne \chi_b$.
Fix the following data:
\begin{itemize}
\item a number field $\F$ with ad\`ele ring $\A$ and distinct places $v_0,v_1$ such that $\F_{v_0} = F$ and $\F_{v_1}$ is nonarchimedean;
\item a nontrivial additive character $\varPsi$ of $\A/\F$ such that $\varPsi_{v_0}$ belongs to the $(F^\times)^2$-orbit of $\psi$;
\item $\alpha, \beta \in \F^\times$ such that $\chi_{\alpha,v_0} = \chi_a$, $\chi_{\beta,v_0} = \chi_b$, and $\chi_{\alpha,v_1}, \chi_{\beta,v_1}, 1$ are pairwise distinct.
\end{itemize}
Put 
\[
 \varPhi = (\chi_\alpha \boxtimes S_2) \oplus (\chi_\beta \boxtimes S_2), 
\]
so that $\varPhi_{v_0} = \phi$.
For given $\epsilon_1, \epsilon_2 \in \mu_2$, let $\varSigma$ be the irreducible representation of $\SO_{2r+5}(\A)$ such that
\begin{itemize}
\item $\varSigma_{v_0} = \sigma^{\epsilon_1,\epsilon_2}$;
\item $\varSigma_{v_1} = \sigma_{v_1}^{\epsilon_1,\epsilon_2}$;
\item $\varSigma_v = \sigma_{\theta(\varPhi_v)}$ for all $v \ne v_0,v_1$.
\end{itemize}
Then $\varSigma$ occurs in $L^2_{\theta(\varPhi)}(\SO_{2r+5})$ by \eqref{eq:mult-so2}, so that $\varPi = \theta_\varPsi(\varSigma)$ occurs in $L^2_{\varPhi,\varPsi}(\Mp_4)$ by Remark \ref{r:howe-ps}.
On the other hand, we have
\[
 \varPi_{v_0} = \theta_\psi(\sigma^{\epsilon_1,\epsilon_2}) = \pi^{\epsilon_1',\epsilon_2'}
\]
for some $\epsilon_1',\epsilon_2' \in \mu_2$.
Since we have already shown that
\[
 \varPi_{v_1} = \theta_{\varPsi_{v_1}}(\sigma_{v_1}^{\epsilon_1,\epsilon_2}) = \pi_{v_1}^{\epsilon_1,\epsilon_2}
\]
and
\[
 \varPi_v = \theta_{\varPsi_v}(\sigma_{\theta(\varPhi_v)}) = \pi_{\varPhi_v}
\]
for all $v \ne v_0,v_1$, it follows from Proposition \ref{p:howe-ps} that $(\epsilon_1',\epsilon_2') = (\epsilon_1,\epsilon_2)$.

This completes the proof of Proposition \ref{p:soudry-local} when $\rho$ is reducible.

\subsection{The irreducible case}

Suppose that $\rho$ is irreducible.
Then either $F$ is nonarchimedean or $F = \R$.
For $\epsilon \in \mu_2$, we denote by $\pi^\epsilon$ and $\sigma^\epsilon$ the representations in $\Pi_{\phi,\psi}(\Mp_4)$ and $\Pi_{\theta(\phi)}(\SO_{2r+5})$ associated to $\epsilon$, respectively, where we identify $\widehat{S}_\phi \cong \widehat{\overline{S}}_{\theta(\phi)}$ with $\mu_2$.

\subsubsection{The nonarchimedean case}

Suppose that $F$ is nonarchimedean.
By definition, we have
\[
 \pi^+ = J_{P_2,\psi}(\tau \otimes |\det|^{\frac{1}{2}}), \qquad
 \pi^- = \pi^-_\varphi,
\]
where $\tau$ is the irreducible supercuspidal representation of $\GL_2(F)$ with $L$-parameter $\rho$ and $\pi^-_\varphi$ is the irreducible genuine square-integrable representation of $\Mp_4(F)$ with $L$-parameter $\varphi = \rho \boxtimes S_2$ (relative to $\psi$) associated to the nontrivial character of $S_\varphi \cong \Z/2\Z$.
Moreover, the $A$-parameter $\theta(\phi)$ is elementary in the sense of M{\oe}glin \cite{moeglin06} and we have 
\begin{align*}
 \sigma^+ & = J_Q(|\cdot|^{r-\frac{1}{2}}, |\cdot|^{r-\frac{3}{2}}, \dots, |\cdot|^{\frac{1}{2}}, \tau \otimes |\det|^{\frac{1}{2}}), \\
 \sigma^- & = J_{Q'}(|\cdot|^{r-\frac{1}{2}}, |\cdot|^{r-\frac{3}{2}}, \dots, |\cdot|^{\frac{3}{2}}, \sigma^-_{\varphi'})
\end{align*}
by her explicit construction of $A$-packets (see also \cite[\S 6]{xu}).
Here $Q = Q_{(1^r,2)}$, $Q' = Q_{(1^{r-1})}$, and $\sigma^-_{\varphi'}$ is the irreducible square-integrable representation of $\SO_7(F)$ with $L$-parameter $\varphi' = (\rho \boxtimes S_2) \oplus (1 \boxtimes S_2)$ associated to the nontrivial character of $\overline{S}_{\varphi'} \cong \Z/2\Z$.
By Lemma \ref{l:ind-princ-2} below, we have
\[
 \theta_\psi(\pi^+) = \sigma^+.
\]
Also, by \cite[Theorem 1.4]{atobe-gan}, we have
\[
 \theta_\psi(\pi^-) = \sigma^-. 
\]

\subsubsection{The real case}

Suppose that $F = \R$.
When $r=4$, we will show in \S \ref{sss:soudry-real} below that
\begin{equation}
\label{eq:soudry-real}
 \theta_\psi(\pi^\epsilon) = \sigma^\epsilon 
\end{equation}
for $\epsilon \in \mu_2$.

This completes the proof of Proposition \ref{p:soudry-local} when $\rho$ is irreducible.

\appendix

\section{Local theta lifts from $\O_3$ to $\Mp_4$}

In this appendix, we prove some properties of the local theta lift used in \S\S \ref{s:sk} and \ref{s:howe-ps}.
Let $F$ be a local field of characteristic zero.

\subsection{Properties of the theta lift}

We consider the theta lift from $\O(V_1^\epsilon)$ to $\Mp(W_2)$.

\begin{lem} 
\label{l:o3mp4-nonzero}
Let $\sigma$ be an irreducible representation of $\O(V_1^\epsilon)$.
Then we have
\[
 \theta_{W_2,V_1^\epsilon,\psi}(\sigma) \ne 0
 \, \Longleftrightarrow \, \sigma \ne \det,
\]
where $\det$ is the nontrivial character of $\O(V_1^\epsilon)$ trivial on $\SO(V_1^\epsilon)$.
\end{lem}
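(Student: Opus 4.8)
The plan is to reduce the statement to a computation of the first occurrence of $\sigma$ in the Witt tower $\{W_n\}_{n \ge 0}$ of symplectic spaces over $F$. By the tower property there is an integer $n_0(\sigma) \ge 0$ such that $\theta_{W_n, V_1^\epsilon, \psi}(\sigma) \ne 0$ if and only if $n \ge n_0(\sigma)$; so it suffices to show that $n_0(\sigma) \le 2$ precisely when $\sigma \ne \det$.

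First I would look at the bottom of the tower. The Weil representation $\omega_{W_0, V_1^\epsilon, \psi}$ of $\Mp(W_0) \times \O(V_1^\epsilon)$ is $1$-dimensional and $\O(V_1^\epsilon)$ acts trivially on it, so $\theta_{W_0, V_1^\epsilon, \psi}(\sigma) \ne 0$ if and only if $\sigma$ is the trivial character $\mathbf 1$ of $\O(V_1^\epsilon)$. Hence $n_0(\mathbf 1) = 0$, and $n_0(\sigma) \ge 1$ for every $\sigma \ne \mathbf 1$; in particular $\theta_{W_2, V_1^\epsilon, \psi}(\mathbf 1) \ne 0$.

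Next I would invoke the conservation relation of Sun--Zhu \cite{sz} for the dual pair $(\O(V_1^\epsilon), \Mp(W_n))$ relative to $\psi$. Writing $\sigma = \sigma^{\epsilon'}$ as the $\epsilon'$-extension of an irreducible representation $\sigma_0$ of $\SO(V_1^\epsilon)$ and letting $\sigma^{-\epsilon'} = \sigma^{\epsilon'} \otimes \det$ denote the other extension, the conservation relation gives
\[
 n_0(\sigma^{\epsilon'}) + n_0(\sigma^{-\epsilon'}) = \dim V_1^\epsilon = 3.
\]
If $\sigma = \det$, then $\{\sigma^{\epsilon'}, \sigma^{-\epsilon'}\} = \{\mathbf 1, \det\}$, so $n_0(\det) = 3 - n_0(\mathbf 1) = 3 > 2$ by the previous step, and therefore $\theta_{W_2, V_1^\epsilon, \psi}(\det) = 0$. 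If $\sigma \ne \mathbf 1, \det$, then $\sigma_0 \ne \mathbf 1$, so both $n_0(\sigma^{\epsilon'})$ and $n_0(\sigma^{-\epsilon'})$ are $\ge 1$; since they sum to $3$, each is $\le 2$, and in particular $\theta_{W_2, V_1^\epsilon, \psi}(\sigma) \ne 0$. Together with the case $\sigma = \mathbf 1$ already treated, this proves the lemma.

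The only substantial ingredient is the conservation relation, and specifically the fact that the two first occurrence indices attached to a representation $\sigma_0$ of $\SO(V_1^\epsilon)$ and to its $\det$-twist sum to $\dim V_1^\epsilon = 3$: this is exactly what forces both extensions of a representation with $\sigma_0 \ne \mathbf 1$ to occur already at the $\Mp_4$ stage, while pushing the extension $\det$ of the trivial representation $\mathbf 1_{\SO(V_1^\epsilon)}$ one step past it. I would expect the main point to be no more than checking that this form of the conservation relation is available uniformly over all local fields of characteristic zero (including the archimedean ones); the remaining ingredients — the tower property and the triviality of $\omega_{W_0, V_1^\epsilon, \psi}$ — are formal.
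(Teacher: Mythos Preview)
Your proof is correct and takes essentially the same approach as the paper's: both identify what happens at the bottom of the tower (only the trivial character of $\O(V_1^\epsilon)$ lifts to $\Mp(W_0)$) and then apply the Sun--Zhu conservation relation $n_0(\sigma) + n_0(\sigma \otimes \det) = 3$ to conclude. The paper phrases this slightly more compactly by noting that $n_0(\sigma \otimes \det) = 0$ if and only if $\sigma = \det$, which immediately gives $n_0(\sigma) \le 2 \Longleftrightarrow \sigma \ne \det$; your version separates out the cases $\sigma = \mathbf 1$, $\sigma = \det$, and $\sigma \ne \mathbf 1, \det$, but the content is identical.
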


\begin{proof}
Put
\begin{align*}
 n^+(\sigma) & = \min \{ n \ge 0 \, | \, \theta_{W_n,V_1^\epsilon,\psi}(\sigma) \ne 0 \}, \\
 n^-(\sigma) & = \min \{ n \ge 0 \, | \, \theta_{W_n,V_1^\epsilon,\psi}(\sigma \otimes \det) \ne 0 \}.
\end{align*}
Since $n^-(\sigma) = 0$ if and only if $\sigma = \det$, the assertion follows from the conservation relation 
\[
 n^+(\sigma) + n^-(\sigma) = 3
\]
proved by Sun--Zhu \cite{sz}.
\end{proof}

\begin{lem}
\label{l:o3mp4-mult-free}
Let $\sigma_1$ and $\sigma_2$ be irreducible representations of $\O(V_1^{\epsilon_1})$ and $\O(V_1^{\epsilon_2})$, respectively.
Assume that $\theta_{W_2,V_1^{\epsilon_1},\psi}(\sigma_1)$ and $\theta_{W_2,V_1^{\epsilon_2},\psi}(\sigma_2)$ are nonzero.
Then we have
\[
 \theta_{W_2,V_1^{\epsilon_1},\psi}(\sigma_1) = \theta_{W_2,V_1^{\epsilon_2},\psi}(\sigma_2)
 \, \Longleftrightarrow \, \epsilon_1 = \epsilon_2, \, \sigma_1 = \sigma_2.
\]
\end{lem}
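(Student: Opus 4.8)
The implication ``$\Leftarrow$'' is immediate, so the plan is to prove ``$\Rightarrow$''. Assume that
\[
 \pi := \theta_{W_2,V_1^{\epsilon_1},\psi}(\sigma_1) = \theta_{W_2,V_1^{\epsilon_2},\psi}(\sigma_2)
\]
is a nonzero irreducible genuine representation of $\Mp_4(F)$. The first step is to show that each $\sigma_i$ is determined by the pair $(\pi,\epsilon_i)$. For this I would use the basic adjunction for the Weil representation together with Howe duality: the relation $\theta_{W_2,V_1^{\epsilon_i},\psi}(\sigma_i)=\pi$ yields a nonzero $\O(V_1^{\epsilon_i})\times\Mp(W_2)$-map $\omega_{W_2,V_1^{\epsilon_i},\psi}\to\sigma_i\boxtimes\pi$, which read from the metaplectic side exhibits $\sigma_i$ as a quotient of $\Theta_{W_2,V_1^{\epsilon_i},\psi}(\pi)$; since the maximal semisimple quotient of the latter is irreducible or zero, this forces $\theta_{W_2,V_1^{\epsilon_i},\psi}(\pi)=\sigma_i$, and in particular $\Theta_{W_2,V_1^{\epsilon_i},\psi}(\pi)\neq 0$, for $i=1,2$.

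This already settles the case $\epsilon_1=\epsilon_2=:\epsilon$, since then $\sigma_1$ and $\sigma_2$ both coincide with the single representation $\theta_{W_2,V_1^{\epsilon},\psi}(\pi)$. It therefore remains to rule out the possibility $\epsilon_1\neq\epsilon_2$, and this is the step I expect to be the main obstacle.

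To handle it, observe that if $\epsilon_1\neq\epsilon_2$ then, by the first step, $\pi$ has nonzero theta lift to a $3$-dimensional quadratic space in \emph{each} of the two Witt towers of odd-dimensional quadratic spaces over $F$ with trivial discriminant: the split tower $V_0^+\subset V_1^+\subset\cdots$, of dimensions $1,3,5,\dots$, and the tower $V_1^-\subset V_2^-\subset\cdots$ with anisotropic $3$-dimensional bottom, of dimensions $3,5,7,\dots$. Hence the first occurrence of $\pi$ has dimension $\leq 3$ in the first tower and dimension exactly $3$ in the second (here one uses that $V_1^-$ is the very bottom of its tower), so the two first-occurrence dimensions sum to at most $6$; but the conservation relation of Sun--Zhu \cite{sz} for the dual pairs $(\Mp(W_2),\O_{2m+1})$ forces them to sum to $4\cdot 2+4=12$, a contradiction. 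The only points requiring care are in this last paragraph: applying the conservation relation in the correct normalisation for $\Mp_4$, and using that nonvanishing at $V_1^-$ pins, rather than merely bounds, the corresponding first-occurrence dimension. Everything else is formal.
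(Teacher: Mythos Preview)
Your proposal is correct and follows essentially the same route as the paper's proof: Howe duality reduces the question to showing $\epsilon_1=\epsilon_2$, and then the Sun--Zhu conservation relation gives the contradiction. The paper phrases the latter in terms of ranks, writing $r^+(\pi)+r^-(\pi)=5$ (so some $r^\epsilon(\pi)\ge 3$, hence the lift to $V_1^\epsilon$ vanishes), which is exactly your dimensional statement $m^+ + m^- = 12$ rewritten. One small remark: you do not actually need that $V_1^-$ is the bottom of its tower; the inequality $m^-\le 3$ alone already yields $m^+ + m^-\le 6<12$, so the ``pins rather than bounds'' worry is unnecessary.
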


\begin{proof}
By the Howe duality, it suffices to show that if $\theta_{W_2,V_1^{\epsilon_1},\psi}(\sigma_1) = \theta_{W_2,V_1^{\epsilon_2},\psi}(\sigma_2)$, then $\epsilon_1 = \epsilon_2$.
We may assume that $F \ne \C$.
For any $\epsilon \in \mu_2$ and any irreducible genuine representation $\pi$ of $\Mp(W_2)$, put
\[
 r^\epsilon(\pi) = \min \{ r \ge 0 \, | \, \theta_{W_2,V_r,\psi}(\pi) \ne 0, \, \epsilon(V_r) = \epsilon \},
\]
where $\epsilon(V_r)$ denotes the Hasse--Witt invariant of a $(2r+1)$-dimensional quadratic space $V_r$ over $F$ with trivial discriminant.
By the conservation relation 
\[
 r^+(\pi) + r^-(\pi) =5
\]
proved by Sun--Zhu \cite{sz}, we have $r^\epsilon(\pi) \ge 3$ for some $\epsilon \in \mu_2$.
This implies the assertion.
\end{proof}

\begin{lem}
\label{l:ind-princ-1}
Let $\sigma$ be an irreducible representation of $\O(V_1^\epsilon)$.
Assume that either $\sigma$ is tempered, or $\epsilon = 1$ and $\sigma|_{\SO(V_1^\epsilon)} = I_{Q_1}(\chi |\cdot|^s)$ for some unitary character $\chi$ of $F^\times$ and some $s \in \R$ with $|s|<\frac{1}{2}$.
If
\begin{equation}
\label{eq:root}
 \sigma(-1) = \epsilon \cdot \epsilon(\tfrac{1}{2}, \sigma), 
\end{equation}
then we have
\[
 \theta_{W_2,V_1^\epsilon,\psi}(\sigma) = J_{P_1, \psi}(|\cdot|^{\frac{1}{2}}, \theta_{W_1,V_1^\epsilon,\psi}(\sigma)),
\]
where $\epsilon(s, \sigma)$ is the standard $\epsilon$-factor of $\sigma$ relative to $\psi$ (whose value at $s = \frac{1}{2}$ does not depend on $\psi$).
\end{lem}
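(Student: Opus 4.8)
The plan is to treat this as the local analogue of Proposition~\ref{p:tower}: since $\dim W_2 = 4 > 3 = \dim V_1^\epsilon$, going up one step in the symplectic Witt tower turns theta lifting into parabolic induction, and the hypotheses on $\sigma$ are exactly what make the resulting induced representation a standard module whose Langlands quotient is the theta lift in question.

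First I would check that $\sigma$ first occurs in the symplectic tower at $n_0 = 1$. If $\sigma = \det$ then, by the conservation relation and Lemma~\ref{l:o3mp4-nonzero}, both $\theta_{W_1,V_1^\epsilon,\psi}(\sigma)$ and $\theta_{W_2,V_1^\epsilon,\psi}(\sigma)$ vanish and the identity is trivial, so I may assume $\sigma \ne \det$. Then \eqref{eq:root} together with the local root-number dichotomy \eqref{eq:dichotomy} for $\O(V_1^\epsilon) \times \Mp(W_1)$ gives $\theta_{W_1,V_1^\epsilon,\psi}(\sigma) \ne 0$, so the first occurrence index is $\le 1$; and $\theta_{W_0,V_1^\epsilon,\psi}(\sigma)$ is nonzero only when $\sigma$ is the trivial character, which our hypotheses exclude (the trivial character is non-tempered on the split $\SO(V_1^+)$, while on the anisotropic $\O(V_1^-)$ its standard $\epsilon$-factor at $\tfrac12$ equals $1$, so that \eqref{eq:root} fails). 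Hence $n_0 = 1$, and the tower property gives $\theta_{W_2,V_1^\epsilon,\psi}(\sigma) \ne 0$.

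Next I would identify the downstairs lift. By Howe duality $\theta_{W_1,V_1^\epsilon,\psi}(\sigma)$ is irreducible, and the equal-rank theta correspondence for $\Mp_2 \times \O_3$ shows that it is tempered when $\sigma$ is tempered; when instead $\epsilon = 1$ and $\sigma|_{\SO(V_1^+)} = I_{Q_1}(\chi|\cdot|^s)$ with $|s| < \tfrac12$, the compatibility of this theta lift with induction from the Borel (as in the Shimura correspondence) shows that $\theta_{W_1,V_1^+,\psi}(\sigma)$ is a constituent of a principal series of $\Mp_2$ with exponent $|s| < \tfrac12$. In either case the exponent of $\theta_{W_1,V_1^\epsilon,\psi}(\sigma)$ is strictly below $\tfrac12$, so $I_{P_1,\psi}(|\cdot|^{\frac12}, \theta_{W_1,V_1^\epsilon,\psi}(\sigma))$ is a standard module, with unique irreducible quotient $J_{P_1,\psi}(|\cdot|^{\frac12}, \theta_{W_1,V_1^\epsilon,\psi}(\sigma))$.

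Finally I would invoke the induction principle. Kudla's filtration of $\omega_{W_2,V_1^\epsilon,\psi}$ along the unipotent radical of $\widetilde P_1 \subset \Mp(W_2)$ --- the purely local counterpart of the mechanism behind Proposition~\ref{p:tower}, and simpler here since when $\epsilon = -1$ the space $V_1^-$ has no hyperbolic plane and the filtration reduces to a single graded piece --- exhibits $\Theta_{W_2,V_1^\epsilon,\psi}(\sigma)$ as a quotient of $I_{P_1,\psi}(|\cdot|^{s_0}, \Theta_{W_1,V_1^\epsilon,\psi}(\sigma))$ with $s_0 = \tfrac12(2+1-2) = \tfrac12$; the value of $s_0$ is forced by $n_0 = 1$, and the remaining graded piece is irrelevant precisely because $\theta_{W_0,V_1^\epsilon,\psi}(\sigma) = 0$. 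Passing to maximal semisimple quotients, $\theta_{W_2,V_1^\epsilon,\psi}(\sigma)$ is a nonzero quotient of the standard module of the previous step, hence equals its Langlands quotient $J_{P_1,\psi}(|\cdot|^{\frac12}, \theta_{W_1,V_1^\epsilon,\psi}(\sigma))$ by the irreducibility of theta lifts. I expect the main obstacle to be making Kudla's filtration precise with the correct exponent for both isometry classes $V_1^\pm$ and over an archimedean base, and confirming that the relevant graded piece occurs as a genuine quotient rather than merely a subquotient, so that one really lands on the Langlands quotient; the temperedness/almost-temperedness input of the previous step, which is exactly what the two alternative hypotheses on $\sigma$ encode, is the other essential use of the hypotheses.
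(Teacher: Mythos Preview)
Your strategy coincides with the paper's for the nonarchimedean and complex cases: the paper simply cites \cite[Proposition~3.2]{gt1} and \cite[Theorem~2.8]{ab1}, both of which amount to the induction-in-stages / Kudla-filtration argument you sketch. Two minor corrections are in order. First, your claim that the standard $\epsilon$-factor of the trivial character of $\O(V_1^-)$ equals $1$ is wrong: its Jacquet--Langlands transfer is the Steinberg representation, so $\epsilon(\tfrac12,\sigma)=-1$ and \eqref{eq:root} actually \emph{holds}, meaning $\sigma = 1_{\O(V_1^-)}$ is a genuine case of the lemma with first occurrence $n_0=0$. Fortunately this does not break your argument, since all you really need is $\theta_{W_1,V_1^\epsilon,\psi}(\sigma)\neq 0$ (which \eqref{eq:dichotomy} gives), not $n_0=1$. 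Second, the ``remaining graded piece'' in Kudla's filtration of $R_{P_1}(\omega_{W_2,V_1^+,\psi})$ involves $\omega_{W_1,V_0^+,\psi}$ (one step down the $V$-tower), not $\theta_{W_0}$ (the $W$-tower), so your stated reason for its irrelevance is off; one dismisses it instead by an exponent comparison.

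Where you and the paper genuinely diverge is over $F=\R$. There the paper does not attempt to extract a \emph{quotient} from Kudla's filtration; instead it invokes the Adams--Barbasch induction principle \cite[Theorem~8.4]{ab2}, which only yields that $\theta_{W_2,V_1^\epsilon,\psi}(\sigma)$ is a \emph{subquotient} of $I_{P_1,\psi}(|\cdot|^{-1/2},\pi_0)$ containing a specified $K'$-type $\mu'$. It then verifies, case by case via the explicit lowest-$K'$-type computations of \cite{ab2}, that $\mu'$ is a lowest $K'$-type of the standard module and hence can survive only in the Langlands quotient $J_{P_1,\psi}(|\cdot|^{1/2},\pi_0)$. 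This $K$-type step is precisely what fills the gap you correctly anticipated in your final paragraph; your approach, as written, does not supply an alternative for it.
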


\begin{proof}
Recall that 
\begin{equation}
\label{eq:dichotomy}
 \theta_{W_1,V_1^\epsilon,\psi}(\sigma) \ne 0 \, \Longleftrightarrow \, 
 \sigma(-1) = \epsilon \cdot \epsilon(\tfrac{1}{2}, \sigma)
\end{equation}
by the result of Waldspurger \cite{w1,w2} (see also \cite[\S 5]{gs}).
If we put $\pi = \theta_{W_1,V_1^\epsilon,\psi}(\sigma)$, then by assumption, either $\pi$ is tempered, or $\pi = I_{B,\psi}(\chi |\cdot|^s)$ for some unitary character $\chi$ of $F^\times$ and some $s \in \R$ with $|s| < \frac{1}{2}$.
When $F$ is nonarchimedean, the assertion follows from \cite[Proposition 3.2]{gt1} (which is stated for $\pi$ tempered but continues to hold for $\pi = I_{B,\psi}(\chi |\cdot|^s)$ as above).
When $F = \R$, the assertion will be proved in \S \ref{sss:ind-princ-1-real} below.
When $F = \C$, the assertion follows from \cite[Theorem 2.8]{ab1}.
\end{proof}

Recall that $\SO(V_1^\epsilon) \cong (B^\epsilon)^\times/F^\times$ for some quaternion algebra $B^\epsilon$ over $F$.
For any quadratic character $\chi$ of $F^\times$, we may regard $\chi \circ \mathrm{N}_{B^\epsilon}$ as a representation of $\SO(V_1^\epsilon)$, where $\mathrm{N}_{B^\epsilon}$ is the reduced norm on $B^\epsilon$.

\begin{lem}
\label{l:howe-ps-local+}
Let $\sigma$ be a $1$-dimensional representation of $\O(V_1^+)$ such that $\sigma|_{\SO(V_1^+)} = \chi \circ \mathrm{N}_{B^+}$ for some quadratic character $\chi$ of $F^\times$.
\begin{enumerate}
\item 
\label{item:howe-ps-1}
If $\sigma(-1) = \chi(-1)$, then we have
\[
 \theta_{W_2, V_1^+, \psi}(\sigma) = J_{B, \psi}(\chi |\cdot|^{\frac{1}{2}}, |\cdot|^{\frac{1}{2}}).
\]
\item
\label{item:howe-ps-2}
If $\sigma(-1) = -\chi(-1)$ and $\chi \ne 1$, then we have
\[
 \theta_{W_2, V_1^+, \psi}(\sigma) = J_{P_1, \psi}(\chi |\cdot|^{\frac{1}{2}}, \omega_{W_1,\psi}^-).
\]
\end{enumerate}
\end{lem}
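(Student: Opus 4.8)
The plan is to compute $\theta_{W_2,V_1^+,\psi}(\sigma)$ by recognizing $\sigma$ as a Langlands quotient on $\O(V_1^+)$, applying the induction principle to strip off the $\GL_1$-factor on the orthogonal side, and thereby reducing the computation to the elementary Weil representations of $\Mp_2$, which are described explicitly in \S\ref{ss:e-weil} and Lemma \ref{l:elem-weil}. First I would pin down the Langlands data of $\sigma$. Since $V_1^+$ is split, $\SO(V_1^+)\cong\mathrm{PGL}_2(F)$ and the character $\sigma|_{\SO(V_1^+)}=\chi\circ\mathrm{N}_{B^+}=\chi\circ\det$ is the Langlands quotient $J_{Q_1}(\chi|\cdot|^{\frac12})$ of the standard module $I_{Q_1}(\chi|\cdot|^{\frac12})$, where $Q_1$ is the Borel of $\SO(V_1^+)$ with Levi $\GL_1$. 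Letting $Q_1^{\O}$ be the parabolic of $\O(V_1^+)$ with Levi $\GL_1\times\O(V_0^+)$, so that $\O(V_0^+)=\{\pm1\}$, one then has $\sigma=J_{Q_1^{\O}}(\chi|\cdot|^{\frac12}\boxtimes\delta)$, where $\delta$ is the character of $\O(V_0^+)$ determined by $\delta(-1)=\sigma(-1)\cdot\chi(-1)$; this rests only on the elementary observation that $-1\in\O(V_1^+)$ is the product of $-1\in\O(V_0^+)$ with an element of $\SO(V_1^+)$ on which $\chi\circ\det$ equals $\chi(-1)$. Thus $\delta$ is trivial in case \eqref{item:howe-ps-1} and $\delta=\det$ in case \eqref{item:howe-ps-2}.

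Next I would invoke the induction principle for theta lifts (compatibility of the local correspondence with parabolic induction on the orthogonal side), which exhibits $\Theta_{W_2,V_1^+,\psi}(\sigma)$ as a quotient of $I_{P_1,\psi}(\chi|\cdot|^{\frac12},\Theta_{W_1,V_0^+,\psi}(\delta))$. By \S\ref{ss:e-weil} we have $\Theta_{W_1,V_0^+,\psi}(\mathbf1)=\omega^+_{W_1,\psi}$ and $\Theta_{W_1,V_0^+,\psi}(\det)=\omega^-_{W_1,\psi}$, and by Lemma \ref{l:elem-weil} (for $n=1$) the first equals $J_{B,\psi}(|\cdot|^{\frac12})$ while the second is tempered. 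Hence in case \eqref{item:howe-ps-1}, $\theta_{W_2,V_1^+,\psi}(\sigma)$ is a quotient of $I_{P_1,\psi}(\chi|\cdot|^{\frac12},J_{B,\psi}(|\cdot|^{\frac12}))$, and so of the standard module $I_{B,\psi}(\chi|\cdot|^{\frac12},|\cdot|^{\frac12})$; in case \eqref{item:howe-ps-2}, it is a quotient of the standard module $I_{P_1,\psi}(\chi|\cdot|^{\frac12},\omega^-_{W_1,\psi})$. Now $\theta_{W_2,V_1^+,\psi}(\sigma)$ is nonzero by Lemma \ref{l:o3mp4-nonzero} (under the hypotheses one has $\sigma\ne\det$) and irreducible by the Howe duality, so it must be the unique irreducible quotient of the relevant standard module, namely $J_{B,\psi}(\chi|\cdot|^{\frac12},|\cdot|^{\frac12})$ in case \eqref{item:howe-ps-1} and $J_{P_1,\psi}(\chi|\cdot|^{\frac12},\omega^-_{W_1,\psi})$ in case \eqref{item:howe-ps-2}.

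The main obstacle is making the induction principle precise in this mixed metaplectic--orthogonal setting: identifying the correct normalization so that the $\GL_1$-character carries exponent $\tfrac12$ rather than some shifted value, tracking the $(F^\times)^2$-orbit of the additive character that enters the residual $\Mp_2$-factor, and --- most delicately --- justifying that one may feed in the Langlands quotient $\sigma$ directly rather than the full induced representation $I_{Q_1^{\O}}(\chi|\cdot|^{\frac12}\boxtimes\delta)$, which comes down to controlling the theta lift of the Steinberg-type subrepresentation of the latter. When $F=\R$ one should invoke the archimedean form of these reductions (in the spirit of \S\ref{sss:ind-princ-1-real}), and when $F=\C$ part \eqref{item:howe-ps-2} is vacuous. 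An alternative, more self-contained route, parallel to the proof of Lemma \ref{l:elem-weil}, would be to realize $\omega_{W_2,V_1^+,\psi}$ in a mixed Schr\"odinger model and construct by hand the $\O(V_1^+)\times\Mp_4$-equivariant maps onto the relevant degenerate principal series of $\Mp_4$, reading off the $\sigma$-isotypic quotient directly.
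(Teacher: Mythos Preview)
Your overall strategy coincides with the paper's: reduce via an induction-principle argument to the theta lift of the characters of $\O(V_0^+)$, which are the elementary Weil representations $\omega^{\pm}_{W_1,\psi}$, and then read off the Langlands quotient on the $\Mp_4$ side. The one substantive difference is the direction in which you run the induction principle. You present $\sigma$ as the Langlands \emph{quotient} of $I_{Q_1^{\O}}(\chi|\cdot|^{1/2}\boxtimes\delta)$ and then try to push this quotient through the theta correspondence; as you correctly note, this leaves you with the task of controlling the theta lift of the Steinberg-type subrepresentation. The paper sidesteps exactly this obstacle by working on the dual side: it uses the \emph{embedding} $\sigma\hookrightarrow\Ind^{\O(V_1^+)}_{B_1}(\chi|\cdot|^{-1/2}\boxtimes\sigma_0)$, applies Frobenius reciprocity to rewrite $\Theta_{W_2,V_1^+,\psi}(\sigma)^*$ as a $\Hom$-space out of the normalized Jacquet module $R_{B_1}(\omega_{W_2,V_1^+,\psi})$, and then invokes Kudla's two-step filtration of that Jacquet module. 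Since the exponent $-\tfrac12$ differs from $\tfrac32$, the top graded piece contributes nothing, and one obtains directly the surjection
\[
 I_{P_1,\psi}(\chi|\cdot|^{1/2},\,\omega^{\epsilon_0}_{W_1,\psi})\longtwoheadrightarrow\Theta_{W_2,V_1^+,\psi}(\sigma),
\]
with $\epsilon_0=\sigma(-1)\cdot\chi(-1)$, which is precisely the conclusion you were aiming for but without any Steinberg obstruction. From here your final paragraph is on target: nonvanishing (Lemma \ref{l:o3mp4-nonzero}) plus Howe duality force the theta lift to be the unique irreducible quotient of the relevant standard module.

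For $F=\R$ the paper does use the Adams--Barbasch induction principle with explicit tracking of lowest $K$-types (as in \S\ref{sss:ind-princ-1-real}); for $F=\C$ it cites \cite[Theorem 2.8]{ab1}. So your instincts about the archimedean cases are right. The moral: switching from the quotient description of $\sigma$ to the subrepresentation description (i.e.\ replacing $s=\tfrac12$ by $s=-\tfrac12$ on the inducing data) is what turns your sketch into a clean proof.
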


\begin{proof}
When $F$ is nonarchimedean or $F = \R$, the assertion will be proved in \S\S \ref{sss:howe-ps-local+-nonarch} and \ref{sss:howe-ps-local+-real} below.
When $F = \C$, the assertion follows from \cite[Theorem 2.8]{ab1}.
\end{proof}

\begin{lem}
\label{l:howe-ps-local-}
Let $\sigma$ be a $1$-dimensional representation of $\O(V_1^-)$ such that $\sigma|_{\SO(V_1^-)} = \chi \circ \mathrm{N}_{B^-}$ for some quadratic character $\chi = \chi_a$ of $F^\times$ with $a \in F^\times$.
If $\sigma(-1) = -\chi(-1)$ and $\chi \ne 1$, then we have
\[
 \theta_{W_2, V_1^-, \psi}(\sigma) = J_{P_1, \psi}(|\cdot|^{\frac{1}{2}}, \omega_{W_1,\psi_a}^-).
\]
\end{lem}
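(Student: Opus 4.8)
We may assume $F \ne \C$, since over $\C$ there is no $3$-dimensional quadratic space with nontrivial Hasse invariant and the statement is then vacuous. The plan is to reduce to the theta lift to $\Mp_2$ by means of Lemma \ref{l:ind-princ-1}, and then to appeal to Waldspurger's explicit correspondence for $\widetilde{\SL}_2 \times \O_3$. Since $\SO(V_1^-) \cong (B^-)^\times/F^\times$ is compact modulo its centre, the $1$-dimensional representation $\sigma$ is tempered; and since $\chi_a \ne 1$ we have $\sigma|_{\SO(V_1^-)} = \chi_a \circ \mathrm{N}_{B^-} \ne \mathbf{1}$, so $\sigma \ne \det$ and Lemma \ref{l:o3mp4-nonzero} shows that $\theta_{W_2,V_1^-,\psi}(\sigma)$ is nonzero, hence irreducible.

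First I would check the hypothesis \eqref{eq:root} of Lemma \ref{l:ind-princ-1}, namely that $\sigma(-1) = -\,\epsilon(\tfrac12,\sigma)$ (here $\epsilon = -1$ for $V_1^-$); by \eqref{eq:dichotomy} this is equivalent to the nonvanishing of $\theta_{W_1,V_1^-,\psi}(\sigma)$. The Jacquet--Langlands transfer of $\sigma|_{\SO(V_1^-)}$ to $\GL_2(F)$ is the twisted Steinberg representation $\St \otimes \chi_a$, whose standard $\epsilon$-factor at $\tfrac12$ equals $\chi_a(-1)$ when $\chi_a$ is a nontrivial quadratic character: for $\chi_a$ ramified this follows from the vanishing of the relevant $L$-factors together with the local functional equation $\epsilon(1,\chi_a,\psi)\,\epsilon(0,\chi_a,\psi) = \chi_a(-1)$, while the unramified nontrivial case and the case $F = \R$ are a direct computation; it is exactly here that the hypothesis $\chi_a \ne 1$ is needed, since $\epsilon(\tfrac12,\St) = -1 \ne \chi_1(-1)$. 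Combined with the hypothesis $\sigma(-1) = -\chi_a(-1)$ this gives $\sigma(-1) = -\,\epsilon(\tfrac12,\sigma)$, so Lemma \ref{l:ind-princ-1} applies (with $\sigma$ tempered) and yields
\[
 \theta_{W_2,V_1^-,\psi}(\sigma) = J_{P_1,\psi}\bigl(|\cdot|^{\frac12},\ \theta_{W_1,V_1^-,\psi}(\sigma)\bigr).
\]

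It then remains to identify $\theta_{W_1,V_1^-,\psi}(\sigma)$ with the odd elementary Weil representation $\omega_{W_1,\psi_a}^-$ of $\Mp_2(F)$. By the previous step this lift is nonzero, hence (by Howe duality) an irreducible genuine representation of $\Mp_2(F)$; and by Waldspurger's explicit description of the theta correspondence for $\widetilde{\SL}_2 \times \O_3$ (see \cite{w1,w2}, \cite[\S 5]{gs}), the lift of the $1$-dimensional representation $\sigma_0 = \chi_a \circ \mathrm{N}_{B^-}$ of $\SO(V_1^-)$ (whose Jacquet--Langlands transfer is $\St \otimes \chi_a$) from the quaternion division algebra $B^-$ is precisely $\omega_{W_1,\psi_a}^-$ --- its companion in the packet of $\chi_a \boxtimes S_2$ being $J_{B,\psi}(\chi_a|\cdot|^{\frac12}) = \omega_{W_1,\psi_a}^+$, which comes from the split form. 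Substituting into the displayed identity gives $\theta_{W_2,V_1^-,\psi}(\sigma) = J_{P_1,\psi}(|\cdot|^{\frac12}, \omega_{W_1,\psi_a}^-)$, as claimed.

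The hard part will be this last step: pinning down which member of the $\Mp_2$-packet of $\chi_a \boxtimes S_2$ equals $\theta_{W_1,V_1^-,\psi}(\sigma)$, and in particular keeping careful track of the additive character (the two members differ in a manner governed by the twist from $\psi$ to $\psi_a$). A secondary technical point is the root-number identity $\epsilon(\tfrac12, \St \otimes \chi_a) = \chi_a(-1)$ for nontrivial quadratic $\chi_a$, which is precisely what forces the hypothesis $\chi_a \ne 1$.
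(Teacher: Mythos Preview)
Your proposal is correct and follows exactly the same route as the paper: verify the root-number identity $\epsilon(\tfrac12,\sigma)=\chi_a(-1)$ so that the hypothesis \eqref{eq:root} of Lemma~\ref{l:ind-princ-1} holds, identify $\theta_{W_1,V_1^-,\psi}(\sigma)=\omega_{W_1,\psi_a}^-$ via Waldspurger, and then apply Lemma~\ref{l:ind-princ-1}. The paper's proof simply states these two facts and invokes the lemma; your version supplies the supporting details (the root-number computation for $\St\otimes\chi_a$ and the packet identification on $\Mp_2$) that the paper leaves implicit.
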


\begin{proof}
Since $\epsilon(\tfrac{1}{2}, \sigma) = \chi(-1)$ and $\theta_{W_1,V_1^-,\psi}(\sigma)  = \omega_{W_1,\psi_a}^-$, the assertion follows from Lemma \ref{l:ind-princ-1}.
\end{proof}

\subsection{The nonarchimedean case}

Suppose that $F$ is nonarchimedean.
Let
\[
 W_2 = X_1 \oplus X_1^* \oplus W_1, \qquad
 V_1^+ = Y_1 \oplus Y_1^* \oplus V_0^+
\]
be the decompositions as in \S\S \ref{ss:mp} and \ref{ss:so} with
\begin{align*}
 X_1 & = \Span(w_1), &
 Y_1 & = \Span(v_1), \\
 X_1^* & = \Span(w_1^*), &
 Y_1^* & = \Span(v_1^*). 
\end{align*}
Recall that $P_1$ is the maximal parabolic subgroup of $\Sp(W_2)$ stabilizing $X_1$.
Let $B_1$ be the Borel subgroup of $\O(V_1^+)$ stabilizing $Y_1$.

\subsubsection{Proof of Lemma \ref{l:howe-ps-local+}}
\label{sss:howe-ps-local+-nonarch}

Let $\sigma$ be an irreducible representation of $\O(V_1^+)$.
Assume that $\sigma \ne \det$, so that $\theta_{W_2,V_1^+,\psi}(\sigma)$ is nonzero by Lemma \ref{l:o3mp4-nonzero}.
We will compute $\theta_{W_2,V_1^+,\psi}(\sigma)$ under the further assumption that there is an embedding
\[
 \sigma \longhookrightarrow \Ind^{\O(V_1^+)}_{B_1}(\chi_s \boxtimes \sigma_0),
\]
where $\chi_s = \chi |\cdot|^s$ for some unitary character $\chi$ of $\GL(Y_1) \cong F^\times$ and some $s \in \R$, and $\sigma_0$ is a character of $\O(V_0^+) = \{ \pm 1\}$.
Note that if $\chi = 1$ and $s = -\frac{1}{2}$, then since $\sigma \ne \det$, we have $\sigma(-1) = 1$.
Without loss of generality, we may assume that $s \ne \frac{3}{2}$.
Put $\epsilon_0 = \sigma_0(-1) = \sigma(-1) \cdot \chi(-1)$.

We have
\begin{align*}
 \Theta_{W_2,V_1^+,\psi}(\sigma)^* 
 & = \Hom_{\O(V_1^+)}(\omega_{W_2,V_1^+,\psi}, \sigma) \\
 & \subset \Hom_{\O(V_1^+)}(\omega_{W_2,V_1^+,\psi}, \Ind^{\O(V_1^+)}_{B_1}(\chi_s \boxtimes \sigma_0)) \\
 & = \Hom_{\GL(Y_1) \times \O(V_0^+)}(R_{B_1}(\omega_{W_2,V_1^+,\psi}), \chi_s \boxtimes \sigma_0), 
\end{align*}
where $*$ denotes the linear dual and $R_{B_1}$ denotes the normalized Jacquet functor with respect to $B_1$.
By the result of Kudla \cite{kudla}, $R_{B_1}(\omega_{W_2,V_1^+,\psi})$ has a filtration
\[
 R_{B_1}(\omega_{W_2,V_1^+,\psi}) = R^0 \supset R^1 \supset \{ 0 \}
\]
of $\GL(Y_1) \times \O(V_0^+) \times \Mp(W_2)$-modules such that 
\begin{align*}
 R^0/R^1 & \cong |{\det}_{Y_1}|^{\frac{3}{2}} \boxtimes \omega_{W_2,V_0^+,\psi}, \\ 
 R^1 & \cong \Ind^{\GL(Y_1) \times \O(V_0^+) \times \Mp(W_2)}_{\GL(Y_1) \times \O(V_0^+) \times \widetilde{P}_1}(\mathcal{S}(\operatorname{Isom}(X_1,Y_1)) \boxtimes \omega_{W_1,V_0^+,\psi}).
\end{align*}
Here $\GL(Y_1) \times \widetilde{\GL}(X_1)$ acts on $\mathcal{S}(\operatorname{Isom}(X_1, Y_1))$ by
\[
 [(a, \tilde{b}) \cdot f](g) = \chi_\psi(\tilde{b}) \cdot f(a^{-1} \circ g \circ b)
\]
for $a \in \GL(Y_1)$, $\tilde{b} \in \widetilde{\GL}(X_1)$ with projection $b \in \GL(X_1)$, and $f \in \mathcal{S}(\operatorname{Isom}(X_1, Y_1))$.
Since $s \ne \frac{3}{2}$, the action of $\GL(Y_1)$ shows that
\[
 \Hom_{\GL(Y_1) \times \O(V_0^+)}(R^0/R^1, \chi_s \boxtimes \sigma_0) = \{ 0 \},
\]
so that
\begin{align*}
 \Theta_{W_2,V_1^+,\psi}(\sigma)^*
 & \subset \Hom_{\GL(Y_1) \times \O(V_0^+)}(R^1, \chi_s \boxtimes \sigma_0) \\
 & = I_{P_1, \psi}(\chi_s^{-1}, \Theta_{W_1, V_0^+, \psi}(\sigma_0))^*.
\end{align*}
Thus, noting that $\Theta_{W_1, V_0^+, \psi}(\sigma_0) = \omega_{W_1, \psi}^{\epsilon_0}$, we obtain a surjection
\begin{equation}
\label{eq:surj-o3mp4}
 I_{P_1, \psi}(\chi_s^{-1}, \omega_{W_1, \psi}^{\epsilon_0})
 \longtwoheadrightarrow \Theta_{W_2,V_1^+,\psi}(\sigma).
\end{equation}
In particular, when $\chi^2 = 1$ and $s = -\frac{1}{2}$, this proves Lemma \ref{l:howe-ps-local+} in the nonarchimedean case.

\subsubsection{More properties of the theta lift}

We prove more properties of the local theta lift, which are not used in the proof of the main theorem but will be necessary when we describe local $A$-packets explicitly in Appendix \ref{a:A-packets} below.

\begin{lem}
\label{l:more-theta-nonarch-1}
Let $\sigma$ be an irreducible supercuspidal representation of $\O(V_1^\epsilon)$.
Assume that $\sigma \ne \det$ when $\epsilon = -1$.
If $\sigma(-1) = - \epsilon \cdot \epsilon(\frac{1}{2}, \sigma)$, then we have
\[
 \theta_{W_2, V_1^\epsilon, \psi}(\sigma) = \theta_{W_2, V_2^\epsilon, \psi}(\St^\epsilon(1,\sigma_0)),
\]
where $\St^\epsilon(1,\sigma_0)$ is the irreducible square-integrable representation of $\SO(V_2^\epsilon)$ contained in $I_{Q_1}(|\cdot|^{\frac{1}{2}}, \sigma_0)$ with $\sigma_0 = \sigma|_{\SO(V_1^\epsilon)}$ (see Lemmas \ref{l:nsc-so+q1}\eqref{nsc-so+q1i} and \ref{l:nsc-so-q1}\eqref{nsc-so-q1i} below).
\end{lem}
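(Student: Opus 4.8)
The plan is to compute $\Theta_{W_2,V_2^\epsilon,\psi}(\St^\epsilon(1,\sigma_0))$ by the same device used in \S\ref{sss:howe-ps-local+-nonarch}, now applied to the maximal parabolic $Q_1\subset\SO(V_2^\epsilon)$ with Levi $\GL(Y_1)\times\SO(V_1^\epsilon)$. Put $\pi=\theta_{W_2,V_1^\epsilon,\psi}(\sigma)$, which is nonzero by Lemma \ref{l:o3mp4-nonzero} (since $\sigma\ne\det$) and irreducible by the Howe duality; we must show $\theta_{W_2,V_2^\epsilon,\psi}(\St^\epsilon(1,\sigma_0))=\pi$. First I extend $\St^\epsilon(1,\sigma_0)$ to a representation $\varsigma$ of $\O(V_2^\epsilon)$, so that $\varsigma\hookrightarrow\Ind^{\O(V_2^\epsilon)}_{Q_1}(|\cdot|^{\frac12}\boxtimes\tilde\sigma_0)$ for the $\O(V_1^\epsilon)$-extension $\tilde\sigma_0$ of $\sigma_0$ forced by the choice of $\varsigma$. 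By Frobenius reciprocity, $\Theta_{W_2,V_2^\epsilon,\psi}(\varsigma)^*$ embeds into $\Hom_{\GL(Y_1)\times\O(V_1^\epsilon)}(R_{Q_1}(\omega_{W_2,V_2^\epsilon,\psi}),\,|\cdot|^{\frac12}\boxtimes\tilde\sigma_0)$, and Kudla's filtration \cite{kudla} of the Jacquet module gives a two-step filtration $R^0\supset R^1\supset\{0\}$ with
\[
 R^0/R^1\cong|{\det}_{Y_1}|^{\frac12}\boxtimes\omega_{W_2,V_1^\epsilon,\psi},\qquad
 R^1\cong\Ind\big(\mathcal{S}(\operatorname{Isom}(X_1,Y_1))\boxtimes\omega_{W_1,V_1^\epsilon,\psi}\big).
\]

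The crucial feature --- in contrast with \S\ref{sss:howe-ps-local+-nonarch}, where the leading exponent $\tfrac32$ did not meet the inducing character --- is that here the leading exponent $\tfrac12$ \emph{equals} the inducing exponent. Hence the $R^0/R^1$-term contributes exactly $\Theta_{W_2,V_1^\epsilon,\psi}(\tilde\sigma_0)$, while the $R^1$-term contributes a representation of $\Mp(W_2)$ parabolically induced from $\widetilde{P}_1$ along a twist of $\Theta_{W_1,V_1^\epsilon,\psi}(\tilde\sigma_0)$. At this point the hypothesis enters: by the Waldspurger dichotomy \eqref{eq:dichotomy}, the condition $\sigma(-1)=-\epsilon\cdot\epsilon(\tfrac12,\sigma)$ forces $\theta_{W_1,V_1^\epsilon,\psi}(\sigma)=0$; and a bookkeeping of the extension signs shows that the extension $\tilde\sigma_0$ appearing above is exactly $\sigma$ --- equivalently, it is the unique extension of $\St^\epsilon(1,\sigma_0)$ to $\O(V_2^\epsilon)$ with nonzero theta lift to $\Mp_4$ (by the conservation relation of \cite{sz}, as in the proof of Lemma \ref{l:o3mp4-mult-free}) that induces $\sigma$ on $\O(V_1^\epsilon)$. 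Therefore the $R^1$-term vanishes, $\Theta_{W_2,V_2^\epsilon,\psi}(\St^\epsilon(1,\sigma_0))$ is a subquotient of $\Theta_{W_2,V_1^\epsilon,\psi}(\sigma)$, and so $\theta_{W_2,V_2^\epsilon,\psi}(\St^\epsilon(1,\sigma_0))$ is either $0$ or $\pi$.

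It remains to exclude $0$. Since $\pi=\theta_{W_2,V_1^\epsilon,\psi}(\sigma)$, the Howe duality shows that $\pi$ first occurs in the orthogonal Witt tower through $V_1^\epsilon$ already at $V_1^\epsilon$ --- when $\epsilon=+$ one has $\theta_{W_2,V_0^+,\psi}(\pi)=0$, since otherwise $\pi$ would be an elementary Weil representation, which Lemmas \ref{l:elem-weil} and \ref{l:ind-princ-1} show it is not. Hence $\Theta_{W_2,V_2^\epsilon,\psi}(\pi)\ne0$ by the tower property, and the structure of a theta lift one step above first occurrence (compatibility of theta with Jacquet functors, together with the reducibility of $I_{Q_1}(|\cdot|^s,\sigma_0)$ at $s=\tfrac12$ from Lemmas \ref{l:nsc-so+q1}\eqref{nsc-so+q1i} and \ref{l:nsc-so-q1}\eqref{nsc-so-q1i}) identifies $\theta_{W_2,V_2^\epsilon,\psi}(\pi)$ with either $\St^\epsilon(1,\sigma_0)$ or the Langlands quotient $J_{Q_1}(|\cdot|^{\frac12},\sigma_0)$. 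The latter is impossible: $J_{Q_1}(|\cdot|^{\frac12},\sigma_0)$ is itself a theta lift from $\Mp(W_1)=\Mp_2$, namely the one-step-up lift of $\theta_{W_1,V_1^\epsilon,\psi}(\sigma\otimes\det)$, which is nonzero by \eqref{eq:dichotomy} since $(\sigma\otimes\det)(-1)=\epsilon\cdot\epsilon(\tfrac12,\sigma)$; it therefore already occurs at $\Mp_2$ in the metaplectic tower, whence its theta lift back to $\Mp_4$ is not supercuspidal, while $\pi$ is (Kudla's theorem on first occurrences, applicable since $\sigma$ is supercuspidal and $\theta_{W_1,V_1^\epsilon,\psi}(\sigma)=0$). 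Thus $\theta_{W_2,V_2^\epsilon,\psi}(\pi)=\St^\epsilon(1,\sigma_0)$, and the Howe duality gives $\theta_{W_2,V_2^\epsilon,\psi}(\St^\epsilon(1,\sigma_0))=\pi=\theta_{W_2,V_1^\epsilon,\psi}(\sigma)$.

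The main obstacle I anticipate is the sign bookkeeping in the second paragraph: one must match the hypothesis $\sigma(-1)=-\epsilon\cdot\epsilon(\tfrac12,\sigma)$ against the $\O(V_2^\epsilon)$-extension of $\St^\epsilon(1,\sigma_0)$ singled out by the conservation relation, so that the vanishing $\theta_{W_1,V_1^\epsilon,\psi}(\sigma)=0$ can be applied to the $R^1$-term; a secondary difficulty is to confirm that going one step up the orthogonal tower from $\pi$ produces the square-integrable constituent $\St^\epsilon(1,\sigma_0)$ rather than the Langlands quotient.
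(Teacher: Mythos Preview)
Your third paragraph alone essentially contains a valid proof, and it is close in spirit to the paper's argument; the first two paragraphs are an unnecessary detour with the sign-bookkeeping gap you yourself flag (if the extension $\tilde\sigma_0$ turned out to be $\sigma\otimes\det$ rather than $\sigma$, the $R^1$-term would not vanish, and you never noncircularly determine which extension of $\St^\epsilon(1,\sigma_0)$ has nonzero lift to $\Mp_4$).

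The paper's proof is shorter and avoids both difficulties by computing in the opposite direction. One sets $\pi=\theta_{W_2,V_1^\epsilon,\psi}(\sigma)$, notes as you do that $\pi$ is nonzero and supercuspidal, and then invokes the local Shimura correspondence \cite{gs} to conclude in one line that the equal-rank lift $\theta_{W_2,V_2^\epsilon,\psi}(\pi)$ is nonzero and \emph{square-integrable}. Next one uses only the top quotient of Kudla's filtration, namely the surjection
\[
 R_{Q_1}(\omega_{W_2,V_2^\epsilon,\psi})\longtwoheadrightarrow |\cdot|^{\frac12}\boxtimes\omega_{W_2,V_1^\epsilon,\psi},
\]
composed with $\omega_{W_2,V_1^\epsilon,\psi}\twoheadrightarrow\sigma_0\boxtimes\pi$, to produce via Frobenius reciprocity a nonzero $\SO(V_2^\epsilon)\times\Mp(W_2)$-map $\omega_{W_2,V_2^\epsilon,\psi}\to I_{Q_1}(|\cdot|^{\frac12},\sigma_0)\boxtimes\pi$. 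Thus $\theta_{W_2,V_2^\epsilon,\psi}(\pi)$ is a subquotient of $I_{Q_1}(|\cdot|^{\frac12},\sigma_0)$, and being square-integrable it must be $\St^\epsilon(1,\sigma_0)$; Howe duality finishes. Compared with your route, the appeal to \cite{gs} replaces your hands-on exclusion of the Langlands quotient (which rests on the unproved identification of $J_{Q_1}(|\cdot|^{\frac12},\sigma_0)$ as a specific lift from $\Mp_2$), and working on the $\SO(V_2^\epsilon)$ side throughout eliminates any need to track $\O(V_2^\epsilon)$-extensions.
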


\begin{proof}
Put $\pi = \theta_{W_2, V_1^{\epsilon}, \psi}(\sigma)$.
By Lemma \ref{l:o3mp4-nonzero} and \eqref{eq:dichotomy}, $\pi$ is nonzero and supercuspidal.
Hence, by the local Shimura correspondence \cite{gs}, $\theta_{W_2, V_2^\epsilon, \psi}(\pi)$ is nonzero and square-integrable.

We now consider the normalized Jacquet module of $\omega_{W_2, V_2^\epsilon, \psi}$ with respect to the maximal parabolic subgroup $Q_1$ of $\SO(V_2^\epsilon)$.
By the result of Kudla \cite{kudla}, there is an $F^\times \times \SO(V_1^\epsilon) \times \Mp(W_2)$-equivariant surjection 
\[
 R_{Q_1}(\omega_{W_2, V_2^\epsilon, \psi}) \longtwoheadrightarrow |\cdot|^{\frac{1}{2}} \boxtimes \omega_{W_2, V_1^\epsilon, \psi}.
\]
Since there is a surjection $\omega_{W_2, V_1^\epsilon, \psi} \twoheadrightarrow \sigma_0 \boxtimes \pi$, this gives rise via Frobenius reciprocity to a nonzero $\SO(V_2^\epsilon) \times \Mp(W_2)$-equivariant map
\[
 \omega_{W_2, V_2^\epsilon, \psi} \longrightarrow I_{Q_1}(|\cdot|^{\frac{1}{2}}, \sigma_0) \boxtimes \pi. 
\]
Hence $\theta_{W_2, V_2^\epsilon, \psi}(\pi)$ is a subquotient of $I_{Q_1}(|\cdot|^{\frac{1}{2}}, \sigma_0)$.
Since $\theta_{W_2, V_2^\epsilon, \psi}(\pi)$ is square-integrable, $\theta_{W_2, V_2^\epsilon, \psi}(\pi)$ must be $\St^\epsilon(1, \sigma_0)$. 
This completes the proof.
\end{proof}

\begin{lem}
\label{l:more-theta-nonarch-2}
Let $\sigma$ be an irreducible square-integrable representation of $\O(V_1^+)$ such that $\sigma|_{\SO(V_1^+)}$ is the unique irreducible subrepresentation of $I_{Q_1}(\chi |\cdot|^{\frac{1}{2}})$ for some quadratic character $\chi$ of $F^\times$.
\begin{enumerate}
\item If $\sigma(-1) = -\chi(-1)$ and $\chi \ne 1$, then we have
\[
 \theta_{W_2, V_1^+, \psi}(\sigma) = \widetilde{\St}_\psi(\chi, \omega_{W_1, \psi}^-), 
\]
where $\widetilde{\St}_\psi(\chi, \omega_{W_1, \psi}^-)$ is the irreducible genuine square-integrable representation of $\Mp(W_2)$ contained in $I_{P_1, \psi}(\chi |\cdot|^{\frac{1}{2}}, \omega_{W_1, \psi}^-)$ (see Lemma \ref{l:nsc-mp-p1}\eqref{nsc-mp-p1i} below).
\item 
If $\sigma(-1) = 1$ and $\chi = 1$, then we have
\[
 \theta_{W_2, V_1^+, \psi}(\sigma) = \pi_{\gen, \psi}(\st), 
\]
where $\pi_{\gen, \psi}(\st)$ is the irreducible genuine tempered representation of $\Mp(W_2)$ contained in $I_{P_1, \psi}(|\cdot|^{\frac{1}{2}}, \omega_{W_1, \psi}^+)$ (see Lemma \ref{l:nsc-mp-p1}\eqref{nsc-mp-p1iii} below). 
\end{enumerate}
\end{lem}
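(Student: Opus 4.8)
The plan is to reduce the statement to the Jacquet-module computation already carried out in \S\ref{sss:howe-ps-local+-nonarch}, applied to the explicit principal-series realization of $\sigma$. By hypothesis $\sigma|_{\SO(V_1^+)}$ is the unique irreducible submodule of $I_{Q_1}(\chi|\cdot|^{\frac12})$, so $\sigma$ embeds into $\Ind^{\O(V_1^+)}_{B_1}(\chi|\cdot|^{\frac12}\boxtimes\sigma_0)$, where $\sigma_0$ is the character of $\O(V_0^+)$ with $\sigma_0(-1) = \sigma(-1)\cdot\chi(-1) =: \epsilon_0$; thus $\epsilon_0 = -1$ in part (i) and $\epsilon_0 = +1$ in part (ii). Taking $s = \frac12$ (permissible, since the only excluded value in that argument is $s = \frac32$), and using $\chi^{-1} = \chi$ together with $\Theta_{W_1,V_0^+,\psi}(\sigma_0) = \omega^{\epsilon_0}_{W_1,\psi}$, the surjection \eqref{eq:surj-o3mp4} becomes $I_{P_1,\psi}(\chi|\cdot|^{-\frac12},\omega^{\epsilon_0}_{W_1,\psi}) \twoheadrightarrow \Theta_{W_2,V_1^+,\psi}(\sigma)$.

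I would then pin down the image. Since $\sigma$ is infinite-dimensional we have $\sigma \ne \det$, so $\theta_{W_2,V_1^+,\psi}(\sigma)$ is nonzero by Lemma \ref{l:o3mp4-nonzero}, and it is irreducible by the Howe duality; being a quotient of $\Theta_{W_2,V_1^+,\psi}(\sigma)$, it is therefore the unique irreducible quotient of $I_{P_1,\psi}(\chi|\cdot|^{-\frac12},\omega^{\epsilon_0}_{W_1,\psi})$ as soon as this (co-)standard module has irreducible cosocle. For part (i) this is immediate: since $\chi \ne 1$, the standard module $I_{P_1,\psi}(\chi|\cdot|^{\frac12},\omega^-_{W_1,\psi})$ has length two, with square-integrable unique submodule $\widetilde{\St}_\psi(\chi,\omega^-_{W_1,\psi})$ and Langlands quotient $J_{P_1,\psi}(\chi|\cdot|^{\frac12},\omega^-_{W_1,\psi})$ (Lemma \ref{l:nsc-mp-p1}\eqref{nsc-mp-p1i}); applying the standard intertwining operator together with $\chi^{-1} = \chi$, the module $I_{P_1,\psi}(\chi|\cdot|^{-\frac12},\omega^-_{W_1,\psi})$ has the same two constituents with the roles of sub and quotient exchanged, so its unique irreducible quotient is $\widetilde{\St}_\psi(\chi,\omega^-_{W_1,\psi})$. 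Hence $\theta_{W_2,V_1^+,\psi}(\sigma) = \widetilde{\St}_\psi(\chi,\omega^-_{W_1,\psi})$.

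For part (ii) I would run the same argument, but now $\omega^+_{W_1,\psi} = J_{B,\psi}(|\cdot|^{\frac12})$ is non-tempered, so $I_{P_1,\psi}(|\cdot|^{\frac12},\omega^+_{W_1,\psi})$ is a subquotient of the principal series $I_{B,\psi}(|\cdot|^{\frac12},|\cdot|^{\frac12})$ rather than a genuine standard module, and a little more input is needed to see that the cosocle of $I_{P_1,\psi}(|\cdot|^{-\frac12},\omega^+_{W_1,\psi})$ is irreducible and equal to $\pi_{\gen,\psi}(\st)$. Here I would invoke the submodule structure recorded in Lemma \ref{l:nsc-mp-p1}\eqref{nsc-mp-p1iii}, which singles out $\pi_{\gen,\psi}(\st)$ as the unique tempered (equivalently, the unique generic) constituent; alternatively, one may observe that $\sigma$ is generic, hence so is its theta lift, while $\pi_{\gen,\psi}(\st)$ is the only generic constituent of $I_{B,\psi}(|\cdot|^{\frac12},|\cdot|^{\frac12})$ in the relevant cuspidal support, which again forces $\theta_{W_2,V_1^+,\psi}(\sigma) = \pi_{\gen,\psi}(\st)$.

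I expect the main obstacle to be precisely this last point: controlling the full composition series of $I_{P_1,\psi}(|\cdot|^{\pm\frac12},\omega^+_{W_1,\psi})$ — and, in part (i), confirming that the genuine standard module $I_{P_1,\psi}(\chi|\cdot|^{\frac12},\omega^-_{W_1,\psi})$ really has length exactly two with the stated constituents — which is the content of the reducibility computations in Lemma \ref{l:nsc-mp-p1}. Everything else is a faithful transcription of the Kudla-filtration argument already carried out in \S\ref{sss:howe-ps-local+-nonarch}, combined with Lemma \ref{l:o3mp4-nonzero} and the Howe duality.
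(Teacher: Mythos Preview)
Your proposal is correct and follows essentially the same route as the paper: both arguments feed the principal-series embedding of $\sigma$ into the Kudla-filtration computation of \S\ref{sss:howe-ps-local+-nonarch} to obtain the surjection \eqref{eq:surj-o3mp4}, and then identify $\theta_{W_2,V_1^+,\psi}(\sigma)$ among the constituents of $I_{P_1,\psi}(\chi|\cdot|^{-\frac12},\omega^{\epsilon_0}_{W_1,\psi})$.

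The one difference is in how the identification step is justified. The paper first computes $\epsilon(\tfrac12,\sigma)$ and uses the dichotomy \eqref{eq:dichotomy} to conclude that $\theta_{W_1,V_1^+,\psi}(\sigma)=0$; this vanishing (first occurrence at $n=2$ for a discrete series $\sigma$) is what lets the paper say simply ``the assertion follows from \eqref{eq:surj-o3mp4}'', since it forces $\theta_{W_2,V_1^+,\psi}(\sigma)$ to be tempered and hence to be the unique tempered constituent. You instead bypass this and argue directly that the co-standard module $I_{P_1,\psi}(\chi|\cdot|^{-\frac12},\omega^{\epsilon_0}_{W_1,\psi})$ has irreducible cosocle equal to the desired representation. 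For part~(i) this is immediate from Lemma~\ref{l:nsc-mp-p1}\eqref{nsc-mp-p1i} and standard module duality, exactly as you say. For part~(ii) your caution is well placed, but the cosocle claim does hold: applying Lemma~\ref{l:nsc-mp-p1}\eqref{nsc-mp-p1iii} with $\psi$ replaced by $\psi^{-1}$ and then taking contragredients gives the exact sequence $0\to J_{B,\psi}(|\cdot|^{\frac12},|\cdot|^{\frac12})\to I_{P_1,\psi}(|\cdot|^{-\frac12},\omega^+_{W_1,\psi})\to \pi_{\gen,\psi}(\st)\to 0$, so the unique irreducible quotient is indeed $\pi_{\gen,\psi}(\st)$. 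Your genericity alternative also works but needs the (true, though not entirely trivial) input that $\psi$-genericity is preserved in this theta lift; the cosocle argument is cleaner.
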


\begin{proof}
Note that 
\[
 \epsilon(\tfrac{1}{2}, \sigma) = 
 \begin{cases}
  \chi(-1) & \text{if $\chi \ne 1$;} \\
  -1 & \text{if $\chi = 1$,}
 \end{cases}
\]
so that $\theta_{W_1, V_1^+, \psi}(\sigma)$ is zero by \eqref{eq:dichotomy}.
The assertion follows from \eqref{eq:surj-o3mp4}.
\end{proof}

\begin{lem}
\label{l:more-theta-nonarch-3}
Let $\sigma$ be an irreducible representation of $\O(V_1^+)$ such that $\sigma|_{\SO(V_1^+)} = I_{Q_1}(\chi |\cdot|^s)$ for some unitary character $\chi$ of $F^\times$ and some $s \in \R$ with $0 \le s <\frac{1}{2}$.
If $\sigma(-1) = - \chi(-1)$, then we have
\[
 \theta_{W_2, V_1^+, \psi}(\sigma) = J_{P_1,\psi}(\chi |\cdot|^s, \omega^-_{W_1,\psi}).
\]
\end{lem}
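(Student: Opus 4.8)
The plan is to deduce this from the Jacquet-module computation already performed in \S\ref{sss:howe-ps-local+-nonarch}. The first step is to realize $\sigma$ as an explicit induced representation of $\O(V_1^+)$. Since $I_{Q_1}(\chi|\cdot|^s)$ is irreducible (part of the hypothesis), the representation $\Ind^{\O(V_1^+)}_{B_1}(\chi|\cdot|^s \boxtimes \sigma_0)$, whose restriction to $\SO(V_1^+)$ is $I_{Q_1}(\chi|\cdot|^s)$, is an irreducible extension of the latter to $\O(V_1^+)$, and the central element $-1 \in \O(V_1^+)$ (which lies in the Levi $\GL(Y_1) \times \O(V_0^+)$ of $B_1$) acts on it by $\chi(-1)\sigma_0(-1)$. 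Taking $\sigma_0$ to be the character of $\O(V_0^+) = \{\pm 1\}$ with $\sigma_0(-1) = \sigma(-1)\chi(-1) = -1$ (using the hypothesis $\sigma(-1) = -\chi(-1)$) therefore gives $\sigma \cong \Ind^{\O(V_1^+)}_{B_1}(\chi|\cdot|^s \boxtimes \sigma_0)$. As $0 \le s < \frac{1}{2}$ we have $s \ne \frac{3}{2}$, so the analysis of \S\ref{sss:howe-ps-local+-nonarch} applies; with $\epsilon_0 = \sigma_0(-1) = -1$, so that $\Theta_{W_1,V_0^+,\psi}(\sigma_0) = \omega_{W_1,\psi}^-$, the surjection \eqref{eq:surj-o3mp4} becomes
\[
 I_{P_1,\psi}(\chi^{-1}|\cdot|^{-s}, \omega_{W_1,\psi}^-) \longtwoheadrightarrow \Theta_{W_2,V_1^+,\psi}(\sigma).
\]

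The key remaining point is that the induced module $I_{P_1,\psi}(\chi|\cdot|^s, \omega_{W_1,\psi}^-)$ is irreducible when $0 \le s < \frac{1}{2}$. Since $\omega_{W_1,\psi}^-$ is irreducible and tempered, the set of $s$ for which $\chi|\cdot|^s \rtimes \omega_{W_1,\psi}^-$ is reducible is of the form $\{\pm s_\chi\}$ for a single $s_\chi \in \frac{1}{2}\Z_{\ge 0}$ by the Harish-Chandra theory of Plancherel measures; as $\frac{1}{2}\Z_{\ge 0} \cap (0,\frac{1}{2}) = \varnothing$, irreducibility is automatic for $0 < s < \frac{1}{2}$, and for $s = 0$ it follows from the hypothesis that $I_{Q_1}(\chi)$ is irreducible, which forces $\chi^2 \ne 1$ and hence rules out the tempered reducibility point $s_\chi = 0$. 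Granting this, the standard intertwining operator attached to the nontrivial Weyl element of the $\GL_1$-factor is an isomorphism, so $I_{P_1,\psi}(\chi^{-1}|\cdot|^{-s}, \omega_{W_1,\psi}^-) \cong I_{P_1,\psi}(\chi|\cdot|^s, \omega_{W_1,\psi}^-) = J_{P_1,\psi}(\chi|\cdot|^s, \omega_{W_1,\psi}^-)$.

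To conclude: $\sigma$ is a principal series, so $\sigma \ne \det$, and hence $\theta_{W_2,V_1^+,\psi}(\sigma)$ is nonzero by Lemma \ref{l:o3mp4-nonzero} and irreducible by Howe duality. Being a nonzero quotient of the irreducible module $I_{P_1,\psi}(\chi^{-1}|\cdot|^{-s}, \omega_{W_1,\psi}^-)$, the representation $\Theta_{W_2,V_1^+,\psi}(\sigma)$ is isomorphic to it, and therefore $\theta_{W_2,V_1^+,\psi}(\sigma) \cong J_{P_1,\psi}(\chi|\cdot|^s, \omega_{W_1,\psi}^-)$, as claimed. I expect the only real subtleties to be bookkeeping rather than conceptual: matching the constants ($\chi_s = \chi|\cdot|^s$, $\epsilon_0 = -1$) when transcribing \S\ref{sss:howe-ps-local+-nonarch}, and the endpoint $s = 0$, where one must check that the hypotheses genuinely preclude reducibility of the induced module on the metaplectic side so that the symbol $J_{P_1,\psi}(\chi, \omega_{W_1,\psi}^-)$ is unambiguous. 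One may also record as a consistency check that $\theta_{W_1,V_1^+,\psi}(\sigma) = 0$ here, by \eqref{eq:dichotomy} together with the identity $\epsilon(\tfrac{1}{2},\sigma) = \chi(-1)$ for the parameter $\chi|\cdot|^s \oplus \chi^{-1}|\cdot|^{-s}$, consistent with first occurrence in the metaplectic tower taking place at $\Mp_4$.
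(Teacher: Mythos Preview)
Your approach is essentially the same as the paper's: both invoke the surjection \eqref{eq:surj-o3mp4} with $\epsilon_0 = -1$ and then use irreducibility of $I_{P_1,\psi}(\chi|\cdot|^s,\omega_{W_1,\psi}^-)$ for $0 \le s < \tfrac{1}{2}$ to identify the theta lift with the Langlands quotient. The paper's one-line proof leaves the irreducibility implicit (it is covered by Lemma~\ref{l:nsc-mp-p1}), while you spell it out.

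There is, however, a slip in your treatment of the endpoint $s=0$. You claim that the hypothesis ``$I_{Q_1}(\chi)$ is irreducible'' forces $\chi^2 \ne 1$; this is false. The unitary principal series $I_{Q_1}(\chi)$ of $\SO(V_1^+)\cong\mathrm{PGL}_2(F)$ is irreducible for every unitary $\chi$, including all quadratic ones (reducibility of $\mathrm{Ind}(\chi,\chi^{-1})$ on $\GL_2$ requires $\chi^2=|\cdot|^{\pm 1}$, not $\chi^2=1$). So your argument does not rule out $s_\chi=0$ when $\chi^2=1$. The fix is immediate: Lemma~\ref{l:nsc-mp-p1} shows that for $\pi=\omega_{W_1,\psi}^-$ the only reducibility points of $I_{P_1,\psi}(\chi|\cdot|^s,\omega_{W_1,\psi}^-)$ lie at $s\in\{\tfrac{1}{2},\tfrac{3}{2}\}$ (cases \eqref{nsc-mp-p1i} and the case $\chi=\chi_a$ with $\pi=\omega_{W_1,\psi_a}^-$), never at $s=0$, regardless of whether $\chi^2=1$. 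Equivalently, the $R$-group for this tempered induction is trivial. With this correction your proof goes through and matches the paper's.
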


\begin{proof}
The assertion follows from \eqref{eq:surj-o3mp4}.
\end{proof}

\subsection{The real case}

Suppose that $F = \R$.
We work in the category of $(\g,K)$-modules.

\subsubsection{Notation}
\label{sss:notation-real}

Let $\O(n)$ be the maximal compact subgroup of $\GL_n(\R)$ defined by
\[
 \O(n) = \{ g \in \GL_n(\R) \, | \, {}^t g^{-1} = g \}. 
\]
Put $n_0 = [\frac{n}{2}]$.
As in \cite[p.~18]{adams}, we parametrize the irreducible representations of $\O(n)$ by highest weights
\[
 (a_1,\dots,a_{n_0};\epsilon)
\]
with $a_i \in \Z$ such that $a_1 \ge \dots \ge a_{n_0} \ge 0$ and $\epsilon = \pm 1$.
Note that the two representations associated to $(a_1,\dots,a_{n_0};1)$ and $(a_1,\dots,a_{n_0};-1)$ are isomorphic if and only if $n$ is even and $a_{n_0}> 0$.

Let $V_{p,q}$ be the quadratic space over $\R$ of signature $(p,q)$ with $p+q$ odd.
We realize the orthogonal group $\O(p,q) = \O(V_{p,q})$ as 
\[
 \O(p,q) = \left\{ g \in \GL_{p+q}(\R) \, \left| \, 
 {}^t g
 \begin{pmatrix}
  \mathbf{1}_p & \\
  & -\mathbf{1}_q
 \end{pmatrix}
 g =
 \begin{pmatrix}
  \mathbf{1}_p & \\
  & -\mathbf{1}_q
 \end{pmatrix}
 \right. \right\}.
\]
Let $K \cong \O(p) \times \O(q)$ be the maximal compact subgroup of $\O(p,q)$ defined by
\[
 K = \{ g \in \O(p,q) \, | \, {}^t g^{-1} = g \}.
\]
Put $p_0 = [\frac{p}{2}]$ and $q_0 = [\frac{q}{2}]$.
As above, we parametrize the irreducible representations of $K$ by highest weights
\[
 (a_1,\dots,a_{p_0};\epsilon) \boxtimes (b_1,\dots,b_{q_0};\delta) 
\]
with $a_i, b_j \in \Z$ such that $a_1 \ge \dots \ge a_{p_0} \ge 0$, $b_1 \ge \dots \ge b_{q_0} \ge 0$ and $\epsilon, \delta = \pm 1$.

Let $W_n$ be the $2n$-dimensional symplectic space over $\R$.
We realize the symplectic group $\Sp_{2n}(\R) = \Sp(W_n)$ as 
\[
 \Sp_{2n}(\R) = \left\{ g \in \GL_{2n}(\R) \, \left| \, 
 {}^t g 
 \begin{pmatrix}
  & \mathbf{1}_n \\
  -\mathbf{1}_n &
 \end{pmatrix}
 g =
 \begin{pmatrix}
  & \mathbf{1}_n \\
  -\mathbf{1}_n &
 \end{pmatrix}
 \right. \right\}.
\]
Let $\overline{K'} \cong \U(n)$ be the maximal compact subgroup of $\Sp_{2n}(\R)$ defined by 
\[
 \overline{K'} = \{ g \in \Sp_{2n}(\R) \, | \, {}^t g^{-1} = g \}
\]
and $K'$ the preimage of $\overline{K'}$ in $\Mp_{2n}(\R)$.
Following the normalization in \cite[p.~19]{adams}, we parametrize the irreducible genuine representations of $K'$ by highest weights
\[
 (a_1, \dots, a_n)
\]
with $a_i \in \Z + \frac{1}{2}$ such that $a_1 \ge \dots \ge a_n$.
Note that this parametrization depends on $\psi$.

For any $a \in \frac{1}{2} \Z$ with $a>0$, we denote by $D_a$ the relative discrete series representation of $\GL_2(\R)$ of weight $2a + 1$ with central character trivial on $\R_+^\times$, so that $\mathcal{D}_a$ is the $L$-parameter of $D_a$.
For any $a \in \frac{1}{2} \Z \smallsetminus \Z$, we denote by $\widetilde{D}_{a,\psi}$ the genuine discrete series representation of $\Mp_2(\R)$ of weight (relative to the parametrization depending on $\psi$)
\[
\begin{cases}
 a+1 & \text{if $a>0$;} \\
 a-1 & \text{if $a<0$.}
\end{cases}
\]

\subsubsection{Theta correspondence over $\R$}

We recall some basic properties of the theta correspondence over $\R$ (see \cite{howe,kashiwara-vergne,adams}).

Let $\mathcal{P}$ be the Fock model of the Weil representation $\omega_{W_n, V_{p,q}, \psi}$.
Namely, $\mathcal{P} = \bigoplus_{d=0}^\infty \mathcal{P}_d$ is the space of polynomials in $n(p+q)$ variables and $\mathcal{P}_d$ is the subspace of homogeneous polynomials of degree $d$, which is invariant under the action of $K \times K'$.
For any irreducible representation $\mu$ of $K$ occurring in $\mathcal{P}$, we define $\deg \mu$ as the smallest nonnegative integer $d$ such that the $\mu$-isotypic component of $\mathcal{P}_d$ is nonzero.
If 
\[
 \mu = (a_1,\dots,a_k,0,\dots,0;\epsilon) \boxtimes (b_1,\dots,b_l,0,\dots,0;\delta)
\]
with $a_k, b_l > 0$, then we have
\[
 \deg \mu = \sum_{i=1}^k a_i + \sum_{j=1}^l b_j + k' + l', 
\]
where 
\begin{equation}
\label{eq:k'l'}
 k' = 
 \begin{cases}
  0 & \text{if $\epsilon = 1$;} \\
  p-2k & \text{if $\epsilon = -1$,}
 \end{cases}
 \qquad
 l' = 
 \begin{cases}
  0 & \text{if $\delta = 1$;} \\
  q-2l & \text{if $\delta = -1$.}
 \end{cases}
\end{equation}
In particular, $\deg \mu$ does not depend on $n$. 
Similarly, for any irreducible genuine representation $\mu'$ of $K'$ occurring in $\mathcal{P}$, we may define $\deg \mu'$.
If
\[
 \mu' = (a_1,\dots,a_n) + \frac{p-q}{2} \cdot (1,\dots,1) 
\]
with $a_i \in \Z$, then we have
\[
 \deg \mu' = \sum_{i=1}^n |a_i|.
\]
In particular, $\deg \mu'$ depends only on $p-q$.

Let $\mathcal{H}$ be the space of joint harmonics, which is a $K \times K'$-invariant subspace of $\mathcal{P}$.
Then $\mathcal{H}$ is multiplicity-free as a representation of $K \times K'$ and induces a correspondence between representations of $K$ and $K'$ given as follows.
Let $\mu$ be an irreducible representation of $K$ and $\mu'$ an irreducible genuine representation of $K'$.
Then $\mu$ and $\mu'$ correspond if and only if $\mu$ and $\mu'$ are of the form
\[
 \mu = (a_1,\dots,a_k,0,\dots,0;\epsilon) \boxtimes (b_1,\dots,b_l,0,\dots,0;\delta)
\]
and 
\[
 \mu' = (a_1,\dots,a_k, \underbrace{1, \dots, 1}_{k'}, 0,\dots,0, \underbrace{-1,\dots,-1}_{l'}, -b_l,\dots, -b_1) + \frac{p-q}{2} \cdot (1,\dots,1) 
\]
with $a_k, b_l > 0$ and $k+k'+l+l' \le n$, where $k',l'$ are as in \eqref{eq:k'l'}.
Note that $\mu$ and $\mu'$ determine each other.

Let $\sigma$ be an irreducible representation of $\O(p,q)$ such that the theta lift $\pi = \theta_{W_n,V_{p,q}, \psi}(\sigma)$ to $\Mp_{2n}(\R)$ is nonzero.
Let $\mu$ be a $K$-type of $\sigma$, i.e.~an irreducible representation of $K$ occurring in $\sigma|_K$.
We say that $\mu$ is of \emph{minimal degree} in $\sigma$ if $\deg \mu$ is minimal among all $K$-types of $\sigma$.
In this case, $\mu$ occurs in $\mathcal{H}$.
Let $\mu'$ be the irreducible genuine representation of $K'$ corresponding to $\mu$.
Then $\mu'$ is a $K'$-type of $\pi$ and is of minimal degree in $\pi$.
An analogous result also holds when we switch the roles of $\sigma$ and $\pi$.

We also need the notion of \emph{lowest $K$-types} introduced by Vogan \cite{vogan}.
In particular, we will use the following properties:
\begin{itemize}
\item 
any irreducible tempered representation with real infinitesimal character is uniquely determined by its unique lowest $K$-type (see also \S \ref{sss:temp-mp4-real} below);
\item
any lowest $K$-type of a standard module occurs with multiplicity one;
\item 
the set of lowest $K$-types of a standard module agrees with that of its unique irreducible quotient.
\end{itemize}
These two notions of $K$-types are closely related as follows.
Assume for simplicity that $p+q=2n+1$.
Let $\sigma$ be an irreducible representation of $\O(p,q)$ such that the theta lift $\pi = \theta_{W_n,V_{p,q}, \psi}(\sigma)$ to $\Mp_{2n}(\R)$ is nonzero.
Then, by \cite[Corollary 5.2]{ab2}, we have:
\begin{itemize}
\item if $\mu$ is a lowest $K$-type of $\sigma$, then $\mu$ is of minimal degree in $\sigma$;
\item if $\mu'$ is a lowest $K'$-type of $\pi$, then $\mu'$ is of minimal degree in $\pi$.
\end{itemize}
 
\subsubsection{Proof of Lemma \ref{l:ind-princ-1}}
\label{sss:ind-princ-1-real}

Suppose first that $\sigma$ is a principal series representation of $\O(2,1)$.
More generally, let $\sigma$ be an irreducible representation of $\O(2,1)$ such that there is a surjection
\[
 \Ind^{\O(2,1)}_{B_1}(\chi_s \boxtimes \sigma_0) \longtwoheadrightarrow \sigma,
\]
where $B_1$ is the Borel subgroup of $\O(2,1)$, $\chi_s = \chi |\cdot|^s$ for some unitary character $\chi$ of $\R^\times$ and some $s \in \R$ with $s \ge 0$, and $\sigma_0$ is a character of $\O(1)$.
Put
\[
 \delta_0 = \chi(-1), \qquad
 \epsilon_0 = \sigma_0(-1).
\]
Then \eqref{eq:root} is equivalent to
\[
 \epsilon_0 = 1, 
\]
in which case we have $\theta_{W_1,V_{2,1},\psi}(\sigma) = J_{B,\psi}(\chi_s)$.
Let $\mu_0$ be a lowest $(K \cap T_1)$-type of $\chi_s \boxtimes \sigma_0$, where $T_1$ is the Levi component of $B_1$.
Let $\mu$ be a lowest $K$-type of $\sigma$, so that $\mu$ occurs in $\Ind^{\O(2,1)}_{B_1}(\chi_s \boxtimes \sigma_0)$ with multiplicity one.
As we will explicate below, we assume the following conditions:
\begin{itemize}
\item $\mu_0$ is of minimal degree in $\chi_s \boxtimes \sigma_0$;
\item $\mu$ is of minimal degree in $\Ind^{\O(2,1)}_{B_1}(\chi_s \boxtimes \sigma_0)$;
\item $\deg \mu = \deg \mu_0$;
\item the restriction of $\mu$ to $K \cap T_1$ contains $\mu_0$.
\end{itemize}
Then, by the induction principle of Adams--Barbasch (see \cite[Proposition 3.25]{ab1}, \cite[Theorem 8.7]{ab2}), $\theta_{W_2,V_{2,1}, \psi}(\sigma)$ is a subquotient of
\[
 I_{P_1, \psi}(\chi_s^{-1}, \omega_{W_1, \psi}^{\epsilon_0})
\]
containing $\mu'$, where $\mu'$ is the $K'$-type corresponding to $\mu$.

Assume that $(\delta_0, \epsilon_0) \ne (1,-1)$.
We may take $\mu_0$ and $\mu$ given by
\[
 \mu_0 = (\enspace; \delta_0) \boxtimes (\enspace; \epsilon_0), \qquad 
 \mu = (0; \epsilon_0) \boxtimes (\enspace; \delta_0 \epsilon_0), 
\]
which satisfy the conditions above and 
\[
 \deg \mu = \deg \mu_0 = 
 \begin{cases}
  0 & \text{if $(\delta_0, \epsilon_0) = (1,1)$;} \\
  1 & \text{if $(\delta_0, \epsilon_0) = (-1,1)$;} \\
  2 & \text{if $(\delta_0, \epsilon_0) = (-1,-1)$.}
 \end{cases}
\]
Then we have
\[
 \mu' = 
 \begin{cases}
  (\frac{1}{2}, \frac{1}{2}) & \text{if $(\delta_0, \epsilon_0) = (1,1)$;} \\
  (\frac{1}{2}, -\frac{1}{2}) & \text{if $(\delta_0, \epsilon_0) = (-1,1)$;} \\
  (\frac{3}{2}, \frac{3}{2}) & \text{if $(\delta_0, \epsilon_0) = (-1,-1)$.}
 \end{cases}
\]
If $\epsilon_0 = 1$, then it follows from \cite[Proposition 6.10]{ab2} that $\mu'$ is a lowest $K'$-type of the principal series representation
\[
 I_{B,\psi}(\chi_s^{-1}, |\cdot|^{\frac{1}{2}})
\]
of $\Mp_4(\R)$, which has $I_{P_1, \psi}(\chi_s^{-1}, \omega_{W_1, \psi}^+)$ as a quotient.
Hence, if $\epsilon_0 = 1$ and $0 \le s \le \frac{1}{2}$, then we have
\begin{equation}
\label{eq:ind-princ-1-real}
 \theta_{W_2,V_{2,1},\psi}(\sigma) = J_{B,\psi}(|\cdot|^{\frac{1}{2}}, \chi_s).
\end{equation}

Suppose next that $\sigma$ is a discrete series representation of $\O(p,q)$ with $(p,q) = (2,1)$ or $(0,3)$.
We write the $L$-parameter of $\sigma|_{\SO(p,q)}$ as $\mathcal{D}_{\kappa-\frac{1}{2}}$ with some positive integer $\kappa$.
Then, by \eqref{eq:root}, we have
\[
 \sigma(-1) =
 \begin{cases}
  (-1)^\kappa & \text{if $(p,q) = (2,1)$;} \\
  (-1)^{\kappa-1} & \text{if $(p,q) = (0,3)$.}
 \end{cases}
\]
Let $\mu$ be the lowest $K$-type of $\sigma$ given by
\[
 \mu = 
 \begin{cases}
  (\kappa;1) \boxtimes (\enspace;1) & \text{if $(p,q) = (2,1)$;} \\
  (\kappa-1;1) & \text{if $(p,q) = (0,3)$.}
 \end{cases}
\]
Put $\pi_0 = \theta_{W_1, V_{p,q}, \psi}(\sigma)$, so that $\pi_0 = \widetilde{D}_{\lambda,\psi}$ with 
\[
 \lambda = 
 \begin{cases}
  \kappa-\frac{1}{2} & \text{if $(p,q) = (2,1)$;} \\
  -\kappa+\frac{1}{2} & \text{if $(p,q) = (0,3)$.}
 \end{cases}
\]
Since $\mu$ is of minimal degree in $\sigma$, it follows from the induction principle \cite[Theorem 8.4]{ab2} that $\theta_{W_2,V_{p,q},\psi}(\sigma)$ is a subquotient of
\[
 I_{P_1, \psi}(|\cdot|^{-\frac{1}{2}}, \pi_0)
\]
containing $\mu'$, where $\mu'$ is the $K'$-type corresponding to $\mu$ and is given by 
\[
 \mu' = 
 \begin{cases}
  (\kappa+\frac{1}{2}, \frac{1}{2}) & \text{if $(p,q) = (2,1)$;} \\
  (-\frac{3}{2}, -\kappa-\frac{1}{2}) & \text{if $(p,q) = (0,3)$.}
 \end{cases}
\]
By \cite[Proposition 6.10]{ab2}, $\mu'$ is a lowest $K'$-type of $I_{P_1, \psi}(|\cdot|^{-\frac{1}{2}}, \pi_0)$, so that
\[
 \theta_{W_2,V_{p,q},\psi}(\sigma) = J_{P_1, \psi}(|\cdot|^{\frac{1}{2}}, \pi_0).
\]
This completes the proof of Lemma \ref{l:ind-princ-1} in the real case.

\subsubsection{Proof of Lemma \ref{l:howe-ps-local+}}
\label{sss:howe-ps-local+-real}

We retain the notation of \S \ref{sss:ind-princ-1-real}, so that $\sigma$ is a quotient of $\Ind^{\O(2,1)}_{B_1}(\chi_s \boxtimes \sigma_0)$.
Then \eqref{item:howe-ps-1} follows from \eqref{eq:ind-princ-1-real}.
To prove \eqref{item:howe-ps-2}, we may assume that $(\delta_0, \epsilon_0) = (-1, -1)$.
Since $\omega_{W_1,\psi}^- = \widetilde{D}_{\frac{1}{2},\psi}$, it follows from \cite[Proposition 6.10]{ab2} that $\mu'$ is a lowest $K'$-type of
\[
 I_{P_1, \psi}(\chi_s^{-1}, \omega_{W_1, \psi}^-).
\]
Hence, if $(\delta_0, \epsilon_0) = (-1, -1)$ and $s \ge 0$, then we have
\[
 \theta_{W_2, V_{2,1}, \psi}(\sigma) = J_{P_1, \psi}(\chi_s, \omega_{W_1,\psi}^-).
\]
This completes the proof of Lemma \ref{l:howe-ps-local+} in the real case.

\subsubsection{More properties of the theta lift}

We prove more properties of the local theta lift, which are not used in the proof of the main theorem but will be necessary when we describe local $A$-packets explicitly in Appendix \ref{a:A-packets} below.

\begin{lem}
\label{l:more-theta-real-1}
Let $\sigma$ be a discrete series representation of $\O(p,q)$ with $(p,q) = (2,1)$ or $(0,3)$ and with lowest $K$-type $\mu$ given by 
\[
 \mu = 
\begin{cases}
 (\kappa; 1) \boxtimes (\enspace;-1) & \text{if $(p,q) = (2,1)$;} \\
 (\kappa-1; -1) & \text{if $(p,q) = (0,3)$}
\end{cases} 
\]
for some positive integer $\kappa$.
Assume that $\sigma \ne \det$ (i.e.~$\kappa > 1$) when $(p,q) = (0,3)$.
\begin{enumerate}
\item If $(p,q) = (2,1)$, then $\theta_{W_2, V_{2,1}, \psi}(\sigma)$ is the genuine (limit of) discrete series representation of $\Mp_4(\R)$ with lowest $K'$-type $(\kappa+\frac{1}{2}, -\frac{1}{2})$ (relative to $\psi$).
\item If $(p,q) = (0,3)$, then $\theta_{W_2, V_{0,3}, \psi}(\sigma)$ is the genuine discrete series representation of $\Mp_4(\R)$ with lowest $K'$-type $(-\frac{5}{2}, -\kappa-\frac{1}{2})$ (relative to $\psi$).
\end{enumerate}
\end{lem}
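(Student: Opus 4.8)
The plan is to treat both cases uniformly: locate the first occurrence of $\sigma$ in the metaplectic tower, transport the (unique) lowest $K$-type of $\sigma$ to $\Mp_4(\R)$ through the joint harmonics, and then identify the lift via the classification of (limits of) discrete series by their lowest $\widetilde{\U}(2)$-type. First I would pin down the first occurrence. The restriction $\sigma|_{\SO(V_1^\epsilon)}$ has $L$-parameter $\mathcal{D}_{\kappa-\frac{1}{2}}$, just as for the discrete series considered in \S\ref{sss:ind-princ-1-real}, but here $\sigma$ is the \emph{other} extension to $\O(V_1^\epsilon)$; comparing lowest $K$-types shows $\sigma(-1) = -\epsilon(\tfrac{1}{2},\sigma)$, so the dichotomy \eqref{eq:dichotomy} gives $\theta_{W_1,V_1^\epsilon,\psi}(\sigma)=0$, while $\theta_{W_0,V_1^\epsilon,\psi}(\sigma)=0$ because $\sigma$ is not a character (this is where the hypothesis $\kappa>1$ enters when $(p,q)=(0,3)$). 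On the other hand $\sigma\otimes\det$ has lowest $K$-type $(\kappa;1)\boxtimes(\enspace;1)$ when $(p,q)=(2,1)$ and $(\kappa-1;1)$ when $(p,q)=(0,3)$, so by \S\ref{sss:ind-princ-1-real} its theta lift to $\Mp_2(\R)$ is the nonzero representation $\widetilde{D}_{\kappa-\frac{1}{2},\psi}$, resp.\ $\widetilde{D}_{-\kappa+\frac{1}{2},\psi}$; since $\sigma\otimes\det$ is not the trivial character, the conservation relation $n^+(\sigma)+n^-(\sigma)=3$ of Sun--Zhu \cite{sz} forces $n^+(\sigma)=2$. Hence $\pi:=\theta_{W_2,V_1^\epsilon,\psi}(\sigma)$ is nonzero and, by Howe duality, irreducible.

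Next I would compute its lowest $K'$-type. Since $\mu$ is the unique lowest $K$-type of $\sigma$, it is of minimal degree by \cite[Corollary~5.2]{ab2}; a direct computation with the degree formulas of \S\ref{sss:notation-real} (in particular \eqref{eq:k'l'}) gives $\deg\mu=\kappa+1$ when $(p,q)=(2,1)$ and $\deg\mu=\kappa$ when $(p,q)=(0,3)$, and identifies the genuine $K'$-type corresponding to $\mu$ in the joint harmonics as $\mu'=(\kappa+\tfrac{1}{2},-\tfrac{1}{2})$, resp.\ $\mu'=(-\tfrac{5}{2},-\kappa-\tfrac{1}{2})$, relative to $\psi$ (one checks $k+k'+l+l'\le 2$ in both cases, so the correspondence is already active at $n=2$). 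Then $\mu'$ is a $K'$-type of $\pi$ of minimal degree, and since $\sigma$ has a unique lowest $K$-type, \cite[Corollary~5.2]{ab2} shows that $\mu'$ is the unique lowest $K'$-type of $\pi$.

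Finally I would identify $\pi$. When $(p,q)=(0,3)$ the space $V_{0,3}$ is negative definite, so $(\O(0,3),\Mp_4(\R))$ is a dual pair of compact type and $\pi$ is a unitary lowest-weight module of $\Mp_4(\R)$; its lowest $\widetilde{\U}(2)$-type $(-\tfrac{5}{2},-\kappa-\tfrac{1}{2})$ has both entries $\le-\tfrac{5}{2}$, hence lies in the (anti)holomorphic discrete range, and $\pi$ is exactly the genuine discrete series with that lowest $K'$-type. When $(p,q)=(2,1)$, $\sigma$ is tempered with real infinitesimal character and $\pi$ is its theta lift at the first occurrence $n^+(\sigma)=2$, below the stable range $n\ge 3$ of the pair $(\O(V_1^\epsilon),\Mp(W_n))$; granting that such a lift is again tempered --- which follows from the Adams--Barbasch description of the archimedean theta correspondence \cite{ab2}, in the same spirit as the computations in \S\ref{sss:ind-princ-1-real} --- the representation $\pi$ is irreducible tempered with real infinitesimal character and a unique lowest $K'$-type, so by the characterization of (limits of) discrete series recalled in \S\ref{sss:temp-mp4-real} it is the genuine (limit of) discrete series of $\Mp_4(\R)$ with lowest $K'$-type $(\kappa+\tfrac{1}{2},-\tfrac{1}{2})$, the limit case arising precisely when $\kappa=1$. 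I expect the last step to be the main obstacle: one needs to know a priori that $\pi$ is a (limit of) discrete series rather than merely a representation containing the $K'$-type $\mu'$ --- immediate from the lowest-weight-module structure when $(p,q)=(0,3)$, but for $(p,q)=(2,1)$ resting on the temperedness of first-occurrence theta lifts below the stable range, which is the point to treat with care.
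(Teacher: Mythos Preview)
Your outline tracks the paper's proof closely through the first two stages: the nonvanishing of $\pi$ at $n=2$ (the paper cites Lemma~\ref{l:o3mp4-nonzero} directly rather than re-deriving it from the conservation relation, but the content is the same), the computation of the corresponding $K'$-type $\mu'$ via the joint harmonics, and the deduction that $\mu'$ is the \emph{unique} lowest $K'$-type of $\pi$ (for which one should also note, as the paper does, that $\theta_{W_2,V_{3,2},\psi}(\pi)\ne 0$, since this is the hypothesis under which \cite[Corollary~5.2]{ab2} applies on the $\Mp_4$ side).

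The substantive divergence is exactly where you flag it. For the temperedness of $\pi$ you appeal to a general principle that first-occurrence lifts of tempered representations below the stable range remain tempered; but no such statement is available off the shelf in \cite{ab2}, and it is not what the paper does. Instead, the paper argues \emph{by exclusion}: using \cite{przebinda} to see that $\pi$ has real infinitesimal character, it then runs through every Langlands quotient $J_{P_1,\psi}(\chi|\cdot|^s,\widetilde{D}_{a,\psi})$, $J_{P_2,\psi}(D_a\otimes|\det|^s)$, and $J_{B,\psi}(\chi_1|\cdot|^{s_1},\chi_2|\cdot|^{s_2})$, computes its lowest $K'$-types via \cite[Proposition~6.10]{ab2}, and checks that none of these sets is the singleton $\{\mu'\}$. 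This handles both $(p,q)=(2,1)$ and $(p,q)=(0,3)$ uniformly. Your lowest-weight-module argument for $(0,3)$ is a legitimate shortcut in that case, but for $(2,1)$ you should replace the unproved temperedness principle with this finite case check; it is short and uses only the lowest-$K$-type formulas already recorded in the paper.
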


\begin{proof}
Note that
\[
 \sigma(-1) =
 \begin{cases}
  (-1)^{\kappa-1} & \text{if $(p,q) = (2,1)$;} \\
  (-1)^\kappa & \text{if $(p,q) = (0,3)$}
 \end{cases}
\]
and $\epsilon(\frac{1}{2}, \sigma) = (-1)^\kappa$, so that $\theta_{W_1, V_{p,q}, \psi}(\sigma)$ is zero by \eqref{eq:dichotomy}.
Put $\pi = \theta_{W_2, V_{p,q}, \psi}(\sigma)$, which is nonzero by Lemma \ref{l:o3mp4-nonzero}.
Let $\mu'$ be the irreducible genuine representation of $K'$ corresponding to $\mu$:
\[
 \mu' = 
 \begin{cases}
  (\kappa+\frac{1}{2}, -\frac{1}{2}) & \text{if $(p,q) = (2,1)$;} \\
  (-\frac{5}{2}, -\kappa-\frac{1}{2}) & \text{if $(p,q) = (0,3)$.}
 \end{cases}
\]
Since $\mu$ is the unique $K$-type of minimal degree in $\sigma$, $\mu'$ is the unique $K'$-type of minimal degree in $\pi$.
Hence we deduce that $\mu'$ is the unique lowest $K'$-type of $\pi$, noting that $\theta_{W_2, V_{3,2}, \psi}(\pi)$ is nonzero.

Since $\pi$ has real infinitesimal character by \cite{przebinda}, it remains to show that $\pi$ is tempered (see also \S \ref{sss:temp-mp4-real} below).
For any irreducible genuine nontempered representation $\pi'$ of $\Mp_4(\R)$, it follows from \cite[Proposition 6.10]{ab2} that $\pi'$ has lowest $K'$-types given as follows.
\begin{itemize}
\item
Suppose that $\pi' = J_{P_1, \psi}(\chi |\cdot|^s, \widetilde{D}_{a,\psi})$ for some unitary character $\chi$ of $\R^\times$, some $s \in \R$ with $s>0$, and some $a \in \frac{1}{2} \Z \smallsetminus \Z$.
Then $\pi'$ has a unique lowest $K'$-type
\[
\begin{cases}
 (a+1,\frac{1}{2}) & \text{if $a>0$ and $\chi(-1) = 1$;} \\
 (a+1,\frac{3}{2}) & \text{if $a>0$ and $\chi(-1) = -1$;} \\
 (-\frac{3}{2},a-1) & \text{if $a<0$ and $\chi(-1) = 1$;} \\
 (-\frac{1}{2},a-1) & \text{if $a<0$ and $\chi(-1) = -1$.}
\end{cases} 
\]
\item
Suppose that $\pi' = J_{P_2, \psi}(D_a \otimes |\det|^s)$ for some $a \in \frac{1}{2} \Z$ with $a>0$ and some $s \in \C$ with $\Re s > 0$.
Then $\pi'$ has a unique lowest $K'$-type
\[
 (a+\tfrac{1}{2}, -a-\tfrac{1}{2})
\]
if $a \in \Z$ and two lowest $K'$-types
\[
 (a+1,-a), \qquad
 (a,-a-1)
\]
if $a \notin \Z$.
\item 
Suppose that $\pi' = J_{B, \psi}(\chi_1 |\cdot|^{s_1}, \chi_2 |\cdot|^{s_2})$ for some unitary characters $\chi_1, \chi_2$ of $\R^\times$ and some $s_1, s_2 \in \R$ with $s_1 > 0$ and $s_1 \ge s_2 \ge 0$.
Put $\epsilon_1 = \chi_1(-1)$ and $\epsilon_2 = \chi_2(-1)$.
Then $\pi'$ has a unique lowest $K'$-type
\[
\begin{cases}
 (\frac{1}{2}, \frac{1}{2}) & \text{if $(\epsilon_1, \epsilon_2) = (1,1)$;} \\
 (\frac{1}{2}, -\frac{1}{2}) & \text{if $(\epsilon_1, \epsilon_2) = (1,-1), (-1,1)$;} \\
 (-\frac{1}{2}, -\frac{1}{2}) & \text{if $(\epsilon_1, \epsilon_2) = (-1,-1)$.}
\end{cases} 
\]
\end{itemize}
In particular, the set of lowest $K'$-types of $\pi'$ does not agree with that of $\pi$, which is a singleton $\{ \mu' \}$.
Hence $\pi$ is tempered.
This completes the proof.
\end{proof}

\begin{lem}
\label{l:more-theta-real-2}
Let $\sigma$ be an irreducible representation of $\O(2,1)$ such that $\sigma|_{\SO(2,1)} = I_{Q_1}(\chi |\cdot|^s)$ for some unitary character $\chi$ of $\R^\times$ and some $s \in \R$ with $0 \le s <\frac{1}{2}$.
If $\sigma(-1) = -\chi(-1)$, then we have
\[
 \theta_{W_2, V_{2,1}, \psi}(\sigma) = J_{P_1,\psi}(\chi |\cdot|^s, \omega^-_{W_1,\psi}).
\]
\end{lem}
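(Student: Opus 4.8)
The plan is to follow the argument of \S\ref{sss:ind-princ-1-real} and \S\ref{sss:howe-ps-local+-real}, based on the Adams--Barbasch induction principle. I would first observe that when $\chi(-1) = -1$ the assertion is already contained in \S\ref{sss:howe-ps-local+-real}: since $I_{Q_1}(\chi|\cdot|^s)$ is irreducible for $0 \le s < \frac{1}{2}$, and $\sigma(-1) = -\chi(-1)$ forces the character $\sigma_0$ of $\O(V_0^+)$ appearing in $\sigma = \Ind^{\O(2,1)}_{B_1}(\chi_s \boxtimes \sigma_0)$ to satisfy $\sigma_0(-1) = -1$, our $\sigma$ is exactly a representation of the type considered there, with $(\delta_0, \epsilon_0) = (\chi(-1), \sigma_0(-1)) = (-1,-1)$; the computation there gives $\theta_{W_2, V_{2,1}, \psi}(\sigma) = J_{P_1,\psi}(\chi_s, \omega_{W_1,\psi}^-)$ for all $s \ge 0$. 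So it remains only to treat the case $\chi(-1) = 1$, which is precisely the case $(\delta_0, \epsilon_0) = (1,-1)$ that was excluded in \S\ref{sss:ind-princ-1-real}.

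For that case I would again write $\sigma = \Ind^{\O(2,1)}_{B_1}(\chi_s \boxtimes \sigma_0)$ with $\sigma_0(-1) = -1$, so that $\theta_{W_2, V_{2,1}, \psi}(\sigma) \ne 0$ by Lemma \ref{l:o3mp4-nonzero}, and then compute $\sigma|_K$ for $K = \O(2) \times \O(1)$ by Frobenius reciprocity. I expect this to show that the $K$-type of $\sigma$ of minimal degree is $\mu = (1;1) \boxtimes (\enspace;1)$ (of degree $1$), and that $\mu$ restricts on $K \cap T_1$ to contain the minimal-degree $(K \cap T_1)$-type $\mu_0 = (\enspace;1) \boxtimes (\enspace;-1)$ of $\chi_s \boxtimes \sigma_0$, with $\deg \mu = \deg \mu_0 = 1$. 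The induction principle of Adams--Barbasch (\cite[Proposition 3.25]{ab1}, \cite[Theorem 8.7]{ab2}) then yields that $\theta_{W_2, V_{2,1}, \psi}(\sigma)$ is an irreducible subquotient of $I_{P_1,\psi}(\chi^{-1}|\cdot|^{-s}, \omega_{W_1,\psi}^-)$ (note $\Theta_{W_1, V_0^+, \psi}(\sigma_0) = \omega_{W_1,\psi}^-$) containing the $K'$-type $\mu'$ corresponding to $\mu$, and tracing $\mu$ through the joint-harmonic correspondence should give $\mu' = (\tfrac{3}{2}, \tfrac{1}{2})$ relative to $\psi$.

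Finally, since $\omega_{W_1,\psi}^- = \widetilde{D}_{\frac{1}{2},\psi}$, \cite[Proposition 6.10]{ab2} identifies $(\tfrac{3}{2}, \tfrac{1}{2})$ as the unique lowest $K'$-type of $J_{P_1,\psi}(\chi|\cdot|^s, \omega_{W_1,\psi}^-)$, hence also of the standard module $I_{P_1,\psi}(\chi|\cdot|^s, \omega_{W_1,\psi}^-)$, in which it occurs with multiplicity one. Since $I_{P_1,\psi}(\chi^{-1}|\cdot|^{-s}, \omega_{W_1,\psi}^-)$ has the same composition factors and the same restriction to $K'$ as $I_{P_1,\psi}(\chi|\cdot|^s, \omega_{W_1,\psi}^-)$ — the $K'$-module structure depending on $\chi$ only through $\chi(-1)$ — the $K'$-type $(\tfrac{3}{2}, \tfrac{1}{2})$ occurs in it with multiplicity one and lies in a single constituent, namely $J_{P_1,\psi}(\chi|\cdot|^s, \omega_{W_1,\psi}^-)$; therefore $\theta_{W_2, V_{2,1}, \psi}(\sigma) = J_{P_1,\psi}(\chi|\cdot|^s, \omega_{W_1,\psi}^-)$ (for $s = 0$ the target is tempered, and the same argument applies using its irreducibility). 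The main obstacle I anticipate is the $K$-type bookkeeping in the middle step for $\chi(-1) = 1$ — computing $\sigma|_K$, verifying that $\mu$ has minimal degree and restricts to contain $\mu_0$, and extracting $\mu'$ from the correspondence — which is exactly the point deliberately omitted from \S\ref{sss:ind-princ-1-real}.
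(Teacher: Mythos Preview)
Your argument is correct, but the paper's proof is considerably shorter and more uniform. Rather than splitting on $\chi(-1)$ and doing the $K$-type bookkeeping in the excluded case $(\delta_0,\epsilon_0)=(1,-1)$, the paper simply applies the coarser form of the induction principle \cite[Theorem~8.4]{ab2} (using $\theta_{W_1,V_{1,0},\psi}(\sigma_0)=\omega_{W_1,\psi}^-$) to conclude that $\theta_{W_2,V_{2,1},\psi}(\sigma)$ is a subquotient of $I_{P_1,\psi}(\chi_s^{-1},\omega_{W_1,\psi}^-)$, and then invokes \cite[Proposition~2.3]{gi-real} to see that this induced representation is \emph{irreducible} for $0\le s<\tfrac12$. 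That single irreducibility statement replaces your entire lowest-$K'$-type multiplicity-one argument and makes the case distinction on $\chi(-1)$ unnecessary.

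Your route has the virtue of being self-contained (no appeal to the irreducibility result of \cite{gi-real}) and of making the excluded case $(\delta_0,\epsilon_0)=(1,-1)$ explicit: your identification $\mu=(1;1)\boxtimes(\,;1)$, $\deg\mu=\deg\mu_0=1$, and $\mu'=(\tfrac32,\tfrac12)$ is correct, and the multiplicity-one argument for the lowest $K'$-type in the standard module is valid. But once the irreducibility of $I_{P_1,\psi}(\chi_s^{-1},\omega_{W_1,\psi}^-)$ is available, all of that becomes redundant, which is why the paper's proof fits in three lines.
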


\begin{proof}
By assumption, we have $\sigma = \Ind^{\O(2,1)}_{B_1}(\chi_s \boxtimes \sigma_0)$, where $B_1$ is the Borel subgroup of $\O(2,1)$, $\chi_s = \chi |\cdot|^s$, and $\sigma_0$ is the nontrivial character of $\O(1)$.
Since $\theta_{W_1, V_{1,0}, \psi}(\sigma_0) = \omega_{W_1,\psi}^-$, it follows from the induction principle \cite[Theorem 8.4]{ab2} that $\theta_{W_2, V_{2,1}, \psi}(\sigma)$ is a subquotient of 
\[
 I_{P_1, \psi}(\chi_s^{-1}, \omega_{W_1, \psi}^-).
\]
On the other hand, by \cite[Proposition 2.3]{gi-real}, $I_{P_1, \psi}(\chi_s^{-1}, \omega_{W_1, \psi}^-)$ is irreducible. 
This implies the assertion. 
\end{proof}

\section{Local theta lifts from $\Mp_4$ to $\SO_{2r+5}$ with $r>3$}
\label{a:mp4so2r+5}

In this appendix, we prove some properties of the local theta lift used in \S\S \ref{s:soudry} and \ref{s:pf-soudry}.
Let $F$ be a local field of characteristic zero.

\subsection{Properties of the theta lift}

We consider the theta lift from $\Mp(W_2)$ to $\SO(V_{r+2}^+)$ with $r>3$.

\begin{lem}
\label{l:ind-princ-2}
Let $\rho$ be a $2$-dimensional orthogonal tempered representation of $L_F$.
Put $\phi = \rho \boxtimes S_2$ and $\theta(\phi) = \phi \oplus (1 \boxtimes S_{2r})$.
Let $\pi_\phi$ be the unique irreducible genuine representation of $\Mp(W_2)$ with $L$-parameter $\varphi_\phi$ (relative to $\psi$) and $\sigma_{\theta(\phi)}$ the unique irreducible representation of $\SO(V_{r+2}^+)$ with $L$-parameter $\varphi_{\theta(\phi)}$.
Then we have
\[
 \theta_{W_2,V_{r+2}^+,\psi}(\pi_\phi) = \sigma_{\theta(\phi)}.
\]
\end{lem}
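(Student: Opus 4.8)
The plan is to identify both representations explicitly as Langlands quotients and then to compute the theta lift by the induction principle for theta correspondences, the point being that, once $\pi_\phi$ is realized by parabolic induction from a $\GL_2$-block, the computation collapses onto the trivial theta lift from the rank-zero group $\Mp(W_0)$. Write $\tau$ for the irreducible tempered representation of $\GL_2(F)$ with $L$-parameter $\rho$. Since $\varphi_\phi = \rho|\cdot|^{\frac{1}{2}} \oplus \rho|\cdot|^{-\frac{1}{2}}$, we have $\pi_\phi = J_{P_2,\psi}(\tau \otimes |\det|^{\frac{1}{2}})$, the Langlands quotient of the standard module $I_{P_2,\psi}(\tau \otimes |\det|^{\frac{1}{2}})$; and since
\[
 \varphi_{\theta(\phi)} = \rho|\cdot|^{\frac{1}{2}} \oplus \rho|\cdot|^{-\frac{1}{2}} \oplus \bigoplus_{j=1}^{r} \bigl( |\cdot|^{j-\frac{1}{2}} \oplus |\cdot|^{-(j-\frac{1}{2})} \bigr),
\]
the representation $\sigma_{\theta(\phi)}$, being the unique irreducible representation of $\SO(V_{r+2}^+) = \SO_{2r+5}$ with this $L$-parameter, is the Langlands quotient $J_{Q_{(1^r,2)}}(|\cdot|^{r-\frac{1}{2}}, |\cdot|^{r-\frac{3}{2}}, \dots, |\cdot|^{\frac{1}{2}}, \tau \otimes |\det|^{\frac{1}{2}})$. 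Since $r > 3$, the pair $(\Mp(W_2), \O(V_{r+2}^+))$ lies well inside the stable range, so by \cite{li97} the lift $\theta_{W_2,V_{r+2}^+,\psi}(\pi_\phi)$ is nonzero, and hence irreducible by the Howe duality; it therefore suffices to realize $\Theta_{W_2,V_{r+2}^+,\psi}(\pi_\phi)$ as a quotient of the standard module whose Langlands quotient is $\sigma_{\theta(\phi)}$.

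The main step is the induction principle, carried out exactly as in the proof of Lemma \ref{l:howe-ps-local+} in \S \ref{sss:howe-ps-local+-nonarch}. Realizing $\pi_\phi$ as a subrepresentation of $I_{P_2,\psi}(\tau^\vee \otimes |\det|^{-\frac{1}{2}})$, I would run Kudla's filtration of the Jacquet module of $\omega_{W_2,V_{r+2}^+,\psi}$ along the Siegel parabolic $\widetilde{P}_2$ of $\Mp(W_2)$ and apply $\Hom_{\widetilde{M}_2}\bigl(-,(\tau^\vee\otimes\chi_\psi)\otimes|\det|^{-\frac{1}{2}}\bigr)$. The terms of the filtration below the full-rank one do not affect the cosocle, since $\tau$ is tempered of dimension $2$; the full-rank term, together with the fact that $\omega_{W_0,V_r^+,\psi}$ is one-dimensional with $\O(V_r^+)$ acting trivially, produces a surjection onto $\Theta_{W_2,V_{r+2}^+,\psi}(\pi_\phi)$ from an induced representation of $\SO(V_{r+2}^+)$ whose Levi datum is the $\GL_2$-block $\tau\otimes|\det|^{\frac{1}{2}}$ (the exponent $\frac{1}{2}$ being forced by compatibility of the theta correspondence with $L$-parameters, and the discriminant twist trivial since $V_{r+2}^+$ has trivial discriminant) together with the trivial representation of $\SO(V_r^+) = \SO_{2r+1}$. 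Since the trivial representation of the split group $\SO_{2r+1}$ is the Langlands quotient $J_{Q_{(1^r)}}(|\cdot|^{r-\frac{1}{2}}, \dots, |\cdot|^{\frac{1}{2}})$, induction in stages and transitivity of the Langlands quotient identify the unique irreducible quotient of this induced representation with $\sigma_{\theta(\phi)}$; as $\theta_{W_2,V_{r+2}^+,\psi}(\pi_\phi)$ is a nonzero irreducible quotient, it equals $\sigma_{\theta(\phi)}$.

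The main obstacle is precisely this last identification. A naive run of Kudla's filtration delivers the $\GL_2$-block --- of small exponent $\frac{1}{2}$ --- ``outermost'', so that the induced representation one obtains is not literally in descending Langlands order and need not a priori have $\sigma_{\theta(\phi)}$ as its unique irreducible quotient. I would remove this difficulty by peeling the ladder $|\cdot|^{r-\frac{1}{2}}, \dots, |\cdot|^{\frac{1}{2}}$ off one block at a time, from the largest exponent downward --- so that at each stage one adjoins only a single $\GL_1$-block of strictly largest exponent, the lift being past first occurrence in the orthogonal tower --- thereby reducing to the single instance $\theta_{W_2,V_2^+,\psi}(\pi_\phi) = J_{Q_2}(\tau\otimes|\det|^{\frac{1}{2}})$ of the $\Mp_4$--$\SO_5$ theta lift, which is part of the local Shimura correspondence. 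The remaining points are routine: keeping track of the exponent shifts and the genuine/discriminant twists through the induction principle; and, when $\tau$ is not supercuspidal (for instance when $\rho$ is reducible, or $\tau$ is a twist of the Steinberg representation), checking that the lower terms of Kudla's filtration do not alter the cosocle, or else reducing to that case as in the proof of Lemma \ref{l:ind-princ-1}. When $F = \R$ the Kudla-filtration step is replaced throughout by the Adams--Barbasch induction principle and a lowest-$K'$-type computation as in \S \ref{sss:ind-princ-1-real}, and when $F = \C$ one can instead appeal to \cite[Theorem 2.8]{ab1}.
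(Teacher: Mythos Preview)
Your approach is correct and parallels the paper in the essential step: for nonarchimedean $F$, both embed $\pi_\phi \hookrightarrow I_{P_2,\psi}(\tau \otimes |\det|^{-1/2})$, run Kudla's filtration of $R_{P_2}(\omega_{W_2,V_{r+2}^+,\psi})$, check that the rank-$0$ and rank-$1$ pieces vanish against $\tau_s \otimes \chi_\psi$ for $s \le 0$, and obtain a surjection $I_{Q_2}(\tau \otimes |\det|^{1/2}, 1_{\SO_{2r+1}}) \twoheadrightarrow \Theta_{W_2,V_{r+2}^+,\psi}(\pi_\phi)$; the real and complex cases are handled exactly as you say. The one genuine difference is how the ordering obstruction you correctly flagged gets resolved. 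You propose to climb the orthogonal tower one step at a time from the equal-rank case $\theta_{W_2,V_2^+,\psi}(\pi_\phi) = J_{Q_2}(\tau \otimes |\det|^{1/2})$, invoking the ``going up'' behaviour of theta lifts past first occurrence and transitivity of Langlands quotients. The paper instead dispatches it in one line: $I_{Q_2}(\tau \otimes |\det|^{1/2}, 1)$ is a quotient of $I_{Q_{(2,r)}}(\tau \otimes |\det|^{1/2}, |\det|^{r/2})$, and by Zelevinsky \cite[Theorem 4.2]{zelevinsky} the underlying $\GL_{r+2}$-induction is irreducible, hence isomorphic to the one with the blocks swapped, so one lands directly inside a quotient of the standard module $I_{Q_{(1^r,2)}}(|\cdot|^{r-\frac{1}{2}}, \dots, |\cdot|^{\frac{1}{2}}, \tau \otimes |\det|^{\frac{1}{2}})$. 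Your inductive route works but carries more bookkeeping --- in particular at the base step $m=2 \to 3$, where the new exponent $\tfrac{1}{2}$ ties with that of $\tau$ --- whereas the Zelevinsky swap is a single citation.
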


\begin{proof}
When $F$ is nonarchimedean or $F = \R$, the assertion will be proved in \S\S \ref{sss:ind-princ-2-nonarch} and \ref{sss:ind-princ-2-real} below.
When $F = \C$, the assertion follows from \cite[Theorem 2.8]{ab1}.
\end{proof}

We also need the properties \eqref{eq:howe-ps-nonarch} when $F$ is nonarchimedean and \eqref{eq:soudry-real} when $F = \R$, which will be recalled and proved below.

\subsection{The nonarchimedean case}

Suppose that $F$ is nonarchimedean.
Let
\[
 W_2 = X_1 \oplus X_1^* \oplus W_1 = X_2 \oplus X_2^*, \qquad
 V_{r+2}^+ = Y_1 \oplus Y_1^* \oplus V_{r+1}^+ = Y_2 \oplus Y_2^* \oplus V_r^+
\]
be the decompositions as in \S\S \ref{ss:mp} and \ref{ss:so} with
\begin{align*}
 X_1 & = \Span(w_1), &
 Y_1 & = \Span(v_1), \\
 X_1^* & = \Span(w_1^*), &
 Y_1^* & = \Span(v_1^*), \\
 X_2 & = \Span(w_1, w_2), &
 Y_2 & = \Span(v_1, v_2), \\
 X_2^* & = \Span(w_1^*, w_2^*), &
 Y_2^* & = \Span(v_1^*, v_2^*). 
\end{align*}
We write 
\[
 X_2 = X_1 \oplus X_1'
\]
with $X_1' = \Span(w_2)$.
Recall that $P_i$ and $Q_i$ are the maximal parabolic subgroups of $\Sp(W_2)$ and $\SO(V_{r+2}^+)$ stabilizing $X_i$ and $Y_i$, respectively. 
Let $\mathcal{B}$ be the Borel subgroup of $\GL(X_2)$ stabilizing $X_1$.

\subsubsection{Proof of Lemma \ref{l:ind-princ-2}}
\label{sss:ind-princ-2-nonarch}

Let $\tau$ be the irreducible self-dual tempered representation of $\GL(X_2) \cong \GL_2(F)$ with $L$-parameter $\rho$.
Then we have either 
\begin{itemize}
\item $\tau = \Ind^{\GL(X_2)}_{\mathcal{B}}(\chi \boxtimes \chi^{-1})$ for some unitary character $\chi$ of $F^\times$ such that $\chi^2 \ne 1$; or 
\item $\tau = \Ind^{\GL(X_2)}_{\mathcal{B}}(\chi_a \boxtimes \chi_b)$ for some $a, b \in F^\times$; or
\item $\tau$ is supercuspidal.
\end{itemize}
Let $\pi$ be an irreducible genuine representation of $\Mp(W_2)$.
We will compute $\theta_{W_2,V_{r+2}^+,\psi}(\pi)$ under the assumption that there is an embedding
\[
 \pi \longhookrightarrow I_{P_2, \psi}(\tau_s), 
\]
where $\tau_s = \tau \otimes |\det|^s$ for some $s \in \R$ with $s \le 0$.

We have
\begin{align*}
 \Theta_{W_2,V_{r+2}^+,\psi}(\pi)^* 
 & = \Hom_{\Mp(W_2)}(\omega_{W_2,V_{r+2}^+,\psi}, \pi) \\
 & \subset \Hom_{\Mp(W_2)}(\omega_{W_2,V_{r+2}^+,\psi}, I_{P_2,\psi}(\tau_s)) \\
 & = \Hom_{\widetilde{\GL}(X_2)}(R_{P_2}(\omega_{W_2,V_{r+2}^+,\psi}), \tau_s \otimes \chi_\psi),
\end{align*}
where $*$ denotes the linear dual and $R_{P_2}$ denotes the normalized Jacquet functor with respect to $P_2$.
By the result of Kudla \cite{kudla}, $R_{P_2}(\omega_{W_2,V_{r+2}^+,\psi})$ has a filtration
\[
 R_{P_2}(\omega_{W_2,V_{r+2}^+,\psi}) = R^0 \supset R^1 \supset R^2 \supset \{ 0 \} 
\]
of $\widetilde{\GL}(X_2) \times \SO(V_{r+2}^+)$-modules such that
\begin{align*}
 R^0/R^1 & \cong \chi_\psi |{\det}_{X_2}|^{r+1}, \\
 R^1/R^2 & \cong \Ind^{\widetilde{\GL}(X_2) \times \SO(V_{r+2}^+)}_{\widetilde{\mathcal{B}} \times Q_1}(\chi_\psi |{\det}_{X_1}|^{r+\frac{1}{2}} \boxtimes \mathcal{S}(\operatorname{Isom}(Y_1,X_1'))), \\
 R^2 & \cong \Ind^{\widetilde{\GL}(X_2) \times \SO(V_{r+2}^+)}_{\widetilde{\GL}(X_2) \times Q_2}(\mathcal{S}(\operatorname{Isom}(Y_2,X_2))).
\end{align*}
Here $\widetilde{\GL}(X_1') \times \GL(Y_1)$ acts on $\mathcal{S}(\operatorname{Isom}(Y_1, X_1'))$ by
\[
 [(\tilde{a}, b) \cdot f](g) = \chi_\psi(\tilde{a}) \cdot f(a^{-1} \circ g \circ b)
\]
for $\tilde{a} \in \widetilde{\GL}(X_1')$ with projection $a \in \GL(X_1')$, $b \in \GL(Y_1)$, and $f \in \mathcal{S}(\operatorname{Isom}(Y_1, X_1'))$, and $\widetilde{\GL}(X_2) \times \GL(Y_2)$ acts on $\mathcal{S}(\operatorname{Isom}(Y_2, X_2))$ similarly.
Since $s \le 0$, the actions of $\widetilde{\GL}(X_2)$ and $\widetilde{\GL}(X_1)$ show that
\[
 \Hom_{\widetilde{\GL}(X_2)}(R^0/R^1, \tau_s \otimes \chi_\psi) = \{ 0 \}
\]
and
\begin{align*}
 & \Hom_{\widetilde{\GL}(X_2)}(R^1/R^2, \tau_s \otimes \chi_\psi) \\ 
 & = \Hom_{\widetilde{\GL}(X_1) \times \widetilde{\GL}(X_1')}(\Ind^{\widetilde{\GL}(X_1) \times \widetilde{\GL}(X_1') \times \SO(V_{r+2}^+)}_{\widetilde{\GL}(X_1) \times \widetilde{\GL}(X_1') \times Q_1}(\chi_\psi |{\det}_{X_1}|^{r+\frac{1}{2}} \boxtimes \mathcal{S}(\operatorname{Isom}(Y_1,X_1'))), R_{\overline{\mathcal{B}}}(\tau_s \otimes \chi_\psi)) \\ 
 & = \{ 0 \},
\end{align*}
respectively, where $\overline{\mathcal{B}}$ is the Borel subgroup of $\GL(X_2)$ opposite to $\mathcal{B}$, so that
\begin{align*}
 \Theta_{W_2,V_{r+2}^+,\psi}(\pi)^*
 & \subset \Hom_{\widetilde{\GL}(X_2)}(R^2, \tau_s \otimes \chi_\psi) \\
 & = I_{Q_2}(\tau_s^\vee, 1)^*.
\end{align*}
Thus, we obtain a surjection
\[
 I_{Q_2}(\tau_s^\vee, 1)
 \longtwoheadrightarrow \Theta_{W_2,V_{r+2}^+,\psi}(\pi).
\]
When $s = -\frac{1}{2}$, the left-hand side is a quotient of
\begin{equation}
\label{l:ind-princ-2-pf}
 I_{Q_{(2,r)}}(\tau \otimes |\det|^{\frac{1}{2}}, |\det|^{\frac{r}{2}}),
\end{equation}
where $Q_{(i,j)}$ is the parabolic subgroup of $\SO(V_{r+2}^+)$ with Levi component $\GL_i \times \GL_j \times \SO(V_{r-i-j+2}^+)$.
By \cite[Theorem 4.2]{zelevinsky}, the representation
\[
 \Ind^{\GL_{r+2}(F)}_{\mathcal{Q}_{(2,r)}}((\tau \otimes |\det|^{\frac{1}{2}}) \boxtimes |\det|^{\frac{r}{2}})
\]
is irreducible and is isomorphic to
\[
 \Ind^{\GL_{r+2}(F)}_{\mathcal{Q}_{(r,2)}}(|\det|^{\frac{r}{2}} \boxtimes (\tau \otimes |\det|^{\frac{1}{2}})),
\]
where $\mathcal{Q}_{(i,j)}$ is the maximal parabolic subgroup of $\GL_{i+j}$ with Levi component $\GL_i \times \GL_j$.
Hence \eqref{l:ind-princ-2-pf} is isomorphic to 
\[
 I_{Q_{(r,2)}}(|\det|^{\frac{r}{2}}, \tau \otimes |\det|^{\frac{1}{2}}), 
\]
which is a quotient of 
\[
 I_Q(|\cdot|^{r-\frac{1}{2}}, |\cdot|^{r-\frac{3}{2}}, \dots, |\cdot|^{\frac{1}{2}}, \tau \otimes |\det|^{\frac{1}{2}}),
\]
where $Q = Q_{(1^r,2)}$.
This proves Lemma \ref{l:ind-princ-2} in the nonarchimedean case.

\subsubsection{Proof of \eqref{eq:howe-ps-nonarch}}
\label{sss:howe-ps-nonarch}

Let $a, b \in F^\times$ be such that $\chi_a, \chi_b, 1$ are pairwise distinct.
Then \eqref{eq:howe-ps-nonarch} says that
\[
 \theta_{W_2,V_{r+2}^+,\psi}(J_{P_1, \psi}(\chi_a |\cdot|^{\frac{1}{2}}, \omega_{W_1,\psi_b}^-)) = J_Q(|\cdot|^{r-\frac{1}{2}}, |\cdot|^{r-\frac{3}{2}}, \dots, |\cdot|^{\frac{3}{2}}, \chi_a |\cdot|^{\frac{1}{2}}, \sigma^-_{\varphi_b}),
\]
where $Q = Q_{(1^r)}$ and $\sigma^-_{\varphi_b}$ is the irreducible square-integrable representation of $\SO(V_2^+)$ with $L$-parameter $\varphi_b = (\chi_b \boxtimes S_2) \oplus (1 \boxtimes S_2)$ associated to the nontrivial character of $\overline{S}_{\varphi_b} \cong \Z / 2 \Z$.

Put $\pi_0 = \omega_{W_1,\psi_b}^-$ to ease notation.
Let $\pi$ be an irreducible genuine representation of $\Mp(W_2)$.
We will compute $\theta_{W_2,V_{r+2}^+,\psi}(\pi)$ under the assumption that there is an embedding
\[
 \pi \longhookrightarrow I_{P_1, \psi}(\chi_s, \pi_0), 
\]
where $\chi_s = \chi |\cdot|^s$ for some unitary character $\chi$ of $\GL(X_1) \cong F^\times$ and some $s \in \R$ with $s \le 0$.

We have
\begin{align*}
 \Theta_{W_2,V_{r+2}^+,\psi}(\pi)^* 
 & = \Hom_{\Mp(W_2)}(\omega_{W_2,V_{r+2}^+,\psi}, \pi) \\
 & \subset \Hom_{\Mp(W_2)}(\omega_{W_2,V_{r+2}^+,\psi}, I_{P_1, \psi}(\chi_s, \pi_0)) \\
 & = \Hom_{\widetilde{\GL}(X_1) \times \Mp(W_1)}(R_{P_1}(\omega_{W_2,V_{r+2}^+,\psi}), \chi_s \chi_\psi \boxtimes \pi_0), 
\end{align*}
where $*$ denotes the linear dual and $R_{P_1}$ denotes the normalized Jacquet functor with respect to $P_1$.
By the result of Kudla \cite{kudla}, $R_{P_1}(\omega_{W_2,V_{r+2}^+,\psi})$ has a filtration
\[
 R_{P_1}(\omega_{W_2,V_{r+2}^+,\psi}) = R^0 \supset R^1 \supset \{ 0 \} 
\]
of $\widetilde{\GL}(X_1) \times \Mp(W_1) \times \SO(V_{r+2}^+)$-modules such that 
\begin{align*}
 R^0/R^1 & \cong \chi_\psi |{\det}_{X_1}|^{r+\frac{1}{2}} \boxtimes \omega_{W_1,V_{r+2}^+,\psi}, \\
 R^1 & \cong \Ind^{\widetilde{\GL}(X_1) \times \Mp(W_1) \times \SO(V_{r+2}^+)}_{\widetilde{\GL}(X_1) \times \Mp(W_1) \times Q_1}(\mathcal{S}(\operatorname{Isom}(Y_1,X_1)) \boxtimes \omega_{W_1,V_{r+1}^+,\psi}).
\end{align*}
Here $\widetilde{\GL}(X_1) \times \GL(Y_1)$ acts on $\mathcal{S}(\operatorname{Isom}(Y_1, X_1))$ by
\[
 [(\tilde{a}, b) \cdot f](g) = \chi_\psi(\tilde{a}) \cdot f(a^{-1} \circ g \circ b)
\]
for $\tilde{a} \in \widetilde{\GL}(X_1)$ with projection $a \in \GL(X_1)$, $b \in \GL(Y_1)$, and $f \in \mathcal{S}(\operatorname{Isom}(Y_1, X_1))$.
Since $s \le 0$, the action of $\widetilde{\GL}(X_1)$ shows that
\[
 \Hom_{\widetilde{\GL}(X_1) \times \Mp(W_1)}(R^0/R^1, \chi_s \chi_\psi \boxtimes \pi_0) = \{ 0 \}, 
\]
so that 
\begin{align*}
 \Theta_{W_2,V_{r+2}^+,\psi}(\pi)^*
 & \subset \Hom_{\widetilde{\GL}(X_1) \times \Mp(W_1)}(R^1, \chi_s \chi_\psi \boxtimes \pi_0) \\
 & = I_{Q_1}(\chi_s^{-1}, \Theta_{W_1,V_{r+1}^+,\psi}(\pi_0))^*.
\end{align*}
Thus, we obtain a surjection
\begin{equation}
\label{eq:howe-ps-nonarch-pf}
 I_{Q_1}(\chi_s^{-1}, \Theta_{W_1,V_{r+1}^+,\psi}(\pi_0))
 \longtwoheadrightarrow \Theta_{W_2,V_{r+2}^+,\psi}(\pi).
\end{equation}
On the other hand, since $\pi_0$ is supercuspidal, $\Theta_{W_1,V_{r+1}^+,\psi}(\pi_0)$ is irreducible.
Hence, by \cite[Theorem 1.4]{atobe-gan}, we have
\[
 \Theta_{W_1,V_{r+1}^+,\psi}(\pi_0) = J_{Q'}(|\cdot|^{r-\frac{1}{2}}, |\cdot|^{r-\frac{3}{2}}, \dots, |\cdot|^{\frac{3}{2}}, \sigma^-_{\varphi_b}),
\]
where $Q' = Q_{(1^{r-1})}$.
In particular, when $\chi = \chi_a$ and $s = -\frac{1}{2}$, the left-hand side of \eqref{eq:howe-ps-nonarch-pf} is a quotient of
\[
 I_Q(\chi_a |\cdot|^{\frac{1}{2}}, |\cdot|^{r-\frac{1}{2}}, |\cdot|^{r-\frac{3}{2}}, \dots, |\cdot|^{\frac{3}{2}}, \sigma^-_{\varphi_b})
 \cong
 I_Q(|\cdot|^{r-\frac{1}{2}}, |\cdot|^{r-\frac{3}{2}}, \dots, |\cdot|^{\frac{3}{2}}, \chi_a |\cdot|^{\frac{1}{2}}, \sigma^-_{\varphi_b}).
\]
This proves \eqref{eq:howe-ps-nonarch}.

\subsection{The real case}

Suppose that $F = \R$.
We work in the category of $(\g,K)$-modules.
We retain the notation of \S \ref{sss:notation-real}.

\subsubsection{Proof of Lemma \ref{l:ind-princ-2}}
\label{sss:ind-princ-2-real}

Put $(p,q) = (r+3,r+2)$.
Let $\tau$ be the irreducible self-dual tempered representation of $\GL_2(\R)$ with $L$-parameter $\rho$.
Then we have either 
\begin{itemize}
\item $\tau = \Ind^{\GL_2(\R)}_{\mathcal{B}}(\chi_1 \boxtimes \chi_2)$ for some unitary characters $\chi_1, \chi_2$ of $\R^\times$ such that $\chi_1 \chi_2 = 1$ or $\{\chi_1, \chi_2\} = \{ 1, \mathrm{sgn} \}$, where $\mathcal{B}$ is the Borel subgroup of $\GL_2$; or 
\item $\tau = D_\kappa$ for some positive integer $\kappa$.
\end{itemize}
Put 
\[
 \epsilon_1 = \chi_1(-1), \qquad
 \epsilon_2 = \chi_2(-1)
\]
in the former case.
Let $\pi$ be an irreducible genuine representation of $\Mp_4(\R)$ such that there is a surjection
\[
 I_{P_2, \psi}(\tau_s) \longtwoheadrightarrow \pi,
\]
where $\tau_s = \tau \otimes |\det|^s$ for some $s \in \R$ with $s \ge 0$.
Let $\mu'_0$ be a lowest $(\overline{K'} \cap M_2)$-type of $\tau_s$, where $M_2$ is the Levi component of $P_2$.
Let $\mu'$ be a lowest $K'$-type of $\pi$, so that $\mu'$ occurs in $I_{P_2, \psi}(\tau_s)$ with multiplicity one.
As we will explicate below, we assume the following conditions:
\begin{itemize}
\item $\mu'_0$ is of minimal degree in $\tau_s$;
\item $\mu'$ is of minimal degree in $I_{P_2, \psi}(\tau_s)$;
\item $\deg \mu' = \deg \mu'_0$;
\item the restriction of $\mu'$ to $K' \cap \widetilde{M}_2$ contains $\mu'_0 \otimes \chi_\psi$.
\end{itemize}
Then, by the induction principle of Adams--Barbasch (see \cite[Proposition 3.25]{ab1}, \cite[Theorem 8.7]{ab2}), $\theta_{W_2,V_{p,q}, \psi}(\pi)$ is a subquotient of
\[
 I_{Q_2}(\tau_s^\vee, 1)
\]
containing $\mu$, where $\mu$ is the $K$-type corresponding to $\mu'$.

By \cite[Proposition 6.10]{ab2}, we may take $\mu'_0$ and $\mu'$ given by
\[
 \mu'_0 = 
\begin{cases}
 (0;1) & \text{if $\tau = \Ind^{\GL_2(\R)}_{\mathcal{B}}(\chi_1 \boxtimes \chi_2)$ and $(\epsilon_1, \epsilon_2) = (1,1)$;} \\
 (1;1) & \text{if $\tau = \Ind^{\GL_2(\R)}_{\mathcal{B}}(\chi_1 \boxtimes \chi_2)$ and $(\epsilon_1, \epsilon_2) = (1,-1), (-1,1)$;} \\
 (0;-1) & \text{if $\tau = \Ind^{\GL_2(\R)}_{\mathcal{B}}(\chi_1 \boxtimes \chi_2)$ and $(\epsilon_1, \epsilon_2) = (-1,-1)$;} \\
 (2\kappa+1;1) & \text{if $\tau = D_\kappa$}
\end{cases}
\]
and 
\[
 \mu' = 
\begin{cases}
 (\frac{1}{2}, \frac{1}{2}) & \text{if $\tau = \Ind^{\GL_2(\R)}_{\mathcal{B}}(\chi_1 \boxtimes \chi_2)$ and $(\epsilon_1, \epsilon_2) = (1,1)$;} \\
 (\frac{1}{2}, -\frac{1}{2}) & \text{if $\tau = \Ind^{\GL_2(\R)}_{\mathcal{B}}(\chi_1 \boxtimes \chi_2)$ and $(\epsilon_1, \epsilon_2) = (1,-1), (-1,1)$;} \\
 (-\frac{1}{2}, -\frac{1}{2}) & \text{if $\tau = \Ind^{\GL_2(\R)}_{\mathcal{B}}(\chi_1 \boxtimes \chi_2)$ and $(\epsilon_1, \epsilon_2) = (-1,-1)$;} \\
 (\kappa+\frac{1}{2}, -\kappa-\frac{1}{2}) & \text{if $\tau = D_\kappa$,}
\end{cases}
\]
which satisfy the conditions above (since $\theta_{W_2,V_{3,2}, \psi}(\pi)$ is nonzero) and 
\[
 \deg \mu' = \deg \mu'_0 = 
\begin{cases}
 0 & \text{if $\tau = \Ind^{\GL_2(\R)}_{\mathcal{B}}(\chi_1 \boxtimes \chi_2)$ and $(\epsilon_1, \epsilon_2) = (1,1)$;} \\
 1 & \text{if $\tau = \Ind^{\GL_2(\R)}_{\mathcal{B}}(\chi_1 \boxtimes \chi_2)$ and $(\epsilon_1, \epsilon_2) = (1,-1), (-1,1)$;} \\
 2 & \text{if $\tau = \Ind^{\GL_2(\R)}_{\mathcal{B}}(\chi_1 \boxtimes \chi_2)$ and $(\epsilon_1, \epsilon_2) = (-1,-1)$;} \\
 2\kappa+1 & \text{if $\tau = D_\kappa$.}
\end{cases}
\]
Then we have
\[
 \mu = 
\begin{cases}
 (0,\dots,0;1) \boxtimes (0,\dots,0;1) & \text{if $\tau = \Ind^{\GL_2(\R)}_{\mathcal{B}}(\chi_1 \boxtimes \chi_2)$ and $(\epsilon_1, \epsilon_2) = (1,1)$;} \\
 (0,\dots,0;1) \boxtimes (1,0,\dots,0;1) & \text{if $\tau = \Ind^{\GL_2(\R)}_{\mathcal{B}}(\chi_1 \boxtimes \chi_2)$ and $(\epsilon_1, \epsilon_2) = (1,-1), (-1,1)$;} \\
 (0,\dots,0;1) \boxtimes (1,1,0,\dots,0;1) & \text{if $\tau = \Ind^{\GL_2(\R)}_{\mathcal{B}}(\chi_1 \boxtimes \chi_2)$ and $(\epsilon_1, \epsilon_2) = (-1,-1)$;} \\
 (\kappa,0,\dots,0;1) \boxtimes (\kappa+1,0,\dots,0;1) & \text{if $\tau = D_\kappa$.}
\end{cases}
\]
If we put $Q' = Q_{(2,1^r)}$, then it follows from \cite[Proposition 6.10]{ab2} that $\mu$ is a lowest $K$-type of
\[
 I_{Q'}(\tau_s^\vee, |\cdot|^{r-\frac{1}{2}}, |\cdot|^{r-\frac{3}{2}}, \dots, |\cdot|^{\frac{1}{2}}), 
\]
which has $I_{Q_2}(\tau_s^\vee, 1)$ as a quotient.
Hence, if $0 \le s \le \frac{1}{2}$, then we have
\[
 \theta_{W_2,V_{p,q},\psi}(\pi) = J_Q(|\cdot|^{r-\frac{1}{2}}, |\cdot|^{r-\frac{3}{2}}, \dots, |\cdot|^{\frac{1}{2}}, \tau_s), 
\]
where $Q = Q_{(1^r,2)}$.
This completes the proof of Lemma \ref{l:ind-princ-2} in the real case.

\subsubsection{Some $A$-packets}

Let $G$ be the split odd special orthogonal group of rank $r+2$, so that $G = \SO(p,q)$ with $(p,q) = (r+3,r+2)$.
Let $\theta$ be the Cartan involution of $G$ defined by $\theta(g) = {}^t g^{-1}$.
Let $\g$ be the complexified Lie algebra of $G$.

We consider local $A$-parameters
\[
 \phi = \mathcal{D}_{\kappa} \boxtimes S_2, \qquad
 \theta(\phi) = \phi \oplus (1 \boxtimes S_{2r})
\]
with a positive integer $\kappa$.
For $\epsilon \in \mu_2$, let $\sigma^\epsilon$ be the representation in the $A$-packet $\Pi_{\theta(\phi)}(G)$ associated to $\epsilon$.
Then, by \cite{mr1,mr2}, we have
\[
 \sigma^+ = A_{\q_1}(\lambda_1), \qquad
 \sigma^- = A_{\q_0}(\lambda_0) \oplus A_{\q_2}(\lambda_2),
\]
where $\q_i$ is a $\theta$-stable parabolic subalgebra of $\g$ whose normalizer $L_i$ in $G$ satisfies
\[
 L_i \cong \U(i,2-i) \times \SO(p-2i, q-4+2i),
\]
$\lambda_i$ is the $1$-dimensional representation of $L_i$ given by 
\[
 \lambda_i = {\det}^{\kappa-r-1} \boxtimes 1,
\]
and $A_{\q_i}(\lambda_i)$ is the cohomologically induced representation defined by \cite[(5.6)]{knappvogan}.
Note that $\lambda_i$ is in the weakly fair range, so that $A_{\q_i}(\lambda_i)$ is a (possibly zero, possibly reducible) unitary representation of $G$ of finite length.
If $\kappa > r$, then $\lambda_i$ is in the good range, so that $A_{\q_i}(\lambda_i)$ is nonzero and irreducible.
Moreover, by \cite[Theorem 11.216]{knappvogan}, we have
\[
 A_{\q_1}(\lambda_1) \cong J_Q(|\cdot|^{r-\frac{1}{2}}, |\cdot|^{r-\frac{3}{2}}, \dots, |\cdot|^{\frac{1}{2}}, D_\kappa \otimes |\det|^{\frac{1}{2}})
\]
for $\kappa > r$, where $Q = Q_{(1^r,2)}$.

\begin{lem}
\label{l:atlas}
Assume that $r=4$.
Then $A_{\q_i}(\lambda_i)$ is nonzero and irreducible.
Moreover, we have
\[
 A_{\q_1}(\lambda_1) \cong J_Q(|\cdot|^{\frac{7}{2}}, |\cdot|^{\frac{5}{2}}, |\cdot|^{\frac{3}{2}}, |\cdot|^{\frac{1}{2}}, D_\kappa \otimes |\det|^{\frac{1}{2}}),
\]
where $Q = Q_{(1^4,2)}$.
\end{lem}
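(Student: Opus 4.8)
The plan is to reduce the lemma to a finite verification. When $\kappa>r=4$ the one-dimensional character $\lambda_i$ lies in the good range for the $\theta$-stable datum $\q_i$, so $A_{\q_i}(\lambda_i)$ is nonzero and irreducible, and the asserted identification of $A_{\q_1}(\lambda_1)$ with $J_Q(|\cdot|^{\frac{7}{2}},|\cdot|^{\frac{5}{2}},|\cdot|^{\frac{3}{2}},|\cdot|^{\frac{1}{2}},D_\kappa\otimes|\det|^{\frac{1}{2}})$ is precisely the instance of \cite[Theorem 11.216]{knappvogan} recorded just above. Hence it remains to treat the four parameters $\kappa\in\{1,2,3,4\}$, for which $\lambda_i=\det^{\kappa-5}\boxtimes 1$ is only weakly fair, and where a priori $A_{\q_i}(\lambda_i)$ could vanish or be reducible.

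For these finitely many cases I would carry out the verification directly in the Atlas of Lie Groups and Representations software, taking $G=\SO(7,6)$ with its Cartan involution $\theta$ and the three $\theta$-stable parabolic subalgebras $\q_0,\q_1,\q_2$ whose normalizers in $G$ are $\U(0,2)\times\SO(7,2)$, $\U(1,1)\times\SO(5,4)$, and $\U(2,0)\times\SO(3,6)$. For each $i$ and each $\kappa\le 4$ one enters the $\theta$-stable datum $(\q_i,\lambda_i)$, computes the resulting cohomologically induced module with the normalization of \cite[(5.6)]{knappvogan}, and reads off from its Kazhdan--Lusztig--Vogan data that it is nonzero and irreducible. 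For the identification in the case $i=1$, I would then compare the Langlands data of $A_{\q_1}(\lambda_1)$---its cuspidal support, exponents, and lowest $K$-type, all available from Atlas---with those of the standard module $I_Q(|\cdot|^{\frac{7}{2}},|\cdot|^{\frac{5}{2}},|\cdot|^{\frac{3}{2}},|\cdot|^{\frac{1}{2}},D_\kappa\otimes|\det|^{\frac{1}{2}})$, whose unique irreducible quotient is $J_Q(\ldots)$ by the Langlands classification; equivalently, one may extract a surjection onto this standard module from the formula of \cite[Theorem 11.216]{knappvogan} and then identify the quotient.

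The main obstacle is exactly the passage to the weakly fair (non-good) range: nonvanishing and irreducibility of $A_{\q_i}(\lambda_i)$ are not automatic there, and in the most singular case $\kappa=1$---where the infinitesimal character of $\theta(\phi)$ acquires a zero coordinate---one must genuinely check that neither failure occurs. The Atlas computation settles this; the only care needed is to set up the antidominant chamber and the $\theta$-stable data so that the software's output matches the normalization of $A_{\q}(\lambda)$ used above, and to confirm that the representation on the right-hand side is indeed a standard module in the sense required by the definition of $J_Q$. This is also the reason for restricting to $r=4$: fixing $r$ fixes the group $G$, reducing the statement to a concrete finite check, whereas a uniform argument for all $r>3$ over $\R$ would require a different approach.
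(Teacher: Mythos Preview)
Your proposal is correct and matches the paper's own proof essentially verbatim: the paper notes that for $\kappa>4$ the claim follows from the good-range theory already stated, and that for $\kappa\le 4$ the assertion is verified by an Atlas computation (they display the Atlas session for $i=1$, $\kappa=2$ as a sample). Your discussion of the weakly fair range and the care needed with normalizations is more detailed than what the paper records, but the strategy is identical.
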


\begin{proof}
It remains to prove the assertion for $\kappa \le 4$, but this can be checked by the Atlas software \cite{atlas}.
For example, if $i=1$ and $\kappa=2$, then we have:
\begin{verbatim}
atlas> set G=SO(7,6)
Variable G: RealForm
atlas> Aq_reducible(KGB(G,14),[2,0,0,2,0,0],[1,0,0,1,0,0])
Value: 
1*parameter(x=1953,lambda=[11,7,9,1,3,1]/2,nu=[7,1,5,-1,3,1]/2) [44]
atlas> set P=Parabolic:([4],KGB(G,2336))
Variable P: ([int],KGBElt)
atlas> set p=parameter(KGB(Levi(P),0),[0,0,0,0,5,-3]/2,[7,5,3,1,1,1]/2)
Variable p: Param
atlas> finalize(real_induce_standard(p,G))
Value: 
1*parameter(x=1953,lambda=[11,7,9,1,3,1]/2,nu=[7,1,5,-1,3,1]/2) [44]
\end{verbatim}
\end{proof}

\subsubsection{Proof of \eqref{eq:soudry-real}}
\label{sss:soudry-real}

For $\epsilon \in \mu_2$, let $\pi^\epsilon$ be the representation in the $A$-packet $\Pi_{\phi,\psi}(\Mp_4)$ associated to $\epsilon$.
By definition, we have
\[
 \pi^+ = J_{P_2,\psi}(D_\kappa \otimes |\det|^{\frac{1}{2}}), \qquad
 \pi^- = \pi_\Lambda \oplus \pi_\Lambda^\vee,
\]
where $\pi_\Lambda$ is the genuine discrete series representation of $\Mp_4(\R)$ with lowest $K'$-type $\Lambda = (\kappa+\frac{3}{2}, \kappa+\frac{3}{2})$ (relative to the parametrization depending on $\psi$).
Note that $\pi^-$ does not depend on $\psi$.
Then \eqref{eq:soudry-real} says that
\[
 \theta_{W_2, V_{p,q}, \psi}(\pi^+) = A_{\q_1}(\lambda_1), \qquad
 \theta_{W_2, V_{p,q}, \psi}(\pi^-) = A_{\q_0}(\lambda_0) \oplus A_{\q_2}(\lambda_2)
\]
when $(p,q) = (7,6)$.

The first assertion follows from Lemmas \ref{l:ind-princ-2} and \ref{l:atlas}.
To prove the second assertion, we consider the theta lift $\theta_{W_2, V_{p,q},\psi}(\pi_\Lambda)$ when $4 \le p \le q+5$.
By \cite[Theorem 3.3]{ab2}, we have
\[
 \pi_{\Lambda} = \theta_{W_2, V_{5,0}, \psi}(\sigma_\Lambda), 
\]
where $\sigma_\Lambda$ is the irreducible representation of $\O(5,0)$ with highest weight $(\kappa-1,\kappa-1;1)$.
Hence it follows from \cite[Theorem 1.4]{lmt} combined with induction in stages (see \cite[Corollary 11.86]{knappvogan}) that $\theta_{W_2, V_{p,q}, \psi}(\pi_\Lambda)$ is a subquotient of $A_\q(\lambda)$, where $\q$ is a $\theta$-stable parabolic subalgebra of $\mathfrak{so}(p,q)$ whose normalizer $L$ in $\SO(p,q)$ satisfies
\[
 L \cong \U(2, 0) \times \SO(p-4,q) 
\]
and $\lambda$ is the $1$-dimensional representation of $L$ given by
\[
 {\det}^{\kappa-\frac{p+q-3}{2}} \boxtimes 1.
\]
In particular, when $\{p,q\} = \{6,7\}$, we have 
\[
 \theta_{W_2, V_{p,q},\psi}(\pi_\Lambda) = A_\q(\lambda)
\]
by Lemma \ref{l:atlas}.
On the other hand, by \cite[Lemma 1.5]{ab2}, we have 
\[
 \theta_{W_2, V_{p,q}, \psi}(\pi_\Lambda^\vee) = \theta_{W_2, V_{q,p}, \psi}(\pi_\Lambda).
\]
This completes the proof of \eqref{eq:soudry-real}.

\section{Local $A$-packets for $\Mp_4$}
\label{a:A-packets}

In this appendix, we describe local $A$-packets for $\Mp_4$ explicitly.
Let $F$ be a local field of characteristic zero.

\subsection{The nonarchimedean case}

Suppose that $F$ is nonarchimedean.
Let $\st$ be the Steinberg representation of $\GL_2(F)$ and put $\st_\chi = \st \otimes (\chi \circ \det)$ for any quadratic character $\chi$ of $F^\times$.
We may regard $\st_\chi$ as a representation of $\SO(V_1^+)$.
Let $\widetilde{\st}_{\chi, \psi}$ be the irreducible genuine square-integrable representation of $\Mp(W_1)$ contained in $I_{B,\psi}(\chi |\cdot|^{\frac{1}{2}})$.

\subsubsection{Elliptic tempered representations of $\Mp(W_2)$}

We first introduce some notation.
For this, we need to enumerate the elliptic tempered representations of $\Mp(W_2)$ which are not supercuspidal.

The following lemmas summarize the result of Hanzer--Mati\'c \cite{hanzer-matic} on the composition series of parabolically induced representations of $\Mp(W_2)$.
 
\begin{lem}
\label{l:nsc-mp-p1}
Let $\chi$ be a unitary character of $F^\times$.
Let $s \in \R$ with $s \ge 0$.
Let $\pi$ be either an irreducible genuine square-integrable representation of $\Mp(W_1)$ or an even elementary Weil representation of $\Mp(W_1)$.
Then $I_{P_1, \psi}(\chi |\cdot|^s, \pi)$ is reducible if and only if one of the following holds:
\begin{enumerate}
\item
\label{nsc-mp-p1i}
$\chi^2 = 1$ (so that $\chi = \chi_a$ for some $a \in F^\times$), $s = \frac{1}{2}$, and $\pi$ is supercuspidal but $\pi \ne \omega_{W_1,\psi_a}^-$, in which case we have
\[
 0 \longrightarrow \widetilde{\St}_\psi(\chi, \pi) \longrightarrow I_{P_1, \psi}(\chi |\cdot|^{\frac{1}{2}}, \pi) \longrightarrow J_{P_1, \psi}(\chi |\cdot|^{\frac{1}{2}}, \pi) \longrightarrow 0,
\]
where $\widetilde{\St}_\psi(\chi, \pi)$ is an irreducible square-integrable representation;
\item
$\chi^2 = 1$, $s = \frac{1}{2}$, and $\pi = \widetilde{\st}_{\mu, \psi}$ for some quadratic character $\mu$ of $F^\times$, in which case we have
\[ 
 0 \longrightarrow \Pi \longrightarrow I_{P_1, \psi}(\chi |\cdot|^{\frac{1}{2}}, \widetilde{\st}_{\mu, \psi}) \longrightarrow J_{P_1, \psi}(\chi |\cdot|^{\frac{1}{2}}, \widetilde{\st}_{\mu, \psi}) \longrightarrow  0
\]
with 
\[
 \Pi = 
\begin{cases}
  \widetilde{\St}_\psi(\chi, \widetilde{\st}_{\mu, \psi}) & \text{if $\chi \ne \mu$;} \\
  \pi_{\gen, \psi}(\st_\chi) & \text{if $\chi = \mu \ne 1$;} \\
  \pi_{\mathrm{ng}, \psi}(\st_\chi) & \text{if $\chi = \mu = 1$,}
\end{cases}
\]
where $\widetilde{\St}_\psi(\chi, \widetilde{\st}_{\mu, \psi})$ is an irreducible square-integrable representation (which is isomorphic to $\widetilde{\St}_\psi(\mu, \widetilde{\st}_{\chi, \psi})$), and $\pi_{\gen, \psi}(\st_\chi)$ and $\pi_{\mathrm{ng}, \psi}(\st_\chi)$ are the representations as in Lemma \ref{l:nsc-mp-p2}\eqref{nsc-mp-p2i} below;
\item 
\label{nsc-mp-p1iii}
$\chi^2 = 1$, $s = \frac{1}{2}$, and $\pi = \omega_{W_1,\psi_b}^+$ for some $b \in F^\times$, in which case we have
\[
  0 \longrightarrow \Pi \longrightarrow I_{P_1, \psi}(\chi |\cdot|^{\frac{1}{2}}, \omega_{W_1,\psi_b}^+) \longrightarrow J_{B, \psi}(\chi |\cdot|^{\frac{1}{2}}, \chi_b |\cdot|^{\frac{1}{2}}) \longrightarrow 0
\]
with 
\[
 \Pi = 
\begin{cases}
 J_{P_1, \psi}(\chi_b |\cdot|^{\frac{1}{2}}, \widetilde{\st}_{\chi,\psi}) & \text{if $\chi \ne \chi_b$;} \\
 \pi_{\mathrm{ng}, \psi}(\st_\chi) & \text{if $\chi = \chi_b \ne 1$;} \\
 \pi_{\gen, \psi}(\st_\chi) & \text{if $\chi = \chi_b = 1$;}
\end{cases}
\]
\item 
\label{nsc-mp-p1iv}
$\chi^2 = 1$, $s = \frac{3}{2}$, and $\pi = \widetilde{\st}_{\chi, \psi}$, in which case we have
\[ 
 0 \longrightarrow \widetilde{\St}^+_{\chi, \psi} \longrightarrow I_{P_1, \psi}(\chi |\cdot|^{\frac{3}{2}}, \widetilde{\st}_{\chi, \psi}) \longrightarrow J_{P_1, \psi}(\chi |\cdot|^{\frac{3}{2}}, \widetilde{\st}_{\chi, \psi}) \longrightarrow 0,
\]
where $\widetilde{\St}^+_{\chi, \psi}$ is an irreducible square-integrable representation;
\item
$\chi^2 = 1$ (so that $\chi = \chi_a$ for some $a \in F^\times$), $s = \frac{3}{2}$, and $\pi = \omega_{W_1,\psi_a}^-$, in which case we have
\[
 0 \longrightarrow \widetilde{\St}^-_{\chi, \psi} \longrightarrow I_{P_1, \psi}(\chi |\cdot|^{\frac{3}{2}}, \omega_{W_1,\psi_a}^-) \longrightarrow \omega_{W_2,\psi_a}^- \longrightarrow 0, 
\]
where $\widetilde{\St}^-_{\chi, \psi}$ is an irreducible square-integrable representation;
\item
$\chi^2 = 1$ (so that $\chi = \chi_a$ for some $a \in F^\times$), $s = \frac{3}{2}$, and $\pi = \omega_{W_1,\psi_a}^+$, in which case we have
\[
 0 \longrightarrow J_{P_2, \psi}(\st_\chi \otimes |\det|) \longrightarrow I_{P_1, \psi}(\chi |\cdot|^{\frac{3}{2}}, \omega_{W_1, \psi_a}^+) \longrightarrow \omega_{W_2, \psi_a}^+ \longrightarrow 0.
\]
\end{enumerate}
\end{lem}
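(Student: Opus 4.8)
These lemmas reorganize the composition-series computations of Hanzer--Mati\'c \cite{hanzer-matic}, and the plan is to reduce the statement to their analysis while identifying the Jordan--H\"older constituents with the representations named in this paper. The first step is to locate the reducibility points of $I_{P_1,\psi}(\chi|\cdot|^s,\pi)$ for $s\ge 0$: by the theory of the Plancherel measure for the Levi $\widetilde{\GL}_1\times\Mp(W_1)\subset\Mp(W_2)$ --- which one can compute by the Langlands--Shahidi method for metaplectic groups or transport from $\SO_5$ via the local Shimura correspondence \cite{gs} --- the relevant measure is governed by the $L$-factor attached to the ($2$-dimensional, $\psi$-dependent) $L$-parameter of $\pi$ twisted by $\chi$, together with $L(2s,\chi^2)$. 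This shows that $I_{P_1,\psi}(\chi|\cdot|^s,\pi)$ can be reducible only when $\chi^2=1$ and $s\in\{\tfrac12,\tfrac32\}$, which already yields the dichotomy asserted in the lemma and leaves only the finite list of triples $(\chi,s,\pi)$ appearing in the statement; the precise value of $s$ at which reducibility occurs is dictated by the $L$-parameter of $\pi$ (for instance it is $\tfrac32$ rather than $\tfrac12$ when $\pi=\omega_{W_1,\psi_a}^-$).

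For a fixed such triple with $\pi$ square-integrable, $I_{P_1,\psi}(\chi|\cdot|^s,\pi)$ is a standard module, so it has $J_{P_1,\psi}(\chi|\cdot|^s,\pi)$ as its unique irreducible quotient and exactly one further Jordan--H\"older factor; I would pin down that factor by computing the Jacquet module along the Borel via the geometric lemma, using the known Jacquet modules of the square-integrable representations of $\Mp(W_1)$ (including $\omega_{W_1,\psi_a}^-$), and comparing with the Jacquet modules of the putative constituents. The submodule is then identified either by Casselman's square-integrability criterion --- this settles the cases where the submodule is $\widetilde{\St}_\psi(\chi,\pi)$, $\widetilde{\St}_\psi(\chi,\widetilde{\st}_{\mu,\psi})$ with $\chi\ne\mu$, or $\widetilde{\St}^\pm_{\chi,\psi}$ --- or by a direct socle computation in the remaining cases. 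When instead $\pi$ is an even elementary Weil representation, $I_{P_1,\psi}(\chi|\cdot|^s,\pi)$ is no longer a standard module; here I would realize $\pi$ as a quotient of a Borel-induced principal series via Lemma \ref{l:elem-weil}, reduce to the composition series of a genuinely standard module induced from $B$, and read off the constituents $\omega_{W_2,\psi_a}^\pm$ and $J_{P_2,\psi}(\st_\chi\otimes|\det|)$ using Lemma \ref{l:elem-weil} and the description of $\omega_{W_2,\psi}^\pm$ as theta lifts.

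The hard part will be the degenerate cases, namely case (ii) with $\chi=\mu$ and case (iii) with $\chi=\chi_b$, where the $L$-factor of $\pi$ contributes a ``double'' pole and the naive expectation fails: one must separate the two Steinberg-type tempered constituents $\pi_{\gen,\psi}(\st_\chi)$ and $\pi_{\mathrm{ng},\psi}(\st_\chi)$ of Lemma \ref{l:nsc-mp-p2} and decide which of them occurs in $I_{P_1,\psi}(\chi|\cdot|^{1/2},\pi)$. I expect this to require tracking the unique (up to the $\widetilde{\GL}_1$-action) $\psi$-generic subquotient together with the consistency of Lemmas \ref{l:nsc-mp-p1} and \ref{l:nsc-mp-p2}, i.e. the full classification of elliptic tempered non-supercuspidal representations of $\Mp(W_2)$. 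Everything else is bookkeeping: once the Jordan--H\"older factors are known, the short exact sequences in the statement follow from the standard-module structure, respectively from Lemma \ref{l:elem-weil}.
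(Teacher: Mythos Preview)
The paper does not give its own proof of this lemma: it is introduced with the sentence ``The following lemmas summarize the result of Hanzer--Mati\'c \cite{hanzer-matic} on the composition series of parabolically induced representations of $\Mp(W_2)$'' and is stated without any proof environment. In other words, the paper treats the lemma purely as a citation, deferring entirely to \cite{hanzer-matic} for the argument.

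Your sketch is therefore not a competing proof but an outline of what lies behind that citation, and as such it is reasonable: the reducibility points are indeed located via the Plancherel measure (or equivalently via the local Shimura correspondence with $\SO_5$), the identification of constituents in the standard-module cases proceeds by Jacquet-module bookkeeping and Casselman's criterion, and the degenerate cases with $\chi=\mu$ or $\chi=\chi_b$ genuinely require distinguishing $\pi_{\gen,\psi}(\st_\chi)$ from $\pi_{\mathrm{ng},\psi}(\st_\chi)$, which Hanzer--Mati\'c handle by a careful analysis of Jacquet modules and the interplay with the Siegel parabolic $P_2$. Your proposal to treat the even Weil representation case via Lemma~\ref{l:elem-weil} and reduction to a Borel-induced standard module is also in the right spirit. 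Since the paper itself offers no argument here, there is nothing to compare at the level of strategy; your sketch simply unpacks the reference.
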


\begin{lem}
\label{l:nsc-mp-p2}
Let $\tau$ be an irreducible unitary square-integrable representation of $\GL_2(F)$ with central character $\omega_\tau$.
Let $s \in \R$ with $s \ge 0$.
Then $I_{P_2, \psi}(\tau \otimes |\det|^s)$ is reducible if and only if one of the following holds:
\begin{enumerate}
\item 
\label{nsc-mp-p2i}
$\omega_\tau = 1$ and $s=0$, in which case we have
\[
 I_{P_2, \psi}(\tau) = \pi_{\gen, \psi}(\tau) \oplus \pi_{\mathrm{ng}, \psi}(\tau),
\]
where $\pi_{\gen, \psi}(\tau)$ is an irreducible $\psi$-generic tempered representation and $\pi_{\mathrm{ng}, \psi}(\tau)$ is an irreducible non-$\psi$-generic tempered representation;
\item 
$\tau$ is self-dual and supercuspidal, $\omega_\tau \ne 1$, and $s = \frac{1}{2}$, in which case we have
\[
 0 \longrightarrow \widetilde{\St}_\psi(\tau) \longrightarrow I_{P_2, \psi}(\tau \otimes |\det|^{\frac{1}{2}}) \longrightarrow J_{P_2, \psi}(\tau \otimes |\det|^{\frac{1}{2}}) \longrightarrow 0, 
\]
where $\widetilde{\St}_\psi(\tau)$ is an irreducible square-integrable representation;
\item 
$\tau = \st_\chi$ for some quadratic character $\chi$ of $F^\times$ and $s=1$, in which case we have
\[
 0 \longrightarrow \widetilde{\St}^+_{\chi,\psi} \longrightarrow I_{P_2, \psi}(\st_\chi \otimes |\det|) \longrightarrow J_{P_2, \psi}(\st_\chi \otimes |\det|) \longrightarrow 0, 
\]
where $\widetilde{\St}^+_{\chi,\psi}$ is the representation as in Lemma \ref{l:nsc-mp-p1}\eqref{nsc-mp-p1iv}.
\end{enumerate}
\end{lem}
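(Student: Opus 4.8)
The plan is to reduce the statement, via standard-module theory, to two tasks: (a) locating the reducibility points of the family $I_{P_2,\psi}(\tau\otimes|\det|^s)$ for $s\ge 0$, and (b) identifying the irreducible constituents at each such point. The essential input for (a) is the explicit Jacquet-module and intertwining-operator analysis of Hanzer--Mati\'c \cite{hanzer-matic}; I would not attempt to redo that computation but would organize the argument around it, using the theta correspondence and the earlier results of this paper only to name and cross-check the constituents.

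For $s>0$, the representation $I_{P_2,\psi}(\tau\otimes|\det|^s)$ is a standard module with unique irreducible quotient the Langlands quotient $J_{P_2,\psi}(\tau\otimes|\det|^s)$, so reducibility is equivalent to the non-surjectivity of the standard intertwining operator into $I_{P_2,\psi}(\tau\otimes|\det|^{-s})$, detected by the zeros and poles of the Harish--Chandra $\mu$-function attached to the pair $(\widetilde{\GL}_2,\Mp_4)$. Since $\tau$ is square-integrable on $\GL_2(F)$, the only candidate points with $s>0$ are $s=\frac{1}{2}$, which requires $\tau$ self-dual (equivalently $\omega_\tau^2=1$), and $s=1$, which occurs only for $\tau=\st_\chi$ with $\chi$ quadratic, inherited from the reducibility of the principal series $I_{B,\psi}(\chi|\cdot|^{\frac{3}{2}},\chi|\cdot|^{\frac{1}{2}})$; at $s=0$ the induced representation is tempered and reduces precisely when $\tau\cong\tau^\vee$ of the relevant parity, which for $\GL_2$ forces $\omega_\tau=1$. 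This yields the trichotomy (i)--(iii). In case (i) I would invoke Knapp--Stein $R$-group theory: the $R$-group is $\Z/2\Z$, so $I_{P_2,\psi}(\tau)$ is a direct sum of two inequivalent irreducible tempered representations, and I would separate them by $\psi$-genericity, the induced representation carrying a one-dimensional space of $\psi$-Whittaker functionals, so that exactly one summand is $\psi$-generic; this gives the labelling $\pi_{\gen,\psi}(\tau)$ versus $\pi_{\mathrm{ng},\psi}(\tau)$.

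At the two non-tempered points the $\mu$-function analysis gives length two, and it remains to identify the non-Langlands constituent. In case (ii), Casselman's square-integrability criterion applied to the computed Jacquet modules shows that this constituent is square-integrable; it is by definition $\widetilde{\St}_\psi(\tau)$. In case (iii), the same criterion, together with the observation that $I_{P_2,\psi}(\st_\chi\otimes|\det|)$ and $I_{P_1,\psi}(\chi|\cdot|^{\frac{3}{2}},\widetilde{\st}_{\chi,\psi})$ are, by induction in stages, both subquotients of the single principal series $I_{B,\psi}(\chi|\cdot|^{\frac{3}{2}},\chi|\cdot|^{\frac{1}{2}})$, forces the non-Langlands constituent to be the square-integrable $\widetilde{\St}^+_{\chi,\psi}$ of Lemma \ref{l:nsc-mp-p1}\eqref{nsc-mp-p1iv}. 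The hard part will be task (a): establishing that the supercuspidal reducibility sits at $s=\frac{1}{2}$ --- which is where the metaplectic picture genuinely differs from that of $\Sp_4$ and $\SO_5$ --- and the $s=1$ point for $\st_\chi$, together with the square-integrability of the small constituents. For linear groups these are outputs of the Langlands--Shahidi method, which is unavailable for the cover, so one must either rely on the explicit Hanzer--Mati\'c Jacquet-module computation, or transport the reducibility through the theta correspondence with $\SO_5$ in a way compatible with Howe duality and the conservation relation so as not to be circular with the rest of the paper.
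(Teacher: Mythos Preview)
Your proposal is sound but more elaborate than what the paper actually does. In the paper, this lemma is not proved at all: it is introduced by the sentence ``The following lemmas summarize the result of Hanzer--Mati\'c \cite{hanzer-matic} on the composition series of parabolically induced representations of $\Mp(W_2)$'' and then simply stated. So the paper's ``proof'' is a bare citation.

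Your outline is therefore not a different approach so much as an unpacking of what lies behind that citation: the standard-module/intertwining-operator framework for locating reducibility points, the $R$-group argument at $s=0$, and Casselman's criterion for the square-integrable subrepresentations. This is all correct and is indeed the structure of the Hanzer--Mati\'c analysis. Your remark that the supercuspidal reducibility at $s=\tfrac12$ and the $\st_\chi$ point at $s=1$ are the genuine content, and that one cannot simply invoke Langlands--Shahidi on the cover, is apt. Since the paper treats this lemma as background input rather than something to be proved, your level of detail already exceeds what is required here; a one-line citation of \cite{hanzer-matic} would match the paper exactly.
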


The representations described in the lemmas above exhaust all irreducible genuine elliptic tempered representations of $\Mp(W_2)$ which are not supercuspidal.

\subsubsection{Elliptic tempered representations of $\SO(V_2^\epsilon)$}

We also need to enumerate the elliptic tempered representations of $\SO(V_2^\epsilon)$ which are not supercuspidal.
We write $I_{Q_i}({\cdots}) = I^\epsilon_{Q_i}({\cdots})$ and $J_{Q_i}({\cdots}) = J^\epsilon_{Q_i}({\cdots})$ to indicate that they are representations of $\SO(V_2^\epsilon)$.
Let $\St^\epsilon$ be the Steinberg representation of $\SO(V_2^\epsilon)$ and put $\St^\epsilon_\chi = \St^\epsilon \otimes (\chi \circ \nu)$ for any quadratic character $\chi$ of $F^\times$, where $\nu$ denotes the spinor norm.

The following lemmas summarize the result of Sally--Tadi\'c \cite{sally-tadic} (see also \cite{matic}) on the composition series of parabolically induced representations of $\SO(V_2^+)$.

\begin{lem}
\label{l:nsc-so+q1}
Let $\chi$ be a unitary character of $F^\times$.
Let $s \in \R$ with $s \ge 0$.
Let $\sigma$ be an irreducible square-integrable representation of $\SO(V_1^+)$.
Then $I^+_{Q_1}(\chi |\cdot|^s, \sigma)$ is reducible if and only if one of the following holds:
\begin{enumerate}
\item
\label{nsc-so+q1i}
$\chi^2 = 1$, $s = \frac{1}{2}$, and $\sigma$ is supercuspidal, in which case we have
\[
 0 \longrightarrow \St^+(\chi, \sigma) \longrightarrow I^+_{Q_1}(\chi |\cdot|^{\frac{1}{2}}, \sigma) \longrightarrow J^+_{Q_1}(\chi |\cdot|^{\frac{1}{2}}, \sigma) \longrightarrow 0, 
\]
where $\St^+(\chi, \sigma)$ is an irreducible square-integrable representation;
\item 
$\chi^2 = 1$, $s = \frac{1}{2}$, and $\sigma = \st_\mu$ for some quadratic character $\mu$ of $F^\times$, in which case we have
\[
 0 \longrightarrow \Sigma \longrightarrow I^+_{Q_1}(\chi |\cdot|^{\frac{1}{2}}, \st_\mu) \longrightarrow J^+_{Q_1}(\chi |\cdot|^{\frac{1}{2}}, \st_\mu) \longrightarrow 0
\]
with
\[
 \Sigma = 
\begin{cases}
 \St^+(\chi, \st_\mu) & \text{if $\chi \ne \mu$;} \\
 \sigma_\gen(\st_\chi) & \text{if $\chi = \mu$,}
\end{cases}
\]
where $\St^+(\chi, \st_\mu)$ is an irreducible square-integrable representation (which is isomorphic to $\St^+(\mu, \st_\chi)$) and $\sigma_\gen(\st_\chi)$ is the representation as in Lemma \ref{l:nsc-so+q2}\eqref{nsc-so+q2i} below;
\item
$\chi^2 = 1$, $s = \frac{3}{2}$, and $\sigma = \st_\chi$, in which case we have
\[
 0 \longrightarrow \St^+_\chi \longrightarrow I^+_{Q_1}(\chi |\cdot|^{\frac{3}{2}}, \st_\chi) \longrightarrow J^+_{Q_1}(\chi |\cdot|^{\frac{3}{2}}, \st_\chi) \longrightarrow 0.
\]
\end{enumerate}
\end{lem}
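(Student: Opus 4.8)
The plan is to read off the composition series from the explicit classification of parabolically induced representations of the split group of type $C_2 = B_2$, i.e.\ from the work of Sally--Tadi\'c \cite{sally-tadic} on $\mathrm{GSp}_4$ (equivalently, via the exceptional isogeny, on $\SO_5 \cong \mathrm{PGSp}_4$) and of Mati\'c \cite{matic} on $\SO_{2n+1}$, adding only the bookkeeping needed to transfer notation and to pin down the square-integrable constituents.

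First I would record two reductions. Since $V_2^+$ is the $5$-dimensional split quadratic space of trivial discriminant, $\SO(V_2^+)$ is split of type $B_2$, the Levi factor of $Q_1$ is $\GL_1 \times \SO(V_1^+)$, and $\SO(V_1^+) \cong \mathrm{PGL}_2(F)$; the only irreducible square-integrable representations of $\mathrm{PGL}_2(F)$ are the supercuspidal ones and the twists $\st_\mu$ of the Steinberg representation by quadratic characters $\mu$. This splits the lemma into the case $\sigma$ supercuspidal and the case $\sigma = \st_\mu$, exactly as in the parallel statement for $\Mp(W_2)$ in Lemma \ref{l:nsc-mp-p1}.

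For $\sigma$ supercuspidal, reducibility of $I^+_{Q_1}(\chi |\cdot|^s, \sigma)$ for $s \ge 0$ is governed by the poles of the standard intertwining operator attached to $Q_1$; the Langlands--Shahidi method (equivalently, the Plancherel measure) locates these, and they force reducibility, among $s \ge 0$, precisely at $s = \tfrac12$ when $\chi^2 = 1$, the point $s = 0$ yielding an irreducible tempered representation. For $\sigma = \st_\mu$ I would instead realize $I^+_{Q_1}(\chi |\cdot|^s, \st_\mu)$ as a subquotient of the principal series $I^+_{Q_{(1,1)}}(\chi |\cdot|^s, \mu |\cdot|^{\tfrac12})$ and use the segment combinatorics of \cite{sally-tadic,matic,zelevinsky} to enumerate the reducibility points $s = \tfrac12$ (in the subcases $\chi \ne \mu$ and $\chi = \mu$) and $s = \tfrac32$ (with $\st_\mu = \st_\chi$) together with their Jordan--H\"older factors. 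In every case the Langlands quotient $J^+_{Q_1}(\chi |\cdot|^s, \sigma)$ is the unique irreducible quotient, so what remains is to identify the complementary constituent.

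Finally I would verify the claimed identifications. Square-integrability of $\St^+(\chi, \sigma)$, $\St^+(\chi, \st_\mu)$ and $\St^+_\chi$ follows from Casselman's criterion: one computes the Jacquet modules of $I^+_{Q_1}(\chi |\cdot|^s, \sigma)$ along $Q_1$ and along the Borel by the geometric lemma, and checks that all exponents of the unique irreducible subrepresentation satisfy the strict inequalities of Casselman's criterion. The isomorphism $\St^+(\chi, \st_\mu) \cong \St^+(\mu, \st_\chi)$ and the identification of $\sigma_\gen(\st_\chi)$ with the generic tempered constituent of $I^+_{Q_2}(\st_\chi)$ (Lemma \ref{l:nsc-so+q2}\eqref{nsc-so+q2i}) I would obtain from transitivity of induction---both are constituents of $I^+_{Q_{(1,1)}}(\chi |\cdot|^{\tfrac12}, \mu |\cdot|^{\tfrac12})$, which is symmetric in $\chi$ and $\mu$---together with the uniqueness of the generic constituent of a standard module. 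The main obstacle is this last matching at the degenerate point $s = \tfrac12$, $\chi = \mu$: distinguishing $\St^+(\chi, \st_\mu)$ from $\sigma_\gen(\st_\chi)$, and (when $\chi = \mu = 1$) from the non-generic tempered constituent, forces one to track Whittaker models, or to apply the Aubert--Zelevinsky involution, rather than merely counting constituents.
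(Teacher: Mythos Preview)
Your proposal is correct and essentially reconstructs the argument underlying the cited references. Note, however, that the paper does not prove this lemma at all: it is stated as a summary of known results, introduced with ``The following lemmas summarize the result of Sally--Tadi\'c \cite{sally-tadic} (see also \cite{matic}) on the composition series of parabolically induced representations of $\SO(V_2^+)$,'' and no proof is given. So there is nothing to compare against beyond the citation itself, and your sketch of reducibility points via the Plancherel measure for supercuspidal $\sigma$, together with the principal-series analysis for $\sigma = \st_\mu$ and Casselman's criterion for the square-integrable constituents, is exactly the content one would extract from those references.
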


\begin{lem}
\label{l:nsc-so+q2}
Let $\tau$ be an irreducible unitary square-integrable representation of $\GL_2(F)$ with central character $\omega_\tau$.
Let $s \in \R$ with $s \ge 0$.
Then $I^+_{Q_2}(\tau \otimes |\det|^s)$ is reducible if and only if one of the following holds:
\begin{enumerate}
\item 
\label{nsc-so+q2i}
$\omega_\tau = 1$ and $s=0$, in which case we have
\[
 I^+_{Q_2}(\tau) = \sigma_\gen(\tau) \oplus \sigma_{\mathrm{ng}}(\tau),
\]
where $\sigma_\gen(\tau)$ is an irreducible generic tempered representation and $\sigma_{\mathrm{ng}}(\tau)$ is an irreducible nongeneric tempered representation;
\item 
$\tau$ is self-dual and supercuspidal, $\omega_\tau \ne 1$, and $s=\frac{1}{2}$, in which case we have
\[
 0 \longrightarrow \St^+(\tau) \longrightarrow I^+_{Q_2}(\tau \otimes |\det|^{\frac{1}{2}}) \longrightarrow J^+_{Q_2}(\tau \otimes |\det|^{\frac{1}{2}}) \longrightarrow 0, 
\]
where $\St^+(\tau)$ is an irreducible square-integrable representation;
\item 
$\tau = \st_\chi$ for some quadratic character $\chi$ of $F^\times$ and $s=1$, in which case we have
\[
 0 \longrightarrow \St^+_\chi \longrightarrow I^+_{Q_2}(\st_\chi \otimes |\det|) \longrightarrow J^+_{Q_2}(\st_\chi \otimes |\det|) \longrightarrow 0.
\]
\end{enumerate}
\end{lem}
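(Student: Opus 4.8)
The plan is to deduce the lemma from the classification of Sally--Tadi\'c \cite{sally-tadic} (reorganised by Mati\'c \cite{matic}) of the composition series of parabolically induced representations of the split group $\SO_5(F)$, transported through the isomorphism $\SO(V_2^+)\cong\mathrm{PGSp}_4(F)$, under which $Q_2$ corresponds to the Siegel parabolic with Levi $\GL_2$. Every irreducible constituent of $I^+_{Q_2}(\tau\otimes|\det|^s)$ then pulls back to an irreducible constituent, with trivial similitude central character, of the Siegel-induced representation of $\mathrm{GSp}_4(F)$ (equivalently, to one of $\Sp_4(F)$), so its structure may be read off there.

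First I would locate the reducibility points. For $s>0$, $I^+_{Q_2}(\tau\otimes|\det|^s)$ is a standard module with Langlands quotient $J^+_{Q_2}(\tau\otimes|\det|^s)$, and its reducibility is governed by the poles of the rank-one intertwining operator attached to $Q_2$. The adjoint action of the dual Levi $\GL_2(\C)\subset\Sp_4(\C)$ on the unipotent radical of the dual parabolic is $\Sym^2$ of the standard representation, so in the Langlands--Shahidi normalisation the operator is built from the symmetric-square $L$- and $\varepsilon$-factors of $\tau$. Since the discrete series of $\GL_2(F)$ consist of the supercuspidals and the twists $\st_\chi$ of Steinberg, one finds reducibility in the range $s>0$ exactly when $\tau=\st_\chi$ (at $s=1$) or when $\tau$ is supercuspidal of orthogonal type --- and a square-integrable $\tau$ has orthogonal parameter precisely when it is self-dual with $\omega_\tau\ne1$ --- in which case the reducibility is at $s=\tfrac12$; if $\tau$ is supercuspidal but not self-dual, or self-dual symplectic (i.e.\ $\omega_\tau=1$), the induced representation stays irreducible for $s>0$. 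For $s=0$ the representation is tempered, hence reducible exactly when the Knapp--Stein $R$-group is nontrivial; for the Siegel Levi of $\SO_5$ this happens precisely when $\tau\cong\tau^\vee$ descends to $\mathrm{PGSp}_4$, i.e.\ $\omega_\tau=1$, and then the $R$-group is $\Z/2\Z$ and $I^+_{Q_2}(\tau)$ splits as a sum of two inequivalent irreducible tempered representations, the generic one $\sigma_\gen(\tau)$ and the other $\sigma_{\mathrm{ng}}(\tau)$.

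Next I would determine the structure at the two reducibility points with $s>0$. There $I^+_{Q_2}$ has length two by the previous step, $J^+_{Q_2}(\tau\otimes|\det|^s)$ is its unique irreducible quotient, and the kernel is automatically irreducible; it is square-integrable by Casselman's criterion, which one checks by computing the Jacquet modules of $I^+_{Q_2}(\tau\otimes|\det|^s)$ along $Q_1$ and $Q_2$ and verifying that the constituent other than the Langlands quotient has all its exponents strictly inside the obtuse cone. Writing these square-integrable subrepresentations as $\St^+(\tau)$ and $\St^+_\chi$, one must then check that the notation matches their occurrences in Lemma \ref{l:nsc-so+q1} --- for instance that the $\St^+_\chi$ here is the subrepresentation of $I^+_{Q_1}(\chi|\cdot|^{\frac32},\st_\chi)$ appearing in Lemma \ref{l:nsc-so+q1}\eqref{nsc-so+q1i}, which follows by realising both $I^+_{Q_2}(\st_\chi\otimes|\det|)$ and $I^+_{Q_1}(\chi|\cdot|^{\frac32},\st_\chi)$ inside the principal series $I^+_B(\chi|\cdot|^{\frac32},\chi|\cdot|^{\frac12})$ and comparing Jacquet modules, and similarly for the identification of $\sigma_\gen(\st_\chi)$ used in Lemma \ref{l:nsc-so+q1}.

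I expect the main obstacle to be the self-dual supercuspidal case: one must be sure that a self-dual square-integrable $\tau$ with $\omega_\tau\ne1$ really has orthogonal $L$-parameter (so the Siegel reducibility lies at $s=\tfrac12$ and nowhere else in $s>0$), and that the resulting $\St^+(\tau)$ is genuinely new --- distinct from the square-integrable representations produced in the Steinberg case and in Lemma \ref{l:nsc-so+q1}. The cleanest way to secure all of this, and incidentally to obtain the irreducibility and square-integrability of $\St^+(\tau)$ and $\St^+_\chi$ without Jacquet-module bookkeeping, is to identify $\St^+(\tau)$ and $\St^+_\chi$ with specific members of the tempered $A$-packets of $\SO_5$ --- equivalently, with theta lifts from $\O_2$ and $\O_4$ --- which also makes the compatibility with the rest of the appendix transparent.
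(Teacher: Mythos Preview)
The paper does not actually prove this lemma: it is introduced with the sentence ``The following lemmas summarize the result of Sally--Tadi\'c \cite{sally-tadic} (see also \cite{matic}) on the composition series of parabolically induced representations of $\SO(V_2^+)$,'' and no further argument is given. Your proposal is therefore not so much a different proof as a sketch of how one would extract the statement from those references, which is more than the paper does. In that sense your approach is consistent with the paper's --- both rest on Sally--Tadi\'c --- but you are supplying the translation through the exceptional isomorphism $\SO(V_2^+)\cong\mathrm{PGSp}_4(F)$ and the Langlands--Shahidi/$R$-group bookkeeping that the paper leaves implicit.

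Two small corrections to your sketch. First, the cross-reference is off: the occurrence of $\St^+_\chi$ inside $I^+_{Q_1}(\chi|\cdot|^{3/2},\st_\chi)$ is item (iii) of Lemma~\ref{l:nsc-so+q1}, not item~\eqref{nsc-so+q1i}. Second, your final paragraph suggests identifying $\St^+(\tau)$ and $\St^+_\chi$ as theta lifts from $\O_2$ and $\O_4$ in order to secure their irreducibility and square-integrability; this is overkill and not how the paper (or Sally--Tadi\'c) proceeds. The length-two structure and Casselman's criterion already suffice, and the paper's appendix uses these representations only as labels for the constituents, without any theta-lift characterisation on the $\SO_5$ side.
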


We can easily describe the composition series of parabolically induced representations of $\SO(V_2^-)$ as follows.

\begin{lem}
\label{l:nsc-so-q1}
Let $\chi$ be a unitary character of $F^\times$.
Let $s \in \R$ with $s \ge 0$.
Let $\sigma$ be an irreducible representation of $\SO(V_1^-)$.
Then $I^-_{Q_1}(\chi |\cdot|^s, \sigma)$ is reducible if and only if one of the following holds:
\begin{enumerate}
\item 
\label{nsc-so-q1i}
$\chi^2 = 1$, $s=\frac{1}{2}$, and $\sigma \ne \chi \circ \nu$, in which case we have
\[
 0 \longrightarrow \St^-(\chi, \sigma) \longrightarrow I^-_{Q_1}(\chi |\cdot|^{\frac{1}{2}}, \sigma) \longrightarrow J^-_{Q_1}(\chi|\cdot|^{\frac{1}{2}}, \sigma) \longrightarrow 0, 
\]
where $\St^-(\chi, \sigma)$ is an irreducible square-integrable representation;
\item 
$\chi^2 = 1$, $s=\frac{3}{2}$, and $\sigma = \chi \circ \nu$, in which case we have
\[
 0 \longrightarrow \St^-_\chi \longrightarrow I^-_{Q_1}(\chi |\cdot|^{\frac{3}{2}}, \chi \circ \nu) \longrightarrow J^-_{Q_1}(\chi |\cdot|^{\frac{3}{2}}, \chi \circ \nu) \longrightarrow 0.
\]
\end{enumerate}
\end{lem}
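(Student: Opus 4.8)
The plan is to treat $\SO(V_2^-)$ as a group of $F$-rank one and to read off the composition series from rank-one intertwining-operator theory. The key structural point is that $V_1^-$ is anisotropic, so $\SO(V_1^-)\cong(B^-)^\times/F^\times$ has no proper parabolic subgroup: every irreducible $\sigma$ is supercuspidal, its $1$-dimensional constituents are exactly the characters $\chi\circ\nu$ with $\chi$ quadratic (since $\mathrm{Nrd}$ restricts to $t\mapsto t^2$ on $F^\times$), and under Jacquet--Langlands one has $\mathrm{JL}(\sigma)$ a discrete series of $\SO(V_1^+)\cong\mathrm{PGL}_2$, with $\mathrm{JL}(\chi\circ\nu)=\st_\chi$. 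The only proper parabolic $Q_1$ of $\SO(V_2^-)$ has Levi $\GL_1\times\SO(V_1^-)$ and relative Weyl group of order two; hence for $s>0$ the representation $I^-_{Q_1}(\chi|\cdot|^s,\sigma)$ is a standard module, with unique irreducible quotient the Langlands quotient $J^-_{Q_1}(\chi|\cdot|^s,\sigma)$, irreducible if and only if the long intertwining operator is an isomorphism, equivalently -- by Harish-Chandra's theory, applicable since $\sigma$ is supercuspidal -- the Plancherel $\mu$-function has no pole at $s$; when it does, the module has length two and the submodule, whose exponents lie strictly in the negative chamber, is square-integrable by Casselman's criterion. At $s=0$ there is no reducibility, because the rank-one $R$-group is trivial for all such inducing data, exactly as in the split case (see Lemma \ref{l:nsc-so+q1}).

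It then remains to locate the poles of $\mu(s)$ for $s>0$. I would compute $\mu(s)$ by transporting the quasi-split computation through Jacquet--Langlands on the $\SO(V_1^-)$-factor: the Plancherel measure depends only on the local $L$- and $\epsilon$-factors attached to the parameter $\varphi_\sigma$ of $\sigma$, so up to a nonvanishing holomorphic factor $\mu(s)\doteq\gamma(s,\chi\otimes\varphi_\sigma)\,\gamma(-s,\chi\otimes\varphi_\sigma)\,\gamma(2s,\chi^2)\,\gamma(-2s,\chi^2)$, with $\varphi_\sigma$ the two-dimensional symplectic (hence self-dual) parameter of $\sigma$ and $\chi^2$ real. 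If $\chi^2\neq1$ then, using that the relevant $L$-functions are trivial, all four factors are holomorphic and nonzero for $\Re(s)\geq0$, so $I^-_{Q_1}(\chi|\cdot|^s,\sigma)$ is irreducible for every $s\geq0$; this is the case excluded in the statement. So suppose $\chi^2=1$. If $\sigma\neq\chi\circ\nu$, then $\chi\otimes\varphi_\sigma$ contains no trivial subrepresentation, $\gamma(s,\chi\otimes\varphi_\sigma)$ has no pole for $s\geq0$, while $\gamma(2s,\chi^2)=\gamma(2s,1)$ has a simple pole at $2s=1$ with no compensating zero; hence the unique pole of $\mu$ in $\{s>0\}$ is at $s=\tfrac12$, the module reduces there, and the square-integrable submodule $\St^-(\chi,\sigma)$ has parameter $(\chi\boxtimes S_2)\oplus\varphi_\sigma$. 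This matches Lemma \ref{l:nsc-so+q1}(\ref{nsc-so+q1i}), and its $\st_\mu$-analogue with $\mu\neq\chi$, under $\mathrm{JL}$.

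The delicate case is $\sigma=\chi\circ\nu$, so $\varphi_\sigma=\chi\boxtimes S_2$ and $\chi\otimes\varphi_\sigma\cong1\boxtimes S_2$, giving $\gamma(s,\chi\otimes\varphi_\sigma)=\gamma(s+\tfrac12,1)\,\gamma(s-\tfrac12,1)$. Using that $\gamma(u,1_{F^\times})$ has a simple pole at $u=1$ and a simple zero at $u=0$, a short bookkeeping shows that the would-be pole of $\mu(s)$ at $s=\tfrac12$ cancels: the pole of $\gamma(2s,1)$ there is killed by the simple zero of $\gamma(-s+\tfrac12,1)$, so $\mu(\tfrac12)$ is finite and nonzero and $I^-_{Q_1}(\chi|\cdot|^{1/2},\chi\circ\nu)$ is irreducible, while the surviving pole in $\{s>0\}$ is at $s=\tfrac32$ (from the factor $\gamma(s-\tfrac12,1)$), giving reducibility there with square-integrable submodule $\St^-_\chi$ of parameter $\chi\boxtimes S_4$. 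I expect this cancellation to be the main obstacle: it is exactly what separates the anisotropic form from the split $\SO(V_2^+)$, where the non-supercuspidal Steinberg twist $\st_\chi$ instead yields a (tempered) reducibility at $s=\tfrac12$ (the second case of Lemma \ref{l:nsc-so+q1} with $\chi=\mu$). I would pin the cancellation down either through the explicit Langlands--Shahidi normalization of the rank-one operator, or -- as a consistency check fitting the rest of the paper -- by identifying $\St^-_\chi$ and $J^-_{Q_1}(\chi|\cdot|^{3/2},\chi\circ\nu)$ with theta lifts from $\O(V_1^-)$ of the kind computed in Appendix A, whose parameters ($\chi\boxtimes S_4$, principal type) together with the conservation relation force the stated length-two structure.
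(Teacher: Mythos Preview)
Your argument is correct in substance, and in fact supplies far more detail than the paper does: the paper gives no proof of this lemma at all, prefacing it only with ``We can easily describe the composition series of parabolically induced representations of $\SO(V_2^-)$ as follows.'' The preceding split-case lemmas are attributed to Sally--Tadi\'c and Mati\'c, and the anisotropic case is treated as an equally standard rank-one computation left to the reader.

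Your approach via the Langlands--Shahidi $\mu$-function is a legitimate and transparent way to carry this out. The identification of the adjoint factors (the $2$-dimensional $\chi\otimes\varphi_\sigma$ and the $1$-dimensional $\chi^2$, coming from the two-step nilpotent radical on the dual side) is correct, and the pole/zero bookkeeping at $s=\tfrac12$ and $s=\tfrac32$ in the case $\sigma=\chi\circ\nu$ is exactly the point that distinguishes the inner form from the split one. Two small remarks: your treatment of $s=0$ by asserting ``the rank-one $R$-group is trivial \dots\ exactly as in the split case'' is a bit circular, since Lemma~\ref{l:nsc-so+q1} is itself stated without proof; the cleanest justification is that when $\chi^2=1$ the $\mu$-function has a double zero at $s=0$ (from $\gamma(2s,1)\gamma(-2s,1)$), forcing $W_\sigma^0=W_\sigma$ and hence trivial $R$-group. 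Also, the theta-lift consistency check you mention at the end is a nice sanity check but not needed for the proof.
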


The representations described in the lemmas above exhaust all irreducible elliptic tempered representations of $\SO(V_2^\epsilon)$ which are not supercuspidal.

\subsubsection{Local Shimura correspondence}

The local Shimura correspondence \cite{gs} is a bijection between representations of $\Mp(W_2)$ and $\SO(V_2^\epsilon)$ defined via theta lifts, which restricts to a bijection
\[
 \Pi_{\varphi, \psi}(\Mp(W_2)) \, \longleftrightarrow \,
 \Pi_\varphi(\SO(V_2^+)) \sqcup \Pi_\varphi(\SO(V_2^-))
\]
for any $L$-parameter $\varphi: W_F \times \SL_2(\C) \rightarrow \Sp_4(\C)$.
The following table describes this bijection explicitly when any irreducible summand of $\varphi$ is symplectic.

\begin{center}
{\renewcommand{\arraystretch}{1.5}
\begin{tabular}{lll} \hline
 $\varphi:$ $L$-parameter &
 $\Pi_{\varphi, \psi}(\Mp(W_2))$ & 
 $\Pi_{\varphi}(\SO(V_2^\epsilon))$ \\ \hline \hline
 $\varphi = \varrho \boxtimes S_1$ &
 $\pi^{\epsilon}:$ supercuspidal & 
 $\sigma^{\epsilon}:$ supercuspidal \\ \hline
 $\varphi = \rho \boxtimes S_2$ &
 $\pi^+ = \widetilde{\St}_\psi(\tau)$ &
 $\sigma^+ = \St^+(\tau)$ \\
 & $\pi^-:$ supercuspidal & 
 $\sigma^-:$ supercuspidal \\ \hline
 $\varphi = \chi_a \boxtimes S_4$ &
 $\pi^+ = \widetilde{\St}^+_{\chi_a, \psi}$ &
 $\sigma^+ = \St^+_{\chi_a}$ \\ 
 & $\pi^- = \widetilde{\St}^-_{\chi_a, \psi}$ &
 $\sigma^- = \St^-_{\chi_a}$ \\ \hline
 $\varphi = 1 \boxtimes S_4$ &
 $\pi^+ = \widetilde{\St}^-_{1,\psi}$ &
 $\sigma^+ = \St_1^+$ \\ 
 & $\pi^- = \widetilde{\St}^+_{1,\psi}$ &
 $\sigma^- = \St_1^-$ \\ \hline
 $\varphi = (\rho_1 \boxtimes S_1) \oplus (\rho_2 \boxtimes S_1)$
 & $\pi^{\epsilon_1, \epsilon_2}:$ supercuspidal
 & $\sigma^{\epsilon_1, \epsilon_2}:$ supercuspidal \\ \hline
 $\varphi = (\rho_0 \boxtimes S_1) \oplus (\chi_a \boxtimes S_2)$
 & $\pi^{+,+} = \widetilde{\St}_\psi(\chi_a, \pi_0^+)$
 & $\sigma^{+,+} = \St^+(\chi_a,\sigma_0^+)$ \\
 & $\pi^{+,-} = \theta_{W_2, V_1^{\varepsilon_a}, \psi_a}((\sigma_0^{\varepsilon_a} \otimes \nu_a)^{-\varepsilon_0 \chi_a(-1)})$
 & $\sigma^{+,-} = \theta_{W_1, V_2^-, \psi_a}(\pi^{\varepsilon_a}_0) \otimes \nu_a$ \\
 & $\pi^{-,+} = \widetilde{\St}_\psi(\chi_a, \pi_0^-)$
 & $\sigma^{-,+} = \St^-(\chi_a,\sigma_0^-)$ \\
 & $\pi^{-,-} = \theta_{W_2, V_1^{-\varepsilon_a}, \psi_a}((\sigma_0^{-\varepsilon_a} \otimes \nu_a)^{\varepsilon_0 \chi_a(-1)})$
 & $\sigma^{-,-} = \theta_{W_1, V_2^+, \psi_a}(\pi^{-\varepsilon_a}_0) \otimes \nu_a$ \\ \hline
 $\varphi = (\rho_0 \boxtimes S_1) \oplus (1 \boxtimes S_2)$
 & $\pi^{+,+} = \theta_{W_2, V_1^+, \psi}(\sigma_0^{+, -\varepsilon_0})$
 & $\sigma^{+,+} = \St^+(1,\sigma_0^+)$ \\
 & $\pi^{+,-} = \widetilde{\St}_\psi(1, \pi_0^+)$
 & $\sigma^{+,-} = \theta_{W_1, V_2^-, \psi}(\pi_0^+)$ \\
 & $\pi^{-,+} = \theta_{W_2, V_1^-, \psi}(\sigma_0^{-, \varepsilon_0})$
 & $\sigma^{-,+} = \St^-(1,\sigma_0^-)$ \\
 & $\pi^{-,-} = \widetilde{\St}_\psi(1, \pi_0^-)$
 & $\sigma^{-,-} = \theta_{W_1, V_2^+, \psi}(\pi_0^-)$ \\ \hline
 $\varphi = (\chi_a \boxtimes S_2) \oplus (\chi_b \boxtimes S_2)$
 & $\pi^{+,+} = \widetilde{\St}_\psi(\chi_a, \widetilde{\st}_{\chi_b,\psi})$
 & $\sigma^{+,+} = \St^+(\chi_a, \st_{\chi_b})$ \\
 & $\pi^{+,-} = \widetilde{\St}_\psi(\chi_a, \omega_{W_1,\psi_b}^-)$
 & $\sigma^{+,-} = \St^-(\chi_a, \nu_b)$ \\
 & $\pi^{-,+} = \widetilde{\St}_\psi(\chi_b, \omega_{W_1,\psi_a}^-)$
 & $\sigma^{-,+} = \St^-(\chi_b, \nu_a)$ \\
 & $\pi^{-,-} = \theta_{W_2, V_1^-, \psi_b}(\nu_{ab}^{\chi_{ab}(-1)})$
 & $\sigma^{-,-} = \theta_{W_1, V_2^+, \psi_b}(\omega_{W_1, \psi_a}^-) \otimes \nu_b$ \\ \hline
 $\varphi = (\chi_a \boxtimes S_2) \oplus (1 \boxtimes S_2)$
 & $\pi^{+,+} = \widetilde{\St}_\psi(\chi_a, \omega_{W_1,\psi}^-)$
 & $\sigma^{+,+} = \St^+(\chi_a, \st_1)$ \\
 & $\pi^{+,-} = \widetilde{\St}_\psi(\chi_a, \widetilde{\st}_{1,\psi})$
 & $\sigma^{+,-} = \St^-(\chi_a, \nu_1)$ \\
 & $\pi^{-,+} = \theta_{W_2, V_1^-, \psi}(\nu_a^{\chi_a(-1)})$
 & $\sigma^{-,+} = \St^-(1, \nu_a)$ \\
 & $\pi^{-,-} = \widetilde{\St}_\psi(1, \omega_{W_1,\psi_a}^-)$
 & $\sigma^{-,-} = \theta_{W_1, V_2^+, \psi}(\omega_{W_1, \psi_a}^-)$ \\ \hline
 $\varphi = \varphi_0 \oplus \varphi_0$
 & $\pi^{+,+} = \pi_{\gen, \psi}(\tau_0)$
 & $\sigma^{+,+} = \sigma_\gen(\tau_0)$ \\
 & $\pi^{-,-} = \pi_{\mathrm{ng}, \psi}(\tau_0)$
 & $\sigma^{-,-} = \sigma_{\mathrm{ng}}(\tau_0)$ \\ \hline
\end{tabular}
}
\end{center}

\begin{itemize}[leftmargin=8mm]
\item $a, b \in F^\times$ such that $\chi_a, \chi_b, 1$ are pairwise distinct
\item $\varrho:$ $4$-dimensional irreducible symplectic representation of $W_F$
\item $\rho_0, \rho_1, \rho_2:$ $2$-dimensional irreducible symplectic representation of $W_F$ such that $\rho_1 \ne \rho_2$
\item $\rho:$ $2$-dimensional irreducible orthogonal representation of $W_F$
\item $\varphi_0:$ $2$-dimensional irreducible symplectic representation of $L_F$
\item $\Pi_{\rho_0,\psi}(\Mp(W_1)) = \{ \pi_0^\epsilon \, | \, \epsilon \in \mu_2 \}$
\item $\Pi_{\rho_0}(\SO(V_1^\epsilon)) = \{ \sigma_0^\epsilon \}$
\item $\Pi_\rho(\GL_2(F)) = \{ \tau \}$
\item $\Pi_{\varphi_0}(\GL_2(F)) = \{ \tau_0 \}$
\item $\nu_a = \chi_a \circ \nu$ with $\nu:$ spinor norm
\item $\nu_a^\epsilon:$ $\epsilon$-extension of $\nu_a$
\item $\varepsilon_0 = \epsilon(\frac{1}{2}, \rho_0)$
\item $\varepsilon_a = \epsilon(\frac{1}{2}, \rho_0) \cdot \epsilon(\frac{1}{2}, \rho_0 \times \chi_a) \cdot \chi_a(-1)$
\end{itemize}

\subsubsection{Nontempered $A$-packets}

For any $A$-parameter $\phi : L_F \times \SL_2(\C) \rightarrow \Sp_4(\C)$, we have described most of the representations in the $A$-packet $\Pi_{\phi,\psi}(\Mp(W_2))$ explicitly in the body of this paper.
The following lemmas determine the remaining representations.

\begin{lem}
Suppose that $\phi = (\rho \boxtimes S_1) \oplus (\chi_a \boxtimes S_2)$ with a $2$-dimensional symplectic almost tempered representation $\rho$ of $L_F$ and $a \in F^\times$.
\begin{enumerate}
\item
If either 
\begin{itemize}
\item $\rho = \rho_0 \boxtimes S_1$ for some $2$-dimensional irreducible symplectic representation $\rho_0$ of $W_F$; or 
\item $\rho = \chi_b \boxtimes S_2$ for some $b \in F^\times$ such that $\chi_a \ne \chi_b$,
\end{itemize}
then for any $\epsilon_1 \in \mu_2$, we have 
\[
 \pi^{\epsilon_1, -} = \pi^{\epsilon_1, -}_\varphi,
\]
where $\pi^{\epsilon_1,-}_\varphi$ is the irreducible genuine square-integrable representation of $\Mp(W_2)$ with $L$-parameter $\varphi = \rho \oplus (\chi_a \boxtimes S_2)$ (relative to $\psi$) associated to $(\epsilon_1,-1) \in \widehat{S}_\varphi = \mu_2 \times \mu_2$.
\item
If $\rho = \chi_a \boxtimes S_2$, then for any $\epsilon_1 \in \mu_2$, we have
\[
 \pi^{\epsilon_1, -} =
\begin{cases}
 0 & \text{if $\chi_a \ne 1$ and $\epsilon_1 = 1$;} \\
 \pi_\varphi^{-,-} & \text{if $\chi_a \ne 1$ and $\epsilon_1 = -1$;} \\
 \pi_\varphi^{+,+} & \text{if $\chi_a = 1$ and $\epsilon_1 = 1$;} \\
 0 & \text{if $\chi_a = 1$ and $\epsilon_1 = -1$,}
\end{cases}
\]
where $\pi^{\epsilon_1,\epsilon_1}_\varphi$ is the irreducible genuine tempered  representation of $\Mp(W_2)$ with $L$-parameter $\varphi = (\chi_a \boxtimes S_2) \oplus (\chi_a \boxtimes S_2)$ (relative to $\psi$) associated to $(\epsilon_1,\epsilon_1) \in \widehat{S}_\varphi = \Delta \mu_2$.
\item
If $\rho = (\chi |\cdot|^s \boxtimes S_1) \oplus (\chi^{-1} |\cdot|^{-s} \boxtimes S_1)$ for some unitary character $\chi$ of $F^\times$ and some $s \in \R$ with $0 \le s < \frac{1}{2}$, then we have
\[
 \pi^{+,-} = J_{P_1,\psi}(\chi |\cdot|^s, \omega^-_{W_1,\psi_a}).
\]
\end{enumerate}
\end{lem}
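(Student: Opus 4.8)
The plan is to unwind the definition $\pi^{\epsilon_1,\epsilon_2}=\theta_{W_2,V_1^{\epsilon},\psi_a}(\sigma^{\epsilon,\epsilon'})$ from \S\ref{ss:local-sk} in the remaining case $\epsilon_2=-1$; here $\sigma^{\epsilon,\epsilon'}$ is the $\epsilon'$-extension to $\O(V_1^{\epsilon})$ of the irreducible representation $\sigma_0^{\epsilon}$ of $\SO(V_1^{\epsilon})$ with $L$-parameter $\rho\otimes\chi_a$, and $\epsilon,\epsilon'$ are given by \eqref{eq:e1e2-SK}. The structural input that distinguishes this case from $\epsilon_2=+$ is that, since $\epsilon_2=-1$, the first assertion of the lemma at the start of \S\ref{ss:local-sk} (equivalently the dichotomy \eqref{eq:dichotomy}) forces $\theta_{W_1,V_1^{\epsilon},\psi_a}(\sigma^{\epsilon,\epsilon'})=0$, i.e.\ $\sigma^{\epsilon,\epsilon'}(-1)=-\epsilon\cdot\epsilon(\tfrac12,\sigma^{\epsilon,\epsilon'})$; thus Lemma~\ref{l:ind-princ-1} is unavailable and one instead invokes the ``more properties of the theta lift'' lemmas of the first appendix, which were designed for exactly this situation. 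For each of the three cases I would (i) pin down $\sigma^{\epsilon,\epsilon'}$ explicitly, (ii) compute $\theta_{W_2,V_1^{\epsilon},\psi_a}(\sigma^{\epsilon,\epsilon'})$ by the relevant lemma, and (iii) read off the $L$-parameter and the labelling character by comparison with the Shimura correspondence table that immediately precedes the present lemma, keeping track of the $\psi_a\mapsto\psi$ twist (which tensors $\Mp$-parameters by $\chi_a$) and of the identity $I_{P_{\mathbf k},\psi_a}(\tau_1,\dots)=I_{P_{\mathbf k},\psi}(\tau_1\otimes(\chi_a\circ\det),\dots)$.

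Concretely: in case~(1) with $\rho=\rho_0\boxtimes S_1$ the parameter $\rho\otimes\chi_a=\rho_0\otimes\chi_a$ is supercuspidal, so $\sigma^{\epsilon,\epsilon'}$ is supercuspidal on $\O(V_1^{\epsilon})$ and is $\ne\det$ (being $2$-dimensional on $\SO(V_1^{\epsilon})$); Lemma~\ref{l:more-theta-nonarch-1} then gives $\pi^{\epsilon_1,-}=\theta_{W_2,V_2^{\epsilon},\psi_a}(\St^{\epsilon}(1,\sigma_0^{\epsilon}))$, the theta lift to $\Mp(W_2)$ of a square-integrable representation of $\SO(V_2^{\epsilon})$ whose $L$-parameter is $(\rho_0\otimes\chi_a)\oplus(1\boxtimes S_2)$; since the lift $\SO(V_2^{\epsilon})\leftrightarrow\Mp(W_2)$ is the local Shimura correspondence \cite{gs}, $\pi^{\epsilon_1,-}$ is square-integrable with $L$-parameter (relative to $\psi_a$) equal to $(\rho_0\otimes\chi_a)\oplus(1\boxtimes S_2)$, hence with $L$-parameter $\rho\oplus(\chi_a\boxtimes S_2)$ relative to $\psi$. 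In case~(1) with $\rho=\chi_b\boxtimes S_2$ and $\chi_a\ne\chi_b$ (so $\chi_{ab}\ne1$) one has $\sigma_0^{+}=\st_{\chi_{ab}}$ and $\sigma_0^{-}=\chi_{ab}\circ\mathrm N_{B^{-}}$; the first is handled by Lemma~\ref{l:more-theta-nonarch-2} (and \eqref{eq:surj-o3mp4}), giving the square-integrable $\widetilde{\St}_{\psi_a}(\chi_{ab},\omega^{-}_{W_1,\psi_a})=\widetilde{\St}_{\psi}(\chi_b,\omega^{-}_{W_1,\psi_a})$, and the second again by Lemma~\ref{l:more-theta-nonarch-1} (all irreducible representations of the compact group $\O(V_1^{-})$ being supercuspidal). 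In case~(2), $\rho\otimes\chi_a=1\boxtimes S_2$, so $\sigma_0^{+}=\st$ while $\sigma_0^{-}$ is the trivial character of $\SO(V_1^{-})$; the $\epsilon_2=-1$ extension is, on $\O(V_1^{-})$, the character $\det$ --- whence $\pi^{\epsilon_1,-}=\theta_{W_2,V_1^{-},\psi_a}(\det)=0$ by Lemma~\ref{l:o3mp4-nonzero} --- and, on $\O(V_1^{+})$, an extension with $\sigma(-1)=1$, so by Lemma~\ref{l:more-theta-nonarch-2} one obtains the \emph{tempered} (not square-integrable) representation $\pi_{\gen,\psi_a}(\st)$, which matches the stated identification with $\pi_{\gen,\psi}$ or $\pi_{\mathrm{ng},\psi}$ of the relevant Steinberg twist. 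Finally, in case~(3), using $\epsilon(\tfrac12,\chi|\cdot|^s)\epsilon(\tfrac12,\chi^{-1}|\cdot|^{-s})=\chi(-1)$ one checks $\epsilon=1$ and $\epsilon'=-(\chi\chi_a)(-1)$, so that $\sigma^{\epsilon,\epsilon'}$ is an extension of $I_{Q_1}(\chi\chi_a|\cdot|^s)$ to which Lemma~\ref{l:more-theta-nonarch-3} applies; it yields $\theta_{W_2,V_1^{+},\psi_a}(\sigma^{\epsilon,\epsilon'})=J_{P_1,\psi_a}(\chi\chi_a|\cdot|^s,\omega^{-}_{W_1,\psi_a})$, which by the twisting identity and $\chi_a^2=1$ equals $J_{P_1,\psi}(\chi|\cdot|^s,\omega^{-}_{W_1,\psi_a})$, as asserted.

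The main obstacle will be step~(iii) in cases~(1) and~(2): showing that the $A$-packet member $\pi^{\epsilon_1,-}$, indexed by $(\epsilon_1,-1)\in\widehat S_\phi=\mu_2\times\mu_2$, coincides with the $L$-\emph{packet} member indexed by $(\epsilon_1,-1)\in\widehat S_\varphi$ for the \emph{different} parameter $\varphi=\rho\oplus(\chi_a\boxtimes S_2)$. This requires tracing the labelling character through the whole chain of theta correspondences --- Waldspurger's $\Mp(W_1)\leftrightarrow\SO(V_1^{\epsilon})$ dichotomy \cite{w1,w2}, then the lift $\SO(V_2^{\epsilon})\leftrightarrow\Mp(W_2)$ of \cite{gs} together with \cite{atobe-gan} for the square-integrable members --- and reconciling the sign conventions built into \eqref{eq:e1e2-SK} with those underlying the Shimura correspondence table, in particular matching the two factors $a_1\leftrightarrow\rho$ and $a_2\leftrightarrow\chi_a\boxtimes S_2$ of $S_\phi$ with the corresponding factors of $S_\varphi$ (which are written in the opposite order in the relevant table rows). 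Along the way one needs the exact value of the root number $\epsilon(\tfrac12,\chi_{ab}\boxtimes S_2)$ (resp.\ $\epsilon(\tfrac12,\chi_a\boxtimes S_2)$) to decide which of the two extensions of $\sigma_0^{\epsilon}$ carries $\epsilon_2=-1$; this is a standard, if slightly tedious, $\epsilon$-factor computation. Once this bookkeeping is carried out, all three parts of the lemma follow by assembling the explicit descriptions above.
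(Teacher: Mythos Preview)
Your approach is correct and is essentially the same as the paper's: unwind the definition $\pi^{\epsilon_1,-}=\theta_{W_2,V_1^{\epsilon},\psi_a}(\sigma^{\epsilon,\epsilon'})$ with $\epsilon,\epsilon'$ given by \eqref{eq:e1e2-SK} and then apply Lemmas~\ref{l:o3mp4-nonzero}, \ref{l:more-theta-nonarch-1}, \ref{l:more-theta-nonarch-2}, and \ref{l:more-theta-nonarch-3} case by case. The paper's proof is a single sentence citing exactly these four lemmas; your write-up supplies the case analysis and sign-tracking that the paper leaves to the reader, and your honest flagging of step~(iii) (matching the $\widehat S_\phi$-label to the $\widehat S_\varphi$-label via the Shimura correspondence table and the $\psi_a\leftrightarrow\psi$ twist) is precisely the bookkeeping the paper has absorbed into that table.
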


\begin{proof}
For $\epsilon_1 \in \mu_2$, put $\epsilon = \epsilon_1 \cdot \epsilon(\tfrac{1}{2},\rho) \cdot \epsilon(\tfrac{1}{2},\rho \times \chi_a) \cdot \chi_a(-1)$ and $\epsilon' = - \epsilon_1 \cdot \epsilon(\tfrac{1}{2},\rho) \cdot \chi_a(-1)$.
Let $\sigma_0^\epsilon$ be the irreducible representation of $(B^\epsilon)^\times$ with $L$-parameter $\rho \otimes \chi_a$ and $\sigma^{\epsilon,\epsilon'}$ the $\epsilon'$-extension of $\sigma_0^\epsilon$ to $\O(V_1^\epsilon)$.
Since $\pi^{\epsilon_1, -} = \theta_{W_2,V_1^{\epsilon}, \psi_a}(\sigma^{\epsilon, \epsilon'})$, the lemma follows from Lemmas \ref{l:o3mp4-nonzero}, \ref{l:more-theta-nonarch-1}, \ref{l:more-theta-nonarch-2}, and \ref{l:more-theta-nonarch-3}.
\end{proof}

\begin{lem}
Suppose that $\phi = (\chi_a \boxtimes S_2) \oplus (\chi_b \boxtimes S_2)$ with $a,b \in F^\times$.
\begin{enumerate}
\item
If $\chi_a \ne \chi_b$, then we have
\[
 \pi^{-,-} = \pi^{-,-}_\varphi,
\]
where $\pi^{-,-}_\varphi$ is the irreducible genuine square-integrable representation of $\Mp(W_2)$ with $L$-parameter $\varphi = (\chi_a \boxtimes S_2) \oplus (\chi_b \boxtimes S_2)$ (relative to $\psi$) associated to $(-1,-1) \in \widehat{S}_\varphi = \mu_2 \times \mu_2$.
\item 
If $\chi_a = \chi_b$, then we have
\[
 \pi^{-,-} = J_{P_1, \psi}(\chi_a |\cdot|^{\frac{1}{2}}, \widetilde{\st}_{\chi_a, \psi}).
\]
\end{enumerate}
\end{lem}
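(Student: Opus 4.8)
The strategy is to unwind the construction of $\pi^{-,-}$ given in \S\ref{ss:local-howe-ps} and to feed the resulting theta lift into the results on the theta correspondence for $\O_3$ proved in the appendix on local theta lifts from $\O_3$ to $\Mp_4$, together with the local Shimura correspondence recorded in Appendix~\ref{a:A-packets}. By \eqref{eq:e1e2-HPS}, the choice $\epsilon_1 = \epsilon_2 = -1$ gives $\epsilon = -1$ and $\epsilon' = \chi_{ab}(-1)$, so that
\[
 \pi^{-,-} = \theta_{W_2, V_1^-, \psi_a}\bigl(\sigma^{-,\chi_{ab}(-1)}\bigr),
\]
where $\sigma^{-,\chi_{ab}(-1)}$ is the $\chi_{ab}(-1)$-extension to $\O(V_1^-)$ of the quadratic character $\sigma_0^- = \chi_{ab}\circ\mathrm{N}_{B^-}$ of $\SO(V_1^-)$; thus $\sigma^{-,\chi_{ab}(-1)}(-1) = \chi_{ab}(-1)$. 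This is the extension complementary to the one that occurs in the computation of $\pi^{+,-}$ (for which $\epsilon' = -\chi_{ab}(-1)$); since $\pi^{+,-}$ is produced via the case in which $\theta_{W_1, V_1^-, \psi_a}(-)$ is nonzero (Lemmas~\ref{l:howe-ps-local-} and~\ref{l:ind-princ-1}), Waldspurger's dichotomy~\eqref{eq:dichotomy} shows that $\theta_{W_1, V_1^-, \psi_a}(\sigma^{-,\chi_{ab}(-1)}) = 0$ when $\chi_a\ne\chi_b$, while $\theta_{W_1, V_1^-, \psi_a}(\sigma^{-,1}) \ne 0$ when $\chi_a = \chi_b$ (here $\epsilon(\tfrac{1}{2},\mathbf 1\boxtimes S_2) = -1$).

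For part~(1), where $\chi_a\ne\chi_b$, the representation $\sigma := \sigma^{-,\chi_{ab}(-1)}$ is supercuspidal, is $\ne\det$, and satisfies $\theta_{W_1,V_1^-,\psi_a}(\sigma) = 0$; hence Lemma~\ref{l:more-theta-nonarch-1} applies and gives
\[
 \pi^{-,-} = \theta_{W_2, V_2^-, \psi_a}\bigl(\St^-(1,\sigma_0^-)\bigr),
\]
where $\St^-(1,\sigma_0^-)$ is the square-integrable representation of $\SO(V_2^-)$ from Lemma~\ref{l:nsc-so-q1}\eqref{nsc-so-q1i}, with $L$-parameter $(\chi_{ab}\boxtimes S_2)\oplus(1\boxtimes S_2)$. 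Since the equal-rank Shimura correspondence $\SO_5\leftrightarrow\Mp_4$ preserves $L$-parameters up to the twist by $\chi_a$ induced by replacing $\psi$ with $\psi_a$, the right-hand side is a genuine square-integrable representation of $\Mp_4$ with $L$-parameter $(\chi_a\boxtimes S_2)\oplus(\chi_b\boxtimes S_2) = \varphi$ (using $\chi_a\cdot\chi_{ab} = \chi_b$), whose $L$-packet on $\SO(V_2^-)$ is a singleton. Comparing component-group characters — as encoded in the $\varphi = (\chi_a\boxtimes S_2)\oplus(\chi_b\boxtimes S_2)$ row of the table in Appendix~\ref{a:A-packets}, after reconciling the $\psi_b$ used there with our $\psi_a$ via $\chi_{a^{-1}b} = \chi_{ab}$ and the twist-compatibility of theta lifts by quadratic characters — identifies this representation with $\pi^{-,-}_\varphi$, which proves~(1).

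For part~(2), where $\chi_a = \chi_b$ (so $\chi_{ab} = 1$, $\epsilon' = 1$), the representation $\sigma^{-,1} = \mathbf 1_{\O(V_1^-)}$ is the trivial character, which is tempered, $\ne\det$, and satisfies $\sigma^{-,1}(-1) = 1 = (-1)\cdot\epsilon(\tfrac{1}{2},\mathbf 1\boxtimes S_2)$; hence Lemma~\ref{l:ind-princ-1} applies and yields
\[
 \pi^{-,-} = J_{P_1,\psi_a}\bigl(|\cdot|^{\frac{1}{2}},\, \theta_{W_1,V_1^-,\psi_a}(\mathbf 1_{\O(V_1^-)})\bigr).
\]
It remains to identify $\theta_{W_1, V_1^-, \psi_a}(\mathbf 1_{\O(V_1^-)})$ with the genuine square-integrable representation $\widetilde{\st}_{1,\psi_a}$ of $\Mp_2$ — an instance of the $\O_3$--$\Mp_2$ theta correspondence of Waldspurger (see also \cite{gs}) — and then to rewrite, using $I_{P_1,\psi_a}(|\cdot|^{\frac{1}{2}},\pi_0) = I_{P_1,\psi}(\chi_a|\cdot|^{\frac{1}{2}},\pi_0)$ and $\widetilde{\st}_{1,\psi_a} = \widetilde{\st}_{\chi_a,\psi}$ (both being the square-integrable subrepresentation of $I_{B,\psi_a}(|\cdot|^{\frac{1}{2}}) = I_{B,\psi}(\chi_a|\cdot|^{\frac{1}{2}})$), to obtain $\pi^{-,-} = J_{P_1,\psi}(\chi_a|\cdot|^{\frac{1}{2}},\widetilde{\st}_{\chi_a,\psi})$.

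The main obstacle is not any single analytic step — all of those have already been supplied by the appendices — but the bookkeeping of additive characters and normalizations: one must keep track of how replacing $\psi_a$ by $\psi$ (or by $\psi_b$) twists the extensions $\sigma^{\epsilon,\epsilon'}$, the signs $\chi_{ab}(-1)$ and $\epsilon(\tfrac{1}{2},\cdot)$, and the relevant $L$-parameters, and one must correctly pin down the component-group character of the supercuspidal theta lift in part~(1) and the precise member $\widetilde{\st}_{1,\psi}$ of the $\Mp_2$-packet in part~(2).
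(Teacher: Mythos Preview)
Your approach is exactly the paper's: invoke Lemma~\ref{l:more-theta-nonarch-1} for part~(1) and Lemma~\ref{l:ind-princ-1} for part~(2), then identify the output via the local Shimura correspondence recorded in Appendix~\ref{a:A-packets}. The paper's proof is a single sentence citing these two lemmas; you have correctly unpacked what is implicit there.

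One factual correction in part~(1): the $L$-packet of $\varphi' = (1\boxtimes S_2)\oplus(\chi_{ab}\boxtimes S_2)$ on $\SO(V_2^-)$ is \emph{not} a singleton --- by Lemma~\ref{l:nsc-so-q1} it contains both $\St^-(1,\nu_{ab})$ and $\St^-(\chi_{ab},\nu_1)$ --- so knowing only that the lift is square-integrable with $L$-parameter $\varphi$ does not pin it down, and the component-group bookkeeping you flag as ``the main obstacle'' is genuinely needed. Your appeal to the table is the right move; the cleanest way to close the argument (when $\chi_a,\chi_b,1$ are pairwise distinct) is to note that the table's formula $\pi^{-,-}_\varphi = \theta_{W_2,V_1^-,\psi_b}(\nu_{ab}^{\chi_{ab}(-1)})$ is symmetric under interchanging $a\leftrightarrow b$: this swap fixes $\varphi$, fixes $\nu_{ab}$ and $\chi_{ab}(-1)$, and exchanges the two coordinates of $\widehat S_\varphi$, hence fixes the label $(-1,-1)$. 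So the same formula with $\psi_a$ in place of $\psi_b$ also computes $\pi^{-,-}_\varphi$, and that is your $\pi^{-,-}$ by definition. When one of $\chi_a,\chi_b$ is trivial one uses the row $\varphi = (\chi_a\boxtimes S_2)\oplus(1\boxtimes S_2)$ instead, and here the dependence on the additive character is real (compare the entries for $\pi^{-,+}_\varphi$ and $\pi^{-,-}_\varphi$ there); the identification then relies on the compatibility of the $\Mp_4$--$\SO(V_2^-)$ and $\Mp_2$--$\SO(V_1^-)$ correspondences under the $\widetilde{\St}$ and $\St^-$ constructions, which is also what underlies your identification $\theta_{W_1,V_1^-,\psi_a}(\mathbf 1_{\O(V_1^-)}) = \widetilde{\st}_{1,\psi_a}$ in part~(2).
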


\begin{proof}
Put $\epsilon' = \chi_{ab}(-1)$.
Let $\sigma^{-, \epsilon'}$ be the $\epsilon'$-extension of $\chi_{ab} \circ \mathrm{N}_{B^-}$ to $\O(V_1^-)$.
Since $\pi^{-,-} = \theta_{W_2, V_1^-, \psi_a}(\sigma^{-, \epsilon'})$, the lemma follows from Lemmas \ref{l:ind-princ-1} and \ref{l:more-theta-nonarch-1}.
\end{proof}

The following table describes the representations in $\Pi_{\phi,\psi}(\Mp(W_2))$ when $\phi$ is nontempered and any irreducible summand of $\phi$ is symplectic.

\begin{center}
{\renewcommand{\arraystretch}{1.5}
\begin{tabular}{ll} \hline
 $\phi:$ $A$-parameter &
 $\Pi_{\phi, \psi}(\Mp(W_2))$ \\ \hline \hline
 $\phi = \chi_a \boxtimes S_4$
 & $\pi^+ = J_{B, \psi}(\chi_a |\cdot|^{\frac{3}{2}}, \chi_a|\cdot|^{\frac{1}{2}})$ \\
 & $\pi^- = J_{P_1, \psi}(\chi_a |\cdot|^{\frac{3}{2}}, \omega_{W_1,\psi_a}^-)$ \\ \hline 
 $\phi = (\rho_0 \boxtimes S_1) \oplus (\chi_a \boxtimes S_2)$ with $\rho_0 \ne \chi_a \boxtimes S_2$
 & $\pi^{\epsilon_1,+} = J_{P_1, \psi}(\chi_a |\cdot|^{\frac{1}{2}}, \pi_0^{\epsilon_1})$ \\
 & $\pi^{\epsilon_1, -} = \pi^{\epsilon_1, -}_\varphi$ \\ \hline 
 $\phi = (\rho_0 \boxtimes S_1) \oplus (\chi_a \boxtimes S_2)$ with $\rho_0 = \chi_a \boxtimes S_2$
 & $\pi^{\epsilon_1,+} = J_{P_1, \psi}(\chi_a |\cdot|^{\frac{1}{2}}, \pi_0^{\epsilon_1})$ \\
 & $\pi^{\epsilon_1, -} =
\begin{cases}
 0 & \text{if $\chi_a \ne 1$ and $\epsilon_1 = 1$} \\
 \pi_\varphi^{-,-} & \text{if $\chi_a \ne 1$ and $\epsilon_1 = -1$} \\
 \pi_\varphi^{+,+} & \text{if $\chi_a = 1$ and $\epsilon_1 = 1$} \\
 0 & \text{if $\chi_a = 1$ and $\epsilon_1 = -1$}
\end{cases}
$ \\ \hline 
 $\phi = (\chi_a \boxtimes S_2) \oplus (\chi_b \boxtimes S_2)$
 & $\pi^{+,+} = J_{B, \psi}(\chi_a |\cdot|^{\frac{1}{2}}, \chi_b |\cdot|^{\frac{1}{2}})$ \\ 
 & $\pi^{+,-} = J_{P_1, \psi}(\chi_a |\cdot|^{\frac{1}{2}}, \omega_{W_1,\psi_b}^-)$ \\ 
 & $\pi^{-,+} = J_{P_1, \psi}(\chi_b |\cdot|^{\frac{1}{2}}, \omega_{W_1,\psi_a}^-)$ \\
 & $\pi^{-,-} = \pi^{-,-}_\varphi$ \\ \hline
 $\phi = (\chi_a \boxtimes S_2) \oplus (\chi_a \boxtimes S_2)$ 
 & $\pi^{+,+} = J_{B, \psi}(\chi_a |\cdot|^{\frac{1}{2}}, \chi_a |\cdot|^{\frac{1}{2}})$ \\ 
 & $\pi^{-,-} = J_{P_1, \psi}(\chi_a |\cdot|^{\frac{1}{2}}, \widetilde{\st}_{\chi_a, \psi})$ \\ \hline
 $\phi = \rho \boxtimes S_2$ & 
 $\pi^+ = J_{P_2,\psi}(\tau \otimes |\det|^{\frac{1}{2}})$
 \\
 & $\pi^- = \pi^-_\varphi$ \\ \hline 
\end{tabular}
}
\end{center}

\begin{itemize}[leftmargin=20mm]
\item $a, b \in F^\times$ such that $\chi_a \ne \chi_b$
\item $\rho_0:$ $2$-dimensional irreducible symplectic representation of $L_F$
\item $\rho:$ $2$-dimensional irreducible orthogonal representation of $L_F$
\item $\varphi = \phi \circ \Delta:$ $4$-dimensional symplectic representation of $L_F$
\item $\Delta : W_F \times \SL_2(\C) \rightarrow W_F \times \SL_2(\C) \times \SL_2(\C) :$ diagonal map
\item $\Pi_{\rho_0, \psi}(\Mp(W_1)) = \{ \pi_0^\epsilon \, | \, \epsilon \in \mu_2 \}$
\item $\Pi_\rho(\GL_2(F)) = \{ \tau \}$
\end{itemize}

\subsection{The real case}

Suppose that $F = \R$.
We retain the notation of \S \ref{sss:notation-real}.

\subsubsection{(Limit of) discrete series representations of $\Mp_4(\R)$}
\label{sss:temp-mp4-real}

Recall from \cite[Theorem 8.1]{vogan00} that there is a bijection
\[
\begin{tikzcd}
 \{ \text{irreducible genuine tempered representations of $\Mp_4(\R)$ with real infinitesimal character} \} \arrow[d] \\
 \{ \text{irreducible genuine representations of $K'$} \} \arrow[u]
\end{tikzcd}
\]
sending $\pi$ in the first set (see \cite[Definition 8.5]{vogan00} for the precise definition) to the unique lowest $K'$-type of $\pi$, where $K'$ is the maximal compact subgroup of $\Mp_4(\R)$.
In particular, any genuine (limit of) discrete series representation of $\Mp_4(\R)$ (which always has real infinitesimal character) is uniquely determined by its lowest $K'$-type.

We now describe the $L$-packet $\Pi_{\varphi,\psi}(\Mp_4(\R))$ explicitly for any $L$-parameter $\varphi : W_\R \rightarrow \Sp_4(\C)$ when any irreducible summand of $\varphi$ is symplectic.
We may write
\[
 \varphi = \mathcal{D}_a \oplus \mathcal{D}_b
\]
with some $a, b \in \frac{1}{2} + \Z$ such that $a \ge b > 0$ and 
\[
 \Pi_{\varphi, \psi}(\Mp_4(\R)) = \{ \pi^{\epsilon_1,\epsilon_2} \, | \, \epsilon_1, \epsilon_2 \in \mu_2 \},
\]
where we interpret $\pi^{\epsilon_1,\epsilon_2}$ as zero if $a = b$ and $\epsilon_1 \ne \epsilon_2$.
If $a > b$, then by \cite[\S 2.2]{luo}, $\pi^{\epsilon_1,\epsilon_2}$ is the genuine discrete series representation of $\Mp_4(\R)$ with lowest $K'$-type $\Lambda^{\epsilon_1,\epsilon_2}$ (relative to $\psi$) given by
\begin{align*}
 \Lambda^{+,+} & = (a+1, -b), \\
 \Lambda^{+,-} & = (a+1,b+2), \\
 \Lambda^{-,+} & = (-b-2,-a-1), \\
 \Lambda^{-,-} & = (b, -a-1).
\end{align*}
If $a=b$, then $\pi^{\epsilon_1,\epsilon_2}$ is the genuine limit of discrete series representation of $\Mp_4(\R)$ with lowest $K'$-type $\Lambda^{\epsilon_1, \epsilon_2}$ (relative to $\psi$) given by
\begin{align*}
 \Lambda^{+,+} & = (a+1, -a), \\
 \Lambda^{-,-} & = (a, -a-1), 
\end{align*}
so that 
\[
 I_{P_2, \psi}(D_a) = \pi^{+,+} \oplus \pi^{-,-}.
\]
The representations described above exhaust all genuine (limit of) discrete series representations of $\Mp_4(\R)$.

\subsubsection{Nontempered $A$-packets}

For any $A$-parameter $\phi : W_\R \times \SL_2(\C) \rightarrow \Sp_4(\C)$, we have described most of the representations in the $A$-packet $\Pi_{\phi,\psi}(\Mp_4(\R))$ explicitly in the body of this paper.
The following lemmas determine the remaining representations.

\begin{lem}
Suppose that $\phi = (\rho \boxtimes S_1) \oplus (\chi_a \boxtimes S_2)$ with a $2$-dimensional symplectic almost tempered representation $\rho$ of $W_\R$ and $a \in \R^\times$.
\begin{enumerate}
\item 
If $\rho = \mathcal{D}_{\kappa - \frac{1}{2}}$ for some positive integer $\kappa$, then for any $\epsilon_1 \in \mu_2$, we have
\[
 \pi^{\epsilon_1, -} =
 \begin{cases}
  \pi_\varphi^{\epsilon_1, \chi_a(-1)} & \text{if $\kappa>1$ or $\epsilon_1 = \chi_a(-1)$;} \\
  0 & \text{if $\kappa=1$ and $\epsilon_1 = -\chi_a(-1)$,}
 \end{cases}
\]
where $\pi_\varphi^{\epsilon_1, \chi_a(-1)}$ is the genuine (limit of) discrete series representation of $\Mp_4(\R)$ with $L$-parameter $\varphi = \mathcal{D}_{\kappa - \frac{1}{2}} \oplus \mathcal{D}_{\frac{1}{2}}$ (relative to $\psi$) associated to $(\epsilon_1, \chi_a(-1)) \in \widehat{S}_\varphi \subset \mu_2 \times \mu_2$.
\item 
If $\rho = \chi |\cdot|^s \oplus \chi^{-1} |\cdot|^{-s}$ for some unitary character $\chi$ of $\R^\times$ and some $s \in \R$ with $0 \le s < \frac{1}{2}$, then we have
\[
 \pi^{+,-} = J_{P_1,\psi}(\chi |\cdot|^s, \omega^-_{W_1,\psi_a}).
\]
\end{enumerate}
\end{lem}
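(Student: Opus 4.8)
\emph{Approach.} The plan is to mimic the proof of the nonarchimedean analogue of this lemma, replacing the computations of theta lifts from $\O(V_1^\epsilon)$ to $\Mp(W_2)$ used there by their real counterparts --- namely Lemmas \ref{l:o3mp4-nonzero}, \ref{l:more-theta-real-1} and \ref{l:more-theta-real-2} --- together with the explicit description of the genuine (limit of) discrete series of $\Mp_4(\R)$ in terms of their lowest $K'$-types recorded in \S\ref{sss:temp-mp4-real}. Recall from \S\ref{ss:local-sk} that for $\epsilon_1 \in \mu_2$ one has $\pi^{\epsilon_1,-} = \theta_{W_2,V_1^\epsilon,\psi_a}(\sigma^{\epsilon,\epsilon'})$, where $\epsilon,\epsilon' \in \mu_2$ are given by \eqref{eq:e1e2-SK} with $\epsilon_2 = -1$ and $\sigma^{\epsilon,\epsilon'}$ is the $\epsilon'$-extension to $\O(V_1^\epsilon)$ of the irreducible representation $\sigma_0^\epsilon$ of $(B^\epsilon)^\times$ with $L$-parameter $\rho \otimes \chi_a$.

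\emph{The case $\rho = \mathcal{D}_{\kappa-\frac{1}{2}}$.} Since $\chi_a$ is trivial on $W_\C$, we have $\rho \otimes \chi_a = \mathcal{D}_{\kappa-\frac{1}{2}}$, so $\epsilon(\tfrac{1}{2},\rho)\cdot\epsilon(\tfrac{1}{2},\rho\times\chi_a) = 1$ and \eqref{eq:e1e2-SK} gives $\epsilon = \epsilon_1 \cdot \chi_a(-1)$ and $\epsilon' = -\epsilon_1 \cdot \epsilon(\tfrac{1}{2},\rho) \cdot \chi_a(-1)$; in particular $V_1^\epsilon$ is $V_{2,1}$ when $\epsilon_1 = \chi_a(-1)$ and $V_{0,3}$ when $\epsilon_1 = -\chi_a(-1)$. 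Using $\epsilon(\tfrac{1}{2},\mathcal{D}_{\kappa-\frac{1}{2}}) = (-1)^\kappa$, one checks that $\sigma^{\epsilon,\epsilon'}(-1) = \epsilon'$ is exactly the sign for which $\theta_{W_1,V_1^\epsilon,\psi_a}(\sigma^{\epsilon,\epsilon'}) = 0$ by \eqref{eq:dichotomy}, and that $\sigma^{\epsilon,\epsilon'}$ is precisely the discrete series representation of $\O(V_1^\epsilon)$ appearing in Lemma \ref{l:more-theta-real-1}. When $\kappa = 1$ and $\epsilon_1 = -\chi_a(-1)$ the would-be $\sigma_0^-$ does not exist (it would be an irreducible representation of the compact group $(B^-)^\times$ with the $2$-dimensional $L$-parameter $\mathcal{D}_{\frac{1}{2}}$), so $\pi^{\epsilon_1,-} = 0$, which is the asserted vanishing. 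Otherwise Lemma \ref{l:more-theta-real-1}, together with Lemma \ref{l:o3mp4-nonzero} for nonvanishing, identifies $\pi^{\epsilon_1,-}$ as the genuine (limit of) discrete series representation of $\Mp_4(\R)$ whose lowest $K'$-type relative to $\psi_a$ is $(\kappa+\frac{1}{2},-\frac{1}{2})$ or $(-\frac{5}{2},-\kappa-\frac{1}{2})$ according as $V_1^\epsilon$ is $V_{2,1}$ or $V_{0,3}$. Comparing with the list of lowest $K'$-types of the members of $\Pi_{\varphi,\psi}(\Mp_4(\R))$ for $\varphi = \mathcal{D}_{\kappa-\frac{1}{2}} \oplus \mathcal{D}_{\frac{1}{2}}$ in \S\ref{sss:temp-mp4-real} --- and using that the parametrization of genuine $K'$-types relative to $\psi_a$ coincides with that relative to $\psi$ when $\chi_a(-1) = 1$ and is obtained from it by $(a_1,a_2) \mapsto (-a_2,-a_1)$ when $\chi_a(-1) = -1$ --- one finds $\pi^{\epsilon_1,-} = \pi_\varphi^{\epsilon_1,\chi_a(-1)}$.

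\emph{The case $\rho = \chi|\cdot|^s \oplus \chi^{-1}|\cdot|^{-s}$ with $0 \le s < \frac{1}{2}$.} Here $\rho$ is reducible, so $\sigma_0^-$ is zero by convention and only $\epsilon = +$, $V_1^\epsilon = V_{2,1}$, occurs; moreover $\epsilon = \epsilon_1 = +$ as noted after \eqref{eq:e1e2-SK}, so we are computing $\pi^{+,-}$. The restriction of $\sigma^{+,\epsilon'}$ to $\SO(V_{2,1})$ is the principal series $I_{Q_1}(\chi\chi_a|\cdot|^s)$, and since $\epsilon(\tfrac{1}{2},\rho) = \chi(-1)$ one has $\sigma^{+,\epsilon'}(-1) = -(\chi\chi_a)(-1)$, so that $\theta_{W_1,V_{2,1},\psi_a}(\sigma^{+,\epsilon'}) = 0$ by \eqref{eq:dichotomy}. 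Lemma \ref{l:more-theta-real-2} (applied with $\psi$ replaced by $\psi_a$) then gives
\[
 \pi^{+,-} = \theta_{W_2,V_{2,1},\psi_a}(\sigma^{+,\epsilon'}) = J_{P_1,\psi_a}(\chi\chi_a|\cdot|^s, \omega_{W_1,\psi_a}^-),
\]
and the identity $I_{P_1,\psi_a}(\tau,\pi_0) = I_{P_1,\psi}(\tau\otimes(\chi_a\circ\det),\pi_0)$ rewrites the right-hand side as $J_{P_1,\psi}(\chi|\cdot|^s, \omega_{W_1,\psi_a}^-)$, as claimed.

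\emph{Main obstacle.} The delicate point is the sign bookkeeping in the first case: one must check that the labelling of the Saito--Kurokawa $A$-packet coming from \eqref{eq:e1e2-SK} is matched by the theta correspondence to the labelling of the tempered $L$-packet $\Pi_{\varphi,\psi}(\Mp_4(\R))$ coming from the component group of $\varphi$, which requires carefully tracking $\epsilon(\tfrac{1}{2},\mathcal{D}_{\kappa-\frac{1}{2}})$, the value $\chi_a(-1)$, and the $\psi$-versus-$\psi_a$ dependence of the lowest-$K'$-type parametrization through Lemma \ref{l:more-theta-real-1}. Everything else is a routine transcription of the nonarchimedean argument.
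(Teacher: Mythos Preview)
Your approach is essentially the paper's: reduce everything to Lemmas \ref{l:o3mp4-nonzero}, \ref{l:more-theta-real-1}, and \ref{l:more-theta-real-2}, exactly as the paper does. The extra bookkeeping you supply (tracking $\epsilon,\epsilon'$ through \eqref{eq:e1e2-SK}, the $\psi$-versus-$\psi_a$ shift of $K'$-type labels, and the comparison with the table in \S\ref{sss:temp-mp4-real}) is correct and is precisely what the paper's one-line proof leaves implicit.

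There is, however, one genuine slip in your treatment of the vanishing case $\kappa=1$, $\epsilon_1=-\chi_a(-1)$. You claim $\sigma_0^-$ does not exist because ``it would be an irreducible representation of the compact group $(B^-)^\times$ with the $2$-dimensional $L$-parameter $\mathcal{D}_{1/2}$.'' This is wrong: $\mathcal{D}_{1/2}$ is the $L$-parameter of the weight-$2$ discrete series of $\GL_2(\R)$, and its Jacquet--Langlands transfer to $\mathbb{H}^\times/\R^\times$ is the trivial representation, so $\sigma_0^-$ \emph{does} exist. The correct reason for the vanishing is that, with the signs you have already computed, $\epsilon' = -1$ in this case, so $\sigma^{-,\epsilon'}$ is the $(-1)$-extension of the trivial character of $\SO(V_{0,3})$, i.e.\ $\sigma^{-,\epsilon'} = \det$; then Lemma \ref{l:o3mp4-nonzero} gives $\theta_{W_2,V_1^-,\psi_a}(\det)=0$. (This is also why Lemma \ref{l:more-theta-real-1} explicitly excludes $\kappa=1$ in the $(0,3)$ case.) Once you fix this sentence, your argument is complete and matches the paper's.
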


\begin{proof}
For $\epsilon_1 \in \mu_2$, put $\epsilon = \epsilon_1 \cdot \epsilon(\tfrac{1}{2},\rho) \cdot \epsilon(\tfrac{1}{2},\rho \times \chi_a) \cdot \chi_a(-1)$ and $\epsilon' = - \epsilon_1 \cdot \epsilon(\tfrac{1}{2},\rho) \cdot \chi_a(-1)$.
Let $\sigma_0^\epsilon$ be the irreducible representation of $(B^\epsilon)^\times$ with $L$-parameter $\rho \otimes \chi_a$ and $\sigma^{\epsilon,\epsilon'}$ the $\epsilon'$-extension of $\sigma_0^\epsilon$ to $\O(V_1^\epsilon)$.
Since $\pi^{\epsilon_1, -} = \theta_{W_2,V_1^{\epsilon}, \psi_a}(\sigma^{\epsilon, \epsilon'})$, the lemma follows from Lemmas \ref{l:o3mp4-nonzero}, \ref{l:more-theta-real-1}, and \ref{l:more-theta-real-2}.
\end{proof}

\begin{lem}
Suppose that $\phi = (\chi_a \boxtimes S_2) \oplus (\chi_b \boxtimes S_2)$ with $a,b \in \R^\times$.
\begin{enumerate}
\item 
If $\chi_a \ne \chi_b$, then we have
\[
 \pi^{-,-} = 0.
\]
\item
If $\chi_a = \chi_b$, then we have
\[
 \pi^{-,-} = J_{P_1, \psi}(\chi_a |\cdot|^{\frac{1}{2}}, (\omega^-_{W_1,\psi_a})^\vee).
\]
\end{enumerate}
\end{lem}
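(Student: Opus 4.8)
The plan is to unwind the definition of $\pi^{-,-}$ as a theta lift from $\O(V_1^-)$ and then to argue exactly as in the real-case proof of Lemma~\ref{l:ind-princ-1} in \S\ref{sss:ind-princ-1-real}. By \eqref{eq:e1e2-HPS}, for $\pi^{-,-}$ we have $\epsilon=-1$ and $\epsilon'=\chi_{ab}(-1)$, so $\pi^{-,-}=\theta_{W_2,V_1^-,\psi_a}(\sigma^{-,\chi_{ab}(-1)})$, where $\sigma^{-,\epsilon'}$ is the $\epsilon'$-extension to $\O(V_1^-)$ of the character $\sigma_0^-=\chi_{ab}\circ\mathrm{N}_{B^-}$ of $\SO(V_1^-)$. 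Over $\R$ one has $V_1^-=V_{0,3}$, so $B^-=\mathbb{H}$ and $\mathrm{N}_{B^-}$ has image $\R_{>0}$; since $\chi_{ab}$ is quadratic, $\sigma_0^-$ is trivial, and because $\dim V_{0,3}$ is odd, $-1\in\O(V_{0,3})\smallsetminus\SO(V_{0,3})$, so $\sigma^{-,+}=\mathbf 1$ is the trivial character of $\O(V_{0,3})$ while $\sigma^{-,-}=\det$.

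Part (i) is then immediate: if $\chi_a\ne\chi_b$ then $\chi_{ab}$ is the nontrivial quadratic character $\chi_{-1}$ of $\R^\times$, so $\chi_{ab}(-1)=-1$ and $\pi^{-,-}=\theta_{W_2,V_{0,3},\psi_a}(\det)=0$ by Lemma~\ref{l:o3mp4-nonzero}. If $\chi_a=\chi_b$ then $\chi_{ab}$ is trivial, $\chi_{ab}(-1)=1$, and $\pi^{-,-}=\theta_{W_2,V_{0,3},\psi_a}(\mathbf 1)$, which is nonzero by the same lemma; the content of part (ii) is to identify this lift. First I would determine $\pi_0:=\theta_{W_1,V_{0,3},\psi_a}(\mathbf 1)$: the trivial $K$-type of $\mathbf 1$ (with $K=\O(0)\times\O(3)$) is its unique lowest $K$-type and has degree $0$, and via the minimal-degree $K$-type correspondence recalled in \S\ref{sss:notation-real} it corresponds, for $\Mp_2(\R)$, to the $K'$-type $-\tfrac32$ relative to $\psi_a$; hence $\pi_0$ is nonzero and equals the odd elementary Weil representation of that lowest weight, i.e.\ $(\omega^-_{W_1,\psi_a})^\vee=\widetilde D_{-\frac12,\psi_a}$. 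Since the trivial $K$-type of $\mathbf 1$ is of minimal degree, the Adams--Barbasch induction principle \cite[Theorem~8.4]{ab2} shows that $\theta_{W_2,V_{0,3},\psi_a}(\mathbf 1)$ is a subquotient of $I_{P_1,\psi_a}(|\cdot|^{-\frac12},(\omega^-_{W_1,\psi_a})^\vee)$ containing the $K'$-type $(-\tfrac32,-\tfrac32)$ corresponding to the trivial $K$-type of $\mathbf 1$. By \cite[Proposition~6.10]{ab2}, $(-\tfrac32,-\tfrac32)$ is a lowest $K'$-type of this standard module; as such it occurs with multiplicity one and only in the irreducible quotient $J_{P_1,\psi_a}(|\cdot|^{\frac12},(\omega^-_{W_1,\psi_a})^\vee)=J_{P_1,\psi}(\chi_a|\cdot|^{\frac12},(\omega^-_{W_1,\psi_a})^\vee)$, which must therefore coincide with $\theta_{W_2,V_{0,3},\psi_a}(\mathbf 1)$, as claimed.

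The main obstacle is the precise identification of the intermediate lift $\pi_0=\theta_{W_1,V_{0,3},\psi_a}(\mathbf 1)$ — keeping careful track of the contragredient and of the $\psi$ versus $\psi_a$ dependence of the parametrization — together with verifying that the minimality hypotheses of the induction principle are met so that \cite[Theorem~8.4]{ab2} applies; everything else is the same lowest-$K'$-type bookkeeping as in \S\ref{sss:ind-princ-1-real}.
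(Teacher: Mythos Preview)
Your proof is correct and follows essentially the same route as the paper. The only difference is one of packaging: the paper's proof simply reads ``the lemma follows from Lemmas~\ref{l:o3mp4-nonzero} and~\ref{l:ind-princ-1}'', since the trivial representation of $\O(V_{0,3})$ is the discrete series case $\kappa=1$ already handled in \S\ref{sss:ind-princ-1-real} (where it is shown that $\theta_{W_1,V_{0,3},\psi}(\mathbf 1)=\widetilde D_{-1/2,\psi}=(\omega^-_{W_1,\psi})^\vee$ and the induction principle gives the Langlands quotient). You have instead rederived that special case of Lemma~\ref{l:ind-princ-1} inline, with the same $K$-type correspondence and the same appeal to \cite[Theorem~8.4 and Proposition~6.10]{ab2}; so your argument is longer but substantively identical.
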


\begin{proof}
Put $\epsilon' = \chi_{ab}(-1)$.
Let $\sigma^{-, \epsilon'}$ be the $\epsilon'$-extension of $\chi_{ab} \circ \mathrm{N}_{B^-}$ to $\O(V_1^-)$.
Since $\pi^{-,-} = \theta_{W_2, V_1^-, \psi_a}(\sigma^{-, \epsilon'})$, the lemma follows from Lemmas \ref{l:o3mp4-nonzero} and \ref{l:ind-princ-1}.
\end{proof}

The following table describes the representations in $\Pi_{\phi,\psi}(\Mp_4(\R))$ when $\phi$ is nontempered and any irreducible summand of $\phi$ is symplectic.

\begin{center}
{\renewcommand{\arraystretch}{1.5}
\begin{tabular}{ll} \hline
 $\phi:$ $A$-parameter &
 $\Pi_{\phi, \psi}(\Mp_4(\R))$ \\ \hline \hline
 $\phi = \chi_a \boxtimes S_4$
 & $\pi^+ = J_{B, \psi}(\chi_a |\cdot|^{\frac{3}{2}}, \chi_a|\cdot|^{\frac{1}{2}})$ \\
 & $\pi^- = J_{P_1, \psi}(\chi_a |\cdot|^{\frac{3}{2}}, \omega_{W_1,\psi_a}^-)$ \\ \hline 
 $\phi = (\mathcal{D}_{\kappa - \frac{1}{2}} \boxtimes S_1) \oplus (\chi_a \boxtimes S_2)$
 & $\pi^{\epsilon_1,+} = J_{P_1, \psi}(\chi_a |\cdot|^{\frac{1}{2}}, \widetilde{D}_{\lambda,\psi})$ \\
 & $\pi^{\epsilon_1, -} =
 \begin{cases}
  \pi_\varphi^{\epsilon_1, \chi_a(-1)} & \text{if $\kappa>1$ or $\epsilon_1 = \chi_a(-1)$} \\
  0 & \text{if $\kappa=1$ and $\epsilon_1 = -\chi_a(-1)$}
 \end{cases}
 $ \\ \hline 
 $\phi = (\chi_a \boxtimes S_2) \oplus (\chi_b \boxtimes S_2)$
 & $\pi^{+,+} = J_{B, \psi}(\chi_a |\cdot|^{\frac{1}{2}}, \chi_b |\cdot|^{\frac{1}{2}})$ \\ 
 & $\pi^{+,-} = J_{P_1, \psi}(\chi_a |\cdot|^{\frac{1}{2}}, \omega_{W_1,\psi_b}^-)$ \\ 
 & $\pi^{-,+} = J_{P_1, \psi}(\chi_b |\cdot|^{\frac{1}{2}}, \omega_{W_1,\psi_a}^-)$ \\
 & $\pi^{-,-} = 0$ \\ \hline
 $\phi = (\chi_a \boxtimes S_2) \oplus (\chi_a \boxtimes S_2)$ &
 $\pi^{+,+} = J_{B, \psi}(\chi_a |\cdot|^{\frac{1}{2}}, \chi_a |\cdot|^{\frac{1}{2}})$ \\ 
 & $\pi^{-,-} = J_{P_1, \psi}(\chi_a |\cdot|^{\frac{1}{2}}, (\omega^-_{W_1,\psi_a})^\vee)$ \\ \hline 
 $\phi = \mathcal{D}_\kappa \boxtimes S_2$
 & $\pi^+ = J_{P_2,\psi}(D_\kappa \otimes |\det|^{\frac{1}{2}})$ \\
 & $\pi^- = \pi_{\varphi}^{+,-} \oplus \pi_{\varphi}^{-,+}$ \\ \hline
\end{tabular}
}
\end{center}

\begin{itemize}[leftmargin=25mm]
 \item $a, b \in \R^\times$ such that $\chi_a \ne \chi_b$
 \item $\kappa:$ positive integer
 \item $\lambda =
 \begin{cases}
  \kappa-\frac{1}{2} & \text{if $\epsilon_1 = 1$} \\
  -\kappa+\frac{1}{2} & \text{if $\epsilon_1 = -1$}
 \end{cases}
 $
 \item $\varphi = 
 \begin{cases}
 \mathcal{D}_{\kappa - \frac{1}{2}} \oplus \mathcal{D}_{\frac{1}{2}}
 & \text{if $\phi = (\mathcal{D}_{\kappa - \frac{1}{2}} \boxtimes S_1) \oplus (\chi_a \boxtimes S_2)$} \\
 \mathcal{D}_{\kappa + \frac{1}{2}} \oplus \mathcal{D}_{\kappa - \frac{1}{2}}
 & \text{if $\phi = \mathcal{D}_\kappa \boxtimes S_2$}
 \end{cases}
 $
\end{itemize}

\subsection{The complex case}

Suppose that $F = \C$.
For any $A$-parameter $\phi : W_\C \times \SL_2(\C) \rightarrow \Sp_4(\C)$, we have described the representations in the $A$-packet $\Pi_{\phi,\psi}(\Mp_4(\C))$ explicitly in the body of this paper.
The following table describes the representations in $\Pi_{\phi,\psi}(\Mp_4(\C))$ when $\phi$ is nontempered and any irreducible summand of $\phi$ is symplectic.

\begin{center}
{\renewcommand{\arraystretch}{1.5}
\begin{tabular}{ll} \hline
 $\phi:$ $A$-parameter &
 $\Pi_{\phi, \psi}(\Mp_4(\C))$ \\ \hline \hline
 $\phi = 1 \boxtimes S_4$ &
 $\pi^+ = J_{B, \psi}(|\cdot|^{\frac{3}{2}}, |\cdot|^{\frac{1}{2}})$ \\
 & $\pi^- = J_{P_1, \psi}(|\cdot|^{\frac{3}{2}}, \omega_{W_1,\psi}^-)$ \\ \hline
 $\phi = (1 \boxtimes S_2) \oplus (1 \boxtimes S_2)$ &
 $\pi^{+,+} = J_{B, \psi}(|\cdot|^{\frac{1}{2}}, |\cdot|^{\frac{1}{2}})$ \\
 & $\pi^{-,-} = 0$ \\ \hline
\end{tabular}
}
\end{center}

\end{document}